\topskip \setlength{\parindent}{0pt} \setlength{\parskip}{5pt plus
\def\gf{generating function\xspace}
\def\al{\alpha}
\def\be{\beta}
\def\ga{\gamma}
\newtheorem{theorem}{Theorem}
\newtheorem{lemma}[theorem]{Lemma}
\newtheorem{proposition}[theorem]{Proposition}
\newtheorem{corollary}[theorem]{Corollary}
\begin{document}
\title[Avoiding 1324 and two other $4$-letter patterns]{Enumeration of small Wilf classes avoiding 1324 and two other $4$-letter patterns}
\author[D. Callan]{David Callan}
\address{Department of Statistics, University of Wisconsin, Madison, WI 53706}
\email{callan@stat.wisc.edu}
\author[T.~Mansour]{Toufik Mansour}
\address{Department of Mathematics, University of Haifa, 3498838 Haifa, Israel}
\email{tmansour@univ.haifa.ac.il}

\begin{abstract}
Recently, it has been determined that there are 242 Wilf
classes of triples of 4-letter permutation patterns by showing
that there are 32 non-singleton Wilf classes. Moreover,
the generating function for each triple lying in a non-singleton Wilf class has been explicitly determined.
In this paper, toward the goal of enumerating avoiders for the singleton Wilf classes, we obtain the generating function for all but one of the triples containing
1324. (The exceptional triple is conjectured to be intractable.) Our methods are both combinatorial and analytic, including generating trees,
recurrence relations, and decompositions by left-right maxima.
Sometimes this leads to an algebraic equation for the generating
function, sometimes to a functional equation or a multi-index
recurrence amenable to the kernel method.
\bigskip

\noindent{\bf Keywords}: pattern avoidance, generating trees, kernel method
\end{abstract}
\maketitle

\section{Introduction}
In recent decades pattern avoidance has received a lot of attention. It has a
prehistory in the work of MacMahon \cite{macmahon1915} and Knuth \cite{K}, but the
current interest was sparked by a paper of Simion and Schmidt \cite{SiS}. They thoroughly analyzed 3-letter patterns in permutations, including a bijection
between 123- and 132-avoiding permutations, thereby explaining the first
(nontrivial) instance of what is, in modern terminology, a Wilf class. Since then
the problem has been addressed on several other discrete structures, such as
compositions, $k$-ary words, and set partitions; see, e.g., the texts \cite{SHM,TM}
and references contained therein.

Permutations avoiding a single 4-letter pattern have been well studied (see, e.g.,
\cite{St0,St,W,wikipermpatt}), and the latter form 7 symmetry classes and 3 Wilf
classes.  As for pairs of 4-letter patterns, there are 56 symmetry classes, for all
but 5 of which the avoiders have been enumerated \cite{wikipermpatt}. Le \cite{L}
established that these $56$ symmetry classes form $38$ distinct Wilf classes.

The $\binom{24}{3}= 2024$ triples of 4-letter patterns split into 317 symmetry classes. It is known \cite{CMS3patI,CMS3patII} that the 317 symmetry classes split into $242$ Wilf classes, 32 of which are large (a Wilf class is called large if it contains at least two symmetry classes, and small if it consists of a singleton symmetry class) and the large Wilf classes are all explicitly enumerated in \cite{CMS3patI,CMS3patII}, where it is shown that each has an algebraic
generating function.

Our goal here is to enumerate (with one exception, see \cite{gp2015} and \cite{SlA257562}) all the small Wilf classes that contain the pattern 1324.
Running the INSENC algorithm (regular insertion encoding, see \cite{ALR,V})
over all the 210 small Wilf classes
determines the generating function for 126 of them, as presented in the Appendix.
The remaining small classes that contain 1324 are listed in Table \ref{tabgf1324}
along with their generating functions, where the numbering follows that of Table 2
in the appendix to\cite{HYL}, based on lex order of counting sequences.

Section \ref{prelim} contains some preliminary remarks, and Section \ref{proofs}
gives the proofs for all the results in Table \ref{tabgf1324} not proved elsewhere.

{\scriptsize\begin{longtable}[c]{|c|c|c|c|}
\caption{Small Wilf classes of three 4-letter patterns not counted by INSENC that include the pattern $1324$\label{tabgf1324}}\\ \hline
No. & $T$&$F_T(x)$&Thm./[Ref]\\\hline
\endfirsthead  \hline
No. & $T$&$F_T(x)$&Thm./[Ref]\\ \hline
\endhead \hline
\endfoot \hline
\endlastfoot
\raisebox{-.5mm}{29} & \raisebox{-.5mm}{$\{1324,2143,3421\}$} & \raisebox{-.5mm}{$\frac{1 - 8x + 27x^2 - 48x^3 + 50x^4 - 30x^5 + 6x^6}{(1 - x)^5(1 - 2x)^2}$ } & \raisebox{-.5mm}{\cite{AA20160607}} \\[2mm]\hline
\raisebox{-.5mm}{30} & \raisebox{-.5mm}{$\{4231,2143,1324\}$} & \raisebox{-.5mm}{$\frac{1 - 6 x + 14 x^2 - 14 x^3 + 8 x^4 - 2 x^6}{(1 - x)^3 (1 - 2x)^2 }$ } & \raisebox{-.5mm}{\cite{AA20160607}} \\[2mm]\hline

\raisebox{-.5mm}{35} & \raisebox{-.5mm}{$\{1324,2143, 3412\}$}& \raisebox{-.5mm}{$\frac{1-9x+33x^2-62x^3+64x^4-38x^5+10x^6}{(1-3x+x^2)(1-2x)^2(1-x)^3}$}  & \raisebox{-.5mm}{\cite{AA20160607}}  \\[2mm] \hline

\raisebox{-.5mm}{49} & \raisebox{-.5mm}{$\{1324,2341,4123\}$} & \raisebox{-.5mm}{$C(x)+\frac{x^3 - 3 x^4 +3 x^5 -5 x^6 + 9 x^7 -4 x^8}{(1-x)^6 (1-2x)^2}$} & \raisebox{-.5mm}{\ref{th49a}}\\[2mm]\hline
\raisebox{-.5mm}{69} & \raisebox{-.5mm}{$\{1234, 1324, 3412\}$} & \raisebox{-.5mm}{$\frac{1-9x+35x^2-75x^3+98x^4-78x^5+34x^6-10x^7}{(1-2x)^2(1-x)^6}$} & \raisebox{-.5mm}{\ref{th69a}}\\[2mm]\hline
\raisebox{-.5mm}{72} & \raisebox{-.5mm}{$\{1243,1324,3412\}$} & \raisebox{-.5mm}{$1+\frac{x(1-11x+54x^2-152x^3+268x^4-311x^5+237x^6-109x^7+30x^8-4x^9)}{(1-x)^6(1-2x)^2(1-3x+x^2)}$} & \raisebox{-.5mm}{\ref{th72a}}\\[2mm]\hline
\raisebox{-.5mm}{75} & \raisebox{-.5mm}{$\{1243,1324,4231\}$} & \raisebox{-.5mm}{$\frac{x}{1-3x+x^2}-\frac{2-4x-4x^2-x^3}{(1-2x)^2}
+\frac{3-20x+55x^2-83x^3+74x^4-38x^5+12x^6-2x^7}{(1-x)^8}$} & \raisebox{-.5mm}{\ref{th75a}}\\[2mm]\hline
\raisebox{-.5mm}{76} & \raisebox{-.5mm}{$\{3412,1324,2341\}$} & \raisebox{-.5mm}{$\frac{1-10x+44x^2-110x^3+173x^4-176x^5+114x^6-45x^7+12x^8-4x^9}{(1-x)^7(1-2x)^2}$} & \raisebox{-.5mm}{\ref{th76a}}\\[2mm]\hline
\raisebox{-.5mm}{80} & \raisebox{-.5mm}{$\{1324,2341,3421\}$} & \raisebox{-.5mm}{$\frac{1-7x+20x^2-29x^3+25x^4-10x^5+2x^6}{(1-x)^5(1-3x+x^2)}$} & \raisebox{-.5mm}{\ref{th80a}}\\[2mm]\hline
\raisebox{-.5mm}{84} & \raisebox{-.5mm}{$\{4231,1324,2341\}$} & \raisebox{-.5mm}{$\frac{1-9x+33x^2-62x^3+64x^4-36x^5+7x^6}{(1-3x+x^2)(1-2x)^2(1-x)^3}$} & \raisebox{-.5mm}{\ref{th84a}}\\[2mm]\hline
\raisebox{-.5mm}{86} & \raisebox{-.5mm}{$\{3412,2431,1324\}$} & \raisebox{-.5mm}{$\frac{1-7x+19x^2-24x^3+16x^4-4x^5-x^6+2x^7}{(1-x)^3(1-2x)(1-3x+x^2)}$} & \raisebox{-.5mm}{\ref{th86a}}\\[2mm]\hline
\raisebox{-.5mm}{88} & \raisebox{-.5mm}{$\{3412,3421,1324\}$} & \raisebox{-.5mm}{$\frac{(1-x)^2(1-5x+7x^2+x^3)}{(1-2x)^4}$} & \raisebox{-.5mm}{\ref{th88a}}\\[2mm]\hline
\raisebox{-.5mm}{93} & \raisebox{-.5mm}{$\{1324,2413,3421\}$} & \raisebox{-.5mm}{$\frac{1-10x+42x^2-94x^3+120x^4-86x^5+31x^6-3x^7}{(1-x)^3(1-2x)^4}$} & \raisebox{-.5mm}{\ref{th93a}}\\[2mm]\hline
\raisebox{-.5mm}{99} & \raisebox{-.5mm}{$\{1324,3142,4231\}$} & \raisebox{-.5mm}{$\frac{1-8x+25x^2-36x^3+23x^4-4x^5+x^6}{(1-x)(1-2x)^4}$} & \raisebox{-.5mm}{\ref{th99a}}\\[2mm]\hline
\raisebox{-.5mm}{132} & \raisebox{-.5mm}{$\{1324,2341,2413\}$} & \raisebox{-.5mm}{$\frac{1-8x+23x^2-27x^3+12x^4-4x^5+x^6}{(1-3x+x^2)^3}$} & \raisebox{-.5mm}{\ref{th132a}}\\[2mm]\hline
\raisebox{-.5mm}{150} & \raisebox{-.5mm}{$\{1324,3421,3241\}$} & \raisebox{-.5mm}{$\frac{1-11x+52x^2-136x^3+214x^4-204x^5+111x^6-28x^7}{(1-x)^3(1-2x)^3(1-3x+2x^2)}$} & \raisebox{-.5mm}{\ref{th150a}}\\[2mm]\hline
\raisebox{-.5mm}{151} & \raisebox{-.5mm}{$\{1324,1342,3421\}$} & \raisebox{-.5mm}{$\frac{1-12x+61x^2-169x^3+275x^4-263x^5+136x^6-29x^7+x^8}{(1-3x+x^2)(1-2x)^4(1-x)^2}$} & \raisebox{-.5mm}{\ref{th151a}}\\[2mm]\hline
\raisebox{-.5mm}{153} & \raisebox{-.5mm}{$\{4231,1324,1342\}$} & \raisebox{-.5mm}{$\frac{1-10x+41x^2-87x^3+101x^4-61x^5+15x^6-x^7}{(1-x)^2(1-2x)^3(1-3x+x^2)}$} & \raisebox{-.5mm}{\ref{th153a}}\\[2mm]\hline
\raisebox{-.5mm}{156} & \raisebox{-.5mm}{$\{1324,2341,2431\}$} & \raisebox{-.5mm}{$\frac{1-8x+23x^2-25x^3+3x^4+7x^5}{(1-2x)^2(1-3x+x^2)(1-2x-x^2)}$} & \raisebox{-.5mm}{\ref{th156a}}\\[2mm]\hline
\raisebox{-.5mm}{158} & \raisebox{-.5mm}{$\{1324,1342,3412\}$} & \raisebox{-.5mm}{$\frac{1-10x+40x^2-81x^3+88x^4-50x^5+11x^6}{(1-x)^3(1-2x)(1-3x)(1-3x+x^2)}$} & \raisebox{-.5mm}{\ref{th158a}}\\[2mm]\hline
\raisebox{-.5mm}{172} & \raisebox{-.5mm}{$\{2143,4132,1324\}$} & \raisebox{-.5mm}{ $\frac{(2 - 10x + 16x^2 - 8x^3 + x^4)C(x) - 1 + 4x - 5x^2 + x^3}{(1 - x)^2(1 - 3x + x^2)}$ } & \raisebox{-.5mm}{\cite{AA20160607}}\\[2mm]\hline
\raisebox{-.5mm}{180} & \raisebox{-.5mm}{$\{1342,2314,4231\}$} & \raisebox{-.5mm}{$\frac{1-7 x +18 x^2 -22 x^3 +16 x^4 -6  x^5 +x^6 - \left(x- 5 x^2+8 x^3 - 2 x^4 -2 x^5 +x^6\right)C(x)  }{(1-2 x) (1-x)^2 \left(1-5 x+4 x^2-x^3\right)}$} & \raisebox{-.5mm}{\ref{th180a}}\\[2mm]\hline
\raisebox{-.5mm}{184} & \raisebox{-.5mm}{$\{1324,2431,3241\}$} & \raisebox{-.5mm}{$\frac{1-8x+24x^2-32x^3+19x^4-3x^5}{(1-x)(1-2x)(1-3x+x^2)^2}$} & \raisebox{-.5mm}{\ref{th184a}}\\[2mm]\hline
\raisebox{-.5mm}{187} & \raisebox{-.5mm}{$\{1324,2314,2431\}$} & \raisebox{-.5mm}{$\frac{1-9x+31x^2-49x^3+34x^4-7x^5}{(1-3x+x^2)^2(1-2x)^2}$} & \raisebox{-.5mm}{\ref{th187a}}\\[2mm]\hline
\raisebox{-.5mm}{193} & \raisebox{-.5mm}{$\{1324,2431,3142\}$} & \raisebox{-.5mm}{$\frac{x-1+ \left(x^2-5 x+2\right)C(x)}{1-3 x+x^2}$} & \raisebox{-.5mm}{\ref{th193a}}\\[2mm]\hline
\raisebox{-.5mm}{195} & \raisebox{-.5mm}{$\{1324,2341,1243\}$} & \raisebox{-.5mm}{$\frac{ (1 - 7 x + 19 x^2 - 25 x^3 + 13 x^4 + 4 x^5 - 5 x^6 + x^7)C(x) -1 + 7 x - 19 x^2 + 23 x^3 - 7 x^4 - 7 x^5 + 4 x^6}{x(1 - x)^2  (1 - 3 x + x^2) (1 - x - x^2)}$} & \raisebox{-.5mm}{\ref{th195a}}\\[2mm]\hline
\raisebox{-.5mm}{210} & \raisebox{-.5mm}{$\{1243,1324,2431\}$} & \raisebox{-.5mm}{$\frac{\left(1-6 x+13 x^2-11 x^3+4 x^4\right)}{x^2(1-x)^2 }C(x)-
\frac{1-6 x+12 x^2-8 x^3+2 x^4}{x^2(1-x)  (1-2 x)}$} & \raisebox{-.5mm}{\ref{th210a}}\\[2mm]\hline
\raisebox{-.5mm}{211} & \raisebox{-.5mm}{$\{1234,1324,2341\}$} & \raisebox{-.5mm}{$\frac{(1 - 4 x + 5 x^2 - 3 x^3)\, C(x) - (1 - 4 x + 6 x^2 - 4 x^3)}{x (1 - x)^2 (1 - 2 x)}$} & \raisebox{-.5mm}{\ref{th211a}}\\[2mm]\hline
\raisebox{-.5mm}{212} & \raisebox{-.5mm}{$\{1324,2413,2431\}$} & \raisebox{-.5mm}{$1+\frac{x(1-4x+4x^2-x^3-x(1-4x+2x^2)C(x))}{(1-3x+x^2)(1-3x+x^2-x(1-2x)C(x))}$} & \raisebox{-.5mm}{\ref{th212a}}\\[2mm]\hline
\raisebox{-.5mm}{213} & \raisebox{-.5mm}{$\{2431,1324,1342\}$} & \raisebox{-.5mm}{$\frac{ (1 - 5 x + 8 x^2 - 5 x^3)\,C(x)-1 + 4 x - 4 x^2 + x^3}{x^2(1 - 2 x) }$} & \raisebox{-.5mm}{\ref{th213a}}\\[2mm]\hline
\raisebox{-.5mm}{227} & \raisebox{-.5mm}{$\{2143,1432,1324\}$} & \raisebox{-.5mm}{ $\frac{1 - 6x + 12x^2 - 12x^3 + 6x^4 - x^5 - x^2(1 - x + x^2)^2 C(x)}{1 - 7x + 16x^2 - 19x^3 + 11x^4 - 2x^5 - x^6}$} & \raisebox{-.5mm}{\cite{AA20160607}}\\[2mm]\hline
\raisebox{-.5mm}{231} & \raisebox{-.5mm}{$\{1324,1342,2341\}$} & \raisebox{-.5mm}{$\frac{(1 - 3 x) \big(1 - 2x-xC(x)\big)}{(1 - 4 x) (1 - 3 x + x^2)}$} & \raisebox{-.5mm}{\ref{th231a}}\\[2mm]\hline
\raisebox{-.5mm}{237} & \raisebox{-.5mm}{$\{1432,1324,1243\}$} & \raisebox{-.5mm}{ ?  } & \raisebox{-.5mm}{\cite{SlA257562}}\\[2mm]\hline

\raisebox{-.5mm}{241}& \raisebox{-.5mm}{$\{1432,1324,1243\}$} & \raisebox{-.5mm}{$\frac{(v_--1)(v_+-1)((v_-+v_+)(v_-^2+v_+^2-x^2)+(x-1)(v_-^2+v_+^2+v_-v_+))}{x-(v_--1)(v_+-1)(v_-^2+v_+^2+v_-v_++x(v_-+v_++2-x))}$}  & \raisebox{-.5mm}{\ref{th241a}}  \\[3mm] \hline

\end{longtable}}

\section{Preliminaries}\label{prelim}
We say a permutation is \emph{standard} if its support set is an initial segment of the positive integers, and for a permutation $\pi$ whose support is any set of positive integers, St($\pi$) denotes the standard permutation obtained by replacing the smallest entry of $\pi$ by 1, the next smallest by 2, and so on.
Typically, for a a given triple $T$, we consider cases and analyze the structure of a $T$-avoider in each case to the point where we say that $T$-avoiders have such and such a form in that case. It is  always to be understood that we are also asserting, without explicit mention, that a permutation of the specified form is a $T$-avoider, and this enables us to determine the various ``contributions'' to the \gf $F_T(x)$ for $T$-avoiders, yielding an equation for $F_T(x)$. The equation may be an explicit expression for $F_T(x)$ or an algebraic or functional equation. For all but one symmetry class, the \gf turns out to be algebraic of degree $\le 4$. For the exceptional class (Case 237, where $\{1432, 1324, 1243\}$ and $\{4123, 4231, 4312\}$ are representative triples), the \gf is conjectured not to satisfy any ADE (algebraic differential equation), see \cite{gp2015} and \cite[Seq. A257562]{Sl}.

A permutation $\pi$ expressed as $\pi=i_1\pi^{(1)}i_2\pi^{(2)}
\cdots i_m\pi^{(m)}$ where $i_1<i_2<\cdots<i_m$ and
$i_j>\max(\pi^{(j)})$ for $1 \leq j \leq m$ is said to have $m$
\emph{left-right maxima} (at $i_1,i_2,\ldots,i_m$).  Given
nonempty sets of numbers $S$ and $T$, we will write $S<T$ to mean
$\max(S)<\min(T)$ (with the inequality vacuously holding if $S$ or
$T$ is empty).  In this context, we will often denote singleton
sets simply by the element in question. Also, for a number $k$,
$S-k$ means the set $\{s-k:s\in S\}$.

Throughout, $C(x)=\frac{1-\sqrt{1-4x}}{2x}$ denotes the generating function for
the Catalan numbers $C_n:=\frac{1}{n+1}\binom{2n}{n}=\binom{2n}{n}-\binom{2n}{n-1}$.
As is well known \cite{K,wikipermpatt}, $C(x)$ is the generating function for
$(|S_n(\pi)|)_{n\ge 0}$ where $\pi$ is any one of the six 3-letter
patterns. The identity $C(x)=\frac{1}{1-xC(x)}$ or, equivalently,
$xC(x)^2=C(x)-1$ is used to simplify some of our results.
Also throughout,  $L(x)=\frac{1-x}{1-2x}$ denotes the generating
function for $\{213,231\}$-avoiders (resp. $\{213,123\}$-avoiders,
resp. $\{132,123\}$-avoiders), see \cite{SiS}, and $K(x),K'(x)$ etc. are variously
used for other known generating functions.

\section{Proofs}\label{proofs}

\subsection{Case 49: $\{1324,2341,4123\}$}
For this case, we need the following lemmas.
\begin{lemma}\label{lem49a1}
Let $T=\{1324,2341,4123\}$. The generating function for the number of permutation $(n-1)\pi'n\pi''\in S_n(T)$ is given by
$$H(x)=\frac{x^3C(x)^3}{1-x}+\frac{x^2}{1-x}+\frac{x^4}{(1-x)(1-2x)}+\frac{x^5}{(1-x)^4}.$$
\end{lemma}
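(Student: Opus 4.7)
My plan is to pin down the structural consequences of $T$-avoidance on the pair $(\pi',\pi'')$, partition by shape, and assemble the generating function. The starting observation is that since $n-1$ sits at the leftmost position and is second-largest, it cannot participate in any occurrence of $1324$, $2341$, or $4123$: in each of these patterns, any role available to a leftmost entry demands at least two strictly larger values to appear later, and only $n$ exceeds $n-1$. Consequently, every forbidden occurrence either uses $n$ or uses neither of the top two values.

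By examining where $n$ can sit in each pattern, I would extract three necessary and sufficient conditions on $(\pi',\pi'')$: (i) $\pi'$ avoids $132$ (from a $1324$ occurrence in which $n$ plays the ``$4$'', noting $n-1$ cannot sit inside the required $132$ prefix); (ii) the concatenation $\pi'\pi''$ avoids $123$ (from a $4123$ with $n-1$ as the ``$4$''), which equivalently says that $\pi'$ and $\pi''$ each avoid $123$ and that every ascent-bottom of $\pi''$ is $\le\min(\pi')$; and (iii) for every ascent $(a,b)$ of $\pi'$, every entry of $\pi''$ strictly exceeds $a$ (from a $2341$ with $n$ as ``$4$''). Because each of $1324,2341,4123$ contains $123$, condition (ii) also kills the forbidden occurrences that avoid both $n-1$ and $n$, so (i)--(iii) are complete. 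Combining (i) with the $\pi'$-part of (ii) forces $\pi'$ to avoid both $123$ and $132$, placing it in the Simion--Schmidt class counted by $L(x)=(1-x)/(1-2x)$; recursively, such a $\pi'$ either begins with its maximum (and the rest is a smaller $\{123,132\}$-avoider) or begins with its second-largest entry, followed by a decreasing run of values all larger than everything placed after the maximum, then the maximum, then a smaller $\{123,132\}$-avoider.

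I would then partition the avoiders by the shape of $(\pi',\pi'')$ into four cases: (A) $\pi'=\emptyset$, so $\pi''$ is any $123$-avoider, contributing $x^2C(x)$; (B) $\pi''=\emptyset$ with $\pi'$ nonempty, contributing $x^3/(1-2x)$ via the Simion--Schmidt count of $\{123,132\}$-avoiders; (C) both nonempty with $\pi'$ strictly decreasing, so (iii) is vacuous and only (ii) couples $\pi'$ and $\pi''$; (D) both nonempty with $\pi'$ containing at least one ascent. In (D), setting $a^{*}$ equal to the largest ascent-bottom of $\pi'$, condition (iii) forces $\min(\pi'')>a^{*}$, while (ii) forces every ascent-bottom of $\pi''$ to be $\le\min(\pi')$. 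In both (C) and (D) the value set of $\pi''$ splits into a ``low'' part (forced by (ii) to be a decreasing prefix of $\pi''$) and a ``high'' part (which continues as a $123$-avoider), while the layered shape of $\pi'$ contributes an additional Catalan or rational factor. Summing all four contributions and using the Catalan identity $(1-x)C(x)-1=x^2C(x)^3$, i.e.\ $x^2C(x)=\frac{x^2}{1-x}+\frac{x^4C(x)^3}{1-x}$, to reorganize Case (A), the total collapses into the four stated terms $\frac{x^3C(x)^3}{1-x}$, $\frac{x^2}{1-x}$, $\frac{x^4}{(1-x)(1-2x)}$, $\frac{x^5}{(1-x)^4}$.

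The main obstacle will be Case (D). Enforcing simultaneously the layered structure of $\pi'$, the lower bound $\min(\pi'')>a^{*}$ from (iii), the upper bound on ascent-bottoms of $\pi''$ from (ii), and the $123$-avoidance of $\pi''$ itself demands careful subcasing -- on whether $\min(\pi')=1$, on whether $\pi'$ has length one or more, and on the precise layer of $\pi'$ containing the rightmost ascent -- to ensure that each $T$-avoider is counted exactly once and that no counted pair fails a constraint. Once the partition is clean, the final algebraic simplification is routine.
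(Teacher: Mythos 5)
There is a genuine gap here, on two levels. First, some of your structural claims are wrong as stated. Your opening observation that $n-1$ "cannot participate in any occurrence" is false for $4123$, since the leftmost letter plays the largest role there and needs no larger value after it; you implicitly correct this when you derive (ii), but the claimed consequence "every forbidden occurrence either uses $n$ or uses neither of the top two values" is false. More seriously, your "equivalent" restatement of (ii) is not equivalent to "$\pi'\pi''$ avoids $123$": it omits the crossing occurrences in which an increasing pair of $\pi'$ is completed by a letter of $\pi''$ lying above both. For example, $\pi=41253$ (so $\pi'=12$, $\pi''=3$) satisfies (i), (iii), and your restated (ii) — $\pi'$ and $\pi''$ each avoid $123$ and $\pi''$ has no ascent bottom — yet $4123$ occurs as the subsequence $4,1,2,3$. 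Similarly, your description of $\pi''$ in cases (C)--(D) has the roles reversed: condition (ii) forces the entries of $\pi''$ \emph{above} $\min(\pi')$ to form a decreasing subsequence, while the low entries need only keep $\pi''$ a $123$-avoider (e.g.\ $657412$ is a $T$-avoider whose "low part" $412$ is not decreasing). Any enumeration built on your stated restrictions would therefore overcount or misdescribe the objects.

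Second, and decisively, the proof is not actually carried out where it matters. Cases (A) and (B) are easy and correct, but all the nontrivial terms of $H(x)$ come from your cases (C) and (D), and for these you only assert that "the layered shape of $\pi'$ contributes an additional Catalan or rational factor" and that the sum "collapses" to the stated expression, explicitly deferring the "careful subcasing". That subcasing \emph{is} the lemma; without it nothing has been proved. Note also that the paper avoids this direct case analysis altogether: it conditions on the position of the next-largest letter $n-2$ (in $\pi'$ as first letter, in $\pi'$ but not first, or in $\pi''$), which produces a linear relation $H(x)=xH(x)+(\text{explicit terms})$ that is solved immediately. If you want to pursue your direct route, you must first fix condition (ii) to literally "$\pi'\pi''$ avoids $123$" (equivalently, add the constraint that no entry of $\pi''$ exceeds the top of any ascent of $\pi'$), note that in case (D) conditions (ii)--(iii) force $\pi''$ to be decreasing and squeezed between the largest ascent bottom and the smallest ascent top of $\pi'$, and then genuinely perform the enumeration in cases (C) and (D), checking that the contributions sum to $\frac{x^3C(x)^3}{1-x}+\frac{x^2}{1-x}+\frac{x^4}{(1-x)(1-2x)}+\frac{x^5}{(1-x)^4}$.
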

\begin{proof}
Let us write an equation for $H(x)$. Let $\pi=(n-1)\pi'n\pi''\in S_n(T)$. If $n=2$ then we have a contribution of $x^2$. So let us assume that $n>2$, so there are two cases, either $n-2$ belongs to $\pi'$ or to $\pi''$.
\begin{itemize}
\item $n-2$ belongs to $\pi'$: If $\pi''=\emptyset$ then we have a
contribution of $x^3(F_{\{123,132\}}-1)=\frac{x^3}{1-2x}$, see
\cite{SiS}. So, we can assume that $\pi''\neq\emptyset$. If $\pi'$
has a letter between $n-1$ and $n-2$, then $\pi$ can be written as
$$\pi=(n-1)(i-1)(i-2)\cdots
j(n-2)(j-1)(j-2)\cdots1n(n-3)(n-4)\cdots i,$$ which counted by
$\frac{x^5}{(1-x)^3}$. Otherwise, $\pi'$ has no letter between
$n-1$ and $n-2$, which gives a contribution of
$xH(x)-\frac{x^3(1-x)}{1-2x}$.

\item $n-2$ belongs to $\pi''$: In this case, we have a
contribution of $x(C(x)-1-xC(x))=x^3C(x)^3$, where $C(x)$ counts
the $\{123\}$-avoiders.
\end{itemize}
Hence, by adding all the contributions, we have
$$H(x)=x^2+\frac{x^3}{1-2x}+\frac{x^5}{(1-x)^3}+xH(x)-\frac{x^3(1-x)}{1-2x}+x^3C(x)^3,$$
which completes the proof.
\end{proof}

\begin{lemma}\label{lem49a2}
Let $T=\{1324,2341,4123\}$. The generating function for the number of $T$-avoiders with exactly $2$ left-right maxima is given by
$$G_2(x)=\frac{1}{1-x}\left(x^4C(x)^5+\frac{x^5}{(1-x)^5}+\frac{x^5}{(1-x)^4}+\frac{x^4}{1-2x}+H(x)\right),$$
where $H(x)$ is given in Lemma $\ref{lem49a1}$.
\end{lemma}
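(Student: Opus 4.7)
A permutation with exactly two left--right maxima has the form $\pi=a\pi'n\pi''$, where $a<n$, the block $\pi'$ is made of letters smaller than $a$, and $\pi''$ contains the entire ``middle block'' $\{a+1,\dots,n-1\}$ together with any remaining letters below $a$. My plan is to express $G_2(x)$ as a sum over the shapes of $(\pi',\pi'')$, split into five contributions that match the five summands inside the parentheses of the claimed formula, with the global factor $1/(1-x)$ coming from a single geometric degree of freedom.

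The first step is to separate the case $a=n-1$, for which the middle block is empty; the enumeration is exactly that of Lemma~\ref{lem49a1} and yields the $H(x)$ summand directly. For $a\le n-2$ I would next translate the three forbidden patterns into structural conditions. Avoidance of $4123$ with $a$ in the role of the ``$4$'' forces all letters below $a$, whether they sit in $\pi'$ or in $\pi''$, to appear in $123$-avoiding relative order. Avoidance of $2341$ with $a$ in the role of the ``$2$'' forbids an increasing pair drawn from $\{a+1,\dots,n-1\}\subseteq\pi''$ to be followed inside $\pi''$ by any letter smaller than $a$. Finally, avoidance of $1324$ with $(a,n)$ in the roles of ``$(1,4)$'' controls how letters of the middle block may interleave with small letters of $\pi''$ before and after each increase in the middle block. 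Together these rules restrict $(\pi',\pi'')$ to a small catalogue of admissible shapes.

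The third step is to partition the admissible shapes into four families that recover the remaining summands: a generic case in which $\pi''$ breaks into several independent $\{123\}$-avoiding factors, yielding the Catalan product $x^4C(x)^5$; two rigid ``staircase'' cases whose remaining letters fall into independent geometric runs and produce the rational fractions $x^5/(1-x)^5$ and $x^5/(1-x)^4$; and a case in which the residual structure reduces to a $\{123,132\}$-avoider, whose generating function $1/(1-2x)$ (from Simion and Schmidt, recalled in Section~\ref{prelim}) gives the $x^4/(1-2x)$ summand. The outer factor $1/(1-x)$ then arises from one free length parameter present in every shape, most plausibly the size of a canonical decreasing run of small values placed immediately after $a$ and forced to be decreasing by the $2341$/$1324$ conditions, contributing $\sum_{k\ge0}x^k$.

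The hardest step will be the case analysis underlying the stratification, and in particular pinning down the Catalan piece, because the $1324$ constraint --- notoriously the most delicate of the three --- is what cuts the admissible shapes of $\pi''$ down to products of $\{123\}$-avoiding factors. Once the families are correctly isolated, each individual generating function is routine: the Catalan factors come from classical $3$-letter enumerations, the rational pieces from standard staircase decompositions, and the $1/(1-2x)$ piece from the Simion--Schmidt formula. Verifying that the five families are disjoint and together exhaust all $T$-avoiders with exactly two left--right maxima is where the real bookkeeping will lie.
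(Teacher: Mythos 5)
Your plan correctly peels off the $i=n-1$ case as $H(x)$, but beyond that it is reverse-engineered from the target formula rather than derived: you posit four families of shapes chosen to produce $x^4C(x)^5$, $x^5/(1-x)^5$, $x^5/(1-x)^4$ and $x^4/(1-2x)$, yet you never exhibit the decomposition, and the exhaustiveness and disjointness check you defer is precisely the content of the lemma. The paper's proof runs through a concrete chain of cases for $\pi=i\pi'n\pi''$ with $i<n-1$: if $n-1$ immediately follows $n$, delete it to get a contribution $xG_2(x)$; otherwise one uses the key observation (absent from your sketch) that avoidance of $1324$ and $4123$ permits at most one letter greater than $i$ between $n$ and $n-1$, and then splits on whether such a letter exists and on whether the block of small letters between $n$ and $n-1$ is empty. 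In particular the Catalan term is not the count of a single ``generic'' shape: it arises as the geometric sum $\sum_{d\ge1}x^{3+d}C(x)^{3+d}=x^4C(x)^4/\bigl(1-xC(x)\bigr)=x^4C(x)^5$ over the number $d$ of letters of $\pi''$ exceeding $i$, and $x^5/(1-x)^5$ comes from the companion sum $\sum_{d\ge1}x^{4+d}/(1-x)^4$ inside the same case, distinguished by whether $\pi'$ has a letter in a certain range.

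The one structural mechanism you do commit to is wrong. The global factor $1/(1-x)$ does not come from a free decreasing run of small values inserted immediately after $a$: such an insertion is not pattern-safe, since a new minimum $m$ placed right after $a$, followed anywhere later by an ascent $u<v$ among letters below $a$ (which $123$-avoidance of the small letters still allows), creates the occurrence $a\,m\,u\,v$ of $4123$; so this parameter is neither free nor independent of the rest of the structure. In the paper the factor appears algebraically by solving $G_2(x)=xG_2(x)+\bigl(\cdots\bigr)$, where the term $xG_2(x)$ records exactly the case in which $n-1$ is the first letter of $\pi''$; equivalently, the free parameter is the length of the run of consecutive top values $n-1,n-2,\dots$ sitting immediately after $n$, not a run of small values after $a$. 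Without this recursion (or a correct substitute for it) your outline cannot legitimately pull the factor $1/(1-x)$ out of all five summands, and the proof does not go through as proposed.
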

\begin{proof}
Let us write an equation for $G_2(x)$. Let $\pi=i\pi'n\pi''\in
S_n(T)$ be a permutation with exactly $2$ left-right maxima. We
consider the following cases:
\begin{itemize}
\item $i=n-1$: We have a contribution of $H(x)$ as defined in
Lemma \ref{lem49a1}. So from now, we assume that $\pi''$ contains
the letter $n-1$.

\item $\pi''=(n-1)\pi'''$: We have a contribution of $xG_2(x)$. So
we can assume that there is at least one letter between $n$ and
$n-1$. Since $\pi''$ avoids $1324$ and $4123$, we see that there
are exactly at most one letter between $n$ and $n-1$ that greater
than $i$.
\begin{itemize}
 \item if there is a letter in $n\pi''$ between $n$ and $n-1$ that
it is greater than $i$, then $\pi$ can be written as
$\pi=i\pi'nk\beta(n-1)\cdots(k+1)\alpha$ such that $\beta<i$ and
$\alpha>i$. By considering either $\beta$ is empty or not, we
obtain the contributions $\frac{x^4}{1-x}L(x)^2$ and
$\frac{x^5}{(1-x)^4}$, respectively. Recall $L(x)=\frac{1-x}{1-2x}$
is the generating function for $\{213,231\}$-avoiders
(also for $\{213,123\}$-avoiders).

\item Thus, we can assume that $\pi=\pi'n\beta(n-1)\alpha$ such
that $\alpha$ contains the subsequence $(n-1)(n-2)\cdots(i+1)$,
$\beta<i$ and $\beta$ is decreasing (since $\pi$ avoids $4123$,
and $\beta$ is not empty). Suppose that $\beta=ee'\beta'$ then
$e>e'>\beta'$ and $\pi'>e'$. If $\pi'$ has a letter between $e$
and $e'$, then easy to see that the contribution is given by
$\frac{x^{4+d}}{(1-x)^4}$, where $d$ is the number of the letters
in $\pi''$ that are greater than $i$. Otherwise, the contribution
is given by $x^{3+d}C(x)^{3+d}$, where $d$ is the number of the
letters in $\pi''$ that are greater than $i$. Therefore, we have a
contribution of
$$\sum_{d\geq1}\frac{x^{4+d}}{(1-x)^4}+\sum_{d\geq1}x^{3+d}C(x)^{3+d},$$
which equals
$$\frac{x^{5}}{(1-x)^5}+\frac{x^{4}C(x)^{4}}{1-xC(x)}.$$
\end{itemize}
\end{itemize}
Hence, the various contributions give
$$G_2(x)=xG_2(x)+x^4C(x)^5+\frac{x^5}{(1-x)^5}+\frac{x^5}{(1-x)^4}+\frac{x^4}{1-2x}+H(x),$$
which completes the proof.
\end{proof}

\begin{theorem}\label{th49a}
Let $T=\{1324,2341,4123\}$. Then
\[
F_T(x)=C(x)+\frac{x^3 - 3 x^4 +3 x^5 -5 x^6 + 9 x^7 -4 x^8}{(1-x)^6 (1-2x)^2}\, .
\]
\end{theorem}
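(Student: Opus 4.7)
The plan is to decompose $T$-avoiders by their number of left-right maxima. Let $G_m(x)$ denote the generating function for $T$-avoiders having exactly $m$ left-right maxima, so that $F_T(x)=1+\sum_{m\ge 1}G_m(x)$. The case $m=1$ is short: writing $\pi=n\pi^{(1)}$, 1324- and 2341-avoidance of $\pi$ reduces to 123-avoidance of $\pi^{(1)}$ (both 1324 and 2341 contain 123), while any 4123 of $\pi$ must use the leading $n$ as the ``4'' and so becomes a 123 in $\pi^{(1)}$; hence $G_1(x)=xC(x)$. Lemma~\ref{lem49a2} supplies $G_2(x)$ in closed form.

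For $m\ge 3$, write $\pi=i_1\pi^{(1)}i_2\pi^{(2)}\cdots i_m\pi^{(m)}$. Two short observations pin down the skeleton. First, if some $a\in\pi^{(j)}$ satisfies $a>i_1$ with $2\le j\le m-1$, then $(i_1,i_j,a,i_m)$ is a 1324 since $i_1<a<i_j<i_m$. Second, if some $a\in\pi^{(k)}$ satisfies $a<i_1$ with $k\ge 3$, then $(i_1,i_2,i_3,a)$ is a 2341. Intersecting these constraints forces $\pi^{(j)}=\emptyset$ for all $3\le j\le m-1$, so that the skeleton collapses to
\[
\pi=i_1\,\pi^{(1)}\,i_2\,\pi^{(2)}\,i_3\,i_4\cdots i_m\,\pi^{(m)},
\]
with $\pi^{(1)},\pi^{(2)}$ supported below $i_1$ and $\pi^{(m)}$ supported above $i_1$. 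The remaining 1324-, 2341-, and 4123-avoidance conditions then restrict each of $\pi^{(1)},\pi^{(2)},\pi^{(m)}$ to avoid short patterns (yielding generating functions of the form $C(x)$, $L(x)$, or $\tfrac{1}{1-x}$) and determine how the maxima $i_2,\ldots,i_{m-1}$ may interleave with the entries of $\pi^{(m)}$. Carrying out this case analysis produces a rational closed form for each $G_m(x)$, $m\ge 3$, and summing the resulting geometric-type series in $m$ yields a rational function for $\sum_{m\ge 3}G_m(x)$.

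Finally, I add $1+G_1(x)+G_2(x)+\sum_{m\ge 3}G_m(x)$, substitute $H(x)$ from Lemma~\ref{lem49a1} and expand the $G_2$ expression, then simplify every $C(x)^k$ via the identity $xC(x)^2=C(x)-1$, which rewrites each as $A_k(x)+B_k(x)C(x)$ with $A_k,B_k$ rational. I expect the $C(x)$-coefficient to collapse to $1$ (accounting for the leading $C(x)$ in the stated answer) and the rational remainder to match $\tfrac{x^3-3x^4+3x^5-5x^6+9x^7-4x^8}{(1-x)^6(1-2x)^2}$. The most delicate step is the $m\ge 3$ analysis: 4123-avoidance couples $i_1$, in its role as a potential ``4'', with every later ascending triple drawn from $\pi^{(1)}\pi^{(2)}$ or from $\pi^{(m)}$, so the joint constraint on the interleaving of $i_2,\ldots,i_{m-1}$ with the entries of $\pi^{(m)}$ requires a careful case split before the sum over $m$ can be collected cleanly.
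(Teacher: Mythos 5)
Your proposal is correct and follows essentially the same route as the paper: decomposition by left-right maxima with $G_0=1$, $G_1(x)=xC(x)$, Lemma \ref{lem49a2} for $G_2(x)$, the same structural collapse for $m\ge 3$ (namely $\pi^{(3)}=\cdots=\pi^{(m-1)}=\emptyset$ with $\pi^{(1)},\pi^{(2)}<i_1<\pi^{(m)}$), and then summation over $m$. The case analysis you defer is exactly what the paper carries out, computing $G_3(x)=x^3L(x)^2+\frac{2x^3}{1-x}\big(L(x)-1\big)L(x)+\frac{x^5}{(1-x)^4}$ and $G_4(x)=x^4\big(L(x)+(L(x)-1)/(1-x)\big)^2$ separately and showing $G_m(x)=xG_{m-1}(x)$ only for $m\ge 5$, so the ``geometric'' summation applies to the tail $\sum_{m\ge 4}G_m=G_4/(1-x)$ rather than uniformly from $m=3$.
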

\begin{proof}
Let $G_m(x)$ be the generating function for $T$-avoiders with $m$
left-right maxima. Clearly, $G_0(x)=1$ and
$G_1(x)=xF_{\{123\}}(x)=xC(x)$, see \cite{K}. By Lemmas
\ref{lem49a1} and \ref{lem49a2}, we have that
$$G_2(x)=\frac{1}{1-x}\left(x^4C(x)^5+\frac{x^5}{(1-x)^5}+\frac{x^5}{(1-x)^4}+\frac{x^4}{1-2x}+H(x)\right),$$
where
$$H(x)=\frac{x^3C(x)^3}{1-x}+\frac{x^2}{1-x}+\frac{x^4}{(1-x)(1-2x)}+\frac{x^5}{(1-x)^4}.$$
Now, let us write an equation for $G_3(x)$. Let
$\pi=i_1\pi'i_2\pi''n\pi'''\in S_n(T)$ with exactly $3$ left-right
maxima ($i_1,i_2,n$). Since $\pi$ avoids $1324$ and $2341$, then
$\pi'>\pi''$ and $\pi'''=\beta\alpha$ such that
$\beta>i_2>\alpha>i_1$. By considering the cases $\alpha,\beta$
are empty or not, we obtain the contributions $x^3L(x)^2$,
$x^3(L(x)-1)L(x)/(1-x)$, $x^3(L(x)-1)L(x)/(1-x)$, and
$x^5/(1-x)^4$. Hence,
$$G_3(x)=x^3L(x)^2+\frac{2x^3}{1-x}(L(x)-1)L(x)+\frac{x^5}{(1-x)^4}$$.

By very similar techniques as in case $G_3(x)$, we obtain that
$$G_4(x)=x^4(L(x)+(L(x)-1)/(1-x))^2.$$

Now, let us write an equation for $G_m(x)$ and $m\geq5$. Let
$\pi=i_1\pi^{(1)}i_2\pi^{(2)}\cdots i_m\pi^{(m)}\in S_n(T)$ with
exactly $m$ left-right maxima. Since $\pi$ avoids $T$, we see that
$\pi^{(s)}=\emptyset$ for all $s=3,4,\ldots,m-1$,
$\pi^{(1)}>\pi^{(2)}$ and $\pi^{(m)}=\beta\alpha$ such that
$\beta>i_{m-1}>\alpha>i_{m-2}$. Thus, $G_m(x)=xG_{m-1}(x)$, for all $m\geq5$.
Therefore,
$$F_T(x)-1-xC(x)-G_2(x)-G_3(x)=\frac{G_4(x)}{1-x}.$$
By substituting the expressions for $G_2(x),G_3(x),G_4(x)$ and simplifying, we obtain the stated generating function.
\end{proof}

\subsection{Case 69: $\{1234, 1324, 3412\}$}
A permutation $\pi=\pi_1\pi_2\cdots\pi_n\in S_n$ determines $n+1$ positions, called \emph{sites}, between its entries. The sites are denoted $1,2,\dots,n+1$ left to right. In particular, site $i$ is the space between $\pi_{i-1}$ and $\pi_{i}$, $2\le i \le n$. Site $i$ in $\pi$ is said to be {\em active} (with respect to $T$) if, by inserting $n+1$ into $\pi$ in site $i$, we get a permutation in $S_{n+1}(T)$, otherwise \emph{inactive}.

Say $j$ is an \emph{ascent index} for a permutation $\pi=\pi_1\pi_2 \cdots \pi_n$ of $[n]$ if $\pi_j<\pi_{j+1}$, and then $\pi_j$ is an \emph{ascent bottom}.

To construct the generating forest for $T$-avoiders, we first specify the labels. For $n\ge 2$, define the \emph{label} of $\pi \in S_n(T)$ to be $(k,s)$, where $k$ is the number of active sites in $\pi$ and $s$ is the number of active sites greater than the largest ascent index (LAI for short) with LAI taken to be 0 if there are no ascents, that is, if $\pi$ is decreasing.

For instance, the active sites for $\pi=12$ are $\{1,2,3\}$ and LAI $=1$, so the label for 12 is $(3,2)$. Also, 12
has three children $312$, $132$ and $123$ with active sites $\{1,3,4\},\,\{1,2,3\}$ and $\{1,2,3\}$, respectively,
and LAI $ = 2,1,2$, respectively; hence labels $(3,2),(3,2)$, and $(3,1)$. Similarly, all 3 sites for $21$ are active and  and LAI $=0$, so its label is $(3,3)$, and it has three children $321$, $231$ and $213$ with active sites $\{1,2,3,4\}$ in all three cases, and LAI $ = 0,1,2$, respectively; hence labels $(4,4),(4,3)$, and $(4,2)$.
An avoider $\pi\in S_n(T)$ has a label $(k,s)$ with $k=s$ only if $\pi$ is decreasing, in which case $k=s=n+1$.
Otherwise, $0\le s < k$.
\begin{proposition}\label{prop69}
The roots for the generating forest $\mathcal{T}$ of $S_n(T)$ are $12$ and $21$ with labels $(3,2)$ and $(3,3)$ respectively, and the succession rules for the labels of children,
in order of increasing insertion site, are given by
\[
\begin{array}{llll}
(k,s) & \rightsquigarrow & (1,0)\ (2,0)\ \dots \ (k,0) & \textrm{\quad for $s=0$ and $k\ge 1$,}\\[1mm]
  & \rightsquigarrow & (2,1)\ (2,0)\ (3,0)\ \dots\ (k-1,0)\  (k,1) & \textrm{\quad for $s=1$ and $k\ge 2$,}\\[1mm]
  & \rightsquigarrow & (s+1,s)\ (3,1)\ (4,1)\ \dots \ (k-s+1,1)\  & \\
 & & (k-s+2,2)\ (k-s+2,1)\ (k-s+3,1)\ \dots \ (k,1) & \raisebox{1.5ex}[0pt]{\textrm{\quad for $2\le s \le k-1$,}} \\[1mm]
  & \rightsquigarrow & (k+1,k+1)\  (k+1,k) \ \dots \  (k+1,2) & \textrm{\quad for $s=k$,}
\end{array}
\]
\end{proposition}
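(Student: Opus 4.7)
The plan is a direct structural analysis: each succession rule asserts that every avoider $\pi\in S_n(T)$ carrying label $(k,s)$ has children with exactly the listed labels, so what is really needed is (a) a criterion for active sites in terms of patterns of $\pi$, and (b) enough control on the shape of $\pi$ to track how these sites evolve when $n+1$ is inserted.

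First I would verify the two root labels. For $\pi=12$ the three children $312,132,123$ are trivially $T$-avoiders, so all three sites are active; the unique ascent of $12$ is at index $1$, so LAI $=1$ and exactly two active sites lie to its right, giving $(3,2)$. For $\pi=21$ all three sites are again active, $\pi$ is decreasing (LAI $=0$), and $s=k=3$, giving $(3,3)$.

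The engine of the proof is the following active-site criterion: inserting $n+1$ at site $i$ of $\pi$ creates a copy of $1234$ (resp.\ $1324$) iff the prefix $\pi_1\cdots\pi_{i-1}$ contains $123$ (resp.\ $132$), and creates a copy of $3412$ iff the prefix contains an entry $a$ together with two entries $b<c$ of the suffix $\pi_i\cdots\pi_n$ (with $b$ appearing before $c$) both smaller than $a$. From this criterion I would read off the skeleton of an avoider $\pi$ with label $(k,s)$: the LAI $j$ splits $\pi$ into a left block of length $j$ supporting $k-s$ active sites and a right block of length $n-j$ supporting the remaining $s$ active sites, and a short structural argument forces the right block to be essentially a decreasing run with controlled exceptions. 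With this skeleton in hand, for each active site $i_t$ (taken in increasing order of $t$) I would compute the label of the child $\pi^{(t)}$ obtained by inserting $n+1$ at site $i_t$: the new LAI is the position of $n+1$ itself whenever $t\ge 2$ (since a fresh ascent is created on its left), and the new active-site set is determined by re-applying the three-condition criterion with $n+1$ now at position $i_t$.

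The four bullets of the proposition correspond to four regimes and must each be checked. The cases $s=0$ and $s=k$ are short: when $s=0$ all active sites sit at or before the LAI and each insertion produces the label $(t,0)$ with $t$ the active-site index; when $s=k$ the avoider $\pi$ is decreasing and the $t$-th child has exactly one ascent of index $t-1$ with every new site active, giving $(k+1,k+1)$ at $t=1$ and $(k+1,k-t+2)$ otherwise. The case $s=1$ already exhibits the general mechanism and produces $(2,1),(2,0),(3,0),\dots,(k-1,0),(k,1)$. The main obstacle is the generic case $2\le s\le k-1$, whose children split into three contiguous ranges: the leftmost insertion yields $(s+1,s)$; the next $k-s$ insertions (through the first active site past LAI) yield $(3,1),(4,1),\dots,(k-s+1,1),(k-s+2,2)$; and the final $s-1$ insertions yield the trailing $(k-s+2,1),(k-s+3,1),\dots,(k,1)$. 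Verifying that the active-site count increments by exactly one as the insertion point advances one active site to the right, and that the $s$-statistic lands on the claimed value, is the delicate bookkeeping; I would control it by tracking, as the insertion moves rightward, the number of prefix entries below the $n+1$'s left neighbour together with the location of the first potential $\{123,132\}$-witness in the prefix relative to the insertion site. Once these auxiliary statistics are in place the verification in each regime reduces to a finite structural check.
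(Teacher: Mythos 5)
Your active-site criterion is correct (a new $1234$, $1324$ or $3412$ must use $n+1$ as its largest letter, so site $i$ is active iff the prefix $\pi_1\cdots\pi_{i-1}$ avoids $123$ and $132$ and no prefix entry $a$ has an ascending pair $b<c$ with $b,c<a$ in the suffix), the root labels check out, and your $s=k$ case is sound; in spirit this is the same routine case-check the paper leaves to the reader. But a step your whole bookkeeping rests on is false: you claim that for every $t\ge 2$ the child's LAI is the fresh ascent created at $n+1$. That is true only when the insertion site lies strictly to the right of the current LAI, i.e.\ for the last $s$ active sites. For the first $k-s$ active sites, which lie weakly to the left of the LAI $j$, the old last ascent $\pi_j<\pi_{j+1}$ survives in the child at index $j+1$, and that, not the ascent at the insertion point, is the child's LAI. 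The paper's own worked example shows this: $\pi=326541$ has label $(4,2)$ with LAI $=2$; inserting $7$ at the second active site gives $3726541$, whose LAI is $3$ (the surviving ascent $26$), and the correct label $(3,1)$ is computed relative to that LAI, whereas measuring from the fresh ascent at index $1$ would count the still-active site immediately left of $7$ as well and give $(3,2)$. The same defect would give nonzero second coordinates in the $s=0$ rule, where every child must be $(t,0)$; so the stated LAI-update principle cannot be used as the engine of the induction.

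Even with the LAI update corrected (new LAI $=j+1$ for insertions weakly left of $j$, and the ascent index just below $n+1$ otherwise), the substantive content of the proposition is still not established in your outline. One must show that such an insertion deactivates every previously active site strictly between the insertion point and the shifted LAI, and also the site just right of $n+1$ (both via a $3412$ formed by $n+1$, the new maximum $n+2$ to be inserted, and the surviving pair $\pi_j\pi_{j+1}$), that sites weakly left of the insertion point keep their status, and that exactly the right number of active sites beyond the LAI survive so as to produce $(s+1,s)$, then $(3,1),\dots,(k-s+1,1),(k-s+2,2)$, then the tail $(k-s+2,1),\dots,(k,1)$. This is precisely the ``delicate bookkeeping'' you defer to unspecified auxiliary statistics, and it needs the structural description of a $(k,s)$-labelled avoider (where its active sites sit relative to the last ascent, note they need not form one interval, cf.\ $534216$ and $5342761$ in the paper) that your sketch only gestures at. So as written the proposal contains a genuinely false step and leaves the core verification unexecuted; the plan is salvageable, but it is not yet a proof.
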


As an example (bullets denote active sites, an underscore denotes last ascent bottom), the label of
$\pi=\,\textrm{\raisebox{.2ex}{\tiny{$\bullet$}}}\, 3 \,\textrm{\raisebox{.2ex}{\tiny{$\bullet$}}}\,
\underline{2}\,\textrm{\raisebox{.2ex}{\tiny{$\bullet$}}}\, 6 \,\textrm{\raisebox{.2ex}{\tiny{$\bullet$}}}\,
5\, 4\, 1 \in S_6(T)$ is $(k,s)=(4,2)$; its children are
$\,\textrm{\raisebox{.2ex}{\tiny{$\bullet$}}}\,7\, 3 \,
\underline{2}\,\textrm{\raisebox{.2ex}{\tiny{$\bullet$}}}\, 6 \,\textrm{\raisebox{.2ex}{\tiny{$\bullet$}}}\,
5\, 4\, 1 $,
$\,\textrm{\raisebox{.2ex}{\tiny{$\bullet$}}}\,3\,\textrm{\raisebox{.2ex}{\tiny{$\bullet$}}}\, 7 \,
\underline{2} \,\textrm{\raisebox{.2ex}{\tiny{$\bullet$}}}\, 6 \,
5\, 4\, 1 $,
$\,\textrm{\raisebox{.2ex}{\tiny{$\bullet$}}}\,3\,  \,\textrm{\raisebox{.2ex}{\tiny{$\bullet$}}}\,
\underline{2} \,\textrm{\raisebox{.2ex}{\tiny{$\bullet$}}}\, 7 \,\textrm{\raisebox{.2ex}{\tiny{$\bullet$}}}\, 6 \,
5\, 4\, 1 $,
$\,\textrm{\raisebox{.2ex}{\tiny{$\bullet$}}}\,3\,  \,\textrm{\raisebox{.2ex}{\tiny{$\bullet$}}}\,
2 \,\textrm{\raisebox{.2ex}{\tiny{$\bullet$}}}\, \underline{6} \,\textrm{\raisebox{.2ex}{\tiny{$\bullet$}}}\, 7 \,
5\, 4\, 1 $, in that order, with labels $(3,2), (3,1),\,(4,2),\,(4,1)$ respectively.

The proof of Proposition \ref{prop69} is based on induction by a routine checking of cases, and is left to the reader. We note a few properties of the active sites for $\pi \in S_n(T)$. Site 1 is always active. If site $n+1$ is active, then so is site $n$. The active sites always form either an interval of integers (necessarily an initial segment of the positive integers) or a pair of intervals of integers. In the latter case, at least one of the intervals is of length 1. For example, $5 3 2 4 1 6 \in S_6(T)$ has active sites $\{1,4,5,6\}$ and
$5 3 4 2 7 6 1 \in S_7(T)$ has active sites $\{1,2,5\}$.

{\bf Enumeration}: Let $A_{k,s}=A_{k,s}(t)$ be the generating function for the number of vertices labeled $(k,s)$ in level $n$ in the generating forest $\mathcal{T}$, where the roots 12 and 21 are at level $2$. Define $$A(u,v)=A(t;u,v)=\sum_{k\geq1,s\geq0}A_{k,s}u^kv^s,\qquad L(u,v)=\frac{u^3v^3t^2}{1-uvt}.$$
Proposition \ref{prop69} leads to
\begin{align}
A(u,v)&=t^2u^3v^2(1+v)+\frac{ut}{1-u}\left(A(1,0)-A(u,0)\right)+vt\frac{d}{dv}A(u,v)\mid_{v=0}\notag\\
&+\frac{t}{1-u}\left(u^2\frac{d}{dv}A(1,v)\mid_{v=0}-\frac{d}{dv}A(u,v)\mid_{v=0}\right)+utA(1,uv)-utL(1,uv)-utA(1,0)\notag\\
&+\frac{vu^3t}{1-u}\left(A(1,1)-A(1,0)-\frac{d}{dv}A(1,v)\mid_{v=0}-L(1,1)\right)\label{eq69m1}\\
&-\frac{uvt}{1-u}\left(A(u,1)-A(u,0)-\frac{d}{dv}A(u,v)\mid_{v=0}-L(u,1)\right)\notag\\
&+u^2v^2t\left(A(u,1/u)-A(u,0)-\frac{1}{u}\frac{d}{dv}A(u,v)\mid_{v=0}-L(1,1)\right)\notag\\
&+\frac{u^4v^2t^3}{(1-v)(1-ut)}-\frac{u^4v^5t^3}{(1-v)(1-uvt)}.\notag
\end{align}
By substituting $v=1/u$ into \eqref{eq69m1}, we obtain
\begin{align*}
A(u,1/u)=\frac{t(t-u^2t+uA(1,1)-A(u,1))}{(1-u)(1-x)},
\end{align*}
which implies
\begin{align}
A(u,v)&=t^2u^3v^2(1+v)+\frac{ut}{1-u}\left(A(1,0)-A(u,0)\right)+vt\frac{d}{dv}A(u,v)\mid_{v=0}\notag\\
&+\frac{t}{1-u}\left(u^2\frac{d}{dv}A(1,v)\mid_{v=0}-\frac{d}{dv}A(u,v)\mid_{v=0}\right)+utA(1,uv)-utL(1,uv)-utA(1,0)\notag\\
&+\frac{vu^3t}{1-u}\left(A(1,1)-A(1,0)-\frac{d}{dv}A(1,v)\mid_{v=0}-L(1,1)\right)\label{eq69m2}\\
&-\frac{uvt}{1-u}\left(A(u,1)-A(u,0)-\frac{d}{dv}A(u,v)\mid_{v=0}-L(u,1)\right)\notag\\
&+u^2v^2t\left(\frac{t(t-u^2t+uA(1,1)-A(u,1))}{(1-u)(1-x)}-A(u,0)-\frac{1}{u}\frac{d}{dv}A(u,v)\mid_{v=0}-L(1,1)\right)\notag\\
&+\frac{u^4v^2t^3}{(1-v)(1-ut)}-\frac{u^4v^5t^3}{(1-v)(1-uvt)}.\notag
\end{align}
Substitute $v=0$ into \eqref{eq69m2} and into the derivative of \eqref{eq69m2} respect to $v$.
Solve the resulting system for the variables $\frac{d}{dv}A(u,v)\mid_{v=0}$ and $\frac{d}{dv}A(1,v)\mid_{v=0}$ to get
\begin{align}
\frac{d}{dv}A(u,v)\mid_{v=0}&=\frac{2ut-2u+1}{1-2t}A(u,0)-\frac{ut}{(1-u)(1-2t)}A(u,1)-\frac{(1-u)ut}{1-2t}A(1,0)\label{eq69m3}\\
&+\frac{u^3t}{(1-u)(1-2t)}-\frac{u^3t^3}{(1-t)(1-2t)(1-ut)}.\notag
\end{align}
Thus, by using \eqref{eq69m3} twice, \eqref{eq69m2} can be written as
{\small\begin{align*}
A(u,v)&=\frac{(uvt^2-(t-1)^2)uvt}{(1-2t)(1-t)(1-u)}A(u,1)+\frac{(uv-1)(uvt-vt+2t-1)}{1-2t}A(u,0)+tuA(1,uv)\\
&+\frac{(uvt(t-1)+vt(1-2t)+(t-1)^2)tvu^3}{(1-t)(1-u)(1-2t)}A(1,1)+\frac{ut(1-uv)(uvt-vt+2t-1)}{1-2t}A(1,0)\\
&-\frac{(uv(3uv+2v-1)t^3-((u^2+5u+2)v^2+uv+1)t^2+(2(1+u)v^2+(v+1))t-v(1+v))u^3vt^2}{(1-ut)(1-t)(1-uvt)(1-2t)}
\end{align*}}
By a routine computer check, the solution of this equation is given by
\begin{align*}
A(u,v)&=\frac{t^2uK(u,v)}{(1-uvt)(1-t)^6(1-ut)^2(1-2t)^2},
\end{align*}
where
{\small\begin{align*}
K(u,v)&=u^2v^2(1+v)-u^2v(2(u+5)v^2+(u+8)v-1)t\\
&+u(u(u^2+19u+43)v^3+u(6u+29)v^2+(1-5u)v+1)t^2\\
&+(1-2u-u(3u^2-6u+1)v+u^2(u+3)(u-21)v^2-u^2(9u^2+80u+104)v^3)t^3\\
&+((1+u)(1-4u)+u(16u^2-3u-4)v-u^2(3u^2-39u-86)v^2+u^2(39u^2+192u+155)v^3)t^4\\
&+\bigl(14u^2+u-5+u(2u^3-26u^2+11u+2)v-3u^2(18u+25)v^2\\
&\qquad\qquad\qquad\qquad\qquad\qquad\qquad\qquad\qquad\qquad-2u^2(u^3+49u^2+142u+73)v^3\bigr)t^5\\
&+\bigl(2u^3-12u^2+8u-1-u(8u^3-12u^2+7u-5)v-u^2(2u^3-4u^2-43u-46)v^2\\
&\qquad\qquad\qquad\qquad\qquad\qquad\qquad\qquad\qquad\qquad+u^2(10u^3+147u^2+265u+85)v^3\bigr)t^6\\
&-u\bigl(6u^2-2u-3+(1-u^2)(8u-1)v-u(u+1)(8u^2-7u-20)v^2\\
&\qquad\qquad\qquad\qquad\qquad\qquad\qquad\qquad\qquad\qquad+u(18u^3+133u^2+154u+28)v^3\bigr)t^7\\
&+u^2(4u-4+(2u^2+2u-3)v-(10u^3+3u^2-16u-4)v^2+(14u^3+73u^2+52u+4)v^3)t^8\\
&-4u^3v((u^2+6u+2)v^2+(1-u^2)v+u-1)t^9+4u^4v^3t^{10}.
\end{align*}}
Since $A(1,1)=\sum_{n\geq2}|S_n(T)|t^n$ and so $F_T(t)=1+t+A(1,1)$, we get the following result.
\begin{theorem}\label{th69a}
Let $T=\{1234, 1324, 3412\}$. Then
$$F_T(x)=\frac{1-9x+35x^2-75x^3+98x^4-78x^5+34x^6-10x^7}{(1-2x)^2(1-x)^6}.$$
\end{theorem}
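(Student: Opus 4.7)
The plan is to translate the succession rules of Proposition \ref{prop69} into a functional equation for the bivariate weight enumerator
$$A(u,v)=\sum_{n\ge 2}t^n\sum_{\pi\in S_n(T)}u^{k(\pi)}v^{s(\pi)},$$
where $(k(\pi),s(\pi))$ is the label of $\pi$. Each of the four cases in the succession rule yields a separate contribution. The $s=0$ case gives $\frac{ut}{1-u}(A(1,0)-A(u,0))$; the $s=1$ case brings in the derivative terms $\partial_v A(u,v)|_{v=0}$ and $\partial_v A(1,v)|_{v=0}$ (from the child $(2,1)$ and the geometric run through $(2,0),(3,0),\ldots,(k-1,0)$); the generic $2\le s\le k-1$ case contributes terms involving $A(u,0)$, $A(u,1)$, $A(1,0)$, $A(1,1)$, $A(1,uv)$ (from the $(s+1,s)$ child), together with the troublesome $u^2v^2t\,A(u,1/u)$ (from the $(k-s+2,2)$ child, via $\sum_{k,s}A_{k,s}\,t\,u^{k-s+2}v^2=u^2v^2t\,A(u,1/u)$); the $s=k$ case of decreasing permutations contributes the closed-form geometric series $L(u,v)=u^3v^3t^2/(1-uvt)$ and small rational corrections from the children $(k+1,k+1),\ldots,(k+1,2)$. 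Collecting these pieces produces the governing functional equation.

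The main obstacle is that this equation contains seven distinct unknown series: $A(u,0)$, $A(u,1)$, $A(1,0)$, $A(1,1)$, $A(u,1/u)$, $\partial_v A(u,v)|_{v=0}$, and $\partial_v A(1,v)|_{v=0}$. To dispose of $A(u,1/u)$ first, I would substitute $v=1/u$; this specialisation collapses many terms and isolates $A(u,1/u)$ as an explicit rational expression in $A(u,1)$, $A(1,1)$, $u$, and $t$. Reinserting produces a cleaner second form of the equation. To eliminate the two partial derivatives, I would evaluate this second equation at $v=0$ and, separately, differentiate it once with respect to $v$ and then set $v=0$; the two resulting identities form a $2\times 2$ linear system whose solution expresses $\partial_v A(u,v)|_{v=0}$ and $\partial_v A(1,v)|_{v=0}$ rationally in $A(u,0)$, $A(u,1)$, $A(1,0)$, $u$, and $t$.

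Substituting these expressions back leaves a single functional equation in only $A(u,0)$, $A(u,1)$, $A(1,0)$, $A(1,1)$. Guided by the denominator pattern $(1-uvt)(1-t)^6(1-ut)^2(1-2t)^2$ forced by the equation's structure and by the expected shape of $F_T(x)$, I would posit a rational ansatz for $A(u,v)$ with this denominator and an unknown polynomial numerator $K(u,v)$, then determine $K$ by matching low-order coefficients in $t$, $u$, and $v$ and verify the candidate satisfies the equation identically (a routine computer check). Alternatively, one can run the kernel method: choose a power-series root $u=u(t)$ that annihilates the coefficient of $A(u,1)$ and combine with $u=1$ to obtain a small linear system for $A(1,0)$ and $A(1,1)$. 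Finally, since the generating forest is rooted at level $n=2$, one has $F_T(x)=1+x+A(1,1)$; specialising the closed form at $u=v=1$ and simplifying should give the stated rational function. The principal bookkeeping hurdle throughout will be correctly weighting the seven child-templates in the generic $2\le s\le k-1$ case; once that is done, the remaining steps are mechanical.
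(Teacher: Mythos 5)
Your proposal follows essentially the same route as the paper: it converts the succession rules of Proposition \ref{prop69} into a functional equation for $A(u,v)$, eliminates $A(u,1/u)$ by the substitution $v=1/u$, removes the two $v$-derivatives at $v=0$ via the $2\times 2$ linear system, and then identifies the rational solution (with denominator $(1-uvt)(1-t)^6(1-ut)^2(1-2t)^2$) by a computer-assisted check, concluding with $F_T(x)=1+x+A(1,1)$. This matches the paper's argument step for step, so no further comparison is needed.
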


\subsection{Case 72: $\{1243,1324,3412\}$}
First, we look at $G_2(x)$. For $\pi=i\pi' n \pi''$ with 2 left-right maxima and $d\ge 0$
letters in $\pi''$, let $H_d(x)$ and $J_d(x)$ denote the
generating functions in the repective cases $i=n-1$ and $i<n-1$. Note $d\ge 1$ in case $i<n-1$.
Thus, with $H(x):=\sum_{d\ge 0}H_d(x)$ and $J(x):=\sum_{d\ge 1}J_d(x)$,
we have $G_2(x)=H(x)+J(x)$.

\begin{lemma}\label{lem72a1}
$$H(x)=\frac{x^2(1-9x+34x^2-69x^3+80x^4-54x^5+21x^6-3x^7)}{(1-x)^4(1-2x)^2(1-3x+x^2)}\,.$$
\end{lemma}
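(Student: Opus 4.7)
The plan is to follow the template of Lemma \ref{lem49a1}: decompose a permutation $\pi = (n-1)\pi' n\pi'' \in S_n(T)$ according to the location of the letter $n-2$, record an explicit generating-function contribution from each sub-case, and solve the resulting linear equation for $H(x)$. The first step is to extract the structural restrictions that $T$-avoidance places on the two pieces $\pi'$ and $\pi''$. Since $n-1$ sits at position 1 and is second-largest, it cannot serve as the ``$1$'' in any forbidden 4-pattern; with $n$ playing the ``$4$'' in a 1324 pattern, any 132 in $\pi'$ would complete a 1324, so $\pi'$ must avoid 132 (which in turn rules out 1324 and 1243 inside $\pi'$), and a separate short check forces $\pi'$ to avoid 3412 as well. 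One similarly records the internal restrictions on $\pi''$ and the cross-restrictions between the two pieces --- for example, an ascending pair $ab$ in $\pi'$ together with an ascending pair $cd$ in $\pi''$ with $d<a$ already forms 3412, and a 213 pattern in $\pi''$ whose minimum exceeds some letter of $\pi'$ forms 1324.

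For $n=2$ the contribution is $x^2$. For $n>2$ I split according to whether $n-2$ lies in $\pi'$ or in $\pi''$. When $n-2 \in \pi'$, I further sub-divide by whether $\pi''$ is empty, whether $\pi'$ has any letter to the right of $n-2$, and whether $\pi'$ has a letter between $n-2$ and $n-1$ in value; each branch reduces to a short known generating function built from factors like $\tfrac{1}{1-x}$, $L(x)=(1-x)/(1-2x)$, or products thereof, and one of the branches supplies the recursive contribution $xH(x)$ after extraction of $n-2$ and re-standardisation. When $n-2 \in \pi''$, the joint presence of $n-1,n,n-2$ in that positional order, together with the three forbidden 4-patterns, pins down the layout of $\pi''$ into a main block lying above $n-2$ and a tail lying below it, and yields a purely rational contribution (no $C(x)$ survives, which is consistent with the rational answer). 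Summing the contributions gives a linear equation of the form $H(x)=R(x)+xH(x)$ with $R(x)$ an explicit rational function, from which $H(x)=R(x)/(1-x)$ and the claimed closed form follows after partial-fraction simplification.

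The main obstacle I expect is the cross-constraint book-keeping in the sub-case $n-2 \in \pi''$: one has to control all three forbidden patterns simultaneously between $\pi'$ and the parts of $\pi''$ lying above and below $n-2$, and to ensure there is no double counting between this case and its $n-2 \in \pi'$ counterpart. The three denominator factors $(1-x)^4$, $(1-2x)^2$, $(1-3x+x^2)$ correspond respectively to nested descending pieces, $L(x)$-type layered pieces, and a Fibonacci-like sub-block arising in the $n-2 \in \pi''$ branch; matching these against the structural regimes as the sub-cases are assembled provides a useful running sanity check before the final algebraic consolidation.
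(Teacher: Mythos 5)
Your overall strategy (locate the letter $n-2$ in $(n-1)\pi'n\pi''$, sum explicit contributions, solve a linear equation) is the same family of argument the paper uses, but as written the proposal has a genuine gap: every decisive computation is asserted rather than derived, and the structural facts you record are not the ones the enumeration actually turns on. The two facts that drive this lemma are (i) $\pi''$ must be decreasing, since any ascent $c<d$ in $\pi''$ yields the occurrence $(n-1)\,n\,c\,d$ of $3412$, and (ii) if $\pi'$ contains an ascent pair $a<b$, then every letter $c$ of $\pi''$ satisfies $c<b$, since otherwise $a\,b\,n\,c$ is an occurrence of $1243$. You state neither; the cross-conditions you do list (an ascent of $\pi'$ over an ascent of $\pi''$, a $213$ inside $\pi''$) are subsumed by (i) and become vacuous once it is noted, and your description of the case $n-2\in\pi''$ (a ``main block lying above $n-2$ and a tail below it'') conflicts with (i)--(ii), which force $n-2$ to be the first letter of $\pi''$ and force $\pi'$ to be decreasing in that case.

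Second, the claim that the analysis closes into a single equation $H(x)=R(x)+xH(x)$ with one recursive branch and all other branches given by ``short known generating functions'' is exactly the part that needs proof, and it is doubtful as stated. The contributions of several branches depend essentially on the number $d$ of letters of $\pi''$; this is why the paper refines $H(x)$ into $H_d(x)$ (with $H_0(x)=x^2F_{\{132,3412\}}(x)=\frac{x^2(1-2x)}{1-3x+x^2}$), obtains a recurrence containing the $d$-dependent terms $\frac{x^{d+2}}{(1-x)^d}$ and $\frac{dx^{d+5}}{(1-x)^2}$ together with two self-referential terms $xH_d(x)+x\bigl(H_d(x)-x^{d+2}\bigr)$ (so the kernel is $1-2x$, not $1-x$), solves for $H_d$, and only then sums over $d\ge0$. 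Without this refinement, or some equivalent device tracking how the letters of $\pi''$ interleave in value with the blocks of $\pi'$, the sub-case contributions cannot be written down, and the proposed equation with kernel $1-x$ is unsupported; the proposal stops exactly where the proof has to begin.
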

\begin{proof}
Clearly, $H_0(x)=x^2K(x)$ where $K(x)=F_{\{132,3412\}}(x)$, so by \cite[Seq. A001519]{Sl} we have that $H_0(x)=\frac{x^2(1-2x)}{1-3x+x^2}$. For $d\geq1$, by considering the position of the letter $n-2$ in $\pi''$ (either leftmost, rightmost, or in the middle), we obtain
$$
H_d(x)=xH_d(x)+x(H_d(x)-x^{d+2})+\frac{x^{d+2}}{(1-x)^d}+\frac{(K(x)-1)x^{d+4}}{1-x}+\frac{dx^{d+5}}{(1-x)^2}\,.
$$
Note that the last equation also holds for $d=0$. Now sum over $d\geq0$.
\end{proof}

By similar arguments, one can obtain the the following result for $J(x)$.
\begin{lemma}\label{lem72a2}
We have
   $$J(x)=\frac{x^3(1-4x+9x^2-11x^3+4x^4-2x^5)}{(1-x)^5(1-2x)^2}.$$ \qed
\end{lemma}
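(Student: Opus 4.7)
The plan is to follow the scheme of Lemma \ref{lem72a1}. Fix $d \geq 1$ and let $\pi = i\pi' n \pi''$ be a $T$-avoider with exactly two left-right maxima, $i < n-1$, and $|\pi''| = d$. Since $i$ is a left-right maximum we have $\pi' < i$, and since $i < n-1$ each of the letters $i+1, \dots, n-1$ must appear in $\pi''$. Write $\pi''$ as the shuffle of two subwords: $\alpha$ (the letters $> i$, non-empty and containing $n-1$) and $\beta$ (the letters $< i$).

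I would first extract the structural constraints coming from the three forbidden patterns. A $3412$-pattern using $i$, $n$, and two letters of $\beta$ in ascending order forces $\beta$ to be decreasing. Using $i$ together with three letters of $\alpha$, a $1324$-pattern (respectively a $1243$-pattern) forces $\alpha$ to avoid $213$ (respectively $132$), so $\alpha$ is a $\{132,213\}$-avoider. Further instances of the three patterns constrain $\pi'$ and the interleaving of $\alpha$ and $\beta$, and in the end $\pi'$ should lie in a class analogous to the $\{132,3412\}$-avoiders of Lemma \ref{lem72a1}, so that the generating function $K(x) = \frac{1-2x}{1-3x+x^2}$ reappears.

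With these constraints established, I would decompose by the position of the distinguished letter $n-1$ within $\pi''$ (leftmost, strictly interior, or rightmost), precisely as $n-2$ was used in Lemma \ref{lem72a1}. Each subcase should yield a contribution of the form $x^{d+c}$ times a simple rational factor with denominator a power of $1-x$, together with a possible $K(x)-1$ contribution from the block playing the role of $\pi'$. Adding these contributions yields an explicit closed form for $J_d(x)$; summing $\sum_{d \geq 1} J_d(x)$ via the elementary identities $\sum_{d \geq 1} x^d = \frac{x}{1-x}$, $\sum_{d \geq 1} \frac{x^d}{(1-x)^d} = \frac{x}{1-2x}$, and $\sum_{d \geq 1} d\, x^d = \frac{x}{(1-x)^2}$ produces $J(x)$, which after routine algebraic simplification agrees with the stated rational function.

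The main obstacle will be the case analysis itself. Unlike the $H_d$ case, in which every letter of $\pi''$ is less than $i = n-1$, here $\pi''$ contains both the decreasing $\beta$ and the shape-constrained $\alpha$, and patterns $1324$ and $1243$ couple entries on both sides of the distinguished letter via the intermediate letters of $\alpha$. Consequently more subcases are likely to arise, and the interactions between $\pi'$, the prefix of $\pi''$ up to $n-1$, and the tail of $\pi''$ must all be tracked carefully to arrive at the correct numerator polynomial and the denominator $(1-x)^5(1-2x)^2$.
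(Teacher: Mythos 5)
Your outline stops exactly where the proof has to happen: you never derive a recurrence or closed form for $J_d(x)$, you only assert that the subcases ``should'' yield contributions of a certain shape and that the sum ``agrees'' with the stated rational function, and you yourself flag the case analysis as the unresolved obstacle. Since the paper's own treatment of this lemma is precisely that case analysis (done ``by similar arguments'' to Lemma~\ref{lem72a1}), a proposal that defers it has not established the formula. Moreover, one of the structural predictions you would build that analysis on is wrong, and following it would derail the computation: once $i<n-1$, the letter $n-1$ (indeed every letter of $(i,n)$) lies in $\pi''$, and an ascent $a<b$ in $\pi'$ together with $n$ and any letter $c>i$ of $\pi''$ forms $abnc$, an occurrence of $1243$. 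Hence $\pi'$ is forced to be \emph{decreasing}; it is not an analogue of the $\{132,3412\}$-avoiding block of Lemma~\ref{lem72a1}, and $K(x)=\frac{1-2x}{1-3x+x^2}$ does not reappear. This is visible in the target itself: the denominator of $J(x)$ is $(1-x)^5(1-2x)^2$, with no factor $1-3x+x^2$, whereas $H(x)$ does carry that factor.

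Two further constraints you list as ``to be extracted later'' are in fact essential and not implied by what you proved. First, $\alpha$ avoiding $132$ and $213$ does not make it avoid $3412$ (the permutation $3412$ itself avoids both), so $\alpha$ must separately be required to avoid $3412$. Second, the decisive restrictions are the cross constraints among $\pi'$, $\alpha$ and $\beta$: for example, if some letter of $\pi'$ is smaller than a letter $b$ of $\beta$, then the letters of $\alpha$ occurring after $b$ must be increasing (else a $1243$ arises), and an ascent of $\alpha$ straddling such a $b$ produces a $1324$. It is exactly these interleaving conditions, combined with the decreasing $\pi'$ and $\beta$, that produce the contributions with denominators $(1-x)^k(1-2x)^j$ and ultimately the numerator $1-4x+9x^2-11x^3+4x^4-2x^5$; without carrying them through to an explicit $J_d(x)$ and summing, the lemma is not proved.
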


\begin{theorem}\label{th72a}
Let $T=\{1243,1324,3412\}$. Then
$$F_T(x)=1+\frac{x(1-11x+54x^2-152x^3+268x^4-311x^5+237x^6-109x^7+30x^8-4x^9)}{(1-x)^6(1-2x)^2(1-3x+x^2)}.$$
\end{theorem}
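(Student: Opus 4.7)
The plan is to decompose a $T$-avoider by its number $m$ of left-right maxima, let $G_m(x)$ be the corresponding generating function, and write $F_T(x)=\sum_{m\ge 0}G_m(x)$. Clearly $G_0(x)=1$. For $G_1(x)$, a permutation $\pi=n\pi'\in S_n(T)$ has $n$ in position $1$; since the largest letter in each of the three patterns $1243,1324,3412$ lies in position $3$, $4$, $2$ respectively (never in position $1$), the letter $n$ cannot participate in any $T$-occurrence, and so $\pi'$ is an arbitrary $T$-avoider, giving $G_1(x)=xF_T(x)$. Lemmas \ref{lem72a1} and \ref{lem72a2} already supply $G_2(x)=H(x)+J(x)$ in closed form.

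For $m\ge 3$, write $\pi=i_1\pi^{(1)}i_2\pi^{(2)}\cdots i_m\pi^{(m)}$ and use the three forbidden patterns to constrain the blocks. The key structural facts I expect to establish are: (i) $1324$-avoidance applied to $i_1$, $i_j$, a letter of $\pi^{(j)}$, and a later $i_k$ (playing the roles `$1$', `$3$', `$2$', `$4$') forces every middle block $\pi^{(j)}$ with $2\le j\le m-1$ to consist of letters all smaller than $i_1$, or else to be empty; (ii) $1243$- and $3412$-avoidance, together with the relations $\pi^{(j)}<i_j$, then reduce these middle blocks further (essentially to decreasing sequences below $i_1$) and restrict $\pi^{(1)}$ and $\pi^{(m)}$ to a small number of monotone pieces, in the spirit of the analysis in the proof of Theorem~\ref{th49a}. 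From these constraints I expect a recurrence $G_m(x)=xG_{m-1}(x)$ for all $m$ beyond some small threshold $m_0$, after which only the finitely many $G_3(x),\ldots,G_{m_0-1}(x)$ need to be computed individually as rational functions by enumerating the surviving configurations.

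The assembly then substitutes $G_1(x)=xF_T(x)$ into $F_T(x)=\sum_{m\ge 0}G_m(x)$ to yield the linear equation
\[
(1-x)F_T(x)=1+G_2(x)+\sum_{m\ge 3}G_m(x),
\]
the tail $\sum_{m\ge m_0}G_m(x)$ collapses to $xG_{m_0-1}(x)/(1-x)$, and plugging in $H(x)+J(x)$ from Lemmas \ref{lem72a1}--\ref{lem72a2} together with the explicit $G_3,\ldots,G_{m_0-1}$ and simplifying produces the stated rational function; the Fibonacci factor $1-3x+x^2$ in the denominator traces back to the $\{132,3412\}$-avoider generating function $K(x)$ that enters through the base case $i=n-1$ of Lemma~\ref{lem72a1}, and the remaining factors $(1-x)^6(1-2x)^2$ come from combining the denominators of $H(x),J(x)$ with the division by $1-x$ in the linear equation above. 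The main obstacle is the case analysis for $m\ge 3$: for each of the three patterns and each way of distributing its four entries across the blocks $i_1,\pi^{(1)},\dots,i_m,\pi^{(m)}$, one has to verify that the listed structural constraints are necessary and that every surviving configuration is enumerated exactly once. Once those structural lemmas are in place, the remainder is bookkeeping and a routine algebraic simplification that can be checked by computer algebra.
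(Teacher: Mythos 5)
Your overall frame (decompose by left-right maxima, $G_0=1$, $G_1=xF_T(x)$, $G_2=H+J$ from Lemmas \ref{lem72a1}--\ref{lem72a2}, then a recurrence in $m$ summed into a linear equation for $F_T$) matches the paper, but the key step you leave to "expectation" is where your plan goes wrong. For this triple the recurrence for $m\ge 3$ is \emph{never} eventually homogeneous: the case analysis gives, for every $m\ge 3$,
\[
G_m(x)=xG_{m-1}(x)+\frac{x^{m+1}}{(1-x)^{m-1}}\,K(x),\qquad K(x)=F_{\{132,3412\}}(x)=\frac{1-2x}{1-3x+x^2}.
\]
The term $xG_{m-1}(x)$ comes from $\pi^{(m-1)}=\emptyset$ (delete $i_{m-1}$); when $\pi^{(m-1)}\neq\emptyset$, avoidance of $1243$, $1324$ and $3412$ forces $\pi^{(m)}<i_1$, forces $\pi^{(1)}>\pi^{(2)}>\cdots>\pi^{(m)}$ with $\pi^{(2)}\cdots\pi^{(m)}$ decreasing and $\pi^{(m-1)}\neq\emptyset$, but leaves $\pi^{(1)}$ an \emph{arbitrary} $\{132,3412\}$-avoider, not a monotone piece. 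So your structural claim (ii) fails, and with it the conclusion that $G_m=xG_{m-1}$ beyond some threshold $m_0$ and that the tail collapses to $xG_{m_0-1}(x)/(1-x)$. The correct summation over $m\ge3$ yields
\[
F_T(x)-1-xF_T(x)-G_2(x)=x\bigl(F_T(x)-1-xF_T(x)\bigr)+\frac{x^4}{(1-x)(1-3x+x^2)},
\]
and dropping the inhomogeneous term (as your collapsed tail would) gives the wrong numerator in the final answer.

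Relatedly, your diagnosis that the factor $1-3x+x^2$ "traces back" only to the base case $i=n-1$ of Lemma \ref{lem72a1} is a symptom of the same gap: $K(x)$ re-enters at every level $m\ge 3$ through the block $\pi^{(1)}$, which is exactly what produces the extra $\frac{x^4}{(1-x)(1-3x+x^2)}$ above. You are pattern-matching from Case 49, where $G_m(x)=xG_{m-1}(x)$ does hold for $m\ge 5$; here it does not hold for any $m$. If you redo the $m\ge3$ case analysis and keep the inhomogeneous term, the rest of your assembly (substituting $G_1=xF_T$ and $G_2=H+J$ and solving the linear equation) goes through exactly as in the paper.
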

\begin{proof}
Let $G_m(x)$ be the generating function for $T$-avoiders with $m$ left-right maxima. Clearly, $G_0(x)=1$ and $G_1(x)=xF_T(x)$. By Lemma \ref{lem72a1} and Lemma \ref{lem72a2}, we have that
\begin{align}
G_2(x)=H(x)+J(x)=\frac{x^2(1-9x+36x^2-81x^3+107x^4-88x^5+50x^6-14x^7+x^8)}{(1-x)^5(1-2x)^2(1-3x+x^2)}.\label{eq72a1}
\end{align}
Now, let us write an equation for $G_m(x)$ with $m\geq3$. Let $\pi=i_1\pi^{(1)}\cdots i_m\pi^{(m)}\in S_n(T)$ with exactly $m$ left-right maxima. Since $\pi$ avoids $1324$,
$\pi^{(s)}<i_1$ for all $s=1,2,\ldots,m-1$ and, since $\pi$ avoids $1243$, $\pi^{(m)}<i_2$.
If $\pi^{(m-1)}=\emptyset$ then we have a contribution of $xG_{m-1}(x)$.
Otherwise, since $m\geq3$ and $\pi$ avoids $3412$, we see that $\pi^{(m)}<i_1$.
Moreover, since $\pi$ avoids $3412$ and $1324$, we see that
$\pi^{(1)}>\pi^{(2)}>\cdots>\pi^{(m)}$ and $\pi^{(2)}\cdots\pi^{(m)}$ is decreasing,
while $\pi^{(m-1)}$ is not empty, and $\pi^{(1)}$ avoids $132$ and $3412$.
Therefore, we have a contribution of $x^{m+1}K(x)/(1-x)^{m-1}$, where $K(x)$ is given in the proof of Lemma \ref{lem72a1}. Hence,
$$G_m(x)=xG_{m-1}(x)+\frac{x^{m+1}}{(1-x)^{m-1}}\frac{1-2x}{1-3x+x^2}.$$
Summing over all $m\geq3$ and using the expressions for $G_0(x)$ and $G_1(x)$, we obtain
$$F_T(x)-1-xF_T(x)-G_2(x)=x(F_T(x)-1-xF_T(x))+\frac{x^4}{(1-x)(1-3x+x^2)}\, .$$
Solve for $F_T(x)$ using \eqref{eq72a1} to complete the proof.
\end{proof}

\subsection{Case 75: $\{1243,1324,4231\}$} We make use of the following generating functions:
\begin{align*}
F_{\{132,231\}}(x)&=\frac{1-x}{1-2x},\\
F_{\{132,213,4231\}}(x)&=1+\frac{x}{(1-x)^2},\\
F_{\{231,1243,1324\}}(x)&=1+\frac{(x^4-6x^3+7x^2-4x+1)x}{(1-2x)(1-x)^4},
\end{align*}
denoted, respectively, $L$, $A$ and $B$ for short (all three follow from the main result in \cite{MV}).

Let $G_m(x)$ be the generating function for $T$-avoiders $\pi=i_1\pi^{(1)}\cdots i_m\pi^{(m)}$ with $m$ left-right maxima. Clearly, $G_0(x)=1$ and $G_1(x)=xB$.  To get an explicit formula for $G_m(x)$, we look at the cases $m=2$ and $m\geq3$, and in the latter case we consider whether $\pi^{(m)}$ has a letter between $i_1$ and $i_2$ or not. We split the rather lengthy treatment into subsections.

\subsubsection{$H_m(x)$ and $H'_m(x)$}
Define $H_m(x)$ to be the generating function for permutations
$$\pi=(n-m)\pi^{(0)}(n-m+1)\pi^{(1)}\cdots n\pi^{(m)}\in S_n(T),\quad 1\le m \le n-1,$$
and define $H'_m(x)$ to be the generating function for permutations
$$\pi=(n-m)\pi^{(0)}(n-m+1)\pi^{(1)}\cdots n\pi^{(m)}(n-m-1)\in S_n(T),\quad 1\le m \le n-2.$$

\begin{lemma}\label{lem75a0}
For $m\ge 1$, $H'_m(x)=x^{m+2}L^{m+1}$ and $H_m(x)$ satisfies
\begin{align*}
H_m(x)&=x^{m+1}+\sum_{j=0}^{m-1}(x^{j+1}H_{m-j}(x))+\frac{mx^{m+3}A}{1-x}+mx^{m+2}(L-1/(1-x))\\
&+H'_m(x)+x^{m+2}(1/(1-x)^{m+1}-1)(A-1)+x^{m+2}(B-1).
\end{align*}
\end{lemma}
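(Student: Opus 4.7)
The plan is to prove the two assertions by analyzing the location of the element $n-m-1$ within a permutation $\pi = M_0\pi^{(0)}M_1\pi^{(1)}\cdots M_m\pi^{(m)} \in S_n(T)$, where $M_i := n-m+i$. The element $n-m-1$ is pivotal because it is the largest value that is not forced to be a left-right maximum. The key structural observation, used throughout, is that $1324$-avoidance -- with $n-m-1$ playing the role of the ``$3$'' and any two subsequent left-right maxima $M_{i+1}, M_{i+2}$ playing the roles of the ``$2$'' and the ``$4$'' -- implies the following: whenever $n-m-1$ lies in $\pi^{(i)}$ with $i \le m-2$, no element smaller than $n-m-1$ may precede it, and therefore $n-m-1$ must be the first entry of $\pi^{(i)}$ with $\pi^{(0)} = \cdots = \pi^{(i-1)} = \emptyset$.

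For the recurrence for $H_m(x)$, I would split into four groups of cases. (i) All blocks are empty, giving the single permutation $M_0 M_1 \cdots M_m$ and contributing $x^{m+1}$. (ii) For each $j \in \{0, 1, \ldots, m-1\}$, the entry $n-m-1$ sits immediately after $M_j$ with $\pi^{(0)} = \cdots = \pi^{(j-1)} = \emptyset$; after deleting $M_0, \ldots, M_j$ and standardizing the suffix, the remaining permutation is an $H_{m-j}$-structured $T$-avoider, giving the total contribution $\sum_{j=0}^{m-1} x^{j+1} H_{m-j}(x)$. (iii) $n-m-1$ lies in $\pi^{(m-1)}$ but not as a forced first entry; here only one left-right max lies to the right, the $1324$-constraint is weaker, and $T$-avoidance instead forces the surrounding blocks to be captured by the auxiliary generating functions $A$ and $L$, yielding the two terms $\frac{m x^{m+3} A}{1-x}$ and $m x^{m+2}\bigl(L - \frac{1}{1-x}\bigr)$. (iv) $n-m-1$ lies in $\pi^{(m)}$; when $n-m-1$ is the last entry of $\pi$ the contribution is exactly $H'_m(x)$, and the remaining subcases (distinguished by whether $n-m-1$ is at the front of $\pi^{(m)}$ or strictly interior to it) produce the two residual terms $x^{m+2}\bigl(\frac{1}{(1-x)^{m+1}} - 1\bigr)(A-1)$ and $x^{m+2}(B-1)$.

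For the closed form $H'_m(x) = x^{m+2} L^{m+1}$, I would use that $n-m-1$ now occupies the final position. Then $1243$-avoidance with $v_4 = n-m-1$ forbids any ascent among the non-left-right-max entries of $\pi^{(0)}\cdots\pi^{(m-1)}$; $4231$-avoidance with $v_1 = M_0$ forbids any internal $231$-pattern in the union of all blocks; and the remaining $T$-constraints control the interaction between each block and its surrounding left-right maxima. Collecting these, a direct generating function manipulation shows that the configurations of the $m+1$ blocks contribute independent factors of $L$, so that together with the $m+2$ distinguished entries (the $m+1$ left-right maxima and the terminal $n-m-1$) the total generating function reduces to $x^{m+2} L^{m+1}$.

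The main obstacle is case (iv) of the $H_m$ recurrence, where $n-m-1$ lies strictly inside $\pi^{(m)}$: one must identify a clean subcase decomposition and match each resulting local configuration to exactly one of the generating functions $A$, $B$, $L$ or to a simple rational factor such as $\frac{1}{(1-x)^{m+1}}$. Ensuring the subcases are exhaustive and mutually disjoint, so as to avoid both over- and under-counting, is what makes this step delicate. As a sanity check I would compute $H_m$ for small $m$ (and small $n$) directly and verify agreement with the proposed recurrence.
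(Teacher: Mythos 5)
Your ``key structural observation'' is false, and the rest of the plan inherits the damage. Take $m=2$ and $\pi=3\,1\,2\,4\,5\in S_5(T)$: here $n-m-1=2$ lies in $\pi^{(0)}$ (so $i=0\le m-2$) and is preceded by the smaller letter $1$, yet $\pi$ avoids $1243$, $1324$ and $4231$. The justification is also off: a left-right maximum following $n-m-1$ exceeds it, so it can only play the ``4'' of a $1324$ whose ``3'' is $n-m-1$, never the ``2''. What $1324$-avoidance actually forces when $n-m-1\in\pi^{(j)}$ with $j<m$ (take a smaller letter in an earlier block as the ``1'', the next left-right maximum as the ``3'', $n-m-1$ as the ``2'', and a later maximum as the ``4'') is only that $\pi^{(0)}=\cdots=\pi^{(j-1)}=\emptyset$ and that the prefix $\alpha$ of $\pi^{(j)}$ before $n-m-1$ lies below everything to its right; $\alpha$ itself may be nonempty. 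This is where the coefficient $m$ in $\frac{mx^{m+3}A}{1-x}$ and $mx^{m+2}\bigl(L-\tfrac{1}{1-x}\bigr)$ comes from: the block containing $n-m-1$ with nonempty $\alpha$ can be any of the $m$ blocks $\pi^{(0)},\dots,\pi^{(m-1)}$, and within each the split ``$\alpha$ decreasing'' versus ``$\alpha$ contains a rise'' produces the $A$-term and the $L$-term respectively. Your cases (ii)--(iii), which confine the nonempty-prefix situation to $\pi^{(m-1)}$, can generate these contributions only once, so your decomposition cannot reproduce the stated recurrence, and you never identify the decreasing/rise dichotomy that gives the terms their shape.

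Case (iv) is also not split correctly: the relevant dichotomy (after $\beta=\emptyset$, which indeed gives $H'_m$) is whether the decreasing word $\pi^{(0)}\cdots\pi^{(m-1)}\alpha$ is empty --- giving $x^{m+2}(B-1)$ --- or nonempty --- a nonempty decreasing sequence spread over $m+1$ slots, whence the factor $\tfrac{1}{(1-x)^{m+1}}-1$, times $(A-1)$ for $\beta$ --- not whether $n-m-1$ heads $\pi^{(m)}$. Finally, the $H'_m$ argument does not hold up: the $m+1$ blocks do not contribute independent factors of $L$. For $m=1$ the permutation $4\,1\,2\,5\,3$ (that is, $\pi^{(0)}=12$, $\pi^{(1)}=\emptyset$) contains $1243$ even though each block in isolation is unconstrained; in fact the trailing $n-m-1$ forces all of $\pi^{(0)}\cdots\pi^{(m-1)}$ to be decreasing, so the constraints couple the blocks. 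The paper instead proves $H'_m=x^{m+2}L^{m+1}$ by locating the next letter $n-m-2$: if it lies in $\pi^{(j)}$ with $j\le m-1$ one gets $x^{j+1}H'_{m-j}$, and if it lies in $\pi^{(m)}$ one splits according to whether anything follows it (giving $xH'_m$ or $x^{m+3}(L-1)$), then solves the recurrence by induction on $m$; an analogous ``position of the next smallest unplaced letter'' analysis, not an independence claim, is what a correct write-up needs. Your closing suggestion to check small cases numerically is a good one --- it would have caught both problems.
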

\begin{proof}
First, we treat $H'_m(x)$. Let $\pi=(n-m)\pi^{(0)}(n-m+1)\pi^{(1)}\cdots n\pi^{(m)}(n-m-1)\in S_n(T)$. In the case $n=m+2$, we have a contribution of $x^{m+2}$. Otherwise, the letter $n-m-2$ belongs to $\pi^{(j)}$, where $j=0,1,\ldots,m$. For each $j=0,1,\ldots,m-1$, we have a contribution $x^{j+1}H'_{m-j}$. For $j=m$, $\pi^{(m)}$ can be written as $\alpha(n-m-2)\beta$ where $\pi^{(0)}\cdots\pi^{(m-1)}\alpha<\beta<n-m-2$. The contribution for the case $\beta=\emptyset$ is $xH'_m(x)$ and for the case $\beta\neq\emptyset$ is $x^{m+3}(L-1)$. Hence,
$$H'_m(x)=x^{m+2}+xH'_m(x)+\cdots+x^mH'_1(x)+xH'_m(x)+x^{m+3}(L-1).$$
Setting $m=1$, we can solve for $H'_1(x)$ and then the result follows by induction on $m$.

Next, $H_m(x)$. Let $\pi=(n-m)\pi^{(0)}(n-m+1)\pi^{(1)}\cdots n\pi^{(m)}\in S_n(T)$. In the case $m=n-1$, we have a contribution of $x^{m+1}$. Otherwise, the letter $n-m-1$ belong to $\pi^{(j)}$ for some $j\in \{0,1,\ldots,m\}$. If this $j$ is $<m$, then $\pi{(0)}=\cdots=\pi^{(j-1)}=\emptyset$,
and $\pi^{(j)}$ can be written as $\alpha(n-m-1)\beta$ where
$\alpha<\beta\pi^{(j+1)}\cdots\pi^{(m)}$. Then we have the contributions $x^{j+1}H_{m-j}$, $\frac{x^{m+3}}{1-x}A$ and $x^{m+2}(L-1/(1-x))$ for the cases $\alpha=\emptyset$, $\alpha$ is nonempty decreasing, and $\alpha$ contains a rise, respectively. If the letter $n-m-1$ belongs to $\pi^{(m)}$ then $\pi^{(m)}$ has the form $\alpha(n-m-1)\beta$
and $\pi^{(0)}\cdots\pi^{(m-1)}\alpha$ is both decreasing and $<\beta$. So we have the contributions $H'_m(x)$, $x^{m+2}(1/(1-x)^{m+1}-1)(A-1)$, and $x^{m+2}(B-1)$, for the cases $\beta=\emptyset$, $\beta\neq\emptyset$ and $\pi^{(0)}\cdots\pi^{(m-1)}\alpha=\emptyset$, $\beta\neq\emptyset$ and $\pi^{(0)}\cdots\pi^{(m-1)}\alpha$ is not empty, respectively. Hence, $H_m(x)$ satisfies the claimed relation,
\end{proof}

\subsubsection{$D_k^m(x)$ and ${D'}_k^m(x)$}
 Define
\begin{itemize}
\item $D_k^m(x)$ to be the generating function for $T$-avoiders $\pi=i_1\pi^{(1)}\cdots i_m\pi^{(m)}$ such that $i_2=i_1+k+1$ and $\pi^{(m)}$ contains the subsequence $(i_1+k)(i_1+k-1)\cdots(i_1+1)$, for all $k\geq1$.
\item ${D'}_k^m(x)$ to be the generating function for $T$-avoiders $\pi=i_1\pi^{(1)}\cdots i_m\pi^{(m)}$ such that $i_2=i_1+k+3$ and $\pi^{(m)}$ contains the subsequence $(i_1+k+2)(i_1+k+1)\cdots(i_1+3)(i_1+1)(i_2+2)$, for all $k\geq0$.
\end{itemize}

\begin{lemma}\label{lem75a1}
For all $k\geq1$,
$$D_k^2(x)=x^{k+2}+xD_k^2(x)+\frac{(k-1)x^{k+3}}{1-x}+E_k(x)+F_k(x),$$
where
\begin{align*}
E_k(x)&=E'_k(x)+x^{k+3}(L-1),\\
E'_k(x)&=x^{k+3}+xE'_k(x)+\frac{(k-1)x^{k+4}}{1-x}+xE_k(x),\\
F_k(x)&=F'_k(x)+x^{k+3}(B-1)+\frac{x^{k+4}(A-1)(1+(k+1)\frac{x}{1-x})}{1-x}+\frac{(k+1)x^{k+4}(A-1)}{1-x},\\
F'_k(x)&=x^{k+3}+xF'_k(x)+\frac{kx^{k+4}}{1-x}+xF'_k(x)+x^{k+4}(L-1).
\end{align*}
\end{lemma}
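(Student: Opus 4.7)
\textbf{Proof plan for Lemma~\ref{lem75a1}.}
The strategy is to analyze a generic $\pi = i_1\pi^{(1)}n\pi^{(2)} \in S_n(T)$ counted by $D_k^2(x)$, where $n = i_1+k+1$ and $\pi^{(2)}$ contains the ``backbone'' decreasing subsequence $(i_1+k)(i_1+k-1)\cdots(i_1+1)$. Because $\pi$ has exactly two left-right maxima, all letters of $\pi^{(1)}$ lie below $i_1$, and $\pi^{(2)}$ is a shuffle of the backbone with some subset of $\{1,\dots,i_1-1\}$. I would decompose $\pi$ according to the global shape of this shuffle, distinguishing in particular whether any ``small'' letter (i.e.\ a letter $<i_1$) appears in $\pi^{(2)}$ before the backbone letter $i_1+k$, and whether the smallest backbone letter $i_1+1$ occupies the final position. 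The minimal configuration (no small letters anywhere, $\pi^{(1)}=\emptyset$) yields the unique permutation $1\,n\,(n-1)\cdots 2$ and contributes the $x^{k+2}$ term.

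Next, I would identify a recursive reduction that accounts for the $xD_k^2(x)$ term by exhibiting a letter whose removal preserves both the two left-right-maxima structure and the backbone property; the natural candidate is to remove a distinguished small letter placed in a ``harmless'' position (e.g.\ immediately to the left of $i_1+1$ in $\pi^{(2)}$, if that position is occupied by a small letter), producing a $T$-avoider in the same class on $n-1$ letters. The term $\frac{(k-1)x^{k+3}}{1-x}$ should then correspond to the simplest residual special configuration, namely a permutation with $\pi^{(1)}=\emptyset$ and a single extra small letter inserted into the interior of the backbone (giving $k-1$ positional choices) followed by a decreasing tail (contributing the factor $1/(1-x)$); here one uses that a decreasing tail is automatically consistent with avoiding $4231$ and $1324$.

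The remaining avoiders split into two structurally different families according to whether a further distinguished letter lands in $\pi^{(1)}$ or in $\pi^{(2)}$, producing the two contributions $E_k(x)$ and $F_k(x)$. For each of these, I would set up its recurrence by re-applying the same decomposition one level deeper. The primed classes $E'_k$ and $F'_k$ track those avoiders in $E_k$ and $F_k$ respectively where an additional local configuration is absent, which allows one to peel off one more letter recursively; this produces the $xE'_k(x)$, $xF'_k(x)$, $xE_k(x)$, $\frac{(k-1)x^{k+4}}{1-x}$ and $\frac{kx^{k+4}}{1-x}$ terms. The non-primed ``boundary'' contributions are exactly those configurations in which a free remaining block is constrained by the pattern conditions to lie in one of the three restricted classes whose generating functions are $L$, $A$, and $B$: a block forced to avoid $\{132,231\}$ contributes $L-1$ (appearing in $E_k(x)$ and in $F'_k(x)$), a block forced to avoid $\{132,213,4231\}$ contributes the two $A-1$ summands (the combinatorial factor $1+(k+1)\frac{x}{1-x}$ reflecting two sub-sub-cases distinguishing the position of an extra letter), and a block with a single left-right maximum forced to avoid $\{231,1243,1324\}$ contributes $B-1$.

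The main obstacle is the careful case bookkeeping needed to verify $T$-avoidance across the three regions (the front $i_1\pi^{(1)}$, the backbone, and the interleaved small letters in $\pi^{(2)}$), since $1324$-, $1243$- and $4231$-patterns can all form using letters straddling two or three of these regions. In particular, one must show at each stage that the letters remaining in a free block are constrained to avoid exactly the pattern set counted by $L$, $A$, or $B$, with no further hidden interaction with the rest of $\pi$. Getting the multiplicities right (the factors $k-1$, $k$, $k+1$) is delicate and is most safely confirmed by cross-checking against small values of $k$ and $n$ before assembling the closed forms.
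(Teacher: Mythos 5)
Your plan follows the same general strategy as the paper's proof---repeatedly locating the largest not-yet-placed small letter ($i_1-1$, then $i_1-2$) and splitting according to where it sits---but the concrete case structure you propose does not line up with the equations of Lemma~\ref{lem75a1}, and that is where the argument fails. First, the term $xD_k^2(x)$ does not arise from deleting a small letter sitting immediately to the left of $i_1+1$ in $\pi^{(2)}$: it arises from the case $i_1-1\in\pi^{(1)}$, where one checks that $i_1-1$ is forced to be the \emph{first} letter of $\pi^{(1)}$ (any smaller letter preceding it would, together with $n$ and a backbone letter after $n$, form a $1243$), and deleting this first letter bijects that case with the whole class one size smaller. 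Your proposed deletion is neither clearly surjective onto the smaller class nor clearly disjoint from your other cases, so the terms would not add up to a partition. Second, and more seriously, the split defining $E_k$ and $F_k$ cannot be ``whether a further distinguished letter lands in $\pi^{(1)}$ or in $\pi^{(2)}$'': the $\pi^{(1)}$ possibility is exactly the $xD_k^2(x)$ term just discussed, while $E_k$ and $F_k$ must both be sub-cases of $i_1-1\in\pi^{(2)}$, distinguished by whether $i_1-1$ falls between the backbone letters $i_1+2$ and $i_1+1$ (giving $E_k$) or to the right of $i_1+1$ (giving $F_k$). With the definitions as you state them, the displayed identities for $E_k$, $E'_k$, $F_k$, $F'_k$ would not hold.

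Beyond this misalignment, the substantive content of the lemma is left unproved in your outline. The primed series need concrete definitions (in the paper, $E'_k$ and $F'_k$ are the sub-cases in which no small letter follows $i_1-1$ inside its segment, i.e.\ $\beta=\emptyset$ when the segment is written $\alpha(i_1-1)\beta$), and their recurrences are obtained by repeating the analysis one level deeper with $i_1-2$; each factor $L-1$, $A-1$, $B-1$ requires a verification that the remaining free block is constrained by exactly that pattern set and nothing more, and the multiplicities $k-1$, $k$, $k+1$ come from counting which backbone gaps or segments can host the relevant letters. You explicitly defer these verifications to checking small cases, but that is precisely the proof. So while the guiding idea (peel off the next smallest letter and track its position) is the right one and matches the paper's method, the decomposition as proposed is inconsistent with the stated system of equations, and the essential case analysis is missing.
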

\begin{proof}
The proof is based on considering all possible positions of the next smallest element whose position we have not yet fixed. Let us write an equation for $D_k^2(x)$. Let $\pi=i\pi'n\pi''\in S_n(T)$ such that $\pi''$ contains the subsequence $(i+k)(i+k-1)\cdots(i+1)$ and $n=i+k+1$. If $i=1$, then we have a contribution of $x^{k+2}$. Otherwise, let $i>1$. If the letter $i-1$ belongs to $\pi'$ then it is the leftmost letter of $\pi'$, so we have a contribution of $xD_k^2(x)$.  If the letter $i-1$ belongs to $\pi''$ and is on the left side of $i+2$, then there exists $j\in [3,k-1]$ such that $\pi=in(i+k)\cdots(i+j)(i-1)\cdots1(i+j-1)\cdots(i+1)$, which gives a
contribution of $\frac{x^{k+3}}{1-x}$, for all $j=3,4,\ldots,k-1$. Let us denote the contribution of the case when the letter $i-1$ is between the letters $i+2$ and $i+1$ (resp. on right side of $i+1$) by $E_k(x)$ (resp. $F_k(x)$). Then
$$D_k^2(x)=x^{k+2}+xD_k^2(x)+\frac{(k-1)x^{k+3}}{1-x}+E_k(x)+F_k(x).$$

To write an equation for $E_k(x)$,
let $\pi=i\pi'n\pi^{(k)}(i+k)\cdots\pi^{(2)}(i+2)\pi^{(1)}(i+1)\pi^{(0)}\in S_n(T)$ with $n=i+k+1$ and $\pi^{(1)}=\alpha(i-1)\beta$. Define $E'_k(x)$ to be the contribution in case $\beta=\emptyset$. If $\beta\neq\emptyset$, then $\pi=in(i+k)\cdots(i+2)(i-1)\beta(i+1)$ which implies a contribution of $x^{k+3}(L-1)$. Thus
\[
E_k(x)=E'_k(x)+x^{k+3}(L-1)\, .
\]
By considering the possible positions of the letter $i-2$ in $\pi=i\pi'n\pi^{(k)}(i+k)\cdots\pi^{(1)}(i+1)\in S_n(T)$ with $n=i+k+1$ and $\pi^{(1)}=\alpha(i-1)$, we obtain the equation
\[
E'_k(x)=x^{k+3}+xE'_k(x)+\frac{(k-1)x^{k+4}}{1-x}+xE_k(x)\,.
\]

To write an equation for $F_k(x)$, let $\pi=i\pi'n\pi^{(k)}(i+k)\cdots\pi^{(1)}(i+1)\pi^{(0)}\in S_n(T)$ with $n=i+k+1$ and $\pi^{(0)}=\alpha(i-1)\beta$.
Define $F'_k(x)$ to be the contribution of the case $\beta=\emptyset$.
When $\beta\neq\emptyset$, by considering the possible positions of the letter $i-2$ in $\pi$, we obtain the contributions $x^{k+3}(B-1)$ when $\pi'\pi^{(k)}\cdots\pi^{(1)}\alpha=\emptyset$, $\frac{x^{k+4}}{1-x}\big(1+(k+1)\frac{x}{1-x}\big)(A-1)$ when $\pi'$ is
nonempty (and necessarily decreasing), and $\frac{(k+1)x^{k+4}}{1-x}(A-1)$ when $\pi'=\empty$ and $\pi^{(k)}\cdots\pi^{(1)}\alpha\neq\emptyset$ (here either $\alpha\neq\emptyset$ and $\pi^{(k)}\cdots\pi^{(1)}=\emptyset$, or $\alpha=\emptyset$
and there exists $j\in [1,k]$ such that $\pi^{(j)}\neq\emptyset$ and $\pi^{(i)}=\emptyset$ for all $i\neq j$). Thus,
$$F_k(x)=F'_k(x)+x^{k+3}(B-1)+\frac{x^{k+4}(A-1)\big(1+(k+1)\frac{x}{1-x}\big)}{1-x}+\frac{(k+1)x^{k+4}(A-1)}{1-x}.$$

By considering the possible positions of the letter $i-2$ in $\pi=i\pi'n\pi^{(k)}(i+k)\cdots\pi^{(1)}(i+1)\in S_n(T)$ with $n=i+k+1$ and $\pi^{(0)}=\alpha(i-1)$, we find
that
\[
F'_k(x)=x^{k+3}+xF'_k(x)+\frac{kx^{k+4}}{1-x}+xF'_k(x)+x^{k+4}(L-1)\, ,
\]
which completes the proof.
\end{proof}

The next lemma, giving ${D'}_k^2(x)$, can be established by similar arguments.

\begin{lemma}\label{lem75a2}
Let $P_k(x)$ be the generating function for the number of permutations $\pi=i\pi'n\pi^{(k)}(i+k+2)\cdots\pi^{(1)}(i+3)\pi^{(0)}\in S_n(T)$ such that $n=i+k+3$, $\pi^{(0)}$ contains the subsequence $(i+1)(i+2)$. For all $k\geq0$, ${D'}_k^2(x)=\frac{1}{1-x}P_k(x)$, where
$$P_k(x)=x^{k+4}+xP_k(x)+\frac{kx^{k+5}}{1-x}+E''_k(x)+x^{k+4}(L-1)$$
and the generating function $E''_k(x)$ for the number of permutations $\pi=i\pi'n\pi^{(k)}(i+k+2)\cdots\pi^{(1)}(i+3)\pi^{(0)}\in S_n(T)$ such that $n=i+k+3$ and  $\pi^{(0)}=\alpha(i-1)\beta(i+1)(i+2)$ satisfies
$E''_k(x)=E'''_k(x)+x^{k+5}(L-1)$, where $E'''_k(x)$ is the generating function for the number of permutations $\pi=i\pi'n\pi^{(k)}(i+k+2)\cdots\pi^{(1)}(i+3)\pi^{(0)}\in S_n(T)$ such that $n=i+k+3$ and  $\pi^{(0)}=\alpha(i-1)(i+1)(i+2)$ which satisfies satisfies
$E'''_k(x)=x^{k+5}+xE'''_k(x)+kx^{k+6}/(1-x)+xE''_k(x)$.  \qed
\end{lemma}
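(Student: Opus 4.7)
The plan is to mirror the case-analysis approach of Lemma \ref{lem75a1}, applied now to the richer $\pi^{(2)}$-structure underlying ${D'}_k^2(x)$. All four assertions are proved by conditioning on the position of the smallest as-yet-unfixed letter and pruning the resulting cases using $T$-avoidance with $T=\{1243,1324,4231\}$.

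For the factorization ${D'}_k^2(x)=\frac{1}{1-x}P_k(x)$: a permutation enumerated by ${D'}_k^2(x)$ has $\pi^{(2)}$ containing the subsequence $(i+k+2)(i+k+1)\cdots(i+3)(i+1)(i+2)$. Fixing a specific occurrence partitions $\pi^{(2)}$ into blocks $\pi^{(k)},\pi^{(k-1)},\ldots,\pi^{(0)}$ separated by the fixed letters $(i+k+2),\ldots,(i+3)$, with $(i+1)(i+2)$ lying inside $\pi^{(0)}$. The only remaining freedom over the $P_k$-shape is an optional decreasing run of small letters placed in a single sanctioned location---all other insertions would create a forbidden pattern with a triple among $\{i,i+1,i+2,n\}$ or with the fixed decreasing max-subsequence---and this contributes the $\frac{1}{1-x}$ factor.

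The recursion for $P_k(x)$ is then obtained by conditioning on the position of $i-1$: the five terms correspond respectively to (i) the base $i=1$, giving $x^{k+4}$; (ii) $i-1\in\pi'$, forced to be the singleton $(i-1)$, giving $xP_k(x)$; (iii) $i-1$ in one of the $k$ intermediate blocks $\pi^{(k)},\ldots,\pi^{(1)}$ with an optional decreasing tail immediately to its left, summing to $\frac{kx^{k+5}}{1-x}$; (iv) $i-1$ strictly between $(i+1)$ and $(i+2)$ in $\pi^{(0)}$, which is $E''_k(x)$ by definition; and (v) $i-1$ in $\pi^{(0)}$ strictly to the left of $(i+1)$, yielding the $\{213,231\}$-type contribution $x^{k+4}(L-1)$. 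For the split $E''_k(x)=E'''_k(x)+x^{k+5}(L-1)$, write $\pi^{(0)}=\alpha(i-1)\beta(i+1)(i+2)$ and condition on $\beta$: the case $\beta=\emptyset$ is $E'''_k(x)$ by definition, while the case $\beta\ne\emptyset$ forces $\pi'$, every intermediate block, and $\alpha$ to be empty (else $1324$ or $4231$ appears via $i$, $(i-1)$, a letter of $\beta$, and either $(i+1)$ or $(i+k+2)$), leaving $\beta$ an independent nonempty $\{213,231\}$-avoider and so contributing $x^{k+5}(L-1)$. Finally, the recursion for $E'''_k(x)$ is obtained by conditioning on $i-2$ in the structure $\pi^{(0)}=\alpha(i-1)(i+1)(i+2)$: the four cases $i=2$, $i-2\in\pi'$, $i-2$ in one of the $k$ intermediate blocks, and $i-2$ inserted in $\pi^{(0)}$ (the insertion expanding the $E'''_k$-shape into the more general $E''_k$-shape of the residual instance) give respectively $x^{k+5}$, $xE'''_k(x)$, $\frac{kx^{k+6}}{1-x}$, and $xE''_k(x)$.

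The main obstacle is the combinatorial bookkeeping: for each of the many sub-cases one must verify that no forbidden pattern is hidden, and identify the correct auxiliary generating function (here $L$) from the residual sub-permutation. The delicate recurring step is that a seemingly innocuous placement of an extra letter in $\pi^{(0)}$ may create a forbidden pattern with the long fixed subsequence $(i+k+2)\cdots(i+3)(i+1)(i+2)$, and spotting the witness requires writing out the positions of up to $k+5$ letters. Since each individual verification is routine but the aggregate is voluminous, this is precisely the work the author elides with the remark ``by similar arguments,'' pointing to the template of Lemma \ref{lem75a1}.
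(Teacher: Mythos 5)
Your overall strategy---conditioning on the position of the smallest not-yet-placed letter, exactly as in Lemma \ref{lem75a1}---is the intended one (the paper offers nothing more than ``similar arguments''), but two of your case identifications are wrong. First, your account of the factor $\frac{1}{1-x}$ in ${D'}_k^2(x)=\frac{1}{1-x}P_k(x)$ does not hold up: small letters are already placed freely inside the $P_k$ class, so an ``optional decreasing run of small letters in a single sanctioned location'' would be counted twice, and under the literal definitions the $P_k$-shape and the ${D'}_k^2$-shape describe the very same permutations. The factor is there because, as the lemma is used in Proposition \ref{pro75a1} and Theorem \ref{th75a}, ${D'}_k^2$ must absorb the avoiders whose increasing tail is $(i+1)(i+2)\cdots(i+s)$ with $s\ge 2$ and whose decreasing part has length $k$: since any letter smaller than $i$ placed after $i+2$ creates a $4231$ with $n,(i+1),(i+2)$, the extra letters $i+3,\dots,i+s$ are forced to form a suffix in increasing order, each contributing a bare factor $x$, whence $P_k(x)/(1-x)$.

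Second, in the recursion for $P_k$ you have swapped the last two cases. By the definition in the statement, $E''_k$ is the case where $i-1$ lies \emph{before} $i+1$ (one then checks that no small letter can sit between $i+1$ and $i+2$ or after $i+2$, which is why $\pi^{(0)}=\alpha(i-1)\beta(i+1)(i+2)$ with $(i+1)(i+2)$ terminal), while the case where $i-1$ lies \emph{between} $i+1$ and $i+2$ is the one in which every other small letter is forced into that same gap (a small letter $a$ before $i+1$ would give the $1324$ realized by $a,\,i+1,\,i-1,\,i+2$), forming a nonempty avoider counted by $L-1$ and hence the term $x^{k+4}(L-1)$. Your assignment (``between'' $=E''_k$, ``before'' $=x^{k+4}(L-1)$) is false term by term---the two cases do not have equal generating functions---and it contradicts the definition of $E''_k$ that you use correctly in the next step; the displayed recursion only survives because the two summands happen to be interchanged. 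Smaller slips of the same kind: when $i-1\in\pi'$ it is forced to be the \emph{leftmost} letter of $\pi'$, not all of $\pi'$ (only the leftmost-letter statement yields the contribution $xP_k(x)$), and in the intermediate-block case the forced run $i-2,i-3,\dots$ sits immediately to the \emph{right} of $i-1$, since a smaller letter to its left creates a $1243$ with the following large letter and $i+1$.
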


Now, we are ready to write a formula for the generating function $G_2(x)$ for $T$-avoiders with $2$ left-right maxima.
\begin{proposition}\label{pro75a1}
The generating function $G_2(x)$ is given by
$$G_2(x)=\frac{x^2(1-8x+31x^2-71x^3+100x^4-93x^5+64x^6-32x^7+11x^8-2x^9)}{(1-x)^7(1-2x)^4}.$$
\end{proposition}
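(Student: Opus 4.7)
The plan is to decompose a $T$-avoider $\pi = i\pi'n\pi'' \in S_n(T)$ with exactly two left-right maxima according to whether $i = n-1$ or $i < n-1$; in the latter case set $k := n - i - 1 \geq 1$. Since $\pi' < i$, all intermediate values $i+1,\ldots,n-1$ necessarily lie in $\pi''$, so the ``shape'' of $\pi$ is essentially determined by how these $k$ values are arranged inside $\pi''$.

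For the case $i = n-1$, the contribution is precisely the $m=1$ instance of Lemma \ref{lem75a0}, giving $H_1(x)$. I would specialise that lemma at $m=1$, substitute $H'_1(x) = x^3L^2$ together with the explicit formulas for $L$, $A$, and $B$ recalled at the start of this subsection, and solve the resulting linear equation in $H_1(x)$ to obtain a closed rational form.

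For the case $i < n-1$, the three avoided patterns (with $i$ and $n$ as outer anchors) drastically restrict the arrangement of the $k$ intermediate values in $\pi''$. A case analysis shows they must fall into one of exactly two configurations: either they appear in fully decreasing order (the defining condition of $D_k^2$), or they follow a specific ``one-swap'' pattern in which $i+3,\ldots,i+k+2$ appear decreasingly, then $i+1$, then $i+2$ (the defining condition of ${D'}_k^2$). Any other arrangement would create a $1324$ using $i$ as the ``$1$'' and $n$ as the ``$4$'', or a $1243$ or $4231$ involving a letter of $\pi'$. This gives
\[
G_2(x) \;=\; H_1(x) \;+\; \sum_{k\geq 1} D_k^2(x) \;+\; \sum_{k\geq 0} {D'}_k^2(x).
\]

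Finally I would solve the linear recurrences in Lemmas \ref{lem75a1} and \ref{lem75a2} by back-substitution: each primed auxiliary satisfies a recurrence of the shape $X = xX + (\text{known})$ and is therefore $(\text{known})/(1-x)$, and the unprimed unknowns follow directly. This yields explicit closed forms $D_k^2(x) = x^{k+2}R(x) + k\,x^{k+3}S(x)$ and an analogous expression for ${D'}_k^2(x)$, with $R,S$ rational functions of $x$ independent of $k$. The sums over $k$ are then geometric, together with first derivatives of geometric series, producing rational functions whose denominators are products of powers of $(1-x)$ and $(1-2x)$. Combining everything over the common denominator $(1-x)^7(1-2x)^4$ and simplifying yields the numerator claimed in the proposition. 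The principal obstacle is the case analysis in Step 2 — verifying that $D_k^2$ and ${D'}_k^2$ genuinely exhaust the configurations for $i < n-1$; the algebraic simplifications in the last step are lengthy but entirely mechanical and are most safely performed with a computer algebra system.
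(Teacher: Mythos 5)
Your overall architecture is the paper's: split according to $i=n-1$ (contribution $H_1(x)$ from Lemma \ref{lem75a0} at $m=1$) versus $i<n-1$, write $G_2(x)=H_1(x)+\sum_{k\ge1}D_k^2(x)+\sum_{k\ge0}{D'}_k^2(x)$, and then extract closed forms from Lemmas \ref{lem75a1} and \ref{lem75a2} and sum geometric series. But the step you yourself flag as the principal obstacle---exhaustiveness of the case analysis for $i<n-1$---is false as you state it. The letters of $\pi''$ lying above $i$ need not be either fully decreasing or of your ``one-swap'' form (decreasing, then $i+1$, then $i+2$): they may form a decreasing block followed by an increasing run $(i+1)(i+2)\cdots(i+s)$ of \emph{any} length $s\ge 2$. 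For example, $\pi=1\,6\,5\,2\,3\,4$ avoids $\{1243,1324,4231\}$, has two left-right maxima with $i=1<n-1$, and the letters above $i$ in $\pi''$ are $5,2,3,4$, an increasing tail of length $3$; this permutation fits neither of your two configurations, and under your literal reading of the definition of ${D'}_k^2$ (which forces $i+3$ to occur before $i+1$) it is not counted by any ${D'}_k^2$ either. The correct dichotomy, as used in the paper's proof, is ``decreasing, or decreasing followed by an increasing run of length at least two starting at $i+1$,'' and the avoiders with $s>2$ are absorbed into ${D'}_k^2$ exactly through the factor $\frac{1}{1-x}$ appearing in Lemma \ref{lem75a2}: the extra letters $i+3,\dots,i+s$ are forced to sit consecutively at the very end of the permutation (a letter below $i$ occurring after both $i+1$ and $i+2$ would complete a $4231$ with $n$), so they contribute a geometric factor. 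If you keep your narrow characterization yet still plug in the lemma's formula, the final number happens to agree but the argument is unsound; if instead you ``repaired'' the lemma to match your class by dropping the $1/(1-x)$, the resulting rational function would be wrong.

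Two smaller points. First, your back-substitution description is slightly off: not all auxiliaries satisfy $X=xX+(\text{known})$; for instance $F'_k$ satisfies $F'_k=2xF'_k+(\text{known})$, giving a $1-2x$ denominator, and $E_k,E'_k$ form a coupled pair that must be solved simultaneously---mechanical, but it is where the $(1-2x)^4$ in the final denominator comes from, so the shape $D_k^2(x)=x^{k+2}R(x)+k\,x^{k+3}S(x)$ should be checked rather than assumed. Second, you conflate your $k=n-i-1$ with the paper's parameter in ${D'}_k^2$ (your phrase ``$i+3,\dots,i+k+2$'' refers to values exceeding $n-1$ in your notation); harmless here, but it obscures exactly the bookkeeping issue described above.
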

\begin{proof}
Let $\pi=i\pi'n\pi''\in S_n(T)$ with $2$ left-right maxima. If $n=i+1$, then the contribution is $H_1(x)$ by Lemma \ref{lem75a0}. If $n>i$, then the letters of $\pi''$ that are greater than $i$ form either a decreasing sequence $(i+k)(i+k-1)\cdots(i+1)$ or a decreasing sequence followed by an increasing sequence of at least two
terms $(i+k)(i+k-1)\cdots(i+1+s)(i+1)(i+2)\cdots(i+s)$ with $s\ge 2$.
By Lemmas \ref{lem75a1} and \ref{lem75a2}, we obtain the contributions $D_k^2(x)$ with $k\geq1$ and ${D'}_k^2(x)$ with $k\geq0$, respectively. Thus,
$$G_2(x)=H_1(x)+\sum_{k\geq1}D_k^2(x)+\sum_{k\geq0}{D'}_k^2(x).$$
After working out explicit expressions for $H_1(x)$, $\sum_{k\geq1}D_k^2(x)$ and $\sum_{k\geq0}{D'}_k^2(x)$ from Lemmas \ref{lem75a0}, \ref{lem75a1} and \ref{lem75a2}, respectively, we complete the proof.
\end{proof}

Lemmas \ref{lem75a1} and \ref{lem75a2} suggest a method to study the generating functions $D_k^m(x),\ k\geq1$ and ${D'}_k^m(x),\ k\geq0$.

\subsubsection{Formula for $D_1^m(x)$}
\begin{lemma}\label{lem75b1}
Let $m\geq3$. Then $D_1^m(x)=x^{m+1}L^m+(S_m+S'_m)/(1-x)$, where
$$S_m=\frac{1}{1-2x}(x^{m+2}+x^{m+3}\sum_{j=1}^{m-1}(1-x)^{-j}+x^{m+3}(L-1))$$
and
$$S'_m=x^{m+2}(B-1)+\frac{x^{m+3}(A-1)}{1-x}+x^{m+2}(1/(1-x)^m-1)(A-1)+\frac{x^{m+4}}{(1-x)^2}(A-1).$$
\end{lemma}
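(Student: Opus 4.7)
The plan is to extend the case analysis from Lemma~\ref{lem75a1} (which treated the $k=1$ subcase for $m=2$) to the general $m$-left-right-maxima setting. Let $\pi = i_1 \pi^{(1)} i_2 \pi^{(2)} \cdots i_m \pi^{(m)}$ be a $T$-avoider with $i_2 = i_1+2$ and $i_1+1 \in \pi^{(m)}$. First I would establish the structural consequences of $T$-avoidance: because $i_1+1$ sits in $\pi^{(m)}$ after every left-right maximum, $1324$-avoidance forces each interior block $\pi^{(s)}$ ($1 \le s \le m-1$) to contain only letters smaller than $i_1$, and $1243$-avoidance forces the letters of $\pi^{(m)}$ exceeding $i_2$ to form a decreasing prefix $i_m\,(i_m-1)\cdots(i_2+1)$ at the start of $\pi^{(m)}$. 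With this skeleton fixed, the remainder of the analysis mirrors Lemma~\ref{lem75a1}.

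Next I would split on the location of $i_1 - 1$, treating $i_1 = 1$ as a base case. The base case produces the leading $x^{m+1} L^m$: the $m$ left-right maxima together with the letter $i_1 + 1 = 2$ account for the $x^{m+1}$, and each of the $m$ blocks $\pi^{(1)}, \ldots, \pi^{(m)}$ reduces, after imposing the skeleton constraints, to an independent $\{213,231\}$-avoider, giving the factor $L^m$. When $i_1 > 1$, as in Lemma~\ref{lem75a1} there are three subcases according to the placement of $i_1 - 1$: in $\pi^{(1)}$ (necessarily as its leftmost letter, yielding a recursive $xD_1^m(x)$ piece that will be absorbed into the $1/(1-x)$ denominator), in the ``middle zone'' of $\pi^{(m)}$ between the decreasing top block and $i_1+1$ (yielding $S_m/(1-x)$), and to the right of $i_1+1$ in $\pi^{(m)}$ (yielding $S'_m/(1-x)$). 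In the middle-zone subcase, the sum $\sum_{j=1}^{m-1}(1-x)^{-j}$ records which interior block $\pi^{(s)}$ absorbs the small letters below $i_1 - 1$, while the $1/(1-2x)$ factor encodes an $L$-type residue in $\pi^{(m)}$ to the right of $i_1-1$. In the right-of-$i_1+1$ subcase, the four summands of $S'_m$ correspond to the portion of $\pi$ up to $i_1-1$ being (i) empty, (ii) a nonempty decreasing $\pi^{(1)}$-prefix, (iii) nonempty in some interior $\pi^{(s)}$ for $2 \le s \le m-1$, or (iv) split further; the factors $A = F_{\{132,213,4231\}}$ and $B = F_{\{231,1243,1324\}}$ capture the tails appearing after $i_1 - 1$.

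The main obstacle is the meticulous verification that each geometric configuration corresponds bijectively to the $T$-avoiders of the claimed type; this requires simultaneously tracking all three pattern avoidances in every subcase. The middle-zone and right-of-$i_1+1$ analyses parallel the argument for Lemma~\ref{lem75a1} almost verbatim, except that the interior blocks $\pi^{(2)}, \ldots, \pi^{(m-1)}$ now contribute the additional $1/(1-x)^{j}$ factors that produce the geometric sum in $S_m$ and the $1/(1-x)^m$ denominator in $S'_m$. After assembling the contributions and solving the implicit equation for $D_1^m(x)$, the recursive $xD_1^m(x)$ term combines with the base to yield the stated closed form.
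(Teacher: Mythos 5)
Your structural setup is wrong in ways that matter for this lemma. In the $D_1^m$ setting the left-right maxima are forced to satisfy $i_j=i_1+j$: any letter $y>i_2$ that is not a left-right maximum lies in some $\pi^{(j)}$ with $j\ge 3$ and then $(i_1,i_2,i_j,y)$ is an occurrence of $1243$, so $\pi^{(m)}$ contains \emph{no} letters exceeding $i_2$ at all. Hence there is no ``decreasing prefix $i_m(i_m-1)\cdots(i_2+1)$'' inside $\pi^{(m)}$ and no ``middle zone between the decreasing top block and $i_1+1$'' of the kind your second subcase is built around; everything in $\pi^{(m)}$ other than $i_1+1$ is below $i_1$, and the paper writes $\pi^{(m)}=\alpha(i_1+1)\beta$ and splits primarily on $\beta$. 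The term $x^{m+1}L^m$ is the total contribution of the case $\beta=\emptyset$ (the paper's $E_1^m$), obtained by setting up and solving a family of recurrences $E_1^{m,j}$ indexed by which block $\pi^{(j)}$ contains $i_1-1$ (including $2\le j\le m-1$, a case your split on the position of $i_1-1$ omits). It is emphatically not the contribution of the base case $i_1=1$: when $i_1=1$ the avoider is the single permutation $1\,3\,4\cdots(m+1)\,2$, contributing just $x^{m+1}$. Nor are the blocks independent $\{213,231\}$-avoiders: an ascent in any $\pi^{(j)}$ with $j\le m-1$, followed by $i_{j+1}$ and then $i_1+1$, is a $1243$, so those blocks are decreasing, and $4231$ with $i_2$ as the ``4'' ties the value ranges of the blocks together.

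Your global bookkeeping also cannot produce the stated identity. If the subcase ``$i_1-1$ leftmost in $\pi^{(1)}$'' were a recursive term $xD_1^m(x)$ that is ``absorbed into the $1/(1-x)$ denominator,'' then solving would divide \emph{every} contribution, including your leading term, by $1-x$, whereas in the lemma $x^{m+1}L^m$ stands undivided and only $S_m+S'_m$ carries the prefactor; at the same time you attach $1/(1-x)$ to $S_m$ and $S'_m$ directly, so the factor is accounted for twice. In the paper the recursion-with-$x$ lives inside $E_1^m$ and inside $S_m$ — where \emph{two} recursive occurrences $xS_m$, coming from the placement of $i_1-2$, are exactly what create the $1/(1-2x)$, not an ``$L$-type residue to the right of $i_1-1$'' — while the outer $1/(1-x)$ multiplies only $S_m+S'_m$, and the four summands of $S'_m$ come from the $2\times 2$ split on whether $\pi^{(1)}\cdots\pi^{(m-1)}\alpha$ and $\beta'$ are empty or not, not from where the prefix of $\pi$ up to $i_1-1$ sits. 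As written, your proposal asserts the shapes of $S_m$ and $S'_m$ by analogy rather than deriving them, and the parts you do specify are inconsistent with the formula you are trying to prove.
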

\begin{proof}
Let $\pi=i_1\pi^{(1)}\cdots i_m\pi^{(m)}\in S_n(T)$ with $m$ left-right maxima such that $\pi^{(m)}$ has exactly one letter between $i_1$ and $i_2$. So, $i_j=i_1+j$ for all $j=2,3,\ldots,m$ and $\pi^{(m)}=\alpha(i_1+1)\beta$.
We denote the generating function for permutations $\pi$ with $\beta=\emptyset$ by $E_1^m$, and we denote the generating function for permutations $\pi$ with $\beta=\emptyset$ and $i_1-1 \in \pi^{(j)}$ by $E_1^{m,j}$, for $j=2,3,\ldots,m$. Then, by considering the position of letter $i-1$ in $\pi$ with $\beta=\emptyset$, we obtain
$$E_1^m=x^{m+1}+xE_1^m+\sum_{j=2}^{m-1}E_1^{m,j}+E_1^{m,m},$$
where
$$E_1^{m,s}=x^{m+2}+xE_1^{m,s}+\cdots+xE_1^{m,m-1}+x^{m+3}(L-1)$$
with $E_1^{m,m}=xE_1^m+x^{m+2}(L-1)$. Hence, by induction on $s$, we have
$$E_1^{m,s}=\sum_{i=0}^{m-s-1}\binom{m-s-1}x^{i+1}(1-2x)^{-i-1}(x^{m+1}+x^{m+2}(L-1)).$$
Thus, $E_1^m=x^{m+1}L^m$.

Denote the generating function for permutations $\pi$ with $\beta=\beta'(i-1)$ by $S_m$, and the generating function for permutations $\pi$ with $\beta=\beta'(i-1)\beta''$ with $\beta''\neq\emptyset$ by $S'_m$. Clearly, $D_1^m=E_1^m+(S_m+S'_m)/(1-x)$. By considering the position of the letter $i-2$ in the permutations counted by $S_m$, we obtain
$$S_m=x^{m+2}+xS_m+x^{m+3}\sum{j=1}^{m-1}(1-x)^{-j}+xS_m+x^{m+3}(L-1).$$
By considering the four possibilities where $\pi^{(1)}\cdots\pi^{(m-1)}\alpha$ and $\beta'$ are empty or not, we complete the proof.
\end{proof}

\subsubsection{A formula for ${D'}_0^m$}
\begin{lemma}\label{lem75c0}
Let $m\geq3$. Then ${D'}_0^m(x)=\frac{x^{m+2}}{1-x}(L+x(L+L^2+\cdots+L^m-1))$.
\end{lemma}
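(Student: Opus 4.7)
The plan is to follow the approach of Lemmas \ref{lem75a2} and \ref{lem75b1} in the new regime $m\ge 3$, $k=0$. Let $\pi=i_1\pi^{(1)}i_2\pi^{(2)}\cdots i_m\pi^{(m)}$ be a typical $T$-avoider counted by ${D'}_0^m(x)$; thus $i_2=i_1+3$ and $\pi^{(m)}$ contains the ascending pair $i_1+1,\,i_1+2$ as a subsequence.

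The first step is to record the structural constraints forced by the three forbidden patterns. A $1324$-argument identical to the one implicit in Lemma \ref{lem75b1} shows that, for $2\le j\le m-1$, no entry of $\pi^{(j)}$ can exceed $i_1$: otherwise $i_1,\,i_j,\,y,\,i_{j+1}$ would form a $1324$. Next, $1243$-avoidance applied to $i_1,(i_1+1),y,(i_1+2)$ forbids any $y>i_1+2$ between the marked pair in $\pi^{(m)}$, and $4231$-avoidance further restricts the interaction among the big letters of $\pi^{(m)}$. Together these yield a canonical form for $\pi$: a decreasing prefix of small letters (those $<i_1$) placed at the front of $\pi^{(1)}$, the $m+2$ marked letters $i_1,\ldots,i_m,i_1+1,i_1+2$, a possibly empty decreasing ``big block'' in $\pi^{(m)}$ before the marked pair, and an admissible distribution of remaining small letters across the blocks of $\pi^{(m)}$.

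I would next decompose ${D'}_0^m(x)$ so that the two summands of the claimed formula surface transparently. Pull out the factor $\frac{x^{m+2}}{1-x}$ to account for the $m+2$ marked letters together with an arbitrary decreasing prefix of small letters in $\pi^{(1)}$. The residual factor $L+x(L+L^2+\cdots+L^m-1)$ then comes from two contributions: (i) a baseline $L$ corresponding to the case in which the only big letters of $\pi^{(m)}$ are $i_1+1$ and $i_1+2$ themselves, with the remaining small letters of $\pi^{(m)}$ forming an admissible $\{213,231\}$-avoiding sequence enumerated by $L(x)$ in the spirit of Lemma \ref{lem75a0}; and (ii) a sum indexed by $j\in\{1,\ldots,m\}$ that records in which gap $(i_j,i_{j+1})$ one additional big-letter block lives. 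In the latter sum, each choice contributes a factor of $L^j$ arising from the independent $\{213,231\}$-avoiding placement of small letters across $j$ sub-intervals of $\pi^{(m)}$; the corrective $-1$ removes the doubly-counted empty-block case, and the extra factor of $x$ records the single new big letter. Assembling the two contributions gives the stated right-hand side.

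The main obstacle will be, as in every lemma of this section, the careful combinatorial bookkeeping: one must verify that every $T$-avoider falls into exactly one case of the decomposition and that every configuration of the prescribed form is genuinely a $T$-avoider. This mirrors the $E_k,F_k,E'_k,F'_k$-style case split of Lemma \ref{lem75a1} but with an additional layer of complexity coming from arbitrary $m\ge 3$. Once the case analysis is in place, the algebraic reassembly to the closed form $\frac{x^{m+2}}{1-x}\bigl(L+x(L+L^2+\cdots+L^m-1)\bigr)$ is a routine geometric-series computation.
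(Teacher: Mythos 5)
Your structural observations at the start are fine (the $1324$ argument does give $\pi^{(j)}<i_1$ for $j\le m-1$, and the $1243$ argument does exclude letters above $i_1+2$ between the marked pair), but the heart of your proposal --- the claim that after extracting $\frac{x^{m+2}}{1-x}$ the residual factor splits as a ``baseline $L$'' plus a sum over gaps $(i_j,i_{j+1})$ in which ``one additional big-letter block lives'', contributing $x\cdot L^j$ --- is not a proof and, as stated, is combinatorially wrong. For $m\ge 3$ and $k=0$ there are \emph{no} additional big letters at all: any non-left-right-maximal letter $w$ with $i_2<w<i_m$ is impossible, since if $w$ lies in some $\pi^{(j)}$ with $j\le m-1$ then $i_1\,i_j\,w\,i_{j+1}$ is a $1324$, and if $w$ lies in $\pi^{(m)}$ then $i_1\,i_2\,i_m\,w$ is a $1243$. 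So the letters above $i_1$ are exactly $i_1+1,i_1+2$ and the left-right maxima, your two cases do not partition the objects being counted, and the ``extra factor of $x$'' in the formula cannot be a ``new big letter''. The entire content of the lemma is the bookkeeping of the \emph{small} letters (those below $i_1$) among $\pi^{(1)},\dots,\pi^{(m-1)}$ and the three slots $\alpha,\beta,\gamma$ of $\pi^{(m)}=\alpha(i_1+1)\beta(i_1+2)\gamma$, and your proposal never derives how this produces the powers $L,\dots,L^m$ or the $-1$; the ``canonical form'' and the independence needed for your product factorization are asserted, not established.

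For comparison, the paper proves the lemma by conditioning on which block contains the next-smallest unplaced letter: writing ${D'}_0^m(x)=x^{m+2}+x{D'}_0^m(x)+\sum_{j=2}^m{D'}_0^{m,j}(x)$, where ${D'}_0^{m,j}$ records that $i_1-1$ lies in $\pi^{(j)}$, then analyzing the position of $i_1-2$ to get recurrences whose solution is ${D'}_0^{m,s}=x^{m+3}L^{m-s+1}$ (with the case $s=m$ handled separately via $H'_{m-1}(x)=x^{m+1}L^m$ from Lemma \ref{lem75a0}, which is where the terms $x^{m+3}L^m$, the two $(L-1)$ terms, and hence the $-1$ in the final formula come from). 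Thus the exponent of $L$ tracks the location of a \emph{small} letter, not a gap housing a big one, and the $\frac{1}{1-x}$ arises from solving the linear equation (the $x{D'}_0^m$ term for $i_1-1$ at the front of $\pi^{(1)}$), not from an a priori independent decreasing prefix. To repair your write-up you would need to set up and solve such a recurrence system (or give a genuinely justified bijective factorization), rather than reading the shape of the answer back into an unverified case split.
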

\begin{proof}
As in the proof of Lemma \ref{lem75b1}, by considering the position of the letter $i_1-1$ in permutations
$\pi=i_1\pi^{(1)}\cdots i_m\pi^{(m)}\in S_n(T)$ with $m$ left-right maxima and $\pi^{(m)}=\alpha(i_1+1)\beta(i_1+2)$, we obtain
$${D'}_0^m(x)=x^{m+2}+x{D'}_0^m+\sum_{j=2}^m{D'}_0^{m,j},$$
where ${D'}_0^{m,j}$ is the generating function for the $T$-avoiders $\pi=i_1\pi^{(1)}\cdots i_m\pi^{(m)}\in S_n(T)$ with $m$ left-right maxima and $\pi^{(m)}=\alpha(i_1+1)\beta(i_1+2)$ such that the letter $i_1-1$ belongs to $\pi^{(j)}$. Again, by considering the position of the letter $i_1-2$, we have
$${D'}_0^{m,j}=x^{m+3}+x{D'}_0^{m,j}+x{D'}_0^{m,j+1}+...+x{D'}_0^{m,m-1}+x^{m+4}(L-1)$$
with (see Lemma \ref{lem75a0})
$${D'}_0^{m,m}=x^{m+3}(L-1)+x^{m+2}(L-1)+x^2H'_{m-1}(x)=x^{m+3}(L-1)+x^{m+2}(L-1)+x^{m+3}L^m.$$
By induction on $s$, we see that $${D'}_0^{m,s}=x^{m+3}L^{m-s+1}.$$ Thus, by substituting the expression for ${D'}_0^{m,j}$ into the equation for ${D'}_0^m$, we complete the proof.
\end{proof}

\subsubsection{A formula for $D_k^m$ and ${D'}_k^m$} In the next two lemmas we study the generating functions $D_k^m(x)$ with $k\geq2$ and ${D'}_k^m$ with $k\geq1$.

As before, by considering all possible positions of the next smallest element whose position we have not yet fixed, we obtain the following results.

\begin{lemma}\label{lem75b2}
Let $m\geq3$ and $k\geq2$. Define $S_k^{m,m-1}$ (resp. $S_k^{m,m}$, ${E}_k^m$, ${E'}_k^m$) to be the generating function for the $T$-avoiders $\pi=i_1\pi^{(1)}\cdots i_m\pi^{(m)}\in S_n(T)$ with $m$ left-right maxima such that the letters between $i_1$ and $i_2$ in $\pi^{(m)}$ form a decreasing sequence $j_1j_2\cdots j_k$ and the letter $i_1-1$ lies between $j_{k-1}$ and $j_k$ (resp. $i_1-1$ lies on the right side
of $j_k$, $\pi^{(m)}$ has no letter smaller than $i_1$ on the right side of the letter $j_1$, $\pi^{(m)}$ has at least one letter smaller than $i_1$ on the right side of the letter $j_1$). Then $D_k^m(x)=E_k^m+{E'}_k^m$, where $E_k^m=\frac{x^{m+k}}{(1-x)^m}$,
\begin{align*}
{E'}_k^m&=x{E'}_k^m+\frac{(k-2)x^{m+k+1}}{1-x}+S_k^{m,m-1}+{S'}_k^{m,m},\\
S_k^{m,m-1}&=x^{m+k+1}(L-1)+T_k^m,\\
T_k^m&=x^{m+k+1}+xT_k^m+x^{m+k+2}\sum_{j=2}^{m-1}(1-x)^{-j}+\frac{(k-1)x^{m+k+2}}{1-x}+xT_k^m+x^{m+k+2}(L-1),\\
S_k^{m,m}&=x^{m+k+1}(B-1)+x^{m+k+1}\frac{(k+1)x}{1-x}(A-1)+\frac{x^{m+k+2}}{1-x}(A-1)\left(1+\frac{(k+1)x}{1-x}\right)\\
&+x^{m+k+1}\left(\frac{1}{(1-x)^{m-2}}-1\right)\frac{A-1}{1-x}+\frac{x^{m+k+2}(A-1)}{(1-x)^2}\left(\frac{1}{(1-x)^{m-2}}-1\right)+{T'}_k^m,\\
{T'}_k^m&=x^{m+k+1}+x{T'}_k^m+x^{m+k+2}\sum_{j=2}^{m-1}(1-x)^{-j}+\frac{kx^{m+k+2}}{1-x}+x{T'}_k^m+x^{m+k+2}(L-1).
\end{align*}
\end{lemma}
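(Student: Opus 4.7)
The plan is to adapt the case-analysis arguments used in Lemmas~\ref{lem75a1}--\ref{lem75a2} to the general $m \ge 3$, $k \ge 2$ setting. Starting from a $T$-avoider $\pi = i_1\pi^{(1)}\cdots i_m\pi^{(m)}$ counted by $D_k^m$, I would first exploit the forbidden patterns $1324$ and $1243$ to show that $\pi^{(1)},\ldots,\pi^{(m-1)}$ lie below $i_1$ and are each decreasing, and that the letters of $\pi^{(m)}$ below $i_1$ can only appear on specified sides of the fixed decreasing spine $j_1\cdots j_k$. This structural reduction lets me split $D_k^m$ into $E_k^m + {E'}_k^m$ according to whether $\pi^{(m)}$ has any letter smaller than $i_1$ to the right of $j_1$.

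The $E_k^m$ case is immediate: the surviving shape is $m$ independent (possibly empty) decreasing blocks living in the gaps to the left of $j_1$, giving the closed form $x^{m+k}/(1-x)^m$. For ${E'}_k^m$ I would condition on the position of the letter $i_1-1$, producing exactly four placements: leftmost in $\pi^{(1)}$ (contributing $x{E'}_k^m$); between $j_s$ and $j_{s-1}$ for some $s \in \{2,\ldots,k-1\}$ (contributing $(k-2)x^{m+k+1}/(1-x)$, one summand per $s$, with the remaining letters frozen by the $4231$ and $1243$ conditions); between $j_{k-1}$ and $j_k$ (defining $S_k^{m,m-1}$); and to the right of $j_k$ (defining $S_k^{m,m}$). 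In each case the specific forbidden pattern that forces the resulting shape should be called out explicitly.

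The subsidiary generating functions $S_k^{m,m-1}$, $T_k^m$, $S_k^{m,m}$, ${T'}_k^m$ are then obtained by iterating the device: locate the next unplaced small letter ($i_1-2$, then $i_1-3$, etc.), enumerate its allowed positions, and in each position identify the tail as either a decreasing block, an empty fragment, or a copy of one of the catalogued avoider classes with generating function $L$, $A$, or $B$. The splitting $S_k^{m,m-1} = T_k^m + x^{m+k+1}(L-1)$ arises from asking whether the stretch of $\pi^{(m)}$ between $i_1-1$ and $j_k$ is empty or an ascending run, and analogously for $S_k^{m,m} = {T'}_k^m + \cdots$; the $A{-}1$ and $B{-}1$ terms account for the four ways in which $\pi^{(1)}\cdots\pi^{(m-1)}\alpha$ and the leading fragment $\beta'$ can each be empty or nonempty.

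The main obstacle will be the pattern-bookkeeping rather than any single nontrivial step: several subcases look superficially alike, but the admissible shape of the residual fragment differs subtly, so one must repeatedly invoke exactly the right one among $1243$, $1324$, $4231$ to pin down whether a given block is forced to be empty, strictly decreasing, or free to be any $\{132,231\}$-, $\{132,213,4231\}$-, or $\{231,1243,1324\}$-avoider. Once every subcase has been reduced to one of these standard building blocks, substituting the known generating functions $L$, $A$, $B$ assembles the contributions into the stated system of equations, proving the lemma.
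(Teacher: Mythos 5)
Your plan is essentially the paper's own argument: the paper gives no detailed proof of this lemma, stating only that it follows ``by considering all possible positions of the next smallest element whose position we have not yet fixed,'' which is exactly your scheme of splitting $D_k^m$ into $E_k^m+{E'}_k^m$, conditioning on where $i_1-1$ (then $i_1-2$, etc.) sits relative to the spine $j_1\cdots j_k$, and reducing each residual block to the catalogued avoider classes with generating functions $L$, $A$, $B$, just as in Lemmas~\ref{lem75a1} and~\ref{lem75b1}. So your proposal is correct and takes essentially the same route.
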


\begin{lemma}\label{lem75c1}
Let $m\geq3$ and $k\geq1$.
Define $U_k^m$ to be the generating function for the $T$-avoiders $\pi=i_1\pi^{(1)}\cdots i_m\pi^{(m)}\in S_n(T)$ with $m$ left-right maxima such that the letters between $i_1$ and $i_2$ in $\pi^{(m)}$ form a subsequence $(i_2-1)\cdots(i_1+3)(i_1-1)(i_1+1)(i_1+2)$ of $k+1$ terms. Then
\begin{align*}
{D'}_k^m&=x^{m+k+2}+x{D'}_k^m+x^{m+k+3}\sum_{j=2}^{m-1}(1-x)^{-j}+\frac{kx^{m+k+3}}{1-x}\\
&+x^{m+k+3}(L-1)+U_k^m+x^{m+k+2}(L-1),\\
U_k^m&=x^{m+k+3}+xU_k^m+x^{m+k+4}\sum_{j=2}^{m-1}(1-x)^{-j}+\frac{kx^{m+k+4}}{1-x}\\
&+x^{m+k+4}(L-1)+xU_k^m.
\end{align*}
\end{lemma}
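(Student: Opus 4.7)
The plan is to establish both recurrences in Lemma~\ref{lem75c1} by extending the positional-analysis technique used throughout the preceding lemmas of this subsection. For a $T$-avoider $\pi=i_1\pi^{(1)}i_2\pi^{(2)}\cdots i_m\pi^{(m)}$ counted by ${D'}_k^m(x)$, the fixed skeleton consists of the $m$ left-right maxima together with the prescribed subsequence $(i_2-1)(i_2-2)\cdots(i_1+3)(i_1+1)(i_1+2)$ of length $k+2$ inside $\pi^{(m)}$, for a total of $m+k+2$ already-placed letters. I would split the count according to the location of the letter $i_1-1$, or, in the base case $i_1=1$, its absence. If $i_1=1$, then avoidance of $1324$ and $4231$ (together with the fixed subsequence in $\pi^{(m)}$) forces every $\pi^{(j)}$ to be empty, contributing exactly $x^{m+k+2}$.

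Assume next $i_1\ge 2$. First, when $i_1-1$ is the leading letter of $\pi^{(1)}$, the remaining avoider has precisely the same skeleton, contributing $x\,{D'}_k^m$. Second, when $i_1-1\in\pi^{(j)}$ for some $j\in\{2,\dots,m-1\}$, avoidance forces $\pi^{(1)}=\cdots=\pi^{(j-1)}=\emptyset$ and requires the part of $\pi^{(j)}$ to the left of $i_1-1$ to be a decreasing sequence of letters below $i_1$; summing $x^{m+k+3}/(1-x)^{j-1}$ over $j$ yields $x^{m+k+3}\sum_{j=2}^{m-1}(1-x)^{-j}$. Third, when $i_1-1\in\pi^{(m)}$, I subdivide by its position relative to the prescribed subsequence: insertion among the descending block $(i_1+k+2)\cdots(i_1+3)$ gives $k$ legal slots and combines with an admissible $\{213,231\}$-avoiding tail to produce $\frac{k x^{m+k+3}}{1-x}+x^{m+k+3}(L-1)$; placement strictly between $(i_1+3)$ and $(i_1+1)$ is, by construction, the object enumerated by $U_k^m$; placement to the right of $(i_1+2)$ with a further decreasing tail contributes $x^{m+k+2}(L-1)$. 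Summing all pieces yields the first identity of the lemma.

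For $U_k^m(x)$, the skeleton now has $m+k+3$ fixed letters and the letter $i_1-1$ is already placed. I would run the identical positional analysis, this time on $i_1-2$: the base case $i_1=2$ contributes $x^{m+k+3}$; placing $i_1-2$ as the first letter of $\pi^{(1)}$ gives $x U_k^m$; placing it in $\pi^{(j)}$ for $2\le j\le m-1$ (with preceding blocks empty and a decreasing prefix to the left) contributes $x^{m+k+4}\sum_{j=2}^{m-1}(1-x)^{-j}$; and placing it in $\pi^{(m)}$, among the descending block or in its tail, supplies the terms $\frac{k x^{m+k+4}}{1-x}$ and $x^{m+k+4}(L-1)$. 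Gathering these contributions reproduces the second identity.

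The main obstacle is not the bookkeeping but the verification that each case is both exhaustive and pattern-safe. The delicate points are that a letter dropped into $\pi^{(j)}$ with $j<m$ must not form $1324$ or $4231$ with two letters of the fixed subsequence inside $\pi^{(m)}$, and that any letter inserted inside $\pi^{(m)}$ between the descending block and $(i_1+1)(i_1+2)$ must not complete a $1243$ with $i_1$ and the descending block. Once these restrictions are worked out, the appearance of $L=\frac{1-x}{1-2x}$ is forced because every admissible ``tail'' reduces to a $\{213,231\}$-avoider, and the remaining simplifications reduce to the shifted geometric series already encountered in Lemmas \ref{lem75b1}--\ref{lem75b2}.
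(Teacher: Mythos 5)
Your overall strategy is exactly the one the paper invokes (it omits the details, saying these recurrences follow ``by considering all possible positions of the next smallest element whose position we have not yet fixed''), and your base case and the recursive term $x\,{D'}_k^m$ are fine. But as written the proposal does not actually reproduce the stated identities, and the failures are concrete. First, in the middle-block case $i_1-1\in\pi^{(j)}$, $2\le j\le m-1$, the structure you describe (earlier blocks empty, a decreasing prefix of $\pi^{(j)}$ below $i_1$) yields a contribution $x^{m+k+3}/(1-x)$ that does not depend on $j$; your per-$j$ expression $x^{m+k+3}(1-x)^{-(j-1)}$ neither follows from that description nor sums to the lemma's term, since $\sum_{j=2}^{m-1}(1-x)^{-(j-1)}=\sum_{j=1}^{m-2}(1-x)^{-j}\ne\sum_{j=2}^{m-1}(1-x)^{-j}$. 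The $j$-dependent power comes from additional rigid structure forced on the rest of the permutation once $i_1-1$ sits in $\pi^{(j)}$ (several independent runs, as in the treatment of ${D'}_0^{m,j}$ in Lemma \ref{lem75c0} and of $T_k^m$, ${T'}_k^m$ in Lemma \ref{lem75b2}), and you have not identified it.

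Second, the recurrence for $U_k^m$ contains the term $xU_k^m$ \emph{twice}, but your case list produces it only once (from ``$i_1-2$ is the first letter of $\pi^{(1)}$''); gathering the contributions you name gives the second identity minus one $xU_k^m$. The missing case is the placement of $i_1-2$ inside $\pi^{(m)}$ immediately adjacent to the already-fixed letter $i_1-1$, which again yields a copy of the same structure (compare the two recursive terms in $F'_k$ of Lemma \ref{lem75a1} and in $E'''_k$ of Lemma \ref{lem75a2}); note also that, unlike the ${D'}_k^m$ equation, the $U_k^m$ equation has no $x^{m+k+3}(L-1)$-type term, a structural asymmetry your ``identical positional analysis'' glosses over. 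Finally, you explicitly defer the verification that the cases are exhaustive and pattern-safe; for this triple that verification (e.g., why letters above $i_2$ are excluded, which slots for $i_1-1$ inside $\pi^{(m)}$ are legal, and why the tails reduce to $\{213,231\}$-avoiders counted by $L$) is the actual content of the proof, so the argument as it stands is an outline with two steps that do not check out rather than a proof.
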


\subsubsection{A formula for $F_T(x)$}
Now, we are ready to find an explicit formula for the generating function $F_T(x)$ by  getting a formula for $G_m(x)$.

\begin{theorem}\label{th75a}
Let $T=\{1243,1324,4231\}$. Then
$$F_T(x)=\frac{x}{1-3x+x^2}-\frac{2-4x-4x^2-x^3}{(1-2x)^2}
+\frac{3-20x+55x^2-83x^3+74x^4-38x^5+12x^6-2x^7}{(1-x)^8}.$$
\end{theorem}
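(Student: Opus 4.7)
The plan is the left-right-maxima decomposition $F_T(x)=\sum_{m\ge 0}G_m(x)$, where $G_m(x)$ is the generating function for $T$-avoiders with exactly $m$ left-right maxima. The cases $G_0(x)=1$, $G_1(x)=xB$, and $G_2(x)$ are already in hand (the last from Proposition \ref{pro75a1}), so the real work is $G_m(x)$ for $m\ge 3$ together with the final summation in $m$.

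For $m\ge 3$, I would split a $T$-avoider $\pi=i_1\pi^{(1)}\cdots i_m\pi^{(m)}$ by the pattern formed by the letters of $\pi^{(m)}$ that lie in the interval $(i_1,i_2)$: either none (the case $i_2=i_1+1$, counted by $H_m(x)$ of Lemma \ref{lem75a0}), or a pure decreasing run of length $k\ge 1$ (counted by $D_k^m(x)$ of Lemmas \ref{lem75b1} and \ref{lem75b2}), or the ``decreasing-then-jump'' shape spelled out in Lemmas \ref{lem75c0} and \ref{lem75c1} and counted by ${D'}_k^m(x)$ for $k\ge 0$. Thus
\[
G_m(x)=H_m(x)+\sum_{k\ge 1}D_k^m(x)+\sum_{k\ge 0}{D'}_k^m(x)\qquad (m\ge 3).
\]

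The first step is to solve the recurrences in Lemmas \ref{lem75a0}, \ref{lem75b1}, \ref{lem75c0}, \ref{lem75b2}, \ref{lem75c1} in closed form. Each is of the shape $(1-x)Y=P(x)+(\text{tail in smaller indices})$, so the auxiliary quantities $E'_k$, $F'_k$, $T_k^m$, $T'_k^m$, $U_k^m$, $E_1^{m,s}$, $\ldots$ can be eliminated by iterated substitution; substituting $L=(1-x)/(1-2x)$, $A=1+x/(1-x)^2$, and the rational expression for $B$ then expresses each $H_m(x)$, $D_k^m(x)$, ${D'}_k^m(x)$ as a finite linear combination of $x^a(1-x)^{-b}$ and $x^a(1-2x)^{-b}$ whose dependence on $m$ and $k$ is concentrated in polynomial coefficients and in geometric factors $x^m$, $(1-x)^{-m}$, $L^m$, $x^k$, $x^k/(1-x)^k$. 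The second step carries out the inner summations $\sum_{k\ge 2}D_k^m(x)$ and $\sum_{k\ge 1}{D'}_k^m(x)$; each is a finite combination of geometric series in $k$ with ratios drawn from $\{x,\;x/(1-x)\}$ and therefore evaluates in closed form, giving $G_m(x)$ as an explicit rational function in $x$ with a handful of $m$-dependent terms. The third step sums $\sum_{m\ge 3}G_m(x)$, again a finite combination of geometric series in $m$, and adds $G_0(x)+G_1(x)+G_2(x)$; simplifying the resulting rational function produces the claimed three-term formula whose denominators $1-3x+x^2$, $(1-2x)^2$, and $(1-x)^8$ trace back respectively to the origins $L$, $B$, and $A$.

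The principal obstacle is bookkeeping, not conceptual: five building blocks, each with several summands and auxiliary generating functions, produce a large rational expression whose reduction is mechanical but sufficiently voluminous that a computer algebra system should be used for the final partial-fraction simplification. The compact denominator shape of the stated answer — in particular, the absence of any $L^m$- or $(1-x)^{-m}$-tails in the final $F_T(x)$ — serves as a useful consistency check: all contributions involving $L$, $A$, or $B$ with an $m$-dependent exponent must cancel once the geometric sum in $m$ is performed, leaving only the three ``residual'' denominators above.
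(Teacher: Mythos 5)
Your plan reproduces the paper's own proof: the same left-right-maxima decomposition with $G_0=1$, $G_1=xB$, $G_2$ from Proposition \ref{pro75a1}, the same splitting of $G_m(x)$ for $m\ge3$ according to whether the letters of $\pi^{(m)}$ lying in $(i_1,i_2)$ are absent, purely decreasing, or decreasing-then-increasing (i.e.\ exactly Lemmas \ref{lem75a0}--\ref{lem75c1}), followed by geometric summations in $k$ and $m$ and a mechanical rational simplification. Only two bookkeeping corrections are needed when you execute it: since the paper's $H_\mu(x)$ counts avoiders with $\mu+1$ left-right maxima, the $i_2=i_1+1$ case of an $m$-maxima avoider contributes $H_{m-1}(x)$, not $H_m(x)$; and ${D'}_k^m(x)$ as defined only covers increasing tails of length exactly two, so the tails of length $s>2$ force an extra factor $\frac{1}{1-x}$ on that sum, matching the paper's $G_m(x)=H_{m-1}(x)+\sum_{k}D_k^m+\frac{1}{1-x}\sum_{k}{D'}_k^m$.
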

\begin{proof}
Fix $m\geq3$. Let $G'_m(x)$ be the generating function for the $T$-avoiders $\pi=i_1\pi^{(1)}\cdots i_m\pi^{(m)}\in S_n(T)$ with $m$ left-right maxima and $i_2>i_1+1$. Let $\gamma$ be the subsequence of letters of $\pi^{(m)}$ that are larger than $i_1$ and smaller than $i_2$. Since $\pi$ avoids $T$, $\gamma$ can be written as either
$(i_2-1)(i_2-2)\cdots(i_1+1)$ or $(i_2-1)(i_2-2)\cdots(i_1+s+1)(i_1+1)(i_1+2)\cdots(i_1+s)$ for $s\geq2$. It follows that
$$G_m(x)=H_{m-1}(x)+\sum_{k\geq0}D_k^m+\frac{1}{1-x}\sum_{k\geq1}{D'}_k^m.$$
Thus,
$$\sum_{m\geq3}G_m(x)=\sum_{m\geq2}H_m(x)+\sum_{m\geq3}\sum_{k\geq0}D_k^m+\frac{1}{1-x}\sum_{m\geq3}\sum_{k\geq1}{D'}_k^m.$$
By Lemmas \ref{lem75a0} and \ref{lem75b1}-\ref{lem75c1}, we obtain after simplifying
$$\sum_{m\geq3}G_m(x)=\frac{x^3p(x)}{(1-x)^8(1-2x)^2(1-3x+x^2)},$$
where $p(x)=1-10x+49x^2-149x^3+296x^4-403x^5+408x^6-322x^7+187x^8-74x^9+18x^{10}-2x^{11}$.
Since $$F_T(x)=1+xB+G_2(x)+\sum_{m\geq3}G_m(x),$$
and we have determined each of the summands,
the result follows.
\end{proof}

\subsection{Case 76: $\{3412,1324,2341\}$} Here we use the fact that $$F_{\{132,2341,3412\}}(x)=F_{\{213,2341,3412\}}(x)=\frac{1-3x+x^2}{(1-x)^2(1-2x)},$$ denoted $A$ for short (followed from the main result of see \cite{MV}).
\begin{lemma}\label{lem76a1}
The generating function $G_2(x)$ for $T$-avoiders with $2$ left-right maxima is given by
$$G_2(x)=\frac{x^2(1-7x+23x^2-40x^3+39x^4-22x^5+9x^6-4x^7)}{(1-x)^6(1-2x)^2}.$$
\end{lemma}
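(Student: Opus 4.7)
My plan is to decompose a $T$-avoider $\pi$ with exactly two left-right maxima as $\pi = i\pi' n\pi''$ with $i<n$, so that $\pi'<i$ and $\pi''<n$, and to exploit the structural constraints that each of $\{1324,2341,3412\}$ forces on the pieces $\pi'$ and $\pi''$. The enumeration will use the generating function $A=\frac{1-3x+x^2}{(1-x)^2(1-2x)}$ for $\{132,2341,3412\}$-avoiders and simple rational functions to count restricted shapes on the $\pi''$ side.

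First I would pin down the structure of $\pi'$. Appending $n$ to any $132$ in $\pi'$ produces a $1324$ in $\pi$, so $\pi'$ must avoid $132$; together with the inherited avoidance of $2341$ and $3412$, this places $\pi'$ in the class counted by $A$. Moreover, if $\pi''$ contains a letter smaller than some ascent bottom of $\pi'$, then that ascent together with $n$ and the smaller letter forms a $2341$; so whenever $\pi''$ has such a letter, $\pi'$ is additionally forced to be decreasing, contributing $\frac{1}{1-x}$ instead of $A$.

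Next I would pin down the structure of $\pi''$. Write $\pi'' = \beta_1\alpha_1\beta_2\alpha_2\cdots$ as the interleaving of its runs of ``large'' letters (those $>i$, with overall subsequence $\beta$) and ``small'' letters (those $<i$, overall subsequence $\alpha$). Applying $3412$ with first two letters $i,n$ to an ascending pair of small letters in $\pi''$ forces $\alpha$ to be decreasing; applying $1324$ with $i$ in the first slot, and $3412$ internally to $\pi''$, restricts $\beta$ to a decreasing run possibly followed by a single rise, and further restricts how $\beta$ may interleave with $\alpha$. I would organize the case analysis around the dichotomies (i) $i=n-1$ versus $i<n-1$ (which controls whether $\beta$ is empty), (ii) $\alpha=\emptyset$ versus $\alpha\ne\emptyset$, and (iii) whether $\pi'$ is decreasing or has an ascent.

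Each subcase then contributes a product of $A$, $L=\frac{1-x}{1-2x}$, and elementary factors $\frac{x^j}{(1-x)^k}$, with the leading $x^2$ tracking the two left-right maxima. The main obstacle will be to run an exhaustive case split cleanly, without double-counting at the boundaries where $\pi'$ admits an ascent and $\pi''$ simultaneously carries both a nontrivial $\beta$ and a nontrivial $\alpha$; once all contributions are listed, summing over the common denominator $(1-x)^6(1-2x)^2$ and simplifying is routine and should deliver the claimed numerator $x^2(1-7x+23x^2-40x^3+39x^4-22x^5+9x^6-4x^7)$.
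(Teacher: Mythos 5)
There is a genuine gap, and it sits exactly where the real work of this lemma lies. Your preliminary observations are fine and agree with the paper's setup: $\pi'$ must avoid $132$ (so it lies in the class counted by $A$), the letters of $\pi''$ below $i$ must be decreasing (via $3412$ with $i,n$ as the ``34''), and an ascent in $\pi'$ together with $n$ and a later small letter creates a $2341$. But your structural claim about the large letters of $\pi''$ is false: it is not true that $\beta$ is ``a decreasing run possibly followed by a single rise.'' For example, $\pi=2\,7\,1\,3\,4\,5\,6\in S_7(T)$ has two left-right maxima, $\alpha=\{1\}$, and $\beta=3456$ is increasing of length $4$. What the patterns actually force is only that the large letters preceding a small letter form a decreasing sequence, while the large letters after the last small letter merely avoid $\{213,2341,3412\}$ (an $A$-class object); in addition there are cross constraints you have not accounted for, e.g.\ a $1324$ whose ``1'' and ``3'' lie in $\pi'$, whose ``2'' is a small letter of $\pi''$ and whose ``4'' is a large letter of $\pi''$, which couples the shape of $\pi'$ to how $\alpha$ and $\beta$ interleave.

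Because of these couplings, the enumeration does not reduce to a finite list of subcases each contributing a product of $A$, $L$ and factors $x^j/(1-x)^k$, and the difficulty is not double counting at boundaries. The paper's own derivation is intrinsically recursive: it classifies by the position of $i-1$ (leftmost in $\alpha$, elsewhere in $\alpha$, or in $\beta$), which already produces the term $xG_2(x)$ in the equation for $G_2(x)$, and the case $i-1\in\beta$ requires a further family $B_d$ (by the number of large letters before $i-1$) satisfying $B_d=xB_{d-1}$, whose base cases $B_0,B_1$ are computed through a chain of auxiliary generating functions $K,K',K'',K'''$ obtained by repeatedly locating $i-2$, $i-3$, etc. Your proposal contains no mechanism for setting up or solving these recursions, and the three dichotomies you list ($i=n-1$ or not, $\alpha$ empty or not, $\pi'$ decreasing or not) do not separate the hard configurations in which letters just below $i$ sit inside $\pi''$ interleaved with larger letters. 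As written, the plan would stall precisely at the computation that produces the nontrivial part of the numerator, so the claimed ``routine'' summation cannot be carried out from what you have.
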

\begin{proof}
Suppose $\pi=i\al n\be \in S_n(T)$ with $2$ left-right maxima. If $i=1$, then $\pi=1n\be$
and where $\be$ avoids $\{213,2341,3412\}$, giving a contribution to $G_2(x)$ of $x^2A$.
Now suppose $i>1$. If $i-1$ is the leftmost letter of $\al$, we obtain a contribution of
$xG_2(x)$ by deleting $i-1$. If $i-1 \in \al$ and is not the leftmost letter, then  $\pi$
decomposes as $\pi=i\alpha'(i-1)\alpha''n\beta'\beta''$,
where $\beta'>i>i-1>\beta''>\alpha'>\alpha''$ (to avoid 1324) and $\beta''$ is decreasing
(to avoid 3412), as in Figure \ref{figAK2}.
\begin{figure}[htp]
\begin{center}
\begin{pspicture}(4,3.6)
\psset{xunit=.6cm}
\psset{yunit=.4cm}
\psline[linecolor=lightgray](4,2)(4,6.5)
\psline[linecolor=lightgray](2,4)(6,4)
\psline(0,2)(0,4)(2,4)(2,0)(4,0)(4,2)(0,2)
\psline(4,6.5)(6,6.5)(6,8.5)(4,8.5)(4,6.5)
\psline(6,4)(8,4)(8,6)(6,6)(6,4)
\psdots[linewidth=1.5pt](0,6.5)(2,6)(4,8.5)
\rput(1,3){\textrm{{\footnotesize $\al'\ne \emptyset$}}}
\rput(3,1){\textrm{{\footnotesize $\al''$}}}
\rput(5,7.5){\textrm{{\footnotesize $\be'$}}}
\rput(7,5){\textrm{{\footnotesize $\be''\downarrow$}}}
\rput(-.4,6.8){\textrm{{\footnotesize $i$}}}
\rput(1.2,6.0){\textrm{{\footnotesize $i\!-\!1$}}}
\rput(3.6,8.8){\textrm{{\footnotesize $n$}}}
\end{pspicture}
\caption{An avoider with 2 left-right maxima, $i-1$ before $n$ but not adjacent to $i$}\label{figAK2}
\end{center}
\end{figure}
Consider four cases according as $\alpha'',\beta''$ are empty or not:

- $\alpha'',\,\beta''$ both empty. Note that $\al'$  is nonempty and avoids $\{132,2341,3412\}$ and $\be'$ avoids $\{213,2341,3412\}$. So the contribution is $x^3(A-1)A$.

- $\alpha''=\emptyset,\ \beta''\ne \emptyset$. Here, $\be'$ is decreasing (to avoid 2341),
while $\al'$ is nonempty and avoids $\{132,2341,3412\}$. So the contribution is $x^4(A-1)/(1-x)^2$.

- $\alpha''\ne\emptyset,\ \beta'' = \emptyset$. Here, $\al'$ is decreasing (to avoid 2341)
and nonempty, $\al''$ is also decreasing (to avoid 3412) and nonempty, and $\be'$ avoids $\{213,2341,3412\}$. So the contribution is $x^5A/(1-x)^2$.

- $\alpha''\ne\emptyset,\ \beta'' \ne \emptyset$. Here, $\al'$ and $\be'$ are decreasing
(both to avoid 2341) and $\al'$ is nonempty, $\al''$ is also decreasing (to avoid 3412) and nonempty, and $\be''$ is nonempty (and decreasing, as always). So the contribution is $x^6/(1-x)^4$.

Next, suppose $i>1$ and $i-1 \in \be$ so that $\pi=i \al n \be' (i-1) \be''$.
Then $\be'>i$ since $j \in \be'$ with $j<i$ makes $inj(i-1)$ a $3412$.
For $d\ge 0$, let $B_d$ be the generating function for such permutations where $\be'$ has $d$ letters.
\begin{lemma}\label{lem76aB0}
We have
 $$B_0=\frac{x^3(1-5x+12x^2-13x^3+4x^4+4x^5-4x^6)}{(1-x)^4(1-2x)^2}\,.$$
\end{lemma}
\begin{proof}
First, let $K$ be the generating function for $T$-avoiders $i(i-2)(i-4)\pi' (i+1)(i-1)(i-3)\pi''$ with $2$ left-right maxima such that $i\geq5$ where $n=i+1$. Let us write an equation for $K$. When $i=5$ the contribution is $x^6$.
Otherwise, we consider the position of $i-5$. If $i-5$ is either the leftmost letter of $\pi'$
or of $\pi''$ then we have contribution of $xK$. Otherwise $\pi'=\alpha'(i-5)\alpha''$ with $\alpha'\neq\emptyset$ such that $i-5>\pi''>\alpha'>\alpha''$ with $\alpha''$ and $\pi''$ are decreasing. By consider whether $\alpha''$ is empty or not, we obtain the contribution $x^7(A-1+x^2/(1-x)^2)/(1-x)$. Thus
$$K=2xK+x^6+x^7(A-1+x^2/(1-x)^2)/(1-x)\,,$$
which leads to
$$K=\frac{(1-4x+6x^2-2x^3)x^6}{(1-2x)^2(1-x)^2}\,.$$

Next, let $K'$ be the generating function for $T$-avoiders $i(i-2)\pi'n(i-1)(i-3)\pi''$ with $2$ left-right maxima such that $n>i\geq4$. Let us write an equation for $K'$. When $i=4$ we have a contribution of $x^5A$. If $i-4$ is the leftmost letter of $\pi'$ then we have a contribution $K$, and if $i-4$ is leftmost letter of $\pi''$ then we have a contribution $xK'$. Otherwise $\pi'=\alpha'(i-4)\alpha''$  with $\alpha'\neq\emptyset$ such that $i-4>\pi''>\alpha'>\alpha''$ with $\alpha''$ and $\pi''$  decreasing. By considering whether $\alpha''$ is empty or not, we obtain the contribution $x^6(A-1+x^2/(1-x)^2)/(1-x)$. Thus $K'=x^5A+xK'+K+x^6(A-1+x^2/(1-x)^2)/(1-x)$, which leads to
$$K'=\frac{(1-4x+6x^2-2x^3-2x^4)x^5}{(1-2x)^2(1-x)^3}\,.$$

Next, let $K''$ be the generating function for $T$-avoiders $i(i-2)\pi'n(i-1)\pi''$ with $2$ left-right maxima such that $n>i\geq3$. By using similar techniques as for finding formulas $K$ and $K'$, we have
\begin{align*}
K''=x^4A+xK''+K'+x^5A(A-1)+\frac{x^7A}{(1-x)^2}+\frac{x^6(A-1)}{1-x}+\frac{x^8}{(1-x)^3}\,,
\end{align*}
which implies
$$K''=\frac{(1-3x+4x^2)x^4}{(1-2x)^2(1-x)^2}\,.$$

Lastly, let $K'''$ be the generating function for $T$-avoiders $i\pi'n(i-1)j(i-2)\pi''$ with
$2$ left-right maxima such that $n>j>i\geq3$.
By using similar techniques as for finding formulas $K$ and $K'$, we obtain
$$K'''=\frac{(1-x+2x^3)x^5}{(1-2x)(1-x)^2}\,.$$

Now, we are ready to write an equation for the generating function $B_0$ for $T$-avoiders
$i\pi'n(i-1)\pi''$ with $2$ left-right maxima such that $n>i\geq2$. Again, we decompose
the structure by looking at position of $i-2$. If $i=2$ then we have a contribution of $x^3A$,
if $i-2$ is leftmost letter of $\pi'$ then we have a contribution of $K''$,  if $i-2$ belongs
to $\pi''$ and is not the leftmost letter of it, we have a contribution
of $x^4A(A-1)+\frac{x^6A}{(1-x)^2}+\frac{x^5(A-1)}{1-x}+\frac{x^7}{(1-x)^3}$, and
if $i-2$ belongs to $\pi''$ we have a contribution of $K'''$. Thus,
$$B_0=x^3A+K''+x^4A(A-1)+\frac{x^6A}{(1-x)^2}+\frac{x^5(A-1)}{1-x}+\frac{x^7}{(1-x)^3}+K'''\,,$$
which completes the proof.
\end{proof}

One can similarly show (details omitted) that
\[
 B_1=\frac{x^4(1-2x+2x^2)(1-2x+2x^2-2x^4)}{(1-x)^4(1-2x)^2}\, ,
\]
and $B_d=xB_{d-1}$ for all $d\geq2$. Hence, the total contribution for the case $i>1$ and $i-1 \in \be$ is given by $B$, where $B-B_1-B_0=x(B-B_0)$, which leads to
$$B=\frac{x^3(1-5x+13x^2-17x^3+9x^4+2x^5-4x^6)}{(1-x)^5(1-2x)^2}.$$

Adding all the contributions,
$$G_2(x)=x^2A+xG_2(x)+x^3(A-1)A+ \frac{x^4(A-1)}{(1-x)^2} +\frac{x^5 A}{(1-x)^2}+\frac{x^6}{(1-x)^4}+B,$$
with solution for $G_2(x)$ as claimed.
\end{proof}

\begin{theorem}\label{th76a}
Let $T=\{3412,1324,2341\}$. Then
$$G_T(x)=\frac{1-10x+44x^2-110x^3+173x^4-176x^5+114x^6-45x^7+12x^8-4x^9}{(1-x)^7(1-2x)^2}.$$
\end{theorem}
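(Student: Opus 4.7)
\emph{Plan.} Let $G_m(x)$ be the generating function for $T$-avoiders with exactly $m$ left-right maxima, so $F_T(x)=\sum_{m\ge 0}G_m(x)$. Clearly $G_0(x)=1$, and since none of the patterns $3412,1324,2341$ has its largest entry in position $1$, a permutation $n\pi'$ avoids $T$ iff $\pi'$ does; hence $G_1(x)=xF_T(x)$. Lemma \ref{lem76a1} furnishes $G_2(x)$. It therefore suffices to compute $\sum_{m\ge 3}G_m(x)$ and solve
\begin{equation*}
F_T(x)\ =\ 1+xF_T(x)+G_2(x)+\sum_{m\ge 3}G_m(x).
\end{equation*}

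For $m\ge 3$ and $\pi=i_1\pi^{(1)}\cdots i_m\pi^{(m)}\in S_n(T)$, I would first use $2341$-avoidance: the triple $(i_j,i_{j+1},i_l)$ with $j+2\le l\le m$ together with any letter $a<i_j$ in $\pi^{(l)}$ would form a $2341$. Hence every letter of $\pi^{(l)}$ exceeds $i_{l-2}$ for $l\ge 3$. Combining this with $1324$-avoidance applied to $(i_1,i_s,a,i_{s+1})$ for $a\in\pi^{(s)}$ (and $a>i_1$ by the previous step) shows that $\pi^{(s)}=\emptyset$ for every $s$ with $3\le s\le m-1$. So for $m\ge 4$ every $T$-avoider with $m$ left-right maxima has the narrow shape
\begin{equation*}
\pi\ =\ i_1\pi^{(1)} i_2\pi^{(2)} i_3 i_4\cdots i_m\pi^{(m)}.
\end{equation*}

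Next I would pin down the joint shape of $\pi^{(1)},\pi^{(2)},\pi^{(m)}$. The $3412$-constraint applied to $(i_{m-1},i_m,a,b)$ with $a,b\in\pi^{(m)}$ forces the letters of $\pi^{(m)}$ below $i_{m-1}$ to be decreasing; the $1324$-constraint applied to $(i_1,i_{m-1},a,b)$ with $a<i_{m-1}<b$ in $\pi^{(m)}$ forces all letters of $\pi^{(m)}$ below $i_{m-1}$ to lie positionally to the right of all letters above $i_{m-1}$. Parallel bookkeeping on $\pi^{(1)}$ and $\pi^{(2)}$ (each of which, after removing the portion governed by interaction with other blocks, avoids a pair of $3$-letter patterns together with one of $\{2341,3412\}$, with generating function reducible to $A=F_{\{132,2341,3412\}}$ or $L$) identifies each admissible configuration with a product of standard small-class generating functions. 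Partitioning by whether $\pi^{(m)}$ is empty, I expect to obtain a recursion
\begin{equation*}
G_m(x)\ =\ xG_{m-1}(x)+E_m(x)\qquad(m\ge 4),
\end{equation*}
where $xG_{m-1}(x)$ accounts for $\pi^{(m)}=\emptyset$ (append $i_m$ and strip it) and $E_m(x)$ is an explicit $x^{m}$-multiple of a rational function with denominator a product of $(1-x)$'s and $(1-2x)$'s.

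Finally I would handle $m=3$ directly -- the block $\pi^{(3)}$ is the unconstrained last block rather than a forcibly empty middle block, so a one-off short case analysis is needed -- then telescope $\sum_{m\ge 3}G_m(x)$ to a rational function, substitute into the displayed equation, and solve for $F_T(x)$ to obtain the stated formula (clearing the common denominator $(1-x)^7(1-2x)^2$). The main obstacle is the joint case analysis on $\pi^{(1)},\pi^{(2)},\pi^{(m)}$: several subcases arise depending on which of these blocks are empty and on how the letters of $\pi^{(m)}$ split above/below $i_{m-1}$, and a clean non-overlapping partition of contributions (in the spirit of the four-case breakdown in the proof of Lemma \ref{lem76a1}) is essential to avoid both overcounting and missing cases; once that is in hand, the telescoping and the final rational-function simplification are routine.
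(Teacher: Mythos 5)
Your structural analysis of the blocks is sound and matches the paper: for $m\ge 3$ the $2341$- and $1324$-constraints do force $\pi^{(s)}=\emptyset$ for $3\le s\le m-1$, and $\pi^{(m)}=\gamma\beta$ with $\gamma>i_{m-1}>\beta>i_{m-2}$, $\beta$ decreasing. But the recursion you propose, $G_m(x)=xG_{m-1}(x)+E_m(x)$ with "$xG_{m-1}(x)$ accounts for $\pi^{(m)}=\emptyset$ (append $i_m$ and strip it)", is wrong. Stripping the final letter $i_m=n$ is injective into the set of $T$-avoiders with $m-1$ left-right maxima, but it is not surjective: appending a new maximum at the \emph{end} of an arbitrary $T$-avoider $\sigma$ creates a $1324$ whenever $\sigma$ contains a $132$ (the new letter serves as the ``4''), and such $\sigma$ exist in abundance. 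For instance $\sigma=214653$ avoids $T$ and has $3$ left-right maxima, yet $2146537$ contains the occurrence $1\,4\,3\,7$ of $1324$. So the empty-$\pi^{(m)}$ case contributes strictly less than $xG_{m-1}(x)$; in fact, since the correct values satisfy $G_m(x)=x^m\bigl(A+\tfrac{x}{(1-x)^2}\bigr)^2$ for all $m\ge 3$ (so that $G_m=xG_{m-1}$ already holds exactly for $m\ge 4$), adding any nonzero positive $E_m(x)$ on top of $xG_{m-1}(x)$ necessarily overcounts and cannot reproduce the stated formula.

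The paper avoids this trap by never recursing on $m$ for the last block: it treats all $m\ge 3$ uniformly, splitting into four cases according to whether $\pi^{(2)}$ and $\beta$ are empty, and using the additional forcing you did not record (if $\beta\ne\emptyset$ then $\gamma$ must be decreasing, to avoid a $2341$ of the form $i_{m-1},a,b,c$ with $a<b$ in $\gamma$ and $c\in\beta$; symmetrically, if $\pi^{(2)}\ne\emptyset$ then $\pi^{(1)}$ must be decreasing). This yields the contributions $x^mA^2$, $\tfrac{x^{m+1}}{(1-x)^2}A$ (twice), and $\tfrac{x^{m+2}}{(1-x)^4}$, where $A=F_{\{132,2341,3412\}}(x)=\tfrac{1-3x+x^2}{(1-x)^2(1-2x)}$, hence the closed form for $G_m$ above; summing over $m\ge 3$ gives $\tfrac{x^3}{1-x}\bigl(A+\tfrac{x}{(1-x)^2}\bigr)^2$ and the theorem follows from $F_T=1+xF_T+G_2+\sum_{m\ge3}G_m$. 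If you want to keep a recursive flavour, the correct size-reduction for $m\ge 4$ is to delete one of the forced-empty middle maxima (say $i_3$), not the last one; alternatively, replace your $xG_{m-1}(x)$ term by $x$ times the generating function for $(m-1)$-maxima avoiders of $\{132,2341,3412\}$ restricted appropriately, which is exactly the extra condition your append-and-strip argument ignores. Beyond this error, the proposal remains schematic (the ``parallel bookkeeping'' and the value of $E_m$ are never carried out), so it is not yet a proof even after the fix.
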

\begin{proof}
Let $G_m(x)$ be the generating function for $T$-avoiders with $m$ left-right maxima. Clearly, $G_0(x)=1$, $G_1(x)=xF_T(x)$, and $G_2(x)$ is given above.

Now, let us write an equation for $G_m(x)$ with $m\geq3$. Let $\pi=i_1\pi^{(1)}\cdots i_m\pi^{(m)}\in S_n(T)$ with exactly $m$ left-right maxima. Since $\pi$ avoids $1324$ and $2341$, we can write $\pi$ as
$$\pi=i_1\pi^{(1)}i_2\pi^{(2)}i_3i_4\cdots i_m\gamma\beta,$$
where $\pi^{(1)}>\pi^{(2)}$ and $\gamma>i_{m-1}>\beta>i_{m-2}$. By considering the four possibilities for whether $\pi^{(2)},\beta$ are empty of not, we obtain the contributions $x^mA^2$ (both empty), $\frac{x^{m+1}}{(1-x)^2}A$ ($\pi^{(2)}$ empty, $\be$ nonempty), $\frac{x^{m+1}}{(1-x)^2}A$ ($\pi^{(2)}$ nonempty, $\be$ empty) and $\frac{x^{m+2}}{(1-x)^4}$ (both nonempty). Thus,
$$G_m(x)=x^m\left(A+\frac{x}{(1-x)^2}\right)^2.$$
Therefore,
$$F_T(x)-1-xF_T(x)-G_2(x)=\sum_{m\geq3}G_m(x)=\frac{x^3}{1-x}\left(A+\frac{x}{(1-x)^2}\right)^2.$$
Substituting for $G_2(x)$ and solving for $F_T(x)$ completes the proof.
\end{proof}

\subsection{Case 80: $\{1324,2341,3421\}$}
In order to study this case, we need the following lemmas.
\begin{lemma}\label{lem80a1}
Let $T=\{1324,2341,3421\}$. Let $G_2(x)$ be the generating function for the number of permutations $\pi=i\pi'n\pi''\in S_n(T)$ with exactly $2$ left-right maxima. Then
$$G_2(x)=\frac{x^2(1-8x+28x^2-52x^3+50x^4-22x^5+5x^6)}{(1-x)^4(1-2x)^2(1-3x+x^2)}.$$
\end{lemma}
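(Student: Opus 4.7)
Following the scheme of Lemma \ref{lem76a1}, I would write $\pi=i\pi' n\pi''\in S_n(T)$ with $\pi'<i$, split $\pi''=\alpha\cup\beta$ into the subsequences with $\alpha<i<\beta$, and enumerate cases by the value of $i$ and, when $i>1$, by the location of the letter $i-1$. Three structural observations will be used repeatedly: (i)~a copy of $1324$ using $i$ and three later letters forces $\beta$ (as a subsequence) to avoid $213$; (ii)~a copy of $2341$ using $i$, two ascending letters of $\beta$, and one letter of $\alpha$ forbids any letter of $\alpha$ from appearing after an ascending pair of $\beta$; (iii)~a copy of $3421$ using $i$, a later letter larger than $i$, and two letters of $\alpha$ forces $\alpha$ to be increasing.

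In the base case $i=1$, no copy of any forbidden pattern can use positions $1$ or $2$ of $\pi$, since their values are the overall minimum and maximum; hence $1n\pi''$ avoids $T$ iff $\pi''$ avoids $\{213,2341,3421\}$. I would compute the generating function $A(x)$ of this auxiliary class by a subsidiary decomposition (for instance, by the position of the maximum of $\pi''$), expecting the factor $(1-3x+x^2)$ of the claimed denominator to enter here; this case contributes $x^2A(x)$. For $i>1$: if $i-1$ is the leftmost letter of $\pi'$, removing it gives a $2$-left-right-maxima $T$-avoider, contributing $xG_2(x)$. If $i-1\in\pi'$ but is not leftmost, write $\pi'=\alpha'(i-1)\alpha''$ with $\alpha'\neq\emptyset$; the joint pattern constraints then pin down $\alpha',\alpha''$ and the $\alpha,\beta$ parts of $\pi''$ as combinations of decreasing or increasing runs together with factors avoiding shorter patterns, each contributing a simple rational piece. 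If $i-1\in\pi''$, I would mimic the $B_d$ decomposition of Lemma \ref{lem76a1}: let $d$ be the number of letters of $\beta$ preceding $i-1$, derive a recurrence for the corresponding $B_d(x)$, and sum $B(x)=\sum_{d\ge 0}B_d(x)$.

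Adding all contributions, isolating $G_2(x)$ from the self-referential term $xG_2(x)$, and simplifying yields the claimed rational function. The main obstacle is the $i-1\in\pi''$ subcase: whereas Lemma \ref{lem76a1} uses $3412$-avoidance to force the sub-$i$ part of $\pi''$ to be globally decreasing, here $3421$-avoidance only forces $\alpha$ to be increasing, so the shapes admitted by the $B_d$ recurrences are strictly more varied and must be rederived from scratch, with careful attention to the interaction between $3421$ and $1324$ near the bottom of $\pi''$. Once the various rational pieces are in hand, verifying that their sum equals $\frac{x^2(1-8x+28x^2-52x^3+50x^4-22x^5+5x^6)}{(1-x)^4(1-2x)^2(1-3x+x^2)}$ is a straightforward algebraic check.
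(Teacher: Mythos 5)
Your plan is sound in its easy parts: the $i=1$ case does reduce to $x^2F_{\{213,2341,3421\}}(x)$, deleting $i-1$ when it immediately follows $i$ does give a contribution $xG_2(x)$ (all three patterns begin with an ascent, so no occurrence can use $i$ and $i-1$ together), and your observations (i)--(iii) are correct. But what you have written is a plan, not a proof: the two subcases that carry essentially all of the enumerative content, namely $i-1\in\pi'$ not leftmost and $i-1\in\pi''$, are only promised. For the latter you propose to mimic the $B_d$ decomposition of Lemma \ref{lem76a1}, but the step that makes $B_d$ work there is $3412$-avoidance, which forces every letter of $\pi''$ preceding $i-1$ to exceed $i$; with $T=\{1324,2341,3421\}$ this forcing is absent (the letters of $\pi''$ below $i$ form an increasing sequence, and those smaller than $i-1$ may well precede it), so the parameter ``number of letters of $\beta$ before $i-1$'' no longer determines the shape, and you give no substitute structure result or recurrence. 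You flag this yourself as the main obstacle and leave it unresolved, so the claimed rational function is never actually derived. A smaller but telling point: your expectation that the factor $1-3x+x^2$ enters through the auxiliary class is wrong, since $F_{\{213,2341,3421\}}(x)=\frac{1-3x+3x^2}{(1-x)^2(1-2x)}$; that factor can only come from summing a geometric-type series such as $\sum_{d\ge1}\bigl(\frac{x}{(1-x)^2}\bigr)^d$, i.e., precisely from the part of the argument you have not supplied.

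For contrast, the paper proves the lemma by refining $G_2(x)$ according to the number $d$ of letters of $\pi''$ smaller than $i$: it shows these $d$ letters are increasing and pins down how $\pi'$ and the letters above $i$ interleave, computes $G_2(x;0)=x^2K(x)^2$ and $G_2(x;1)$ explicitly with $K(x)=\frac{1-3x+3x^2}{(1-x)^2(1-2x)}$, establishes $G_2(x;d)=xG_2(x;d-1)+\frac{x^{d+3}}{(1-x)^{2d}}K(x)^2+\frac{x^{d+3}}{(1-x)^{2d-1}}K(x)^2$ for $d\ge2$, and sums over $d$; the factor $1-3x+x^2$ appears exactly at that summation step. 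If you want to salvage your route via the position of $i-1$, the missing work is a structure lemma for the cases $i-1\in\pi''$ and $i-1\in\pi'$ non-leftmost precise enough to yield recurrences you can actually sum.
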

\begin{proof}
In order to find a formula for the generating function $G_2(x)$, we refine it as follows. Let $G_2(x;d)$ be the generating function for the number of permutations $\pi=i\pi'n\pi''\in S_n(T)$ with exactly $2$ left-right maxima and where $\pi''$ has exactly $d$ points smaller than $i$. Clearly,
$$G_2(x;0)=x^2F_{\{132,2341,3421\}}(x)F_{\{213,2341,3421\}}(x)=x^2K(x)^2,$$
where $K(x)=\frac{1-3x+3x^2}{(1-x)^2(1-2x)}$ (where we leave the proof to the reader). For $d=1$, our permutations can be written as $i(i-1)\cdots (i'+1)\alpha n(n-1)\cdots i''i'\beta$ with $\alpha<i'$ and $i<\beta<i''$. By considering whether $\alpha$ and $\beta$ are empty or not, one can show that $$G_2(x;1)=\frac{x^3}{(1-x)^2}+\frac{x^3(K(x)-1)^2}{(1-x)^2}+\frac{2x^4(1-x+x^2)}{(1-x)^4(1-2x)}.$$

For $d\geq2$, our permutations can be written as $$\pi=i\alpha^{(1)}\cdots\alpha^{(d)}n\beta^{(1)}j_1\cdots\beta^{(d)}j_d\beta^{(d+1)}$$
such that all the letters that are greater than $j_1$ in $i\alpha^{(1)}\cdots\alpha^{(d)}$ are decreasing and  $n\beta^{(1)}\cdots\beta^{(d)}$ is decreasing. We denote all the letters between $j_1$ and $j_2$ in $\pi'$ by $\delta$. Now, let us write an equation for $G_2(x;d)$. If $\delta=\beta^{(2)}=\emptyset$, then we have a contribution of $xG_2(x;d-1)$. If $\delta\neq\emptyset$, then $\pi$ can be written as
$$\pi=i\alpha^{(1)}\cdots\alpha^{(d-1)}\gamma'\gamma''n\beta^{(1)}j_1\cdots\beta^{(d)}j_d\beta^{(d+1)}$$
such that $\alpha^{(1)}\alpha^{(2)}\cdots\alpha^{(d-1)}\gamma'$ is decreasing, $j_{d+2-s}>\alpha^{(s)}>j_{d+1-s}$ for all $s=1,2,\ldots,d-1$ with $j_{d+1}=i$, $j_2>\gamma'=\delta>j_1$ and $j_1>\gamma''$ where $\gamma''$ avoids $132,2341,3421$ and $\beta^{(d+1)}$ avoids $213,2341,3421$. Thus, we have a contribution of $\frac{x^{d+3}}{(1-x)^{2d}}K(x)^2$. Otherwise, $\delta=\emptyset$ and $\beta^{(2)}\neq\emptyset$, so similarly, we have a contribution of $\frac{x^{d+3}}{(1-x)^{2d-1}}K(x)^2$. Therefore, for all $d\geq2$,
$$G_2(x;d)=xG_2(x;d-1)+\frac{x^{d+3}}{(1-x)^{2d}}K(x)^2+\frac{x^{d+3}}{(1-x)^{2d-1}}K(x)^2.$$
By summing over all $d\geq2$, we obtain
$$G_2(x)-G_2(x;0)-G_2(x;1)=x(G_2(x)-G_2(x;0))+\frac{x^5K(x)^2}{(1-x)^2(1-3x+x^2)}+\frac{x^5K(x)^2}{(1-x)(1-3x+x^2)}.$$
By using the values of $G_2(x;0)$ and $G_2(x;1)$, and then solving for $G_2(x)$, we complete the proof.
\end{proof}

\begin{theorem}\label{th80a}
Let $T=\{1324,2341,3421\}$. Then
$$F_T(x)=\frac{1-7x+20x^2-29x^3+25x^4-10x^5+2x^6}{(1-x)^5(1-3x+x^2)}.$$
\end{theorem}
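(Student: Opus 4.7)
I follow the paper's standard strategy: decompose a $T$-avoider by its number $m$ of left-right maxima. Let $G_m(x)$ denote the generating function for $T$-avoiders with exactly $m$ left-right maxima. Then $G_0(x)=1$, $G_1(x)=xF_T(x)$, and $G_2(x)$ is furnished by Lemma~\ref{lem80a1}. The remaining task is to compute $\sum_{m\ge 3} G_m(x)$ and solve the resulting linear equation for $F_T(x)$.

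For $\pi = i_1 \pi^{(1)} i_2 \pi^{(2)} \cdots i_m \pi^{(m)} \in S_n(T)$ with $m\ge 3$, I first establish that the avoidance of $2341$ and $1324$ collapses the middle blocks. Applying $2341$ to $(i_{j-2},i_{j-1},i_j,a)$ for $a\in\pi^{(j)}$ forces $a>i_{j-2}$ when $j\ge 3$; applying $1324$ to $(i_1,i_j,a,i_{j+1})$ with $3\le j\le m-1$ forces $a<i_1$. Together these give $\pi^{(j)}=\emptyset$ for $3\le j\le m-1$. The same two arguments applied to $\pi^{(m)}$ (where the second argument is not available) leave $\pi^{(m)}$ with all entries in $(i_{m-2},i_m)$; I classify these entries as \emph{large} (in $(i_{m-1},i_m)$) or \emph{medium} (in $(i_{m-2},i_{m-1})$). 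Two more applications refine $\pi^{(m)}$: (i) $1324$ on $(i_{m-2},i_{m-1},a,b)$ forbids a medium preceding a large, so $\pi^{(m)}=\gamma\beta$ with $\gamma$ consisting of the large entries and $\beta$ of the medium entries; (ii) $3421$ on $(i_{m-1},i_m,a,b)$ forbids two medium entries in decreasing order, so $\beta$ is increasing. Finally, $1324$ on $(i_1,a,i_2,b)$ with $a\in\pi^{(1)}$, $b\in\pi^{(2)}$ yields $\pi^{(1)}>\pi^{(2)}$, and the residual constraints on $\pi^{(1)}$ and on $\gamma$ (each viewed as an independent permutation) are avoidance of $\{132,2341,3421\}$ and $\{213,2341,3421\}$ respectively, both counted by $K(x)=\frac{1-3x+3x^2}{(1-x)^2(1-2x)}$ appearing in Lemma~\ref{lem80a1}.

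With this skeleton in hand, I enumerate $G_m(x)$ for $m\ge 3$ by the four cases according as $\pi^{(2)}$ and $\beta$ are empty or not (the role of $\gamma$ being absorbed into the $K(x)$ factor for $\pi^{(m)}$), each contribution being a monomial in $x$ times a product of factors drawn from $K(x)$, $1/(1-x)$ and $1/(1-2x)$. Because each middle left-right maximum $i_j$ with $3\le j\le m-1$ contributes only a factor of $x$ (its trailing block is empty), the four contributions scale as $x^{m-3}$, so that
\[
\sum_{m\ge 3}G_m(x)=\frac{1}{1-x}\,R(x)
\]
for an explicit rational $R(x)$ whose denominator divides $(1-x)^4(1-2x)(1-3x+x^2)$. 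Substituting this and the expression for $G_2(x)$ from Lemma~\ref{lem80a1} into $F_T(x)=1+xF_T(x)+G_2(x)+\sum_{m\ge 3}G_m(x)$ and solving the resulting linear equation for $F_T(x)$ should yield the stated closed form.

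The main obstacle is the bookkeeping for $\pi^{(m)}=\gamma\beta$ interacting with $\pi^{(1)}$, $\pi^{(2)}$: a careful audit is required both to identify the correct reduced avoidance class governing $\gamma$ and to avoid double counting across the emptiness sub-cases. A useful sanity check is that the two factors of $(1-2x)$ in the denominator of $G_2(x)$ must cancel against poles introduced by $\sum_{m\ge 3}G_m(x)$, leaving only $(1-x)^5(1-3x+x^2)$ in the final denominator as required by the statement.
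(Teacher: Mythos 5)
You set up the same framework as the paper---decomposition by the number $m$ of left-right maxima, with $G_0(x)=1$, $G_1(x)=xF_T(x)$ and $G_2(x)$ taken from Lemma~\ref{lem80a1}---and your preliminary structure for $m\ge 3$ (empty middle blocks, $\pi^{(m)}=\gamma\beta$ with the large block $\gamma$ preceding the increasing medium block $\beta$, and $\pi^{(1)}>\pi^{(2)}$) is essentially correct, apart from the quadruple you cite for $\pi^{(1)}>\pi^{(2)}$, which should be $(a,i_2,b,i_m)$ rather than $(i_1,a,i_2,b)$. The first genuine gap is your claim that $\gamma$ and $\pi^{(1)}$ may be treated as \emph{independent} avoiders of $\{213,2341,3421\}$ and $\{132,2341,3421\}$, each contributing a factor $K(x)$, in all four emptiness cases. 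This is false: if $\beta\neq\emptyset$, then an ascent $c_1<c_2$ in $\gamma$ together with $i_{m-1}$ and any $b\in\beta$ gives the occurrence $i_{m-1}\,c_1\,c_2\,b$ of $2341$, so $\gamma$ is forced to be decreasing; likewise, if $\pi^{(2)}\neq\emptyset$, an ascent $a<b$ in $\pi^{(1)}$ together with $c\in\pi^{(2)}$ (necessarily $c<a$) gives $a\,b\,i_2\,c$, again a $2341$, so $\pi^{(1)}$ is forced to be decreasing. Absorbing $\gamma$ into a blanket $K(x)$ factor would therefore overcount; these cross-constraints are precisely why the paper abandons a uniform four-case product formula and instead lists the four explicit structural types for $m=3$, obtaining $G_3(x)=x^3K(x)^2+\frac{2x^4K(x)}{(1-x)^2}+\frac{x^5}{(1-x)^4}=\frac{x^3}{(1-2x)^2}$.

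The second gap is that the computation is never carried out: $R(x)$ is not determined, and the scaling claim that each level-$m$ contribution is $x^{m-3}$ times the $m=3$ contribution is only asserted. It does not follow merely from the middle blocks being empty; you also need that $i_2,\dots,i_{m-2}$ are forced to be the consecutive values $i_1+1,\dots,i_1+m-3$ (because every nonempty block other than $\pi^{(m)}$ lies below $i_1$ while $\pi^{(m)}>i_{m-2}$), or, as the paper argues, that deleting $i_{m-2}$ is a bijection yielding $G_m(x)=xG_{m-1}(x)$ for $m\ge 4$, whence $G_m(x)=x^{m-3}G_3(x)$. Until the $\gamma$/$\beta$ and $\pi^{(1)}$/$\pi^{(2)}$ interactions are repaired, the recursion $G_m=xG_{m-1}$ is justified, and the summation $\sum_{m\ge3}G_m(x)=\frac{G_3(x)}{1-x}$ is combined with Lemma~\ref{lem80a1} and the algebra is actually performed, the concluding ``should yield the stated closed form'' leaves the heart of the proof missing.
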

\begin{proof}
Let $G_m(x)$ be the generating function for $T$-avoiders with $m$ left-right maxima. Clearly, $G_0(x)=1$ and $G_1(x)=xF_T(x)$. Note that the generating function $G_2(x)$ is given in Lemma \ref{lem80a1}.

Let $m=3$. Each permutation of $\pi\in S_n(T)$ with exactly $3$ left-right maxima can be represented as either
\begin{itemize}
\item $\pi=i_1\pi^{(1)}(i_1+1)n\pi^{(3)}$ with $\pi^{(1)}<i_1$ and $\pi^{(3)}>i_1+1$, where $\pi^{(1)}$ avoids $132,2341,3421$ and $\pi^{(3)}$ avoids $213,2341,3421$.
\item $\pi=i_1\pi^{(1)}i_2n(n-1)\cdots(i_2+1)(i_1+1)(i_1+2)\cdots(i_2-1)$, where $i_1+1<i_2$ and $\pi^{(1)}$ avoids $132,2341,3421$.
\item $\pi=i_1(i_1-1)\cdots i'(i_1+1)12\cdots(i'-1)n\pi^{(3)}$, where $i'>1$ and $\pi^{(3)}$ avoids $213,2341,3421$.
\item $\pi=i_1(i_1-1)\cdots i'i_212\cdots(i'-1)n(n-1)\cdots(i_2+1)(i_1+1)(i_1+2)\cdots(i_2-1)$, where $i_1+1<i_2$ and $i'>1$.
\end{itemize}
By finding the corresponding generating functions, we obtain
$$G_3(x)=x^3K(x)^2+\frac{2x^4K(x)}{(1-x)^2}+\frac{x^5}{(1-x)^4}=\frac{x^3}{(1-2x)^2}.$$

Now, let us write an equation for $G_m(x)$ with $m\geq4$. Let $\pi=i_1\pi^{(1)}\cdots i_m\pi^{(m)}\in S_n(T)$ with exactly $m$ left-right maxima. Since $\pi$ avoids $1324$ and $2341$, we see that $\pi^{(s)}<i_1$ for all $s=2,3,\ldots,m-1$ and $\pi^{(m)}>i_{m-2}$. Hence, we have a contribution of $xG_{m-1}(x)$ (by removing the letter $i_{m-2}$). Thus, $G_m(x)=x^{m-3}G_3(x)$ for all $m\geq3$. Therefore,
$$F_T(x)-G_0(x)-G_1(x)-G_2(x)=\sum_{m\geq3}G_m(x)=\frac{1}{1-x}G_3(x),$$
which, by substituting the values of $G_j(x)$, $j=0,1,2,3$, we complete the proof.
\end{proof}

\subsection{Case 84: $\{4231,1324,2341\}$}
\begin{lemma}\label{lem84a1}
Let $\pi=i\alpha n\beta\in S_n(T)$ with exactly $2$ left-right maxima, $i\geq2$ and $i-1$
in $\alpha$. Then the generating function $H(x)$ for such permutations is given by
$$H(x)=xG_2(x)+x^3(L(x)-1)\left(\frac{L(x)}{1-x}+L(x)-\frac{1}{1-x}\right),$$
where $G_2(x)$ is the generating function for permutations in $S_n(T)$ with exactly $2$ left-right maxima and $L(x)=\frac{1-x}{1-2x}$.
\end{lemma}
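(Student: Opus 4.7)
The plan is to partition the permutations counted by $H(x)$ according to the location of the letter $i-1$ within $\alpha$, exploiting the fact that $i-1$ is close in value to the opening left-right maximum $i$.

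\emph{Case A (leftmost).} If $i-1$ is the first letter of $\alpha$, write $\pi=i(i-1)\tilde{\alpha}\,n\beta$. Deleting $i-1$ is a bijection between these permutations and $T$-avoiders $i\tilde{\alpha}\,n\beta\in S_{n-1}(T)$ with exactly two left-right maxima, producing the contribution $xG_2(x)$.

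\emph{Case B (interior).} Write $\pi=i\alpha'(i-1)\alpha''\,n\beta$ with $\alpha'\ne\emptyset$. I would first read off the structural restrictions on $\alpha'$ forced by $T$-avoidance: a $132$-pattern in $\alpha'$ appended by $i-1$ creates a $1324$, and a $231$-pattern in $\alpha'$ prefixed by $i$ creates a $4231$. Hence $\alpha'$ must be a nonempty $\{132,231\}$-avoider, contributing the factor $L(x)-1$; the three fixed positions $i$, $i-1$, $n$ contribute the prefactor $x^3$.

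The remaining work is to enumerate the admissible pairs $(\alpha'',\beta)$. I would split into sub-cases on whether $\alpha''$ is empty and, if not, whether $\beta$ contains any letter exceeding $i$, determine in each sub-case the short patterns that the relevant sub-word must avoid under the $T$-conditions, recognize each sub-generating function as $L(x)$ or $\tfrac{1}{1-x}$, and sum. The compact target $\tfrac{L(x)}{1-x}+L(x)-\tfrac{1}{1-x}=L(x)\bigl(1+\tfrac{1}{1-x}\bigr)-\tfrac{1}{1-x}$ suggests an inclusion--exclusion in which $\tfrac{L(x)}{1-x}+L(x)$ over-counts a single configuration that is corrected by subtracting $\tfrac{1}{1-x}$.

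The principal obstacle is the cascading effect of the $2341$ condition: an ascent inside $\alpha'$ is not excluded by $\{132,231\}$-avoidance (for instance $123$ is such an avoider), yet combined with any strictly smaller letter later in $\alpha''$ or $\beta$ it already yields a $2341$. The delicate step is checking that these cross-region constraints collapse into the compact factor displayed in the lemma, rather than coupling $\alpha'$ with $(\alpha'',\beta)$ in a way that would break the product form; once this is verified, the remainder is mechanical generating-function algebra.
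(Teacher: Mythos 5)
Your Case A and the prefactor $x^3\big(L(x)-1\big)$ coming from $i$, $i-1$, $n$ and a nonempty $\{132,231\}$-avoiding $\alpha'$ agree with the paper, but what you defer as ``remaining work'' and a ``delicate step'' is the entire content of the lemma, and your sketch of it points in the wrong direction. First, there is no case split on $\alpha''$: once $\alpha'\neq\emptyset$, the block $\alpha''$ is forced to be empty, because for $a\in\alpha'$ and $b\in\alpha''$ either $a<b$, and then $a\,(i-1)\,b\,n$ is an occurrence of $1324$, or $a>b$, and then $i\,a\,(i-1)\,b$ is an occurrence of $4231$. So $i-1$ is the last letter of $\alpha$ and $\pi=i\alpha'(i-1)n\beta$. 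Second, the coupling you worry about is removed by two forced value constraints, not by a delicate computation: if some letter $c<\max(\alpha')$ occurred after $n$, then $a\,(i-1)\,n\,c$ would be a $2341$, so every letter of $\beta$ smaller than $i$ exceeds all of $\alpha'$; and avoiding $1324$ (with a letter of $\alpha'$ as the ``1'') then forces the letters of $\beta$ larger than $i$ to precede those smaller than $i$. Hence $\pi=i\alpha'(i-1)n\beta'\beta''$ with $\alpha'<\beta''<i<\beta'$, and the avoidance conditions decouple: $\alpha'$ avoids $\{132,231\}$ and $\beta''$ avoids $\{213,231\}$.

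Third, the closed form is not an inclusion--exclusion correction of an overcount, as you guessed; it is a disjoint two-case union governed by $\beta'$. If $\beta'$ contains an ascent, then $\beta''=\emptyset$ (an ascent of $\beta'$ followed by a letter of $\beta''$ gives a $2341$ with $i$ as the ``2''), and $\beta'$ must avoid $213$ (else $i$ is the ``1'' of a $1324$) and $231$ (else $n$ is the ``4'' of a $4231$), contributing $L(x)-\tfrac{1}{1-x}$, the generating function for $\{213,231\}$-avoiders that are not decreasing. If $\beta'$ is decreasing, then $\beta''$ may be an arbitrary $\{213,231\}$-avoider, contributing $\tfrac{L(x)}{1-x}$. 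These forced-structure arguments are exactly what your proposal is missing; with them, adding $xG_2(x)$ from Case A to $x^3\big(L(x)-1\big)\bigl(\tfrac{L(x)}{1-x}+L(x)-\tfrac{1}{1-x}\bigr)$ recovers the stated formula, which is the paper's derivation.
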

\begin{proof}
Let us write an equation for $H(x)$. If $i-1$ is the first letter of $\alpha$, then the contribution is $xG_2(x)$. Otherwise, $i-1$ is the last letter of $\al$ (to avoid 1324 and 4231), and $\pi$ can be written as $\pi=i\alpha'(i-1)n\beta'\beta''$ such that $\emptyset\neq\alpha'<\beta''<i<\beta'$, where $\alpha'$ avoids both $132$ and $231$ and $\be''$ avoids both $213$ and $231$. Thus, by considering whether $\beta'=n(n-1)\cdots(i+1)$ or not, we get contributions of $x^3\big(L(x)-1\big)L(x)/(1-x)$ and $x^3\big(L(x)-1\big)\big(L(x)-1/(1-x)\big)$
(recall that $F_{\{132,231\}}=F_{\{213,231\}}=L(x)$). Hence,
$$H(x)=xG_2(x)+x^3\big(L(x)-1\big)\left(\frac{L(x)}{1-x}+L(x)-\frac{1}{1-x}\right),$$
as required.
\end{proof}

\begin{lemma}\label{lem84a2}
Let $\pi=i\alpha n\beta\in S_n(T)$ with exactly $2$ left-right maxima, $i\geq2$, and $i-1$ in $\beta$. Then the generating function $H'(x)$ for such permutations is given by
$$H'(x)=\frac{x^3(1-4x+9x^2-12x^3+6x^4-x^5)}{(1-3x+x^2)(1-2x)^2(1-x)^2}.$$
\end{lemma}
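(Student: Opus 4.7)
Write the $T$-avoider as $\pi=i\alpha n\beta^{(1)}(i-1)\beta^{(2)}$ with $\alpha<i$, and let $\beta_{>}$ (resp.\ $\beta_{<}$) denote the letters of $\beta$ greater than (resp.\ less than) $i$; by hypothesis $i-1\in\beta_{<}$. The approach mirrors that of Lemma~\ref{lem84a1}: derive structural constraints from the three forbidden patterns in $T$, then partition the avoiders into a small number of disjoint types whose generating functions are built from $L(x)=\frac{1-x}{1-2x}$, simple rational factors, and (anticipating the $(1-3x+x^2)$ factor in the target denominator) the $\{132,3412\}$-avoider generating function $\frac{1-2x}{1-3x+x^2}$ used in Lemma~\ref{lem72a1}.

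First I would establish several rigidity facts. (a)~Any two letters of $\beta^{(1)}$ both greater than $i$ must appear in decreasing positional order, since otherwise $i$, those two letters, and $i-1$ form the forbidden $2341$; hence $\beta^{(1)}\cap\{i+1,\ldots,n-1\}$ is decreasing. (b)~Taking $i$ as the ``$1$'' in a potential $1324$ forces $\beta_{>}$ to avoid $213$. (c)~Taking $n$ (resp.\ $i$) as the ``$4$'' in a potential $4231$ forces $\beta$ (resp.\ the subsequence consisting of $\alpha$ followed by $\beta_{<}$) to avoid $231$. (d)~If $\alpha$ contains an ascent $a_1<a_2$, then $\beta$ has no letter below $a_1$, since $a_1a_2nc$ would be a $2341$; in particular, either $\alpha$ is decreasing, or the small letters of $\beta$ all lie above $\min\alpha$.

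Next I would decompose into cases, with a primary split on whether $\beta^{(2)}=\emptyset$ (equivalently, whether $i-1$ is the last letter of $\pi$), a secondary split on whether $\beta^{(1)}$ contains any letter greater than $i$, and a tertiary split on the shape of $\alpha$. In each case the constraints from (a)--(d) reduce the remaining freedom to short classical pattern-avoidance classes on $\alpha$, on the part of $\beta^{(1)}$ lying below $i$, and on $\beta^{(2)}$; each such class contributes a known generating function, typically one of $\frac{1}{1-x}$, $L(x)$, or $\frac{1-2x}{1-3x+x^2}$.

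The principal obstacle will be the bookkeeping. Combinatorially, the delicate part is the interaction across the distinguished letter $i-1$: patterns that involve one letter from $\beta^{(1)}$, the letter $i-1$, and one letter from $\beta^{(2)}$ generate mixed constraints that must be carefully disentangled to ensure the case analysis is exhaustive and the cases are pairwise disjoint. Algebraically, verifying that the sum of the sub-case contributions simplifies to the stated $H'(x)=\frac{x^3(1-4x+9x^2-12x^3+6x^4-x^5)}{(1-3x+x^2)(1-2x)^2(1-x)^2}$ is a mechanical but lengthy manipulation, best carried out with a computer algebra system once the individual contributions have been assembled.
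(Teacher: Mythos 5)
Your structural observations (a)--(d) are all correct, and the preliminary decomposition $\pi=i\alpha n\beta^{(1)}(i-1)\beta^{(2)}$ matches the paper's setup. But the plan as stated has a genuine gap: the proposed case split is finite (is $\beta^{(2)}$ empty; does $\beta^{(1)}$ contain a letter $>i$; shape of $\alpha$), whereas the crux of this lemma is that the structure of the avoider depends nontrivially on \emph{how many} letters greater than $i$ precede $i-1$, not merely on whether there is one. The paper's proof refines $H'(x)$ into $A_d(x)$ according to this number $d$ (these $d$ letters are forced to be decreasing, as in your fact (a)), computes $A_0$ and $A_1$ directly, establishes the recurrence $A_d(x)-xA_{d-1}(x)=\frac{x^{d+4}(1-x)^d}{(1-2x)^{d+3}}$ for $d\ge 2$, and sums over $d\ge 0$. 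The factor $1-3x+x^2$ in the answer arises precisely from this geometric summation, via $1-\frac{x(1-x)}{1-2x}=\frac{1-3x+x^2}{1-2x}$; it does not come from a classical avoidance class such as $\{132,3412\}$ (note $3412\notin T$ here), so your anticipation of where that factor originates points the argument in the wrong direction. Unless you can show that the block of big letters of $\beta^{(1)}$ together with the small letters interleaved among them is equinumerous with some class whose generating function already carries $1-3x+x^2$ --- which you do not do --- a bounded number of cases built only from $\frac{1}{1-x}$, $L(x)$ and $\frac{1-2x}{1-3x+x^2}$ will not close the argument.

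Beyond that, the proposal stops exactly where the work begins: the cases are not enumerated, no contribution is computed, disjointness and exhaustiveness (in particular the ``interaction across $i-1$'' you flag, e.g.\ occurrences of $1324$ or $4231$ using one letter of $\beta^{(1)}$, the letter $i-1$, and one letter of $\beta^{(2)}$) are acknowledged as delicate but not resolved, and the final identity is deferred to a computer algebra system without the inputs it would need. To repair the plan, introduce the parameter $d$, pin down the exact structure of an avoider for fixed $d$ (how the small letters may sit among the $d$ decreasing big letters, and what $\alpha$, $\beta^{(2)}$ may then be), and derive either closed forms or a recurrence in $d$ as the paper does; only then is the summation and simplification to $\frac{x^3(1-4x+9x^2-12x^3+6x^4-x^5)}{(1-3x+x^2)(1-2x)^2(1-x)^2}$ a routine calculation.
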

\begin{proof}
In order to write an equation for $H'(x)$, we define $A_d(x)$ to be the generating function for permutations $\pi=i\alpha n\beta'(i-1)\beta''\in S_n(T)$ with exactly 2 left-right maxima, $i\geq2$, and such that $\beta'$ has $d$ letters that are greater than $i$ (these $d$ letters form a decreasing subsequence because $\pi$ avoids $4231$). We leave to the reader to show that
$A_0(x)=\frac{x^3(1-3x+5x^2-4x^3)}{(1-x)(1-2x)^3}$, $A_1(x)=\frac{x^4(1-3x+4x^2-5x^3+4x^4)}{(1-x)(1-2x)^4}$, and for $d\geq2$,
$$A_d(x)-xA_{d-1}(x)=\frac{x^{d+4}(1-x)^d}{(1-2x)^{d+3}}.$$
Summing over all $d\geq0$, we obtain
$$H'(x)=\frac{x^3(1-4x+9x^2-12x^3+6x^4-x^5)}{(1-3x+x^2)(1-2x)^2(1-x)^2},$$
as required.
\end{proof}

\begin{lemma}\label{lem84a3}
The generating function $G_2(x)$ for permutations in $S_n(T)$ with exactly $2$ left-right maxima is given by
$$G_2(x)=\frac{x^2(1-7x+22x^2-35x^3+29x^4-16x^5+6x^6-x^7)}{(1-3x+x^2)(1-2x)^2(1-x)^3}.$$
\end{lemma}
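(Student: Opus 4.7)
The plan is to split $G_2(x) = G_2^{(1)}(x) + H(x) + H'(x)$ according to the value of $i$ in a $T$-avoider $\pi = i\alpha n\beta \in S_n(T)$ with exactly two left-right maxima: $G_2^{(1)}(x)$ enumerates the case $i=1$, while $H(x)$ from Lemma \ref{lem84a1} and $H'(x)$ from Lemma \ref{lem84a2} enumerate the cases $i\geq 2$ with $i-1\in\alpha$ and $i-1\in\beta$, respectively. Since $i-1<i\leq n-1$, whenever $i\geq 2$ the letter $i-1$ must lie in exactly one of $\alpha$ or $\beta$, so these three cases partition the $T$-avoiders in question.

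For the $i=1$ subcase, the two-left-right-maxima requirement forces $\alpha<1$, hence $\alpha=\emptyset$ and $\pi=1n\beta$ with $\beta$ a permutation of $\{2,\ldots,n-1\}$. A systematic examination of the possible placements of the four letters of each forbidden pattern inside $\pi$ shows that the constraints on $\beta$ collapse to $\{213,231\}$-avoidance: $1324$-avoidance using $1$ as the leftmost corner forbids $213$ in $\beta$ (no $1324$ pattern can use $n$, since $n$ sits at position $2$ while the $\mathord{``}4\mathord{"}$ of $1324$ must be at $p_4\geq 4$); $4231$-avoidance using $n$ as the leftmost corner forbids $231$ in $\beta$; and $2341$-avoidance within $\pi$ is already implied once $\beta$ avoids $231$. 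Invoking $L(x)=\frac{1-x}{1-2x}$ from the preliminaries, this yields
\[
G_2^{(1)}(x) \;=\; x^2 L(x) \;=\; \frac{x^2(1-x)}{1-2x}.
\]

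Since Lemma \ref{lem84a1} expresses $H(x)$ with an explicit $xG_2(x)$ term, the identity $G_2(x)=x^2L(x)+H(x)+H'(x)$ becomes a linear equation in $G_2(x)$:
\[
(1-x)\,G_2(x) \;=\; \frac{x^2(1-x)}{1-2x} + x^3\bigl(L(x)-1\bigr)\!\left(\frac{L(x)}{1-x} + L(x) - \frac{1}{1-x}\right) + H'(x).
\]
Substituting the closed form of $H'(x)$ from Lemma \ref{lem84a2} and performing routine rational-function simplification gives the claimed formula; the denominator factors $(1-3x+x^2)$, $(1-2x)^2$, and $(1-x)^3$ come respectively from $H'$, from the $L$-type factors, and from the $(1-x)$-division. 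The main obstacle is the exhaustive pattern-placement analysis for the $i=1$ case: one must enumerate all positions of the four pattern letters across the corners $\{1,n\}$ and the block $\beta$ to confirm that no cross-pattern constraints are missed and that the three pattern conditions indeed collapse to $\{213,231\}$-avoidance on $\beta$.
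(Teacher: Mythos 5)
Your proposal is correct and follows essentially the same route as the paper: the paper also splits $\pi=i\alpha n\beta$ into the cases $i=1$ (contributing $x^2L(x)$ since $\beta$ must avoid $\{213,231\}$), $i\ge 2$ with $i-1\in\alpha$ (Lemma \ref{lem84a1}), and $i\ge 2$ with $i-1\in\beta$ (Lemma \ref{lem84a2}), and then solves the resulting linear equation $G_2(x)=x^2L(x)+H(x)+H'(x)$ for $G_2(x)$. Your verification that the $i=1$ case collapses to $\{213,231\}$-avoidance of $\beta$ is sound, so nothing is missing.
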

\begin{proof}
Permutations in $S_n(T)$ with exactly 2 left-right maxima whose first letter is 1 (and hence second letter is $n$) have the generating function $x^2L(x)$, where $L(x)=F_{\{231,213\}}(x)=\frac{1-x}{1-2x}$. Thus, by Lemmas \ref{lem84a1} and \ref{lem84a2}, we obtain
\begin{align*}
G_2(x)&=x^2L(x)+xG_2(x)+x^3(L(x)-1)\left(\frac{L(x)}{1-x}+L(x)-\frac{1}{1-x}\right)\\
&+\frac{x^3(1-4x+9x^2-12x^3+6x^4-x^5)}{(1-3x+x^2)(1-2x)^2(1-x)^2},
\end{align*}
and solving for $G_2(x)$ completes the proof.
\end{proof}

\begin{theorem}\label{th84a}
Let $T=\{4231,1324,2341\}$. Then
$$F_T(x)=\frac{1-9x+33x^2-62x^3+64x^4-36x^5+7x^6}{(1-3x+x^2)(1-2x)^2(1-x)^3}.$$
\end{theorem}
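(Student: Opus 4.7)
The plan is to continue the decomposition by the number $m$ of left-right maxima. Let $G_m(x)$ denote the generating function for $T$-avoiders with exactly $m$ left-right maxima; we have $G_0(x)=1$ and $G_1(x)=xF_T(x)$ (deleting the leading letter $n=i_1$ from a permutation with one left-right maximum yields an arbitrary $T$-avoider), and Lemma \ref{lem84a3} already supplies $G_2(x)$. Thus it suffices to compute $\sum_{m\ge 3} G_m(x)$ and then solve for $F_T(x)$ in the identity $(1-x)F_T(x)=1+G_2(x)+\sum_{m\ge 3}G_m(x)$.

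For $m\ge 3$, write $\pi=i_1\pi^{(1)}i_2\pi^{(2)}\cdots i_m\pi^{(m)}$ with left-right maxima $i_1<i_2<\cdots<i_m$. The structure is tightly constrained by $T$. Avoiding $2341$ forces $\pi^{(j)}>i_{j-2}$ for every $j\ge 2$, because any letter $a<i_{j-2}$ in $\pi^{(j)}$ combines with $i_{j-2},i_{j-1},i_j$ to form a $2341$. Avoiding $1324$ rules out a $132$-pattern anywhere in the prefix $i_1\pi^{(1)}\cdots i_{m-1}\pi^{(m-1)}$, since $i_m$ at the end would complete it to a $1324$. Avoiding $4231$ forces the subsequence of letters less than $i_s$ that appear after $i_s$ to avoid $231$, for each $s$. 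Together these reduce the intermediate blocks $\pi^{(2)},\dots,\pi^{(m-1)}$ to very limited shapes describable through $\{132,231\}$- and $\{213,231\}$-avoiders, whose generating function is $L(x)=(1-x)/(1-2x)$.

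For $m=3$, I would carry out a direct case analysis on the positions of $i_1-1$ and $i_2-1$, in the spirit of the split into $H(x)$ and $H'(x)$ used in Lemmas \ref{lem84a1}--\ref{lem84a2} for $G_2$. This should produce $G_3(x)$ as an explicit rational function. For $m\ge 4$, the stacking of the $2341$-constraints $\pi^{(j)}>i_{j-2}$ becomes strong enough to force each interior block to be decreasing (and typically empty), which I expect to yield a first-order recurrence of the shape $G_m(x)=xG_{m-1}(x)+c(x)\,x^m$ with $c(x)$ an explicit rational function. Summing the resulting geometric-type series gives a closed form for $\sum_{m\ge 3}G_m(x)$, and substitution into the displayed identity yields the theorem after routine rational-function arithmetic.

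The main obstacle is the case analysis for $G_3(x)$: with three left-right maxima, $\pi^{(1)}$ may already be nontrivial, $\pi^{(2)}$ may contribute (since there are too few earlier left-right maxima to trigger the $2341$ bound on it), and $\pi^{(3)}$ has a layered structure involving letters both above and below $i_1$. Moreover $i_1-1$ and $i_2-1$ can each reside in any of $\pi^{(1)},\pi^{(2)},\pi^{(3)}$, with the three avoidance conditions interacting differently in each subcase. Once $G_3(x)$ is secured, the step $m\ge 4$ is comparatively routine, because the extra left-right maxima essentially act as a single extra $x$-factor in the telescoping.
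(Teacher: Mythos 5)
Your plan breaks at its anchor step: $G_1(x)\neq xF_T(x)$. A $T$-avoider with exactly one left-right maximum has the form $n\pi'$, and deleting $n$ does \emph{not} leave an arbitrary $T$-avoider: since $4231$ begins with its largest entry, the letter $n$ completes any $231$ in $\pi'$ to a $4231$, so $\pi'$ must avoid $231$ as well (which subsumes $4231$ and $2341$). Hence $G_1(x)=xF_{\{231,1324\}}(x)=\frac{x(1-4x+5x^2-x^3)}{(1-2x)^2(1-x)}$, as the paper uses; concretely, $231$ is a $T$-avoider but $4231$ is not, so your deletion map is not onto. (The shortcut $G_1=xF_T$ is valid in many other cases of the paper precisely because none of those patterns starts with its maximum.) Consequently your identity $(1-x)F_T(x)=1+G_2(x)+\sum_{m\ge3}G_m(x)$ is false; the correct bookkeeping is $F_T(x)=1+xF_{\{231,1324\}}(x)+G_2(x)+\sum_{m\ge3}G_m(x)$, and since $G_2$ and the $G_m$ for $m\ge3$ are fixed explicit functions, solving your version would yield a generating function different from the one stated in the theorem. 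This is a genuine error, not a cosmetic one.

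Beyond that, the part that actually needs doing is left as a plan. Your structural observations are sound and agree with the paper's: avoiding $2341$ gives $\pi^{(j)}>i_{j-2}$ for $j\ge3$, and avoiding $1324$ (with $i_m$ available as the ``4'') then forces $\pi^{(3)}=\cdots=\pi^{(m-1)}=\emptyset$, $\pi^{(2)}<\pi^{(1)}<i_1$, and $\pi^{(m)}=\alpha\beta$ with $\alpha>i_{m-1}>\beta>i_{m-2}$. But pushed one step further, these facts settle \emph{all} $m\ge3$ at once: splitting into the four cases according to whether $\pi^{(2)}$ and $\beta$ are empty (and noting that a nonempty $\pi^{(2)}$ forces $\pi^{(1)}$ to be decreasing, while a nonempty $\beta$ forces $\alpha$ to be decreasing), one gets directly $G_m(x)=x^m\bigl(L(x)+\frac{L(x)-1}{1-x}\bigr)^2$ with $L(x)=\frac{1-x}{1-2x}$, which is exactly how the paper proceeds. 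So the separate case analysis on the positions of $i_1-1$ and $i_2-1$ for $G_3$, and the merely ``expected'' recurrence $G_m=xG_{m-1}+c(x)x^m$ for $m\ge4$, are unnecessary detours and, as written, unproven: they leave $\sum_{m\ge3}G_m(x)$ uncomputed. Fix $G_1$, derive the uniform formula for $G_m$ ($m\ge3$), and the theorem follows by summing and invoking Lemma \ref{lem84a3}.
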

\begin{proof}
Let $G_m(x)$ be the generating function for $T$-avoiders with $m$
left-right maxima. Clearly, $G_0(x)=1$ and $G_1(x)=xF_{\{231,1324\}}(x)=
x \frac{1 - 4 x + 5 x^2 - x^3)}{(1 - 2 x)^2 (1 - x)}$ \cite{MV}, and $G_2(x)$
is given by Lemma \ref{lem84a3}. Now suppose $m\geq3$ and
let $\pi=i_1\pi^{(1)}i_2\pi^{(2)}\cdots i_m\pi^{(m)}\in S_n(T)$ with exactly $m$ left-right maxima. Since $\pi$ avoids $1324$ and $2341$, we see that $\pi^{(3)}\cdots\pi^{(m-1)}=\emptyset$, $\pi^{(1)}>\pi^{(2)}$, and $\pi^{(m)}=\alpha\beta$ with $\alpha>i_{m-1}>\beta>i_{m-2}$. By considering the four possibilities, $\pi^{(2)},\,\beta$ empty or not, we obtain the  contributions $x^mL(x)^2$, $\frac{x^m}{1-x} L(x)\big(L(x)-1\big)$,
$\frac{x^m}{1-x} L(x)\big(L(x)-1\big)$ and $\frac{x^m}{(1-x)^2}\big(L(x)-1\big)^2$. Thus,
$$G_m(x)=x^m\left(L(x)+\frac{L(x)-1}{1-x}\right)^2.$$
Summing over $m\geq3$, we obtain
$$F_T(x)-1-G_1(x)-G_2(x)=\frac{x^3}{1-x}\left(L(x)+\frac{L(x)-1}{1-x}\right)^2\,.$$
Using the expressions for $G_1(x),G_2(x)$, we complete the proof.
\end{proof}

\subsection{Case 86: $\{3412,2431,1324\}$}
In this subsection, let $A=\frac{1-2x}{1-3x+x^2}$ denote the generating function
for $F_{\{132,3412\}}(x)$ and let $B=\frac{1-3x+3x^2}{(1-x)^2(1-2x)}$ denote the
generating function for $F_{\{213,2431,3412\}}(x)$ (they can be derived from results in \cite{MV}).

\begin{lemma}\label{lem86a1}
Let $H(x)$ be the generating function permutations in $S_n(T)$ whose first letter is $n-1$. Then
$$H(x)=\frac{x^2(x^4-x^3+5x^2-4x+1)}{(1-x)(1-2x)(1-3x+x^2)}.$$
\end{lemma}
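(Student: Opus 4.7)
The plan is to decompose any $\pi\in S_n(T)$ with $\pi_1=n-1$ as $\pi=(n-1)\alpha n\beta$ by locating the letter $n$, and to read off the structure of $\alpha$ and $\beta$ forced by the three pattern avoidances.

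I would first establish the basic shape. Since $(n-1),n$ is an ascending pair at the head, avoiding $3412$ forces $\beta$ to be decreasing. Because $n-1$ is the first letter and nearly maximal, a direct rank check shows it cannot play any role in a $132$, $3412$, or $2431$ pattern; consequently, avoiding $1324$ (with $n$ in the role of ``4'') forces $\alpha$ to avoid $132$, which in turn forces $\alpha$ to avoid $2431$ (since $132$ is a subpattern of $2431$). A further rank check shows $\alpha$ must also avoid $3412$. Thus $\alpha$ is a $\{132,3412\}$-avoider, enumerated by $A=\frac{1-2x}{1-3x+x^2}$.

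Next, I would analyse the value-interaction forced by avoiding $2431$ with $n$ as ``4''. For each $a\in\alpha$, the decreasing sequence $\beta$ cannot contain both a letter greater than $a$ and a letter less than $a$. Combined with $\alpha\cup\beta=[1,n-2]$, this forces $\beta$ to occupy a contiguous interval $[m,M]$ of values (written in decreasing order) and $\alpha$ to split as $\alpha=\alpha_l\sqcup\alpha_h$ with $\alpha_l=[1,m-1]$ and $\alpha_h=[M+1,n-2]$. Under this partition, the only surviving cross-pattern is a $3412$ with three letters in $\alpha$ and one in $\beta$; it translates to forbidding a $231$-subsequence of $\alpha$ whose smallest letter lies in $\alpha_l$ and whose two larger letters lie in $\alpha_h$. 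All other cross-patterns are vacuous by the value partition and the monotonicity of $\beta$.

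I would then enumerate in four cases according to which of $\beta,\alpha_l,\alpha_h$ are empty. The first three cases, where the extra cross-constraint is vacuous, contribute $x^2A$ (for $\beta=\emptyset$), $\tfrac{x^3A}{1-x}$ (for $\beta\ne\emptyset,\ \alpha_h=\emptyset$), and $\tfrac{x^3(A-1)}{1-x}$ (for $\beta\ne\emptyset,\ \alpha_l=\emptyset,\ \alpha_h\ne\emptyset$). The main obstacle is the fourth case, with $\beta\ne\emptyset$ and both $\alpha_l,\alpha_h$ nonempty. There, combining the $\{132,3412\}$-avoidance of $\alpha$ with the extra cross-constraint forces a rigid skeleton: the $\alpha_h$-letters preceding the last $\alpha_l$-letter must be decreasing (from the $3412$ cross-constraint), while the $\alpha_h$-letters following the first $\alpha_l$-letter must be increasing (from $132$-avoidance). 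Unwinding these joint constraints, the bivariate count collapses to the generating function $\tfrac{x^2(2-x)}{(1-2x)(1-3x+x^2)}$, so this case contributes $\tfrac{x^5(2-x)}{(1-x)(1-2x)(1-3x+x^2)}$ to $H(x)$. Summing the four pieces over the common denominator $(1-x)(1-2x)(1-3x+x^2)$ and simplifying yields the claimed closed form.
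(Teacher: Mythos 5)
Your decomposition is sound and it is a genuinely different route from the paper's. You analyze $(n-1)\alpha n\beta$ by the value structure that $2431$ (with $n$ as the ``4'') forces: $\beta$ must be a decreasing interval of values and $\alpha$ splits as $\alpha_l<\beta<\alpha_h$, with $\alpha$ a $\{132,3412\}$-avoider and the only surviving cross-constraint the split-$231$ condition you state; this characterization is correct in both directions, your first three case contributions are right, and the sum of your four pieces over the common denominator does reproduce the stated $H(x)$. The paper instead refines $H$ by the number $d$ of letters to the right of $n$, uses the same $2431$-forced interval observation to get $H_d(x)=x^{d-1}H_1(x)$, and then computes $H_1(x)$ by locating the letter $1$ and summing geometric-series contributions over two auxiliary parameters; your route trades that multi-parameter bookkeeping for a single refined enumeration of $\{132,3412\}$-avoiders carrying a marked value split with the split-$231$ forbidden.

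The one thin spot is exactly that enumeration (the case $\beta,\alpha_l,\alpha_h$ all nonempty). The two monotonicity facts you cite --- the $\alpha_h$-letters before the last $\alpha_l$-letter are decreasing (this is precisely the split-$231$ condition) and the $\alpha_h$-letters after the first $\alpha_l$-letter are increasing (from $132$) --- are correct, but they do not by themselves determine the count: one still has to process the residual $\{132,3412\}$ constraints involving the low letters, e.g.\ the $132$ occurrences of type low--high--low (which force every low letter before a given high letter to exceed every low letter after it) and the $3412$ occurrences with one letter in $\alpha_h$ and three in $\alpha_l$. So ``unwinding these joint constraints'' conceals the real work of this case. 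The asserted value $\frac{x^2(2-x)}{(1-2x)(1-3x+x^2)}$ is nevertheless correct (its coefficients $2,9,31$ at lengths $2,3,4$ agree with a direct check, and it is what makes your totals match $H(x)$ exactly), so the argument is right in outline; to be a complete proof it needs this sub-enumeration carried out explicitly, for instance by conditioning on whether any high letter precedes the first low letter and on the block structure of the low letters.
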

\begin{proof}
Refine $H(x)$ to $H_d(x)$, the generating function for $T$-avoiders $\pi=(n-1)\pi'n\pi''$
where $\pi''$ has $d$ letters.
Since $\pi$ avoids $3412$, $\pi''$ is a decreasing subsequence say $j_1j_2\cdots j_d$.
Clearly, $H_0(x)=x^2A$. If $d\geq2$, then there is no letter between $j_1$ and
$j_d$ in $\pi'$, otherwise $\pi$ contains $2431$. Thus $H_d(x)=x^{d-1}H_1(x)$, for all $d\geq1$.
Now, let us compute $H_1(x)$. If $j_1=1$, then we have a contribution of $x^3A$.
So, we can assume that $j_1>1$. Define $H_1(x;e,d)$ to be the generating function
for $T$-avoiders $\pi=(n-1)\alpha1\beta nj_1$, where $\beta$ has $e$ letters smaller
than $j_1$ and $d$ letters greater than $j_1$. Clearly, $\beta$ is an increasing subsequence.
Now let us examine the following cases:
\begin{itemize}
\item $e=d=0$. We have a contribution of $xH_1(x)$.
\item $d=0$ and $e\geq1$. Here we see that $\alpha=\alpha'\alpha''$ with $\alpha'$ a decreasing subsequence and $\alpha''<e_1$, where $e_1$ is the smallest letter in $\beta$. So we have a contribution of $\frac{x^{e+4}}{(1-x)^{e+1}}A$.
\item $e=0$ and $d\geq1$. Here we see that either $\alpha=\alpha'\alpha''$ with $\alpha'$ decreasing and $\alpha''<j_1$, or $\alpha=\alpha'\alpha''(j_1+1)\alpha'''$ with $\alpha'$  decreasing, $j_1>\alpha''>\alpha'''$. So we have a contribution of $x^4\left(\frac{A}{1-x}+\frac{x(A-1)}{(1-x)^2}\right)\frac{x^d}{(1-x)^d}$.
\item $e,d\geq1$. Here we see that $\alpha=\alpha'\alpha''$ with $\alpha'$  decreasing
and $\alpha''<e_1$, where $e_1$ is the smallest letter in $\beta$.
So we have a contribution of
    $\frac{x^{d+e+6}A}{(1-x)^{d+e+1}}$.
\end{itemize}
Hence,
$$(1-x)H_1(x)=x^3A+\sum_{e\geq1}\frac{x^{e+4}}{(1-x)^{e+1}}A+\sum_{d\geq1}x^4\left(\frac{A}{1-x}+\frac{x(A-1)}{(1-x)^2}\right)\frac{x^d}{(1-x)^d}+
\sum_{d,e\geq1}\frac{x^{d+e+6}A}{(1-x)^{d+e+1}},$$
which leads to
$$H_1(x)=\frac{x^3(1-3x+3x^2+x^3)}{(1-2x)(1-3x+x^2)}.$$
Since $H(x)=x^2A(x)+\sum_{d\geq1}H_d(x)=x^2A(x)+\frac{1}{1-x}H_1(x)$, the result follows.
\end{proof}

\begin{lemma}\label{lem86a2}
The generating function for $T$-avoiders with $2$ left-right maxima is given by
$$G_2(x)=\frac{x^2(1-5x+10x^2-8x^3+x^4+x^5-x^6)}{(1-x)^3(1-2x)(1-3x+x^2)}.$$
The generating function for $T$-avoiders with $3$ left-right maxima is given by
$$G_3(x)=\frac{x^3(1-4x+4x^2+3x^3-5x^4-3x^5+4x^6-x^7)}{(1-2x)(1-x)^4(1-3x+x^2)}.$$
\end{lemma}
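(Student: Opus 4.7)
The plan is to derive explicit equations for $G_2(x)$ and $G_3(x)$ by case analysis on the structure of a $T$-avoider, mirroring the decomposition style used in Lemma \ref{lem86a1} and exploiting the pieces already worked out there.

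For $G_2(x)$, I would write $\pi = i\alpha n\beta$ with exactly two left-right maxima and split into three top-level cases: (a) $i = n-1$, which contributes exactly $H(x)$ from Lemma \ref{lem86a1}; (b) $i=1$, in which $\alpha=\emptyset$ and $\beta$ (read as a permutation of $\{2,\ldots,n-1\}$) must avoid $\{213,2431,3412\}$, giving $x^2B$ since a $213$ in $\beta$ combined with $1$ in front produces $1324$; and (c) $1 < i < n-1$. In case (c) I would further decompose by the position of $i-1$ (either in $\alpha$ or in $\beta$), and by the set of letters of $\beta$ that exceed $i$. The $1324$ constraint forces any letters of $\beta$ larger than $i$ to sit immediately after $n$ in a decreasing run; $3412$-avoidance makes several sub-blocks of $\alpha$ and $\beta$ either empty or strictly decreasing; and $2431$-avoidance restricts the remaining portions to avoiders with generating function $A$ or $B$. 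Each sub-case then contributes a term expressible in $A$, $B$, $L(x)=\frac{1-x}{1-2x}$, and $\frac{x}{1-x}$.

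For $G_3(x)$, I would write $\pi = i_1\pi^{(1)}\, i_2\pi^{(2)}\, n\pi^{(3)}$. A short analysis using $1324$ (with quadruple $i_1,\cdot,\cdot,n$) shows no letter of $\pi^{(2)}$ lies strictly between $i_1$ and $i_2$, hence $\pi^{(2)} < i_1$; similarly the letters of $\pi^{(3)}$ exceeding $i_2$ must form a decreasing suffix hanging off $n$. Avoiding $3412$, applied to the pairs $(i_1,i_2)$ and letters taken from $\pi^{(2)},\pi^{(3)}$, forces the ``top'' block of $\pi^{(1)}$ to be decreasing, and avoiding $2431$ imposes analogous decreasing/smallness constraints on the block of $\pi^{(3)}$ below $i_2$. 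With these rigid skeletons in hand, I would split into a handful of sub-cases based on the emptiness of $\pi^{(2)}$, the presence of letters of $\pi^{(3)}$ between $i_1$ and $i_2$, and the relative position of the letter $i_1-1$. Each sub-case reduces to a product of an $A$-avoider block, a $B$-avoider block, and powers of $x/(1-x)$ from the decreasing runs.

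The main obstacle is bookkeeping: the sub-cases proliferate quickly and must be chosen to partition the set of avoiders without overlap, and in each sub-case one must identify correctly which smaller pattern class governs the residual block. Once the contributions are assembled, collapsing the resulting sum of rational functions to the stated compact forms
$$G_2(x)=\frac{x^2(1-5x+10x^2-8x^3+x^4+x^5-x^6)}{(1-x)^3(1-2x)(1-3x+x^2)}, \qquad G_3(x)=\frac{x^3(1-4x+4x^2+3x^3-5x^4-3x^5+4x^6-x^7)}{(1-2x)(1-x)^4(1-3x+x^2)}$$
is a routine but tedious algebraic simplification that I would verify by computer algebra against the first several coefficients obtained by direct enumeration of $S_n(T)$ for small $n$.
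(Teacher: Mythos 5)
Your high-level skeleton (peel off the case $i=n-1$ and reuse $H(x)$ from Lemma \ref{lem86a1}, then analyse the remaining avoiders block by block) matches the paper's, but the concrete structural claims on which your case (c) rests are wrong, and they would corrupt every subsequent contribution. For $\pi=i\alpha n\beta$ with $i<n-1$ it is \emph{not} true that the letters of $\beta$ larger than $i$ ``sit immediately after $n$ in a decreasing run'': the permutation $25134$ avoids $T=\{3412,2431,1324\}$, has left-right maxima $2,5$, and the letters $3,4$ of $\beta$ exceeding $i=2$ occur at the \emph{end} of $\beta$, after the small letter $1$, and in \emph{increasing} order. The correct constraints run the other way: an occurrence $i\,n\,b\,c$ with $c<i<b$ is a $2431$, so in $\beta$ every letter below $i$ precedes every letter above $i$; and $i\,n\,x\,y$ with $x<y<i$ is a $3412$, so it is the block of letters \emph{below} $i$ that is decreasing, while the block above $i$ need only avoid $\{213,2431,3412\}$ and is therefore counted by $B$, not by a geometric factor. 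This is exactly the paper's decomposition $\pi=i\alpha'n\alpha''\beta'$ with $\alpha'\alpha''<i<\beta'$, $\beta'\neq\emptyset$, $\alpha''$ decreasing, followed by the split $|\alpha''|\in\{0,1,\ge 2\}$ giving $x^2A(B-1)$, $\frac{x^3}{1-x}A(B-1)+x^3\bigl(A-\frac{1}{1-x}\bigr)(B-1)$ and $\frac{x^4}{1-x}A(B-1)$, added to $H(x)$; the position of $i-1$, which you propose to condition on, plays no role here. Your analogous claim for $G_3(x)$, that the letters of $\pi^{(3)}$ exceeding $i_2$ form a decreasing suffix, is also false: $12534$ avoids $T$, has left-right maxima $1,2,5$, and $\pi^{(3)}=34$ is increasing. (Your observation that $\pi^{(2)}<i_1$ is correct; note also that your cases (a) $i=n-1$ and (b) $i=1$ overlap at $\pi=12$, so the bookkeeping needs care there.)

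Beyond these errors, the proposal is a plan rather than a proof: the actual partition into sub-cases, the identification of which restricted class governs each residual block, and the resulting generating-function contributions---which are the entire content of the lemma---are never written down, and the closed forms are delegated to a computer check of initial coefficients, which cannot establish the identities. To repair the argument, derive the block structure from $2431$ and $3412$ relative to the two left-right maxima as above for $G_2(x)$, and for $G_3(x)$ (which the paper leaves to the reader) carry out the same style of analysis on $i_1\pi^{(1)}i_2\pi^{(2)}n\pi^{(3)}$ using those two patterns, rather than the decreasing-suffix assumption, before assembling and simplifying the rational contributions.
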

\begin{proof}
We treat $G_2(x)$ and leave the similar derivation of $G_3(x)$ to the reader. Let $\pi=i\pi'n\pi''\in S_n(T)$ with 2 left-right maxima. By Lemma \ref{lem86a1}, the contribution for the case $i=n-1$ is $H(x)$. So let us assume that $i<n-1$, that is, $\pi''$ contains the letter $i+1$. Since $\pi$ avoids $2431$, we can write $\pi$ as $\pi=i\alpha'n\alpha''\beta'$ where $\alpha'\alpha''<i<\beta'$, $\beta'\neq\emptyset$ and  $\alpha''$ is decreasing. If $\alpha''=\emptyset$, then we have contribution of $x^2A(B-1)$. If $\alpha''$ has exactly one letter then we have a contribution of $\frac{x^3}{1-x}A(B-1)+x^3\big(A-1/(1-x)\big)(B-1)$ corresponding to the cases $\alpha'$ decreasing or not. If $\alpha''$ has at least two letters, then $\beta'>i>\alpha'>\alpha''$, so the contribution is $x^4A(B-1)/(1-x)$.
By adding all the contributions, we complete the proof.
\end{proof}

\begin{theorem}\label{th86a}
Let $T=\{3412,2431,1324\}$. Then
$$F_T(x)=\frac{1-7x+19x^2-24x^3+16x^4-4x^5-x^6+2x^7}{(1-x)^3(1-2x)(1-3x+x^2)}.$$
\end{theorem}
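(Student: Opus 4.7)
The plan follows the left-right maxima methodology used throughout this paper. Let $G_m(x)$ denote the generating function for $T$-avoiders with exactly $m$ left-right maxima; then $G_0(x)=1$, $G_1(x)=xF_T(x)$, and $G_2(x), G_3(x)$ are supplied by Lemma~\ref{lem86a2}. The strategy is to obtain a formula (or recurrence) for $G_m(x)$ valid for every $m\ge 4$, sum the resulting series, and solve the linear equation
\[
F_T(x)=1+xF_T(x)+G_2(x)+G_3(x)+\sum_{m\ge 4}G_m(x)
\]
for $F_T(x)$.

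For $m\ge 4$, I would decompose a typical avoider $\pi=i_1\pi^{(1)}i_2\pi^{(2)}\cdots i_m\pi^{(m)}$ and invoke the three forbidden patterns in concert. The standard $1324$ argument, applied with $i_s,i_{s+1}$ playing the roles of the ``$3$'' and ``$4$'' of the pattern, shows that every letter of $\pi^{(s-1)}$ must exceed every letter of $\pi^{(s)}$ for each $2\le s\le m-1$ (with both blocks nonempty); iterating this chain together with $\pi^{(1)}<i_1$ places all entries of $\pi^{(2)},\ldots,\pi^{(m-1)}$ strictly below $i_1$. Avoidance of $3412$ then forces the letters less than $i_1$ appearing after $i_2$ (both those in $\pi^{(2)}\cdots\pi^{(m-1)}$ and the lower portion of $\pi^{(m)}$) to form a single decreasing run, while avoidance of $2431$ controls how the letters of $\pi^{(m)}$ exceeding $i_{m-1}$ may interleave with this run. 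Following the template of Theorems~\ref{th80a} and \ref{th84a}, I expect this combined structural rigidity to yield either a recurrence $G_m(x)=xG_{m-1}(x)+c_m(x)$ with an explicit correction $c_m(x)$, or a closed-form expression for $G_m(x)$ that is summable in $m$, so that $\sum_{m\ge 4}G_m(x)$ becomes an explicit rational function.

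Substituting back into the master equation gives a linear equation whose solution, after routine rational-function manipulation, should match the claimed formula. A useful built-in consistency check is that the $(1-x)^{-4}$ singularity of $G_3(x)$ at $x=1$ must partially cancel in the total sum, since the claimed $F_T(x)$ exhibits only a $(1-x)^{-3}$ singularity there. The principal obstacle is the structural case analysis for $m\ge 4$: although the three avoidances are individually clean, the final block $\pi^{(m)}$ still splits into an upper part (letters exceeding $i_{m-1}$) and a lower part (letters below $i_{m-2}$), with each piece constrained by $3412$ and $2431$ in subtle ways; enumerating all resulting sub-cases, in the spirit of the $G_2(x)$ derivation in Lemma~\ref{lem86a2}, is the most labor-intensive step.
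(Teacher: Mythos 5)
Your plan coincides with the paper's actual route: generating functions $G_m(x)$ by number of left-right maxima, with $G_0=1$, $G_1=xF_T$, $G_2,G_3$ imported from Lemma~\ref{lem86a2}, a recurrence of the form $G_m(x)=xG_{m-1}(x)+c_m(x)$ for $m\ge 4$, and a final summation and linear solve. However, as written the proposal has a genuine gap: the entire substance of the theorem's proof is the determination of the correction term $c_m(x)$, and you never derive it -- you only state that you ``expect'' the structural rigidity to yield either such a recurrence or a summable closed form. The paper obtains it by setting $\alpha=\pi^{(m-1)}$ and letting $\beta$ be the subsequence of $\pi^{(m)}$ consisting of letters between $i_{m-2}$ and $i_{m-1}$, then splitting into the four cases according as $\alpha,\beta$ are empty or not, with explicit contributions $xG_{m-1}(x)$, $x^{m+1}A/(1-x)^{m-2}$, $x^{m+1}AB/(1-x)^{m-1}$ and $x^{m+2}A/(1-x)^{m-1}$, where $A=F_{\{132,3412\}}(x)=\frac{1-2x}{1-3x+x^2}$ and $B=F_{\{213,2431,3412\}}(x)=\frac{1-3x+3x^2}{(1-x)^2(1-2x)}$. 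Your sketch neither identifies this case split nor explains where the auxiliary avoidance classes $A$ and $B$ and the powers of $1/(1-x)$ (coming from forced decreasing runs) enter; without that, the recurrence cannot be written down, let alone summed to the stated rational function.

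A secondary imprecision: your proposed structure for the last block (``upper part exceeding $i_{m-1}$ and lower part below $i_{m-2}$'') is not the decomposition that actually drives the count; the relevant distinguished piece is precisely the set of letters of $\pi^{(m)}$ lying \emph{between} $i_{m-2}$ and $i_{m-1}$, and the interaction of $2431$ and $3412$ with $\pi^{(m-1)}$ is what forces the remaining parts into the classes counted by $A$ and $B$. The chain condition $\pi^{(1)}>\pi^{(2)}>\cdots$ you deduce from $1324$ is correct but is not by itself enough; the missing work is exactly the case analysis you defer, so the argument as it stands does not establish the formula.
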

\begin{proof}
Let $G_m(x)$ be the generating function for $T$-avoiders with $m$
left-right maxima. Clearly, $G_0(x)=1,\ G_1(x)=xF_T(x)$, and $G_2(x),\,G_3(x)$ are given in Lemma \ref{lem86a2}. Now let us write an equation for $G_m(x)$ with $m\geq4$. Suppose $\pi=i_1\pi^{(1)}i_2\pi^{(2)}\cdots i_m\pi^{(m)}\in S_n(T)$ with exactly $m$ left-right maxima. Since $\pi$ avoids $1324$ we see that $\pi^{(j)}<i_1$ for all $j=1,2,\ldots,m-1$. With $\alpha=\pi^{(m-1)}$ and
$\beta$ the subsequence of all letters between $i_{m-2}$ and $i_{m-1}$ in $\pi^{(m)}$, we have contributions as follows:
\begin{itemize}
\item If $\alpha=\beta=\emptyset$, then we have $xG_{m-1}(x)$;
\item If $\alpha=\emptyset$ and $\beta\neq\emptyset$, then we have $x^{m+1}A/(1-x)^{m-2}$;
\item If $\alpha\neq\emptyset$ and $\beta=\emptyset$, then we have $x^{m+1}AB/(1-x)^{m-1}$;
\item If $\alpha\neq\emptyset$ and $\beta\neq\emptyset$, then we have $x^{m+2}A/(1-x)^{m-1}$.
\end{itemize}
Thus, $G_m(x)=xG_{m-1}(x)+x^{m+1}A/(1-x)^{m-2}+x^{m+1}AB/(1-x)^{m-1}+x^{m+2}A/(1-x)^{m-1}$, for all $m\geq4$. Summing over $m\geq4$, we obtain
\begin{align*}
&F_T(x)-1-xF_T(x)-G_2(x)-G_3(x)\\
&=x\big(F_T(x)-1-xF_T(x)-G_2(x)\big)+\frac{x^5A}{(1-x)(1-2x)}+\frac{x^5AB}{(1-x)^2(1-2x)}+\frac{x^6A}{(1-x)^2(1-2x)}.
\end{align*}
The result follows by substituting the expressions for $G_2(x)$, $G_3(x)$, $A$ and $B$, and solving for $F_T(x)$.
\end{proof}

\subsection{Case 88: $\{3412,3421,1324\}$}
For this case we outline a proof based on a labelled generating forest and also give a proof based on left-right maxima.
\subsubsection{Labelled generating forest}
The table in the following lemma both recursively defines labels and gives valid succession rules. The $j$-th entry on the right hand side in the rules gives the label when $n+1$ is inserted into the $j$-th active site left to right. A label $k^i,\ 1\le i \le 5$, always indicates $k$ active sites. The proof is omitted.
\begin{lemma}\label{lem88a1}
The generating forest $\mathcal{F}$ is given by
$$\begin{array}{ll}
\mbox{\bf Roots: }&3^1,3^3\\
\mbox{\bf Rules: }&k^1\rightsquigarrow 3^12^53^5\cdots(k-2)^5(k+1)^1(k+1)^2,\quad k\geq3,\\ &k^2\rightsquigarrow3^32^53^5\cdots(k-3)^5(k-1)^4k^4(k+1)^3,\quad k\geq4,\\
&k^3\rightsquigarrow3^32^53^5\cdots(k-2)^5k^4(k+1)^3,\quad k\geq3,\\
&2^3\rightsquigarrow2^33^4,\\
&k^4\rightsquigarrow2^32^53^5\cdots(k-1)^5(k+1)^4,\quad k\geq3,\\
&k^5\rightsquigarrow1^52^53^5\cdots k^5,\quad k\geq1\,.\\
\end{array}$$ \qed
\end{lemma}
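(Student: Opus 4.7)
The strategy is to identify the superscripts $1$ through $5$ with five mutually exclusive structural types of $T$-avoiders, defined so that the label $k^i$ of a permutation $\pi\in S_n(T)$ encodes both its type $i$ and its number $k$ of active sites, and then prove the succession rules by induction on $n$. A natural way to choose the types is to distinguish $T$-avoiders by the local configuration around the largest entry $n$---its position, the position of $n-1$ relative to $n$, and the nature of the prefix and suffix of $n$---since these features control which letters of $\pi$ can combine with a future maximum to complete a forbidden pattern from $T=\{3412,3421,1324\}$. The root verification is immediate, since $12$ and $21$ each have three active sites; one checks directly that the three children of each fall into the two types predicted by the rules for $3^1$ and $3^3$.

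The key localizing observation is that when we insert $n+1$ into $\pi\in S_n(T)$, the new entry is larger than every existing letter, so $n+1$ can only play the role of the maximal entry of any newly-created forbidden pattern. Since the largest entry of $3412$ sits at position $2$, of $3421$ at position $2$, and of $1324$ at position $4$, the test for a site to be active reduces to examining whether three specific letters of $\pi$ (located around the insertion point for $3412$ and $3421$, and to the left of it for $1324$) form a particular $3$-letter subpattern of $\pi$. This gives a mechanical recipe for reading off the active sites of $\pi$ directly from its structure.

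For each rule $k^i\rightsquigarrow \cdots$ in the table, the plan is to take a $T$-avoider $\pi$ of type $i$ with $k$ active sites, go through those sites in left-to-right order, and at each one determine (a) the structural type of the child permutation, by examining how the new maximum sits relative to the existing letters, and (b) the number of active sites of the child, by applying the localizing observation above to the child itself. The main obstacle is combinatorial bookkeeping: with five types, six rules, and the parameter $k$ ranging over all values $\ge 3$, there are many cases, and one must verify both that the types genuinely partition $S_n(T)$ and that the active-site counts produced by the rules agree with those computed directly. Once the types are pinned down, the inductive step becomes a routine, if lengthy, case analysis in the spirit of the arguments already carried out for the earlier cases of the paper.
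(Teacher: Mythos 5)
Your proposal is a plan rather than a proof: the entire content of the lemma lies in saying what the five superscripts $1,\dots,5$ actually mean and then checking, site by site, that each of the six succession rules produces children of exactly the claimed types with exactly the claimed numbers of active sites — and none of this is done. You state that "a natural way to choose the types is to distinguish $T$-avoiders by the local configuration around the largest entry $n$," but you never pin down the five classes, never show they partition $S_n(T)$, and never verify a single rule (not even, say, $k^5\rightsquigarrow 1^52^5\cdots k^5$ or the exceptional rule $2^3\rightsquigarrow 2^33^4$). Since the same number of active sites $k$ occurs with several different superscripts, the superscript must encode a genuinely independent statistic (compare the second coordinate $s$ in the $(k,s)$-labels of Case 69, which is explicitly the count of active sites beyond the last ascent); without an explicit analogue here, the "routine, if lengthy, case analysis" you defer to cannot even be begun, let alone checked. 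Note that the paper itself omits the proof of this lemma, so there is no argument to match yours against; but as submitted, your text asserts a strategy while leaving precisely the assertion of the lemma unproved.

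Two smaller points. First, your localizing observation is correct in spirit and is indeed the right tool: since $n+1$ is a new maximum, a site is inactive iff either the prefix before it contains a $132$ pattern (which would complete $1324$), or some letter before the site exceeds at least two letters after it (which would complete $3412$ or $3421$). However, this is not quite "examining whether three specific letters form a particular $3$-letter subpattern"; it is the existence of such configurations, and phrasing it as specific letters obscures the bookkeeping you would need. Second, the root check as you describe it is fine, but verifying that the three children of $12$ and of $21$ fall into the types "predicted by the rules for $3^1$ and $3^3$" again presupposes the very definitions of the types that are missing.
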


\begin{theorem}\label{th88a}
Let $T=\{3412,3421,1324\}$. Then
$$F_T(x)=\frac{(1-x)^2(1-5x+7x^2+x^3)}{(1-2x)^4}.$$
\end{theorem}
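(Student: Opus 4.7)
The plan is to exploit the labelled generating forest of Lemma \ref{lem88a1}. For each label type $i\in\{1,2,3,4,5\}$, define
\[
A_i(x,y)=\sum_{\pi} x^{|\pi|}y^{k(\pi)},
\]
where the sum runs over $\pi\in S_n(T)$, $n\ge 2$, carrying a label $k^i$ and $y$ marks the number of active sites. Translating each succession rule into a functional equation is routine: a fixed child label $\ell^j$ contributes $xy^{\ell}A_i(x,1)$ per parent node, a shift $(k+c)^j$ contributes $xy^{c}A_i(x,y)$, and each geometric tail $2^5\,3^5\cdots(k-c)^5$ collapses via $\sum_{j=2}^{k-c}y^{j}=(y^2-y^{k-c+1})/(1-y)$ to linear combinations of $A_i(x,y)$ and $A_i(x,1)$. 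The root contributions are $x^2y^3$ to each of $A_1$ and $A_3$.

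I would solve the resulting system in order of increasing coupling. From rule~1 alone,
\[
A_1(x,y)(1-xy)=x^2y^3+xy^3A_1(x,1),
\]
and setting $y=1$ yields $A_1(x,1)=x^2/(1-2x)$; then $A_2(x,y)=xyA_1(x,y)$ gives $A_2(x,1)=x^3/(1-2x)$. The coupled pair $A_3,A_4$ — with rule~4 feeding $A_3$ through its $y^2$-coefficient, rule~5 feeding both $A_3$ and $A_5$, and rule~2 mixing $A_3,A_4,A_5$ — decouples as soon as the rule~4 contribution $xy^2[y^2]A_3(x,y)$ is resolved by comparing the equations at $y=1$, yielding rational closed forms for $A_3(x,1)$ and $A_4(x,1)$.

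The main obstacle is the class $A_5(x,y)$. It is fed by the long tails of rules~1--5 and, via rule~6, by itself: the contribution $k^5\rightsquigarrow 1^5 2^5\cdots k^5$ works out to $\tfrac{xy}{1-y}\bigl(A_5(x,1)-A_5(x,y)\bigr)$. Hence
\[
A_5(x,y)\cdot\frac{1-y+xy}{1-y}=\Psi(x,y)+\frac{xy}{1-y}A_5(x,1),
\]
where $\Psi(x,y)$ is a rational function entirely determined by $A_1,\dots,A_4$. This is a classical kernel-method setup: the kernel $1-y+xy$ vanishes at $y^{*}=1/(1-x)$, which is a well-defined formal power series in $x$, and substituting $y=y^{*}$ eliminates $A_5(x,y)$ and pins down $A_5(x,1)$ explicitly. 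Finally, $F_T(x)=1+x+\sum_{i=1}^{5}A_i(x,1)$, and the identity $F_T(x)=(1-x)^2(1-5x+7x^2+x^3)/(1-2x)^4$ reduces to a routine clearing of denominators; I expect every $A_i(x,1)$ to be rational with denominator dividing $(1-2x)^4$, consistent with the shape of the stated answer.
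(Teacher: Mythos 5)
Your plan is essentially the paper's own proof: the paper introduces, for the five label types of Lemma \ref{lem88a1}, the series $a_k,b_k,c_k,d_k,e_k$ and their sums $A(x,v),\dots,E(x,v)$, solves the type-1 and type-2 series directly, resolves the coupled type-3/type-4 pair by setting $v=1$ together with the relation $c_2=\frac{x}{1-x}D(x,1)$, and then applies the kernel method at $v=1/(1-x)$ to the type-5 equation before summing $F_T(x)=1+x+A(x,1)+B(x,1)+C(x,1)+D(x,1)+E(x,1)$. Your stated checkpoints ($A_1(x,1)=x^2/(1-2x)$, $A_2(x,y)=xyA_1(x,y)$, kernel root $y=1/(1-x)$, denominators dividing $(1-2x)^4$) all agree with the paper's computation.
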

\begin{proof}
Let $a_k(x)$, $b_k(x)$, $c_k(x)$, $d_k(x)$ and $e_k(x)$ be the generating functions for the number of permutations in the $n$th level of the generating forest $\mathcal{F}$ with label $k^1$, $k^2$, $k^3$, $k^4$ and $k^5$, respectively.
By Lemma \ref{lem88a1}, we have
\begin{align*}
a_k(x)&=xa_{k-1}(x),\quad k\geq4,\\
b_k(x)&=xa_{k-1}(x),\quad k\geq4,\\
c_k(x)&=x(b_{k-1}+c_{k-1}(x)),\quad k\geq4,\\
d_k(x)&=x(d_{k-1}(x)+c_k(x)+b_k(x)+b_{k+1}(x)),\quad k\geq4,\\
e_k(x)&=xe_k(x)+x\sum_{j\geq k+1}(d_j(x)+e_j(x))+x\sum_{j\geq k+2}(a_j(x)+c_j(x))+x\sum_{j\geq k+3}b_j(x),\quad k\geq2
\end{align*}
with $a_3(x)=x^2+x\sum_{j\geq3}a_j(x)$, $c_2(x)=xc_2(x)+x\sum_{j\geq3}d_j(x)$, $c_3(x)=x^2+xc_3(x)+x\sum_{j\geq4}(b_j(x)+c_j(x))$, $d_3(x)=xc_2(x)+xc_3(x)+xb_4(x)$ and $e_1(x)=x\sum_{j\geq1}e_j(x)$.

Now let $A(x,v)=\sum_{k\geq3}a_k(x)v^k$, $B(x,v)=\sum_{k\geq3}b_k(x)v^k$, $C(x,v)=\sum_{k\geq3}c_k(x)v^k$,  $D(x,v)=\sum_{k\geq2}d_k(x)v^k$ and $E(x,v)=\sum_{k\geq2}e_k(x)v^k$.

By the above recurrences, we see that $A(x,v)-(x^2+xA(x,1))v^3=xvA(x,v)$. Thus, by substituting $v=1$, we obtain $A(x,1)=\frac{x^2}{1-2x}$, which implies $A(x,v)=\frac{x^2v^3(1-x)}{(1-xv)(1-2x)}$.
Thus, by the equation for $b_k(x)$, we have $B(x,v)=\frac{x^3v^4(1-x)}{(1-xv)(1-2x)}$.

By the equations for $c_k(x),d_k(x),e_k(x)$, we have
\begin{align*}
&C(x,v)-c_3(x)v^3-c_2(x)v^2=xv(B(x,v)+C(x,v)-c_2(x)v^2),\\
&D(x,v)-d_3(x)=x(vD(x,v)+C(x,v)-c_3(x)v^3-c_2(x)v^2+B(x,v)+(B(x,v)-b_4(x)v^4)/v),\\
&E(x,v)-e_1(x)v=\frac{x}{1-v}(v^2E(x,1)-vE(x,v)+v^2D(x,1)-D(x,v)+v^3C(x,1)-C(x,v))\\
&+xv^2c_2(x)+\frac{x}{1-v}(v^4B(x,1)-B(x,v)+v^3A(x,1)-A(x,v))
\end{align*}
with $c_2(x)=\frac{x}{1-x}D(x,1)$, $c_3(x)=x^2+xB(x,1)+xC(x,1)-\frac{x^2}{1-x}D(x,1)$, $d_3(x)=\frac{x^2}{(1-x)^2}D(x,1)$ and $e_1(x)=xE(x,1)v$. Note that $b_4(x)=\frac{x^3(1-x)}{1-2x}$ (from $B(x,v)$).

Substituting $v=1$ in the first two equations, and solving for $C(x,1)$ and $D(x,1)$, we obtain
$$C(x,1)=\frac{x^2(1-4x+7x^2-4x^3-2x^4)}{(1-2x)^3},\quad
D(x,1)=\frac{x^3(1-x)(1-2x^2)}{(1-2x)^3},$$
which implies
\begin{align*}
C(x,v)&=\frac{x^2v^2(x^2(1-2x^2)+(1-5x+4x^5+9x^2-8x^3)v)}{(1-xv)^2(1-2x)^3},\\
&+\frac{x^3(1-x)v^4((x^2+2x-1)(2x^2-2x+1)+x(1-2x)^2v)}{(1-xv)^2(1-2x)^3},\\
D(x,v)&=\frac{x^3v^3(1-4x+5x^2+2x^3-6x^4+x^2(8x^3-6x^2+2x-1)v+x^4(1-2x^2)v^2)}{(1-xv)^3(1-2x)^3}.
\end{align*}

We solve the equation for $E(x,v)$ by using the kernel method (see, e.g., \cite{HM} for an exposition), taking $v=\frac{1}{1-x}$. Using the expressions for $A(x,v)$, $B(x,v)$, $C(x,v)$ and $D(x,v)$, this gives
$$E(x,1)=\frac{x^4(x+1)(3-4x)}{(1-2x)^4}.$$
Since $F_T(x)=1+x+A(x,1)+B(x,1)+C(x,1)+D(x,1)+E(x,1)$, the result follows.
\end{proof}

\subsection{Case 93: $\{1324,2413,3421\}$}
In order to study this case, we need the following lemmas.
\begin{lemma}\label{lem93a1}
Let $T=\{1324,2413,3421\}$. Let $H_m(x)$ be the generating function for the number of $T$-avoiders of the form $\pi=(n+1-m)\pi^{(1)}(n+2-m)\pi^{(2)}\cdots n\pi^{(m)}$ such that $\pi(n+1)\in S_{n+1}(T)$. Then
$$H_m(x)=x^mK(x)+\frac{(m-1)x^{m+1}L(x)}{1-x},$$
where $K(x)=F_{\{132,3421\}}(x)=1+\frac{x(1-3x+3x^2)}{(1-x)(1-2x)^2}$ (see \cite[Seq. A005183]{Sl}) and $L(x)=\frac{1-x}{1-2x}$.
\end{lemma}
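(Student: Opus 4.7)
The plan is to first reduce the condition $\pi(n+1)\in S_{n+1}(T)$ to a pure pattern-avoidance condition on $\pi$, and then analyze the resulting block structure.

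\textbf{Reduction and $132$-structure.} Since $n+1$ is the global maximum at the last position of $\pi(n+1)$, in any new forbidden pattern it must play the role of the ``$4$'' at the fourth chosen position; among $T=\{1324,2413,3421\}$ only $1324$ has its ``$4$'' in the last slot. Hence $\pi(n+1)\in S_{n+1}(T)$ iff $\pi\in S_n(T)$ and $\pi$ additionally avoids $132$, which (since $132$ is a subpattern of both $1324$ and $2413$) collapses to the single condition that $\pi$ avoids $\{132,3421\}$. Because $i_j=n-m+j$ exceeds every entry of every $\pi^{(k)}$, any inequality $a<c$ with $a\in\pi^{(j)}$, $c\in\pi^{(k')}$, $j<k'$, produces the $132$-pattern $(a,i_{k'},c)$; so avoidance of $132$ forces the block inequalities $\pi^{(1)}>\pi^{(2)}>\cdots>\pi^{(m)}$ (as letter sets), together with each $\pi^{(j)}$ being internally $132$-avoiding.

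\textbf{Consequences of $3421$-avoidance.} Taking $x_2=i_k$ and $x_1=i_{k-1}$ with $k\geq 2$, one sees that any descending pair in the concatenation $\pi^{(k)}\pi^{(k+1)}\cdots\pi^{(m)}$ completes a $3421$; given the block inequalities, such a pair exists unless at most one of $\pi^{(2)},\ldots,\pi^{(m)}$ is nonempty and that block is increasing. Separately, if $x_2\in\pi^{(1)}$ is the top of an ascent in $\pi^{(1)}$ and $x_3$ is a later smaller entry of $\pi^{(1)}$ (so $(x_1,x_2,x_3)$ is a $231$ inside $\pi^{(1)}$), then any entry of a nonempty $\pi^{(j)}$ with $j\geq 2$ is automatically smaller than $x_3$ and supplies the missing ``$1$'' of the pattern; hence whenever some $\pi^{(j)}$ ($j\geq 2$) is nonempty, $\pi^{(1)}$ must also avoid $231$.

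\textbf{Enumeration.} The $T$-avoiders of the prescribed form split disjointly into two cases. In Case I all of $\pi^{(2)},\ldots,\pi^{(m)}$ are empty and $\pi^{(1)}$ is an arbitrary $\{132,3421\}$-avoider, contributing $x^m K(x)$. In Case II exactly one block $\pi^{(j)}$ with $j\in\{2,\ldots,m\}$ is nonempty and increasing (generating function $x/(1-x)$, with $m-1$ choices of $j$), while $\pi^{(1)}$ is a possibly empty $\{132,231\}$-avoider (which automatically also avoids $3421$, since $3421$ contains $231$ on its first three positions), contributing $L(x)$. The total from Case II is $(m-1)\,x^m L(x)\cdot\frac{x}{1-x}=\frac{(m-1)x^{m+1}L(x)}{1-x}$, and adding the two cases yields the stated formula. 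The main obstacle is the exhaustive verification that the structural conditions above are both necessary and sufficient to block every cross-block $3421$; the extra $231$-avoidance of $\pi^{(1)}$ in Case II is the key non-obvious ingredient.
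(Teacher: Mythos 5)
Your proof is correct and takes essentially the same route as the paper: the block ordering $\pi^{(1)}>\pi^{(2)}>\cdots>\pi^{(m)}$ forced by the (appended-maximum) $1324$ condition, then the dichotomy between all of $\pi^{(2)},\ldots,\pi^{(m)}$ empty with $\pi^{(1)}$ a $\{132,3421\}$-avoider (contributing $x^mK(x)$) and exactly one of those blocks nonempty and increasing with $\pi^{(1)}$ a $\{132,231\}$-avoider (contributing $\frac{(m-1)x^{m+1}L(x)}{1-x}$). You simply spell out details the paper leaves implicit, namely the reduction of ``$\pi(n+1)\in S_{n+1}(T)$'' to $\pi$ avoiding $\{132,3421\}$ and the $3421$-based arguments that force uniqueness and monotonicity of the nonempty block and the extra $231$-avoidance of $\pi^{(1)}$.
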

\begin{proof}
Let us write a formula for $H_m(x)$. Let $\pi=(n+1-m)\pi^{(1)}(n+2-m)\pi^{(2)}\cdots n\pi^{(m)}$ such that $\pi(n+1)\in S_{n+1}(T)$. Since $\pi$ avoids $1324$, we see that $\pi^{(1)}>\cdots>\pi^{(m)}$. If $\pi^{(2)}=\cdots=\pi^{(m)}=\emptyset$, then $\pi^{(1)}$ avoids $132$ and $3412$, which gives a contribution of $x^mK(x)$. Otherwise, since $\pi$ avoids $3421$, there exists a unique $j$ such that $\pi^{(j)}\neq\emptyset$ with $2\leq j\leq m$. Moreover, $\pi^{(1)}$ avoids $132,231$. Thus, we have a contribution of $\frac{x^{m+1}L(x)}{1-x}$. Hence, $H_m(x)=x^mK(x)+\frac{(m-1)x^{m+1}L(x)}{1-x}$, as required.
\end{proof}

\begin{lemma}\label{lem93a2}
Let $T=\{1324,2413,3421\}$. Let $H'_m(x)$ be the generating function for the number of permutations $\pi=(n+1-m)\pi^{(1)}(n+2-m)\pi^{(2)}\cdots n\pi^{(m)}\in S_{n}(T)$. Then
$$H'_m(x)=xH'_{m-1}(x)+\frac{x^{m+1}}{(1-x)(1-2x)},$$
with $H'_2(x)=\frac{x^2(1-4x+5x^2)}{(1-2x)^3}$.
\end{lemma}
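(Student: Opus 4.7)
The plan is to prove this by induction on $m$, in a spirit similar to the proof of Lemma~\ref{lem93a1} but dropping the auxiliary constraint that $\pi$ extend to a $T$-avoider upon appending $n+1$. For the base case $m=2$, I would enumerate $T$-avoiders of the form $(n-1)\pi^{(1)} n \pi^{(2)}$ directly by decomposing on the position of the letter $n-2$ and then refining according to where the remaining letters of $\pi^{(1)}$ and $\pi^{(2)}$ fall, using the constraints from $\{1324,2413,3421\}$-avoidance. Aggregating the partial contributions should yield the claimed closed form $x^2(1-4x+5x^2)/(1-2x)^3$.

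For the inductive step, I would split the count $H'_m(x)$ according to whether $\pi^{(1)}$ is empty. When $\pi^{(1)}=\emptyset$, the permutation begins with the two consecutive left-right maxima $(n+1-m)(n+2-m)$, so deleting the leading letter $n+1-m$ and relabeling yields a permutation in $S_{n-1}(T)$ of the form $(n-m+1)\pi^{(2)}(n-m+2)\pi^{(3)}\cdots (n-1)\pi^{(m)}$, which is precisely an object counted by $H'_{m-1}$. The factor of $x$ accounts for the removed letter, giving the contribution $x H'_{m-1}(x)$.

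When $\pi^{(1)} \neq \emptyset$, I would exploit $T$-avoidance to pin down the structure of the $\pi^{(j)}$. The key observation is that whenever $c \in \pi^{(j)}$ and $d \in \pi^{(k)}$ with $1 \le j < k \le m-1$ and $c<d$, the quadruple $(c, n+k-m, d, n+k+1-m)$ is a $1324$ pattern, since these four values satisfy $c<d<n+k-m<n+k+1-m$ in that positional order. Hence $\pi^{(1)} > \pi^{(2)} > \cdots > \pi^{(m-1)}$ componentwise. Further restrictions on the interaction between $\pi^{(1)}$ and $\pi^{(m)}$ follow from $2413$- and $3421$-avoidance: for any $a\in\pi^{(1)}$, letters of $\pi^{(m)}$ smaller than $a$ must form an increasing subsequence (from $3421$, via the quadruple $(a,n,b_1,b_2)$) and must appear only after the letters of $\pi^{(m)}$ exceeding $a$ (from $2413$). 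Together with the analogous pinning of $\pi^{(2)},\dots,\pi^{(m-1)}$, the structure becomes rigid enough to enumerate directly, and the contribution should collapse to $x^{m+1}/((1-x)(1-2x))$.

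The main obstacle is the meticulous case analysis in the $\pi^{(1)}\neq\emptyset$ subproof: one must enumerate all admissible configurations arising from the overlapping constraints, verify that intermediate $\pi^{(j)}$ for $2 \le j \le m-1$ are forced to have a very restricted form (most likely empty, with the bulk of the freedom absorbed into $\pi^{(1)}$ and $\pi^{(m)}$), and check that the total matches the coefficient expansion of $x^{m+1}/((1-x)(1-2x))$, namely $2^{n-m}-1$ avoiders of length $n$ for each $n \ge m+1$. Once the recurrence and the base case are established, the stated form of $H'_m(x)$ follows by an immediate iteration.
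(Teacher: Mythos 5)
Your split on whether $\pi^{(1)}$ is empty does not work, and the error is precisely in the step you treat as finished. When $\pi^{(1)}=\emptyset$, deleting the leading letter $n+1-m$ indeed lands in an object counted by $H'_{m-1}(x)$, but this map is only injective, not bijective: prepending a new smallest left-right maximum to an avoider of the $H'_{m-1}$ form can create an occurrence of $3421$ in which the new letter is the ``3'', the next left-right maximum is the ``4'', and a descent among the small letters supplies ``21''. Concretely, $3214$ is an avoider counted by $H'_2$, yet the corresponding length-$5$ permutation $34215$ contains $3421$, so it is not counted by $H'_3$. Hence the $\pi^{(1)}=\emptyset$ case contributes strictly less than $xH'_{m-1}(x)$; at $m=3$, $n=5$ the avoiders with $\pi^{(1)}=\emptyset$ are $34125$, $34152$, $34512$ (three of them), while $[x^5]\,xH'_2(x)=5$. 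Consequently your second case cannot contribute $x^{m+1}/((1-x)(1-2x))$ either: the avoiders with $\pi^{(1)}\neq\emptyset$ at $m=3$, $n=5$ number five ($31245$, $32145$, $31452$, $32451$, $32415$), not $[x^5]\,\frac{x^4}{(1-x)(1-2x)}=3$. The two claimed formulas happen to sum to the right total only because the lemma is true; your attribution of each term to each case is wrong, so the ``meticulous case analysis'' you defer would not close the gap but contradict it.

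The paper avoids this trap by conditioning on $\pi^{(m-1)}$ instead. If $\pi^{(m-1)}=\emptyset$, it deletes the letter $n-1$, which sits immediately before $n$; reinserting a value just below the maximum, adjacent to it, cannot create a forbidden pattern, because for $m\geq 3$ there is already another left-right maximum before $n$ that could play the role of the ``3'' in any would-be $3421$ (and the other patterns are ruled out by adjacency), so that case genuinely contributes $xH'_{m-1}(x)$. If $\pi^{(m-1)}\neq\emptyset$, avoidance of $1324$ and $2413$ forces $\pi^{(2)}=\cdots=\pi^{(m-2)}=\emptyset$, $\pi^{(1)}>\pi^{(m)}>\pi^{(m-1)}$ with $\pi^{(m-1)}\pi^{(m)}$ increasing and $\pi^{(1)}$ avoiding $\{132,231\}$, yielding $\frac{x^{m+1}}{(1-x)^2}\cdot\frac{1-x}{1-2x}=\frac{x^{m+1}}{(1-x)(1-2x)}$. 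If you want to salvage your outline, you must either switch to this split or correctly enumerate the deficient set in your first case (those $H'_{m-1}$-avoiders whose extension creates a $3421$) and redo the second case accordingly.
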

\begin{proof}
We leave the  formula for $H'_2(x)$ to the reader. Now we write a formula for $H'_m(x)$ with $m\geq3$. Let $\pi=(n+1-m)\pi^{(1)}(n+2-m)\pi^{(2)}\cdots n\pi^{(m)}\in S_n(T)$. If $\pi^{(m-1)}=\emptyset$, then we have a contribution of $xH'_{m-1}(x)$. Thus, we can assume that $\pi^{(m-1)}\neq\emptyset$. Since $\pi$ avoids $1324$ and $2413$, we have that $\pi^{(2)}=\cdots = \pi^{(m-2)}=\emptyset$, $\pi^{(1)}>\pi^{(m)}>\pi^{(m-1)}$ and $\pi^{(m-1)}\pi^{(m)}$ is increasing. Note $\pi^{(1)}$ avoids $132$ and $231$. Thus, we have a contribution of $\frac{x^{m+1}}{(1-x)^2}L(x)$. Hence,
$H'_m(x)=xH'_{m-1}(x)+\frac{x^{m+1}}{(1-x)(1-2x)}$, as required.
\end{proof}

Similar consideration yield the following result.
\begin{lemma}\label{lem93a3}
Let $T=\{1324,2413,3421\}$. Let $H''_m(x)$ be the generating function for the number of permutations $\pi=(n+1-m)\pi^{(1)}(n+2-m)\pi^{(2)}\cdots n\pi^{(m)}\in S_{n}(T)$ such that $\pi^{(m)}$ has a letter smaller than $n+1-m$. Then
$$H''_m(x)=xH''_{m-1}(x)+\frac{x^{m+2}}{(1-x)(1-2x)},$$
with $H''_2(x)=\frac{x^3(1-4x+6x^2-2x^3)}{(1-x)(1-2x)^3}$.
\end{lemma}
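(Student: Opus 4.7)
The plan is to mirror the proof of Lemma~\ref{lem93a2} line by line, with a single modification that propagates the extra restriction ``$\pi^{(m)}$ has a letter smaller than $n+1-m$.'' The first step is the observation that every letter of $\pi^{(m)}$ is automatically drawn from $\{1,\ldots,n-m\}$, because the consecutive left-right maxima $n+1-m,\ldots,n$ have already used up the values from $n+1-m$ upward. Hence the hypothesis of Lemma~\ref{lem93a3} is equivalent to ``$\pi^{(m)}\neq\emptyset$,'' which is the single extra constraint distinguishing $H''_m(x)$ from $H'_m(x)$. This makes the recursion an almost mechanical variant of the one for $H'_m(x)$.

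For $m\geq 3$, I split on whether $\pi^{(m-1)}$ is empty. \emph{Case A} ($\pi^{(m-1)}=\emptyset$): delete the letter $i_{m-1}=n-1$ and standardize. The result is a permutation of length $n-1$ whose left-right maxima are the $m-1$ consecutive values $n-m+1,\ldots,n-1$, while the letters of $\pi^{(m)}$, all smaller than $n-1$, are untouched by standardization. Hence the condition $\pi^{(m)}\neq\emptyset$ is preserved, and this case bijects with the class counted by $H''_{m-1}$, contributing $xH''_{m-1}(x)$. \emph{Case B} ($\pi^{(m-1)}\neq\emptyset$): as in Lemma~\ref{lem93a2}, avoidance of $1324$ and $2413$ forces $\pi^{(2)}=\cdots=\pi^{(m-2)}=\emptyset$, $\pi^{(1)}>\pi^{(m)}>\pi^{(m-1)}$, and $\pi^{(m-1)}\pi^{(m)}$ to be an increasing run, while $\pi^{(1)}$ avoids $\{132,231\}$ (contributing $L(x)$). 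The only new feature is that now $\pi^{(m)}\neq\emptyset$ as well, so $\pi^{(m-1)}\pi^{(m)}$ has length $k\geq 2$ with $k-1$ valid split points. Its generating function is therefore $\sum_{k\geq 2}(k-1)x^k=\frac{x^2}{(1-x)^2}$ in place of $\frac{x}{(1-x)^2}$, and multiplying by $x^m$ (LR maxima) and $L(x)$ (block $\pi^{(1)}$) yields $\frac{x^{m+2}L(x)}{(1-x)^2}=\frac{x^{m+2}}{(1-x)(1-2x)}$, which is exactly one extra factor of $x$ compared with the analogous term in Lemma~\ref{lem93a2}.

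The base case $H''_2(x)$ requires a separate direct calculation, parallel to that for $H'_2(x)$ in Lemma~\ref{lem93a2}. For $\pi=(n-1)\pi^{(1)}n\pi^{(2)}$ with $\pi^{(2)}\neq\emptyset$, a careful accounting shows that $\pi^{(1)}$ must avoid $\{132,3421\}$ (the $2413$ constraint being implied by $132$-avoidance, since $2413$ contains the subpattern $132$), and that the interactions between $\pi^{(1)}$, $\pi^{(2)}$, and the anchors $n-1$ and $n$ force a decomposition of $\pi^{(2)}$ into a decreasing block followed by interleavings controlled by the position of $n-2$ in the permutation. This is the one step I expect to be the main obstacle, since the recursion itself is a trivial adaptation but the base case mixes all three forbidden patterns without the simplifications offered by the larger-$m$ structure; I anticipate the explicit rational form $\frac{x^3(1-4x+6x^2-2x^3)}{(1-x)(1-2x)^3}$ will emerge after summing a short list of case contributions along the lines of Lemmas~\ref{lem93a1} and~\ref{lem93a2}.
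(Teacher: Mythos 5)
Your reduction of the hypothesis to ``$\pi^{(m)}\neq\emptyset$'' is correct (all block letters lie below $n+1-m$), and your derivation of the recurrence for $m\geq 3$ is exactly the intended adaptation of Lemma \ref{lem93a2}, which is all the paper itself offers (``similar considerations''): deleting the left-right maximum $n-1$ when $\pi^{(m-1)}=\emptyset$ preserves nonemptiness of $\pi^{(m)}$, giving $xH''_{m-1}(x)$, and when $\pi^{(m-1)}\neq\emptyset$ the increasing run $\pi^{(m-1)}\pi^{(m)}$ must now be split into two nonempty parts, so $\sum_{k\geq 1}kx^k=\frac{x}{(1-x)^2}$ is replaced by $\sum_{k\geq 2}(k-1)x^k=\frac{x^2}{(1-x)^2}$ and the term $\frac{x^{m+1}}{(1-x)(1-2x)}$ becomes $\frac{x^{m+2}}{(1-x)(1-2x)}$. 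That part is sound.

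The gap is the initial condition: $H''_2(x)=\frac{x^3(1-4x+6x^2-2x^3)}{(1-x)(1-2x)^3}$ is part of the lemma, and you neither derive it nor sketch a correct route to it. Your structural description is off: for $\pi=(n-1)\pi^{(1)}n\pi^{(2)}$ with $\pi^{(2)}\neq\emptyset$, any descent inside $\pi^{(2)}$ together with the prefix $(n-1)\,n$ is an occurrence of $3421$, so $\pi^{(2)}$ is forced to be \emph{increasing}; there is no ``decreasing block'' in $\pi^{(2)}$, and the real work lies in the coupling constraints between $\pi^{(1)}$ and $\pi^{(2)}$ (e.g.\ $2413$ with $n$ as its largest letter forbids $a\in\pi^{(1)}$ and $u,v\in\pi^{(2)}$ with $u<a<v$, and $3421$ with its largest letter inside $\pi^{(1)}$ forbids certain $231$ patterns of $\pi^{(1)}$ sitting above letters of $\pi^{(2)}$). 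A clean way to close the gap without this case analysis: $H''_2(x)=H'_2(x)-x^2K(x)$, because the excluded permutations are exactly those of the form $(n-1)\pi^{(1)}n$, which avoid $T$ if and only if $\pi^{(1)}$ avoids $\{132,3421\}$ (the trailing $n$ turns any $132$ into a $1324$, while $1324$ and $2413$ both contain $132$), and $K(x)$ is the generating function from Lemma \ref{lem93a1}; with $H'_2(x)=\frac{x^2(1-4x+5x^2)}{(1-2x)^3}$ from Lemma \ref{lem93a2} this subtraction gives precisely the stated $H''_2(x)$. As written, your proposal proves the recurrence but only conjectures the base case, so the lemma is not yet established.
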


Now we are ready to state the formula for $F_T(x)$.
\begin{theorem}\label{th93a}
Let $T=\{1324,2413,3421\}$. Then
$$F_T(x)=\frac{1-10x+42x^2-94x^3+120x^4-86x^5+31x^6-3x^7}{(1-x)^3(1-2x)^4}.$$
\end{theorem}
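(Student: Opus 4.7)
My approach follows the left-right-maxima decomposition used throughout this paper. Define $G_m(x)$ to be the generating function for $T$-avoiders with exactly $m$ left-right maxima, so that $F_T(x)=\sum_{m\ge 0}G_m(x)$. Immediately $G_0(x)=1$ and $G_1(x)=xF_T(x)$ by removing the single left-right maximum $n$, giving $(1-x)F_T(x)=1+\sum_{m\ge 2}G_m(x)$. The task therefore reduces to determining $G_m(x)$ for $m\ge 2$.

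For $m\ge 2$, write $\pi=i_1\pi^{(1)}i_2\pi^{(2)}\cdots i_m\pi^{(m)}\in S_n(T)$ with $i_1<\cdots <i_m=n$. I would split on whether the left-right maxima coincide with the top $m$ values (the ``consecutive-top'' case $i_1=n-m+1$) or not. The consecutive-top case is precisely what Lemma \ref{lem93a2} counts, giving contribution $H'_m(x)$; solving its recurrence yields the closed form $H'_m(x)=x^{m-2}H'_2(x)+\frac{x^{m+1}(1-x^{m-2})}{(1-x)^2(1-2x)}$. When $i_1<n-m+1$, some value $v\in(i_1,n]$ is not a left-right maximum and sits in some $\pi^{(j)}$ with $j\ge 2$. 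Here avoidance of $1324$ forces $\pi^{(j)}<i_1$ for $j<m$, avoidance of $2413$ pins the larger non-maxima into $\pi^{(m)}$, and avoidance of $3421$ constrains the resulting descent structure. I expect these ``non-consecutive'' contributions to reduce to an explicit combination of $H''_m(x)$ from Lemma \ref{lem93a3} and $H_m(x)$ from Lemma \ref{lem93a1} (the latter encoding the additional restriction that letters below $i_1$ lie in a $\{132,231\}$-structure, matching the ``extension by $n+1$'' constraint), together with decorations counted by $K(x)$ and $L(x)$.

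Once $G_m(x)$ is expressed for each $m\ge 2$ as such a combination, the sum $\sum_{m\ge 2}G_m(x)$ can be evaluated explicitly: each of the recurrences $H'_m-xH'_{m-1}$ and $H''_m-xH''_{m-1}$ is a constant-in-$m$ rational function times a power of $x$, so after a geometric summation one gets a rational function in $x$. Substituting back into $(1-x)F_T(x)=1+\sum_{m\ge 2}G_m(x)$ and simplifying should produce the stated formula for $F_T(x)$.

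The main obstacle is the non-consecutive-top case for $m\ge 2$. The three forbidden patterns interact in a genuinely nontrivial way: $1324$ pushes small letters below $i_1$ into the early $\pi^{(j)}$, $2413$ governs the relative order of ``floating'' large values versus later maxima, and $3421$ limits descending runs after the maxima. Correctly partitioning this case so that each sub-structure is counted exactly once by $H_m$, $H''_m$, or a residual simple term (without double counting between, say, the boundary of $H_m$-avoiders and the $H''_m$-avoiders with a trailing small letter) is the delicate piece of the argument; once that accounting is clean, the remaining computation is routine algebra.
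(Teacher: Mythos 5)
Your setup matches the paper's framework (left-right maxima, $G_0=1$, $G_1=xF_T(x)$, and the three auxiliary series of Lemmas \ref{lem93a1}--\ref{lem93a3}, with the consecutive-top case correctly identified as $H'_m(x)$), but the proposal stops exactly where the actual proof begins. For the non-consecutive-top case you only state an expectation that the contributions ``reduce to an explicit combination of $H''_m$ and $H_m$ together with decorations counted by $K$ and $L$,'' and you yourself flag the partitioning as an unresolved ``delicate piece.'' That partition is the entire content of the argument: one must first show that avoidance of $1324$ and $2413$ forces $\pi^{(s)}<i_1$ for $s<m$ and forces $\pi^{(m)}$ to split into blocks $\beta^{(m)}\cdots\beta^{(1)}$ with $i_{s-1}<\beta^{(s)}<i_s$, and then case on the first nonempty block among $\beta^{(2)},\dots,\beta^{(m)}$. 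This yields, for $m\ge2$,
$$G_m(x)=H'_m(x)+xH_{m-1}(x)\big(K(x)-1\big)+H''_m(x)\big(L(x)-1\big)+\sum_{j=1}^{m-2}\frac{x^{m+1-j}H_j(x)L(x)}{1-x},$$
and it is precisely here that the extension condition ``$\pi(n+1)\in S_{n+1}(T)$'' in Lemma \ref{lem93a1} is needed (the nonempty block to the right acts as the appended large letter constraining the prefix), while the $\beta^{(m)}\neq\emptyset$ case must be split according to whether $\beta^{(1)}$ is empty (giving $xH_{m-1}(K-1)$) or not (giving $H''_m(L-1)$). Without deriving this identity, nothing can be summed over $m$ and the stated closed form is never reached, so the proof is incomplete.

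A secondary but genuine error: your closed form for $H'_m$ is wrong. Unrolling $H'_m=xH'_{m-1}+\frac{x^{m+1}}{(1-x)(1-2x)}$ gives $H'_m(x)=x^{m-2}H'_2(x)+(m-2)\,\frac{x^{m+1}}{(1-x)(1-2x)}$, since each unrolled term $x^{j}\cdot\frac{x^{m+1-j}}{(1-x)(1-2x)}$ equals the same quantity $\frac{x^{m+1}}{(1-x)(1-2x)}$; your factor $\frac{1-x^{m-2}}{1-x}$ would be correct only if the inhomogeneous term did not shift with $m$. The slip is harmless for the final summation (one should sum the recurrences directly over $m$, as the paper does to get $H'(x)$, $H''(x)$ and $H(x)$), but as written it would propagate an error into any term-by-term evaluation of $\sum_{m}G_m(x)$.
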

\begin{proof}
Let $G_m(x)$ be the generating function for $T$-avoiders with $m$ left-right maxima. Clearly, $G_0(x)=1$ and $G_1(x)=xF_T(x)$.

Now let us write an equation for $G_m(x)$ with $m\geq2$. Let $\pi=i_1\pi^{(1)}\cdots i_m\pi^{(m)}\in S_n(T)$ with exactly $m$ left-right maxima. Since $\pi$ avoids $1324$ and $2413$, we see that $\pi^{(s)}<i_1$ for all $s=1,2,\ldots,m-1$ and $\pi^{(m)}=\beta^{(m)}\cdots\beta^{(1)}$ such that $i_{s-1}<\beta^{(s)}<i_{s}$ for all $s=1,2,\ldots,m$ (with $i_0=0$). We consider  cases:
\begin{itemize}
\item $\beta^{(2)}=\cdots\beta^{(j-1)}=\emptyset$ and $\beta^{(j)}\neq\emptyset$ with $j=2,3,\ldots,m-1$. Since $\pi$ avoids $T$, we see that $\pi^{(s)}=\emptyset$ for all $s=j,j+1,\ldots,m-1$ and $\beta^{(s)}=\emptyset$ for all $s\neq j,m$, where $\beta^{(m)}$ avoids $132$ and $231$, and $\beta^{(j)}$ is nonempty increasing. Thus, by Lemma \ref{lem93a1}, we have a contribution of $\frac{x^{m+2-j}H_{j-1}(x)L(x)}{1-x}$.
\item $\beta^{(2)}=\cdots\beta^{(m-1)}=\emptyset$ and $\beta^{(m)}\neq\emptyset$. If $\beta^{(1)}=\emptyset$, then we have a contribution of $xH_{m-1}(x)(K(x)-1)$, see Lemma \ref{lem93a1}. Otherwise, we have a contribution of $H''_m(x)(L(x)-1)$, see Lemma \ref{lem93a3}.
\item $\beta^{(2)}=\cdots\beta^{(m)}=\emptyset$. For this case, we have a contribution of $H'_m(x)$, see Lemma \ref{lem93a2}.
\end{itemize}
By summing over all contributions, we obtain
$$G_m(x)=H'_m(x)+(xH_{m-1}(x)(K(x)-1)+H''_m(x)(L(x)-1)+\sum_{j=1}^{m-2}\frac{x^{m+1-j}H_j(x)L(x)}{1-x},$$
for all $m\geq2$. Hence, by summing over all $m\geq2$ and using the initial conditions $G_0(x)$ and $G_1(x)$, we have
$$F_T(x)-1-xF_T(x)=H'(x)+(L(x)-1)H''(x)+\left(\frac{x^3}{(1-x)^2}L(x)+x(K(x)-1)\right)H(x),$$
where, by Lemmas \ref{lem93a1}, \ref{lem93a2} and \ref{lem93a3},
\begin{align*}
H(x)&=\sum_{m\geq1}H_m(x)=\frac{xK(x)}{1-x}+\frac{x^3}{(1-x)^2(1-2x)}=\frac{x(1-3x+3x^2)}{(1-x)(1-2x)^2},\\
H'(x)&=\sum_{m\geq2}H'_m(x)=\frac{H'_2(x)}{1-x}+\frac{x^4}{(1-x)^3(1-2x)}=\frac{x^2(1-3x+3x^2)^2}{(1-x)^3(1-2x)^3},\\
H''(x)&=\sum_{m\geq2}H''_m(x)=\frac{H''_2(x)}{1-x}+\frac{x^5}{(1-x)^3(1-2x)}=\frac{x^3(1-3x+3x^2)(1-2x+2x^2)}{(1-x)^3(1-2x)^3}.
\end{align*}
Hence, by solving for $F_T(x)$, we complete the proof.
\end{proof}

\subsection{Case 99: $\{1324,3142,4231\}$}
Throughout this case, we abbreviate
$F_{\{132,4231\}}(x)$ by $K(x)$. Recall $K(x)=1+\frac{x(1-3x+3x^2)}{(1-x)(1-2x)^2}$ \cite[Seq. A005183]{Sl}. To find $F_T(x)$, we use the following lemmas.
\begin{lemma}\label{lem99a1}
Let $T=\{1324,3142,4231\}$. Define $J_m(x)$ to be the generating function for the number of permutations $\pi=i_1i_2\cdots i_m\pi'\in S_n(T)$ with exactly $m$ left-right maxima, all occurring at the start, such that $i_1>1$. Set $J(x)=\sum_{m\geq2}J_m(x)$. Then
$J_2(x)=\frac{x^3(1-2x+3x^2-x^3)}{(1-x)^2(1-2x)^2}$ and
\[
J(x)=\frac{x^3(1-2x+4x^2)}{(1-x)(1-2x)^3}\, .
\]
\end{lemma}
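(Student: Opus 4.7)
The plan is to derive $J_2(x)$ directly from a structural analysis of $\pi = i_1 n \pi'$, and then to bootstrap to $J(x)$ via a recurrence in $m$. Throughout, I split $\pi'$ into its \emph{small} entries (those $<i_1$) and \emph{large} entries (those $>i_1$); since $i_1>1$, the small part is automatically nonempty.

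For $J_2(x)$, I would check each forbidden pattern systematically against a four-element subsequence that uses $i_1$ and/or $n$. This yields three consequences: (a) using $n$ as the ``$4$'' of $4231$, $\pi'$ itself avoids $231$, so in particular the large subsequence of $\pi'$ avoids both $213$ and $231$ and is therefore a $\{213,231\}$-avoider counted by $L(x)=\frac{1-x}{1-2x}$; (b) using $i_1$ as the ``$1$'' of $1324$, the large subsequence of $\pi'$ also avoids $1324$; and (c) using $i_1$ as the ``$3$'' of $3142$, there is no triple $s_1\,a\,s_2$ in $\pi'$ with $s_1,s_2$ small, $a$ large, and $s_1<s_2$. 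These three conditions let me split the enumeration according to whether the small entries of $\pi'$ all precede the large entries, or whether some small entry appears to the right of a large entry; in the latter case (c) forces the smalls lying after the first large entry to form a decreasing tail. Summing the two contributions--each a product of an $L(x)$-factor for the large subsequence, a $\frac{1}{1-x}$-factor from the decreasing piece of smalls, and a $231$-avoider factor for the remaining smalls--collapses, after routine algebra, to the stated $J_2(x)=\frac{x^3(1-2x+3x^2-x^3)}{(1-x)^2(1-2x)^2}$.

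For $m\geq 3$, the added left-right maxima $i_3,\ldots,i_m$ interact only weakly with $\pi'$: the pattern $3142$ with $i_{j-1}$ playing the role of the ``$3$'' and $i_j$ playing the role of the ``$4$'' rules out any ``interior'' content between consecutive $i_j$'s that mixes smalls and larges, while the arguments (a)--(c) above transcribe verbatim once ``$n$'' is replaced by ``$i_m$''. A direct extension of the $m=2$ analysis then yields a recurrence of the form $J_m(x) = xJ_{m-1}(x) + R_m(x)$ with an explicit correction $R_m(x)$ geometric in $x$. Summing over $m\geq 2$ telescopes the $xJ_{m-1}$ part, and combined with $J_2(x)$ above the simplification produces $J(x)=\frac{x^3(1-2x+4x^2)}{(1-x)(1-2x)^3}$. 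The main obstacle is step (c) together with its $m\geq 3$ generalization: the three forbidden patterns interact subtly through the small-large dichotomy, and one must rigorously certify that the resulting structural description of $\pi'$ is both necessary and sufficient, so that no $T$-avoider is counted twice and none is missed when the generating-function contributions are assembled.
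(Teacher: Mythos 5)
Your necessary conditions (a)--(c) are correct, but the structural conclusions you draw from them fail, so the enumeration does not go through. First, condition (c) only forbids an \emph{increasing} pair of small entries that are positionally separated by a large entry; it does not force the smalls following the first large entry to be decreasing. The avoider $35412\in S_5(T)$ is counted by $J_2$ (left-right maxima $3,5$ at the start, $i_1=3>1$), yet in $\pi'=412$ the smalls after the large $4$ are $1,2$, an increasing pair; such avoiders are missed by your second case. Second, a ``$231$-avoider factor for the remaining smalls'' is not the right factor: when at least one large entry follows the small block, a $132$ among the smalls combines with that large into a $1324$ (e.g.\ $461325$ has small block $132$, which avoids $231$, but $1,3,2,5$ is a $1324$), so the smalls must avoid $\{132,231\}$ there (factor $L(x)$), while in the case with no larges at all ($i_1=n-1$) they must avoid $\{231,1324\}$ (factor $K(x)=F_{\{132,4231\}}(x)$); a genuine Catalan-type $231$ factor would in any case be incompatible with the rational $J_2$ you are aiming at. Because of these two errors, the asserted ``collapse after routine algebra'' to $\frac{x^3(1-2x+3x^2-x^3)}{(1-x)^2(1-2x)^2}$ is a target being assumed, not derived.

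The $m\geq3$ step has the same problem in sharper form. In $J_m$ the left-right maxima occupy the first $m$ positions, so there is nothing between consecutive $i_j$'s and your remark about $i_{j-1},i_j$ playing the ``$3$'' and ``$4$'' of $3142$ excludes nothing; the relevant constraints come from letters of $\pi'$ with values interlacing the $i_j$'s. Moreover the relation that actually holds is not of your proposed shape $J_m=xJ_{m-1}+R_m(x)$ with $R_m$ an explicit geometric correction: the paper obtains it by locating the letter $n-1$, namely either $i_{m-1}=n-1$ (contribution $xJ_{m-1}$), or $\pi'$ begins with $n-1$ (contribution $xJ_m$ --- a term your shape has no counterpart for), or else the avoider has a completely explicit form contributing $x^m(1+x)(L(x)-1)^2$; this yields $J_m=xJ_{m-1}+xJ_m+x^m(1+x)(L(x)-1)^2$ for $m\geq3$, with an analogous equation for $J_2$ in which the case $i_1=n-1$ produces the $K(x)$ term, and summing over $m$ gives $J(x)$. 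Dropping the $xJ_m$ term and telescoping as you describe changes the answer, so the recurrence step, like the $J_2$ step, needs an actual derivation rather than the sketched one.
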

\begin{proof}
Let us write an equation for $J_m(x)$ with $m\geq3$. Let $\pi=i_1i_2\cdots i_m\pi'\in S_n(T)$ with exactly $m$ left-right maxima. If $i_{m-1}=n-1$, then we have a contribution of $xJ_{m-1}(x)$. Thus, we can assume that $i_{m-1}<n-1$. If $\pi'=(n-1)\pi''$, then we have a contribution of $xJ_m(x)$, otherwise, since $\pi$ avoids $T$ (note that $1$ belongs to $\pi'$), we can write $\pi$ as
$\pi'=i'(i'-1)\cdots(i_{m-1}+1)\alpha\beta$, such that $\alpha<i_1$, $i'<\beta<n$ and $n-1$ belongs to $\beta$. Note that $\beta$ avoids $213$ and $231$, and $\alpha$ avoids $132$ and $231$. Thus, we have a contribution of
$x^m(1+x)(L(x)-1)^2$.
Hence,
$$J_m(x)=xJ_{m-1}(x)+xJ_m(x)+x^m(1+x)(L(x)-1)^2.$$

By very similar techniques, we find that $J_2(x)=x(K(x)-1)+xJ_2(x)+x^2(1+x)(L(x)-1)^2$. Thus,
$J_2(x)=\frac{x^3(1-2x+3x^2-x^3)}{(1-x)^2(1-2x)^2}$ and the displayed recurrence for $J_m$ then readily yields the stated expression for $J(x)$.
\end{proof}

\begin{lemma}\label{lem99a2}
Let $T=\{1324,3142,4231\}$. Define $J'_m(x)$ to be the generating function for the number of permutations $\pi=i_1i_2\cdots i_m\pi'\in S_n(T)$ with exactly $m$ left-right maxima such that $i_1=1$ (and $i_m=n$). Then
$$J'(x):=\sum_{m\geq3}J'_m(x)=\frac{x^3(1-2x+2x^2)}{(1-x)(1-2x)^2}.$$
\end{lemma}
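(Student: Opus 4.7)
The plan is to mimic the proof of Lemma \ref{lem99a1}, with the key adaptation that $i_1 = 1$ now eliminates the possibility of any letter of $\pi'$ lying below the first left-right maximum, which in turn simplifies several of the structural cases that appeared there.

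For $m \geq 4$, I would derive a recurrence for $J'_m(x)$ by a case analysis on the position of $n-1$. If $i_{m-1} = n-1$, then deleting $n-1$ produces a permutation counted by $J'_{m-1}(x)$, contributing $xJ'_{m-1}(x)$. If $i_{m-1} < n-1$ and $\pi' = (n-1)\pi''$, then deleting this leading $n-1$ produces a permutation of smaller length still in the class $J'_m$, contributing $xJ'_m(x)$. Otherwise, the joint avoidance of $T$ together with $i_m = n$ and $i_1 = 1$ forces $\pi'$ to decompose as $i'(i'-1)\cdots(i_{m-1}+1)\beta$ with $i_{m-1} < i' < n$, where $\beta$ contains $n-1$ and avoids both $213$ and $231$; the crucial point, distinguishing this from Lemma \ref{lem99a1}, is that the $\alpha$ block lying below $i_1$ must be empty because $i_1 = 1$. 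This should yield a clean recurrence of the form $J'_m(x) = xJ'_{m-1}(x) + xJ'_m(x) + x^m(L(x)-1)$ for $m \geq 4$.

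The base case $J'_3(x)$ would be handled by an analogous but shorter case analysis applied to $\pi = 1 i_2 n \pi'$, taking into account the same decomposition of $\pi'$ into a decreasing initial segment followed by an $\{213,231\}$-avoiding tail. Summing the recurrence over $m \geq 4$, adding $J'_3(x)$, and simplifying using $L(x) = \frac{1-x}{1-2x}$ would then collapse the geometric tail into the claimed closed form $\frac{x^3(1-2x+2x^2)}{(1-x)(1-2x)^2}$.

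The main obstacle will be verifying the structural claim underlying the third sub-case: namely, that the joint avoidance of $1324$, $3142$, and $4231$, combined with $i_1=1$, $i_m=n$, and exactly $m$ initial left-right maxima, really forces precisely the described decomposition of $\pi'$ with no extra configurations appearing. A related subtlety is confirming that the first two sub-cases together with this third sub-case produce a disjoint enumeration, so that no permutation is double-counted when one removes $n-1$ from different positions.
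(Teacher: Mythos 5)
Your cases (1) and (2) are sound (both deletions are reversible bijections), but case (3) is where the argument fails, and it fails in a way that changes the answer. With $i_1=1$ and $i_m=n$ in front, $\pi'$ must avoid both $213$ (else the initial $1$ completes a $1324$) and $231$ (else $n$ completes a $4231$), and for each $j\le m-1$ no letter of $\pi'$ smaller than $i_j$ may precede a letter larger than $i_j$ (else $1,i_j$ and that pair form a $1324$). Consequently, if $\pi'$ contained any letter below $i_{m-1}$, every letter of $\pi'$ above $i_{m-1}$ (in particular $n-1$) would have to precede it, and the first letter of $\pi'$, having a smaller letter after it, would have to exceed all later letters, forcing $\pi'$ to begin with $n-1$ --- i.e.\ you are back in case (2). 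So in case (3) one is forced to have $i_j=j$ for all $j\le m-1$, and $\pi'$ is a $\{213,231\}$-avoider not beginning with its maximum, hence beginning with its minimum $m$; your proposed decreasing run $i'(i'-1)\cdots(i_{m-1}+1)$ can have length at most one, since a longer run followed by the nonempty $\beta$ creates a $213$ inside $\pi'$. The correct contribution of case (3) is therefore $x^{m+1}\big(L(x)-1\big)=\frac{x^{m+2}}{1-2x}$, not $x^m\big(L(x)-1\big)$ (which does not even match the structure you describe).

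This is not cosmetic: your recurrence gives $[x^5]J'_4=4$, whereas the only length-$5$ members with four left-right maxima are $12354$, $12453$, $13452$, so $[x^5]J'_4=3$; and summing your recurrence (even with the correct base value $J'_3(x)=\frac{x^3(1-x+x^2)}{(1-x)(1-2x)}$, which you also leave uncomputed) yields $\frac{x^3(1-2x+3x^2-x^3)}{(1-x)(1-2x)^2}$ instead of the stated $\frac{x^3(1-2x+2x^2)}{(1-x)(1-2x)^2}$. With the corrected case-(3) term your split does reproduce the lemma, but the forced structure above is exactly the point that needs proof. For comparison, the paper conditions instead on whether $i_2=2$: if $i_2=2$, deleting the letter $1$ gives $xJ'_{m-1}(x)$; if $i_2>2$, the letter $2$ lies in $\pi'$ and the same threshold argument forces all letters of $\pi'$ above $i_2$ into a single decreasing prefix followed by a nonempty $\{213,231\}$-avoider below $i_2$, giving $J'_m(x)=xJ'_{m-1}(x)+\frac{x^{m+1}L(x)}{(1-x)^{m-1}}$ for all $m\ge3$ with $J'_2(x)=x^2L(x)$, which is then solved directly.
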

\begin{proof}
Let us write an equation for $J'_m(x)$ with $m\geq3$. Let $\pi=i_1i_2\cdots i_m\pi'\in S_n(T)$ with exactly $m$ left-right maxima such that $i_1=1$. By considering whether $i_2=2$ or not (we leave the details to the reader), we obtain the contributions
$xJ'_{m-1}(x)$ and $\frac{x^{m+1}L(x)}{(1-x)^{m-1}}$. Hence,
$$J'_m(x)=xJ'_{m-1}(x)+\frac{x^{m+1}L(x)}{(1-x)^{m-1}}$$
with $J'_2(x)=x^2L(x)$. The result follows by solving this recurrence.
\end{proof}

\begin{theorem}\label{th99a}
Let $T=\{1324,3142,4231\}$. Then
$$F_T(x)=\frac{1-8x+25x^2-36x^3+23x^4-4x^5+x^6}{(1-x)(1-2x)^4}.$$
\end{theorem}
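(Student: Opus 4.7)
The plan is to compute $F_T$ by a left-right-maxima (LRM) decomposition. Let $G_m(x)$ denote the generating function for $T$-avoiders with exactly $m$ LRMs, so $F_T(x) = \sum_{m\ge 0}G_m(x)$ with $G_0(x) = 1$. For $m = 1$, a $T$-avoider has a single LRM iff its first entry is $n$; since $n$ at the front can participate in a pattern of $T$ only as the ``$4$'' of $4231$, $\pi=n\pi'$ avoids $T$ iff $\pi'$ avoids $\{1324,3142,231\}$, which reduces to $\pi'$ avoiding $\{1324,231\}$ because $3142$ contains $231$ as a subpattern. I would derive the auxiliary generating function $F_{\{1324,231\}}(x)$ via the standard $231$-avoider decomposition $\pi' = \pi'_L(n-1)\pi'_R$ with $\pi'_L<\pi'_R$: the $1324$-avoidance then forces $\pi'_L$ to avoid $132$ and, whenever $\pi'_L$ is nonempty, $\pi'_R$ to avoid $213$ as well (both classes counted by $L(x)$, and $\{213,231\}$-avoiders automatically avoid $1324$). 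A short functional equation gives $G_1(x) = xF_{\{1324,231\}}(x)$ in closed form.

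For $m\ge 2$, write $\pi = i_1\pi^{(1)}i_2\pi^{(2)}\cdots i_m\pi^{(m)}$ and split. In \emph{Case A}, all LRMs occur at the start, i.e.\ $\pi^{(1)} = \cdots = \pi^{(m-1)} = \emptyset$, and the contribution is $J_m(x) + J'_m(x)$ from Lemmas \ref{lem99a1} and \ref{lem99a2}, summing to $J(x) + J'_2(x) + J'(x)$. In \emph{Case B}, some $\pi^{(j)}$ with $j \le m-1$ is nonempty, and $T$-avoidance forces: (i) from $1324$, every entry of $\pi^{(k)}$ lies below $i_1$ for $2 \le k \le m-1$ (else $i_1 i_k a i_{k+1}$ is a $1324$); (ii) combining (i) with $3142$, the nonempty $\pi^{(j)}$ for $j \le m-1$ are strictly stacked in decreasing value, and every entry of $\pi^{(m)}$ is either $\le \min\pi^{(k^*)}$ (where $k^*$ is the largest nonempty index $\le m-1$) or $\ge i_1$; (iii) further $1324$ and $4231$ constraints govern the ``large'' entries of $\pi^{(m)}$ in the intervals $(i_j,i_{j+1})$ and the internal structure of each $\pi^{(j)}$. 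I would enumerate Case B by the set of nonempty block indices and by the bipartition of $\pi^{(m)}$.

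Summing all contributions, substituting the closed forms from the lemmas, and simplifying algebraically should produce the claimed formula for $F_T(x)$. The principal obstacle will be Case B: although the constraints above are clean to state, the case split by which $\pi^{(j)}$ are nonempty and by how $\pi^{(m)}$ decomposes is branching, and the within-block avoidance conditions (involving shorter patterns $132$, $213$, $231$, together with residual $1324$ interactions) must be tracked carefully. Computing $G_1$ via the auxiliary class $\{1324,231\}$ is a subsidiary step, made tractable by the Catalan-like recursion from $231$-avoidance.
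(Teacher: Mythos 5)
Your setup matches the paper's: a left-right-maxima decomposition, with $G_1(x)=xF_{\{1324,231\}}(x)$ (the paper writes this as $xF_{\{132,4231\}}(x)=xK(x)$, the same series by the reverse symmetry, and your functional equation $F=1+x\bigl(F+(L-1)L\bigr)$ does give $K(x)$), and with Lemmas \ref{lem99a1} and \ref{lem99a2} covering the configurations in which all left-right maxima occur at the start. Your structural observations in Case B are also correct as far as they go: $1324$ forces $\pi^{(s)}<i_1$ for $2\le s\le m-1$, and $3142$ forces the nonempty prefix blocks to be stacked and forbids letters of $\pi^{(m)}$ strictly between $\min\pi^{(k^*)}$ and $i_1$.

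The genuine gap is that Case B, which is the entire substance of the proof of Theorem \ref{th99a}, is never carried out, and your plan omits the one device that makes it close up: reducing each Case B configuration back to the lemma generating functions with a shifted index. In the paper, when $\pi^{(m)}$ has a letter below $i_1$, avoidance of $4231$ makes $\pi^{(1)}\cdots\pi^{(m-1)}$ a single decreasing word and $3142$ pins its values immediately below $i_1$, so peeling it off leaves a permutation counted by $J_{m+1-j}(x)$ and contributes only a factor $x^j/(1-x)^j$; when $\pi^{(m)}>i_1$, the analogous peeling (an $\{132,231\}$-avoider $\pi^{(1)}$, then decreasing runs) reduces to $J'_{m+1-j}(x)$ with factors $L(x)$, $(L(x)-1)$, $x^{j-1}/(1-x)^{j-1}$. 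Your proposal instead says you would ``enumerate Case B by the set of nonempty block indices and by the bipartition of $\pi^{(m)}$,'' but without the reduction the tail $i_{k^*}\pi^{(k^*)}\cdots i_m\pi^{(m)}$ is itself a nontrivial avoidance class -- exactly what $J_m$ and $J'_m$ were introduced to count -- so the enumeration does not terminate in closed form as described; moreover the within-block conditions you defer to ``(iii)'' (joint decreasingness of the prefix blocks and their placement just below $i_1$, versus an $\{132,231\}$-avoiding $\pi^{(1)}$ in the other regime) are precisely what produce the geometric factors, and they are never pinned down. Until those contributions are derived and summed over $m\ge2$ (and over the largest nonempty index $j$), the claimed formula is not established.
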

\begin{proof}
Let $G_m(x)$ be the generating function for $T$-avoiders with $m$ left-right maxima. Clearly, $G_0(x)=1$ and $G_1(x)=xF_{\{132,4231\}}(x)=xK(x)$.

Now, let us write an equation for $G_m(x)$ with $m\geq2$. Let $\pi=i_1\pi^{(1)}\cdots i_m\pi^{(m)}\in S_n(T)$ with exactly $m$ left-right maxima. Since $\pi$ avoids $1324$, we see that $\pi^{(s)}<i_1$ for all $s=2,3,\ldots,m-1$. We consider the following cases:
\begin{itemize}
\item $\pi^{(m)}$ has a letter smaller than $i_1$. Since $\pi$ avoids $4231$, we see that $\pi^{(1)}\cdots\pi^{(m-1)}$ is decreasing. Assume that $\pi^{(j)}\neq\emptyset$ and $\pi^{(j+1)}=\cdots\pi^{(m-1)}=\emptyset$. Since $\pi$ avoids $3142$, we have that
    $\pi^{(m)}$ has no letters between $i_1$ and $i_j$. Thus, we have a contribution of
    $\frac{1}{1-x}J_m(x)$ when $j=1$, and $\frac{x^j}{(1-x)^j}J_{m+1-j}(x)$ when $j=2,3,\ldots,m-1$, where $J_d(x)$ is defined in Lemma \ref{lem99a1}. Hence,
    the total contribution for this case is given by
    $$\frac{1}{1-x}J_m(x)+\sum_{j=2}^{m-1}\frac{x^j}{(1-x)^j}J_{m+1-j}(x).$$

\item $\pi^{(m)}>i_1$ and $\pi^{(2)}=\cdots\pi^{(m-1)}=\emptyset$. Then we have a contribution of $L(x)J'_m(x)$, where $J'_m(x)$ is given in Lemma \ref{lem99a2}.

\item  $\pi^{(m)}>i_1$ and there exists $j$ such that $\pi^{(j)}\neq\emptyset$ and $\pi^{(j+1)}=\cdots\pi^{(m-1)}=\emptyset$, where $j=2,3,\ldots,m-1$. As before, we obtain the contributions $\frac{x^{j-1}(L(x)-1)}{(1-x)^{j-1}}J'_{m+1-j}(x)$ when $j\leq m-2$, and $\frac{x^mL(x)(L(x)-1)}{(1-x)^{m-2}}$ when $j=m-1$. Hence, the total contribution from this case is $$\frac{x^mL(x)(L(x)-1)}{(1-x)^{m-2}}+\sum_{j=1}^{m-3}\frac{x^{j}(L(x)-1)}{(1-x)^{j}}J'_{m-j}(x).$$
\end{itemize}
Hence,
\begin{align*}
G_m(x)&=L(x)J'_m(x)+\frac{x^mL(x)(L(x)-1)}{(1-x)^{m-2}}+\sum_{j=1}^{m-3}\frac{x^j(L(x)-1)}{(1-x)^j}J'_{m-j}(x)\\
&+\frac{1}{1-x}J_m(x)+\sum_{j=2}^{m-1}\frac{x^j}{(1-x)^j}J_{m+1-j}(x).
\end{align*}
Summing over $m\geq2$ and using the expressions for $G_0(x)$ and $G_1(x)$, we obtain
\begin{align*}
F_T(x)-1-xK(x)&=\sum_{m\geq2}G_m(x)\\
&=x^2L(x)^2+\frac{1}{1-x}J_2(x)+L(x)J'(x)+\frac{x^3L(x)(L(x)-1)}{1-2x}\\
&+\frac{x(L(x)-1)J'(x)}{1-2x}+\frac{1}{1-x}(J(x)-J_2(x))+\frac{x^2}{(1-x)(1-2x)}J(x),
\end{align*}
where $J(x)=\sum_{d\geq2}J_d(x)$, $J_2(x)$ and $J'(x)=\sum_{d\geq3}J'_d(x)$ are given in Lemmas \ref{lem99a1} and \ref{lem99a2}. After several algebraic operations, we complete the proof.
\end{proof}

\subsection{Case 132: $\{1324,2341,2413\}$}
\begin{theorem}\label{th132a}
Let $T=\{1324,2341,2413\}$. Then
$$F_T(x)=\frac{1-8x+23x^2-27x^3+12x^4-4x^5+x^6}{(1-3x+x^2)^3}.$$
\end{theorem}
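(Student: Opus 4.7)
The plan is to use the left-right maxima decomposition that pervades this paper. Let $G_m(x)$ be the generating function for $T$-avoiders with exactly $m$ left-right maxima. Then $G_0(x)=1$ and $G_1(x)=xF_T(x)$, since the sole left-right maximum of a permutation with only one such must be the terminal letter $n$, whose deletion returns an arbitrary $T$-avoider of length $n-1$. The goal is to find $G_2(x)$ in closed form, establish a simple recurrence for $G_m(x)$ when $m\ge 3$, and then solve the single linear equation $F_T(x)=\sum_{m\ge 0}G_m(x)$ for $F_T(x)$.

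For the case $m=2$, I would write $\pi=i\pi' n\pi''\in S_n(T)$ and extract structural consequences of the three forbidden patterns. Avoidance of $2413$ forces $\pi''=\gamma\delta$ with $\gamma>i>\delta$, since otherwise a small letter of $\pi''$ preceding a large letter of $\pi''$ would form $2413$ together with $i$ and $n$. Avoidance of $2341$ then forces $\gamma$ to be decreasing, because two ascending letters of $\gamma$ together with $i$ and any letter of $\delta$ would form a $2341$. Avoidance of $1324$ imposes further constraints relating $\pi'$ to the letters of $\delta$. I would then split according to which of $\pi',\gamma,\delta$ are empty; in each surviving sub-case the nontrivial pieces avoid a smaller pattern triple whose generating function is known to involve $1-3x+x^2$ in its denominator, such as $F_{\{132,3412\}}(x)=\frac{1-2x}{1-3x+x^2}$. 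Summing the contributions should yield $G_2(x)$ as a rational function with $(1-3x+x^2)^2$ in its denominator.

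For $m\ge 3$, write $\pi=i_1\pi^{(1)}\cdots i_m\pi^{(m)}$. Applying $2341$ to each triple $i_{s-2},i_{s-1},i_s$ forces $\pi^{(s)}>i_{s-2}$ for every $s\ge 3$, which combined with $\pi^{(s)}<i_s$ squeezes $\pi^{(s)}$ into the narrow range $(i_{s-2},i_s)$. The remaining constraints from $1324$ and $2413$ then pin down each intermediate block $\pi^{(2)},\ldots,\pi^{(m-1)}$ to a trivial or near-trivial shape, yielding a clean relation of the form $G_m(x)=xG_{m-1}(x)+h_m(x)$ with $h_m(x)$ a simple rational function of $x$. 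Summing over $m\ge 3$ and substituting into
\begin{equation*}
F_T(x)=1+xF_T(x)+G_2(x)+\sum_{m\ge 3}G_m(x)
\end{equation*}
produces a linear equation for $F_T(x)$ whose solution simplifies to the claimed formula; the third factor of $1-3x+x^2$ in the denominator enters from the geometric-style summation in $m$.

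The main obstacle will be the casework for $G_2(x)$: there are several sub-configurations based on which of $\pi',\gamma,\delta$ are empty and, within those, on finer distinctions (such as whether a sub-block of $\pi'$ is decreasing or contains an ascent), each of which must be matched to the correct small-class generating function. Once $G_2(x)$ is correctly obtained, the recurrence for $m\ge 3$ and the final algebraic simplification should be essentially routine.
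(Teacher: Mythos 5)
Your overall skeleton (decomposition by left-right maxima, $G_0=1$, $G_1=xF_T(x)$, a formula for $G_2$, a simple pattern for $G_m$ with $m\ge 3$, then one linear equation) is the same as the paper's, and your structural observations for $m=2$ ($\pi''=\gamma\delta$ with $\gamma>i>\delta$, and $\gamma$ decreasing \emph{when} $\delta\neq\emptyset$) are correct. But there is a genuine gap at the heart of your plan: you assert that $G_2(x)$ can be obtained in closed form by matching each sub-case to a known small class, with denominator $(1-3x+x^2)^2$, and that the third factor $1-3x+x^2$ later enters ``from the geometric-style summation in $m$.'' Neither is true. In the sub-case where $\pi''$ has letters below $i$, the letters of $\pi''$ above $i$ collapse to the full decreasing run $n(n-1)\cdots(i+1)$, and the residual object $i\pi'\delta$ is essentially again an unrestricted $T$-avoider; it does not reduce to a smaller known pattern class. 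The paper accordingly writes $G_2(x)=x^2K(x)^2+\frac{x}{1-x}\bigl(F_T(x)-1-xK(x)\bigr)$ with $K(x)=\frac{1-2x}{1-3x+x^2}$, i.e.\ $G_2$ is self-referential in $F_T$. It is exactly this term that makes the kernel of the final linear equation equal to $1-x-\frac{x}{1-x}=\frac{1-3x+x^2}{1-x}$ and produces the third factor of $1-3x+x^2$; the summations over $m$ only contribute $\frac{1}{1-x}$ factors here (for $m\ge 4$ one gets $G_m(x)=x^m\bigl(K+\frac{K-1}{1-x}\bigr)^2$, with no $m$-dependent $(1-x)$-powers). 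Indeed your plan is internally inconsistent with the target formula: if $G_2$ and all $G_m$, $m\ge3$, were known closed forms whose denominators involve at most $(1-3x+x^2)^2$, then solving $F_T=1+xF_T+\sum_{m\ge2}G_m$ would give a denominator $(1-x)$ times those, which cannot yield $(1-3x+x^2)^3$ (the stated numerator is not divisible by $1-3x+x^2$).

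Two smaller points. First, your recurrence claim for $m\ge 3$ needs care: the case $m=3$ must be treated separately from $m\ge4$, because when $m=3$ the sub-case with both $\pi^{(2)}$ and the low part $\beta$ of $\pi^{(m)}$ nonempty creates a $2413$ (using $i_1,i_2$), whereas for $m\ge4$ it is allowed and contributes the extra term $\frac{x^m}{(1-x)^2}(K-1)^2$; getting this distinction wrong changes the answer. Second, trivially, a permutation with a single left-right maximum begins (not ends) with $n$, though your conclusion $G_1=xF_T(x)$ is still correct since $n\rho$ avoids $T$ iff $\rho$ does.
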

\begin{proof}
Let $G_m(x)$ be the generating function for $T$-avoiders with $m$
left-right maxima. Clearly, $G_0(x)=1$ and $G_1(x)=xF_T(x)$.

Let us write an equation for $G_2(x)$. Let $\pi=i\pi'n\pi''\in S_n(T)$ with exactly $2$ left-right maxima. The contribution of the case $\pi''>i$ is $x^2K(x)^2$, where
$K(x)=\frac{1-2x}{1-3x+x^2}$ is the generating function for $\{132,2341\}$-
(or $\{213,2341\}$-) avoiders (for example, see \cite{MV}). If there is a letter in $\pi''$ smaller than $i$,
we can write $\pi$ as $\pi=i\pi'n(n-1) \cdots(i+1)\pi''$ such that $\pi''$ is not empty.
Thus, we have a contribution of $\frac{x}{1-x}\big(F_T(x)-1-xK(x)\big)$,
where $\frac{x^2}{1-x}K(x)$ counts the permutations of the form $i\pi'n(n-1)\cdots(i+1)$.
Hence, $G_2(x)=x^2K(x)^2+\frac{x}{1-x}\big(F_T(x)-1-xK(x)\big)$.

Next, let us write an equation for $G_3(x)$. Let $\pi=i_1\pi'i_2\pi''i_3\pi'''\in S_n(T)$ with exactly $3$ left-right maxima. Since $\pi$ avoids $1324$ and $2413$, we can write $\pi=i_1\pi'i_2\pi''i_3\alpha\beta$ such that
$\pi''<\pi'<i_1<\beta<i_2<\alpha<i_3$. By considering the four cases where $\pi'',\beta$
are empty or not (if both are nonempty we get an occurrence of $2413$, so we
actually have three cases),
we obtain $$G_3(x)=x^3K(x)^2+\frac{x^3}{1-x}K(x)\big(K(x)-1\big)+
\frac{x^3}{1-x}K(x)\big(K(x)-1\big).$$

Finally, let us write an equation for $G_m(x)$ with $m\geq4$. Let $\pi=i_1\pi^{(1)}\cdots i_m\pi^{(m)}\in S_n(T)$ with exactly $m$ left-right maxima. Since $\pi$ avoids $T$, we see that $i_1>\pi^{(1)}>\pi^{(2)}$, $\pi^{(j)}=\emptyset$ for $j=3,4,\ldots,m-1$, and $\pi^{(m)}=\alpha\beta$ such that $i_m>\alpha>i_{m-1}>\beta$. Again, by considering the four cases where
$\pi^{(2)},\beta$ are empty or not, we obtain that
$$G_m(x)=x^mK(x)^2+\frac{x^m}{1-x}K(x)\big(K(x)-1\big)+\frac{x^m}{1-x}K(x)\big(K(x)-1\big)+\frac{x^m}{(1-x)^2}(K(x)-1)^2.$$

By summing for $m\geq4$ and using the expressions for $G_0(x),G_1(x),G_2(x)$ and $G_3(x)$, we obtain
\begin{align*}
F_T(x)&=1+xF_T(x)+\frac{x}{1-x}\big(F_T(x)-1-xK(x)\big)+x^2K(x)^2 \\
&+x^3K(x)^2+\frac{2x^3}{1-x}K(x)\big(K(x)-1\big) +\frac{x^4}{1-x}\left(K(x)+\frac{K(x)-1}{1-x}\right)^2,
\end{align*}
which, by solving for $F_T(x)$, completes the proof.
\end{proof}

\subsection{Case 150: $\{1324,3421,3241\}$}
In this section, define $L=F_{\{213,231\}}(x)=\frac{1-x}{1-2x}$ (see \cite{SiS}). We also have, by the simple decomposition $\pi'n\pi''$ of $132$-avoiders,
$$A:=F_{\{132,3421,3241\}}(x)=\frac{1-3x+3x^2}{(1-x)^2(1-2x)}$$ and
$$B:=F_{\{213,3421\}}(x)=F_{\{132,4312\}}(x)=\frac{1-4x+5x^2-x^3}{(1-x)(1-2x)^2}$$.

\begin{lemma}\label{lem150a1}
The generating function for $T$-avoiders of the form
\begin{itemize}
\item $(d+1)n\pi'(d+2)$ with $n-3\geq d\geq1$ is given by
$$E_d(x)=\frac{x^{d+3}+\sum_{j=2}^{d+1}\frac{x^{d+4}L}{(1-x)^j}}{1-x/(1-x)}.$$
\item $(d+1)n\pi'$ with $n-2\geq d\geq1$ is given by
$$D_d(x)=\frac{x^{d+2}+\sum_{j=2}^d\frac{x^{d+3}L}{(1-x)^j}+\frac{E_d(x)}{1-x}+\frac{x^{d+3}}{(1-x)^{d+1}}(B-1/(1-x))}{1-x/(1-x)}.$$
\end{itemize}
Moreover,
$$E(x):=\sum_{d\geq1}E_d(x)=\frac{(1-3x+4x^2)x^4}{(1-2x)^3}\mbox{ and }D(x):=\sum_{d\geq1}D_d(x)=\frac{(1-5x+10x^2-5x^3)x^3}{(1-2x)^4}.$$
\end{lemma}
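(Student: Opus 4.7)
The plan is to establish each of the recursive equations for $E_d(x)$ and $D_d(x)$ by a careful structural decomposition of the relevant avoiders, then recover the closed-form sums $E(x)$ and $D(x)$ by summing over $d$.

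For $E_d(x)$, I would start with a $T$-avoider $\pi=(d+1)n\pi'(d+2)$, so that $\pi'$ is a permutation of $\{1,\ldots,d\}\cup\{d+3,\ldots,n-1\}$. The plan is to condition on the position of the letter $d$. If $d$ is the leftmost letter of $\pi'$, then deleting it produces another avoider of the same form with parameter $d-1$ replaced by $d-1$ inside the same class, and iterating this yields a self-referential contribution of $\tfrac{x}{1-x}E_d(x)$, which is exactly what the denominator $1-x/(1-x)$ encodes. Otherwise, the avoidance of $1324$ (with $d+1$ as the ``$1$'' and $n$ as the ``$4$'' is impossible, but later large letters can play ``$4$''), together with $3421$ (ruling out descents of a certain form after $n$) and $3241$ (with $d+1$ as the ``$2$''), forces the letters $\geq d+3$ to form a contiguous block whose internal structure avoids $\{213,231\}$, accounting for the factor $L$, while the small letters $1,\ldots,d$ land in one of $j-1$ admissible gaps for some $j\in\{2,\ldots,d+1\}$, giving the $(1-x)^{-j}$ factors. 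The base case $x^{d+3}$ arises when no large letters appear. Summing these contributions and solving for $E_d(x)$ gives the stated formula.

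For $D_d(x)$, the setup is the same but $\pi=(d+1)n\pi'$ no longer has a prescribed last letter. I would perform the same case split on the position of $d$, yielding again the self-referential $\tfrac{x}{1-x}D_d(x)$ and sum-over-$j$ pieces. The two genuinely new pieces come from the new possibilities for the position of the letter $d+2$: either $d+2$ occupies an interior position with a nonempty suffix $\alpha$, in which case avoidance of $3421$ and $3241$ (using $d+1$ and $n$ as landmarks) forces $\alpha$ to be decreasing and the prefix up through $d+2$ to be an $E_d$-configuration, contributing $E_d(x)/(1-x)$; or the letter $d+2$ sits inside the large block together with $d+3,\ldots,n-1$, in which case that block avoids $\{213,3421\}$, contributing $B$ after subtracting the purely decreasing sub-case (already counted in the $E_d$-type piece), which matches the term $\tfrac{x^{d+3}}{(1-x)^{d+1}}(B-1/(1-x))$.

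Once the recursions for $E_d$ and $D_d$ are in hand, the sums $E(x)=\sum_{d\geq1}E_d(x)$ and $D(x)=\sum_{d\geq1}D_d(x)$ reduce to interchanging the inner $j$-summation with the outer $d$-summation and evaluating geometric series in $x/(1-x)$; routine partial-fraction simplification then delivers the asserted rational expressions with denominators $(1-2x)^3$ and $(1-2x)^4$.

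The main obstacle is the bookkeeping forced by three simultaneous forbidden patterns: verifying that the case-split on the position of $d$ (and later of $d+2$, for $D_d$) is exhaustive and non-overlapping, and that the residual structure in each case is precisely a $\{213,231\}$- or $\{213,3421\}$-avoider of the claimed size. In particular, care is needed at the boundary $j=d+1$ of the inner sum and in the $D_d$-versus-$E_d$ interface, to avoid double-counting configurations of the form $(d+1)n\pi'(d+2)\alpha$ with $\alpha$ a (possibly empty) decreasing block; the cleanest way to ensure this is to classify by the position of $d+2$ first and to reuse $E_d(x)$ as a black box inside the $D_d(x)$ equation.
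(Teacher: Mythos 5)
Your plan for $D_d(x)$ --- classify by the position of the letter $d+2$ and reuse $E_d(x)$ as a black box --- is in line with the paper's argument, but the core of the lemma, the equation for $E_d(x)$, is not established by the decomposition you propose, and the structural claims underlying it are false. Since $\pi$ begins $(d+1)\,n$ and avoids $3421$, the small letters $1,\dots,d$ must occur in increasing order in $\pi'$; hence for $d\ge 2$ the letter $d$ is the \emph{last} small letter and can never be the leftmost letter of $\pi'$, so your case split on ``the position of the letter $d$'' with a leftmost case does not get off the ground. Even read charitably, deleting a single letter whose removal lowers the parameter would contribute $xE_{d-1}(x)$, not the self-referential term $\frac{x}{1-x}E_d(x)$ that the stated formula requires; in the paper that term arises from a different mechanism, namely the case in which $d+3$ precedes the letter $1$, where one strips a \emph{nonempty initial decreasing run} of top letters $n,n-1,\dots$ and is left with a configuration of the same type with the \emph{same} $d$. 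Moreover, your assertion that the letters $\ge d+3$ are forced into a contiguous block is wrong: $2\,6\,5\,1\,4\,3$ avoids $\{1324,3421,3241\}$, has the form $(d+1)n\pi'(d+2)$ with $d=1$, and its large letters $5$ and $4$ are separated by $1$. Correspondingly, the factors $(1-x)^{-j}$ are not produced by ``small letters landing in gaps''; they come from the possibly empty decreasing runs of large letters that may occupy the gaps of the increasing word $1\,2\cdots j$, while $L$ accounts for the $\{213,231\}$-avoiding block that follows $d+3$.

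The missing engine is the one the paper uses throughout this case: condition on the position, relative to the increasing run $1,2,\dots,d$, of the smallest letter not yet placed --- the letter $d+3$ for $E_d$ and the letter $d+2$ for $D_d$. That split produces exactly the base term $x^{d+3}$ (resp.\ $x^{d+2}$), the self-loop $\frac{x}{1-x}E_d(x)$ (resp.\ $\frac{x}{1-x}D_d(x)$), the terms of the form $\frac{x^{d+4}L}{(1-x)^{j}}$, and, for $D_d$, the terms $\frac{E_d(x)}{1-x}$ and $\frac{x^{d+3}}{(1-x)^{d+1}}\bigl(B-\frac{1}{1-x}\bigr)$ coming from the case where $d+2$ lies to the right of $d$; only after these recursions are in place does the summation over $d$ become the routine geometric-series computation you describe. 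As written, your argument does not justify the recursions, and hence does not yield the closed forms for $E(x)$ and $D(x)$.
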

\begin{proof}
Let us write an equation for $E_d(x)$. Let $\pi=(d+1)n\pi'(d+2)\in S_n(T)$. If $n=d+3$, we have a contribution of $x^{d+3}$, so let us assume that $n\geq d+4$ and consider 3 cases:
\begin{itemize}
\item If $d+3$ on left side of letter $1$, $\pi$ can be written as $(d+1)n(n-1)\cdots n'(d+3)\pi'(d+2)$ with $\pi'<n'$, so we have $\frac{x}{1-x}E_d(x)$.
\item If $d+3$ between the letters $j$ and $j+1$, $j=1,2,\ldots,d-1$, then $\pi$ can be written as
$$(d+1)n\alpha^{(1)}1\alpha^{(2)}2\cdots\alpha^{(j)}j\alpha^{(j+1)}(d+3)\pi'(j+1)(j+2)\cdots d(d+2)$$
such that the subsequence $n\alpha^{(1)}\cdots\alpha^{(j+1)}$ is decreasing and greater than $\pi'$ and $\pi'>d+3$, where $\pi'$ avoids $\{231,213\}$. So, we have $\frac{x^{d+4}}{(1-x)^j}L(x)$.
\item If $d+3$ between the letters $d$ and $d+2$, then $\pi$ can be written as
$$(d+1)n\alpha^{(1)}1\alpha^{(2)}2\cdots\alpha^{(d)}d\alpha^{(d+1)}(d+3)\pi'(d+2)$$
such that the subsequence $n\alpha^{(1)}\cdots\alpha^{(d+1)}$ is decreasing and greater than $\pi'$ and $\pi'>d+3$, where $\pi'$ avoids $\{231,213\}$. So, we have $\frac{x^{d+4}}{(1-x)^{d+1}}L(x)$.
\end{itemize}
Thus, $E_d(x)=x^{d+3}+\frac{x}{1-x}E_d(x)+\sum_{j=2}^{d+1}\frac{x^{d+4}L}{(1-x)^j}$, as claimed.

Similarly, we can write an equation for $D_d(x)$ and obtain
$$D_d(x)=x^{d+2}+\frac{x}{1-x}D_d(x)+\sum_{j=2}^d\frac{x^{d+3}L}{(1-x)^j}+\frac{E_d(x)}{1-x}+\frac{x^{d+3}}{(1-x)^{d+1}}(B-1/(1-x)),$$
where $x^{d+2}$, $\frac{x}{1-x}D_d(x)$, $\frac{x^{d+3}L}{(1-x)^{j+1}}$ and $\frac{E_d(x)}{1-x}+\frac{x^{d+3}}{(1-x)^{d+1}}(B-1/(1-x))$ are the respective contributions for the cases $n=d+2$, the letter $d+2$ on the left side of the letter $1$, the letter $d+2$ between the letters $j$ and $j+1$ with $j=1,2,\ldots,d-1$, and the letter $d+2$ on the right side of the letter $d$.

By summing over $d\geq1$, we complete the proof.
\end{proof}

\begin{lemma}\label{lem150a2}
For $m\ge 3$, the generating function for $T$-avoiders $\pi=i_1\pi^{(1)}\cdots i_m\pi^{(m)}$ with $m$ left-right maxima and $\pi^{(1)}\neq\emptyset$ is given by
$$G_{m,1}(x)=\frac{x^{m+1}}{(1-x)^2}(A-1)L+x^m(A-1)B+(m-2)\frac{x^{m+1}}{1-x}(A-1)L.$$
\end{lemma}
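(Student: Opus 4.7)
The plan is to characterize each block of a $T$-avoider $\pi = i_1\pi^{(1)}\cdots i_m\pi^{(m)}$ with $\pi^{(1)}\neq\emptyset$ using forbidden-pattern arguments, and then assemble the three summands of $G_{m,1}(x)$ from disjoint structural cases.

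First I would show that $\pi^{(1)}$ must be a nonempty $\{132,3421,3241\}$-avoider, yielding the common factor $A-1$. Since $m\ge 3$, the second left-right maximum $i_2$ follows $\pi^{(1)}$ and exceeds all its entries, so a $132$ inside $\pi^{(1)}$ together with $i_2$ would form a $1324$; avoidance of $3421$ and $3241$ is inherited as a sub-pattern.

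Second, I would establish the structure of the middle blocks. For $2\le j\le m-1$, every entry of $\pi^{(j)}$ must lie below $\min(\pi^{(1)})$: otherwise an entry $a\in\pi^{(1)}$, $i_j$, the offending entry of $\pi^{(j)}$, and $i_{j+1}$ realize a $1324$. A companion argument using $3421$ with an entry of $\pi^{(1)}$, $i_2$, and two consecutive entries of a single $\pi^{(j)}$, together with a $3241$ argument using $i_1$, an entry of $\pi^{(1)}$, $i_k$, and a later entry of $\pi^{(\ell)}$ for some $2\le j<\ell\le m-1$, shows that the concatenation $\pi^{(2)}\pi^{(3)}\cdots\pi^{(m-1)}$ is a single globally increasing sequence of letters all below $\min(\pi^{(1)})$.

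Third, I would partition the count into three subcases matching the three summands:
\begin{itemize}
\item All middle blocks empty and $\pi^{(m)}$ lies entirely above $i_1$: the remaining pattern restrictions (coming from $\pi^{(1)}$ together with $i_1$ and $\pi^{(m)}$) force $\pi^{(m)}$ to avoid $\{213,3421\}$, contributing $x^m(A-1)B$.
\item All middle blocks empty and $\pi^{(m)}$ contains at least one entry below $i_1$: decomposing $\pi^{(m)}$ into its part below $i_1$ and its part above $i_1$, and analysing how each part interacts with $\pi^{(1)}$ and the left-right maxima, yields a factor $\frac{x}{(1-x)^2}L$, giving $\frac{x^{m+1}}{(1-x)^2}(A-1)L$.
\item At least one middle block is nonempty: since the middle blocks are globally increasing and consist of small letters, further $T$-avoidance forces exactly one of the $m-2$ middle positions to carry all these letters, as an arbitrary nonempty increasing run (generating function $x/(1-x)$), and forces $\pi^{(m)}$ to avoid $\{213,231\}$ (generating function $L$). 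Summing over the $m-2$ choices of position gives $(m-2)\frac{x^{m+1}}{1-x}(A-1)L$.
\end{itemize}

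The main obstacle will be the second subcase: verifying that the part of $\pi^{(m)}$ below $i_1$ and the part above $i_1$ decouple precisely into the advertised $\frac{1}{(1-x)^2}$ factor, and confirming that no hidden $1324$ using an entry of $\pi^{(1)}$, $i_2$, and two entries of $\pi^{(m)}$ is produced. Once the contributing configurations are pinned down in each subcase, adding the three contributions yields the claimed expression for $G_{m,1}(x)$.
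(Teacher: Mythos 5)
Your case analysis contains a genuine structural error. Because $\pi^{(1)}\neq\emptyset$, the middle blocks $\pi^{(2)},\dots,\pi^{(m-1)}$ are not merely ``increasing and below $\min(\pi^{(1)})$'' --- they are forced to be empty. Indeed, pick any $a\in\pi^{(1)}$ and any $b\in\pi^{(j)}$ with $2\le j\le m-1$: if $b<a$ then $i_1\,a\,i_j\,b$ is an occurrence of $3241$, and if $b>a$ then $a\,i_j\,b\,i_{j+1}$ is an occurrence of $1324$; either way $\pi\notin S_n(T)$. Consequently your third subcase enumerates configurations that do not exist, and the summand $(m-2)\frac{x^{m+1}}{1-x}(A-1)L$ cannot be obtained that way. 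In the paper this term comes from the \emph{last} block: it counts avoiders with $\pi^{(m)}>i_1$ in which $\pi^{(m)}$ contains letters lying between two consecutive left-right maxima $i_j<i_{j+1}$ with $1\le j\le m-2$; avoidance (e.g.\ $3421$ formed with $i_{j+1}$, $i_m$ and two such letters in decreasing order, and $1324$ formed with $i_j$, $i_{j+1}$, a band letter and a later large letter) forces those letters to be exactly the increasing run $(i_j+1)\cdots(i_{j+1}-1)$ placed at the end of $\pi^{(m)}$, while the remaining prefix of $\pi^{(m)}$ lies above $i_{m-1}$ and avoids $\{213,231\}$, giving $\frac{x}{1-x}\cdot L$ for each of the $m-2$ choices of $j$.

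The same oversight breaks your first subcase: ``$\pi^{(m)}$ entirely above $i_1$'' does \emph{not} reduce to $\pi^{(m)}$ avoiding $\{213,3421\}$. For instance, a letter $c$ of $\pi^{(m)}$ with $i_j<c<i_{j+1}$ ($j\le m-2$) followed by a letter $d>i_{j+1}$ creates the $1324$ pattern $i_j\,i_{j+1}\,c\,d$ even though $cd$ is harmless inside $\pi^{(m)}$ alone. The factor $B$ is correct only in the subcase $\pi^{(m)}>i_{m-1}$ (the paper's first case); the avoiders with $\pi^{(m)}>i_1$ but dipping into a band $(i_j,i_{j+1})$, $j\le m-2$, are precisely the ones producing the $(m-2)$-term you misattributed to the middle blocks. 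Your middle subcase ($\pi^{(m)}$ containing a letter below $i_1$) does correspond to the paper's, but the factor $\frac{x}{(1-x)^2}L$ still needs the argument you defer: avoidance forces $\pi^{(1)}<\pi^{(m)}$, forces all letters of $\pi^{(m)}$ below $i_2$ to form an increasing tail (nonempty below $i_1$, possibly empty between $i_1$ and $i_2$), and forces the prefix of $\pi^{(m)}$ to lie above $i_{m-1}$ and avoid $\{213,231\}$. So while the final formula is the right one, the decomposition as you set it up does not prove it; you need to redo the case split along the lines just described.
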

\begin{proof}
Let $\pi=i_1\pi^{(1)}\cdots i_m\pi^{(m)}\in S_n(T)$ with $m$ left-right maxima and $\pi^{(1)}\neq\emptyset$. By the definitions we see that $\pi^{(2)}\cdots\pi^{(m-1)}=\emptyset$.
We have the following contributions to $G_{m,1}(x)$:
\begin{itemize}
\item If $\pi^{(m)}>i_{m-1}$, then we have $x^m(A-1)B$.
\item If $\pi^{(m)}$ has a letter smaller than $i_1$, then $\pi^{(1)}<\pi^{(m)}=\pi'(k+1)(k+2)\cdots(i_1-1)(i_1+1)\cdots(i_2-1)$, $\pi'>i_{m-1}$, $\pi'$ avoids $\{213,231\}$ and $\pi^{(1)}$ avoids $\{132,3421,3241\}$. This leads to a contribution of $\frac{x^{m+1}}{(1-x)^2}(A-1)L$.
\item If $\pi^{(m)}>i_1$ and $\pi^{(m)}$ has a letter between $i_j$ and $i_{j+1}$, where $j=1,2,\ldots,m-2$, then $\pi^{(1)}<i_1<\pi^{(m)}$, where $\pi^{(m)}=\pi'(i_j+1)\cdots(i_{j+1}-1)$ with $\pi'$ a $\{213,231\}$-avoider, and $\pi^{(1)}$ avoids $\{132,3421,3241\}$. This leads to a contribution of $\frac{x^{m+1}}{1-x}(A-1)L$.
\end{itemize}
Adding these contributions yields the result.
\end{proof}

Using similar arguments, we obtain the following result.

\begin{lemma}\label{lem150a3}
The generating function for $T$-avoiders $\pi=i_1\pi^{(1)}\cdots i_m\pi^{(m)}$ with $m$ left-right maxima and $\pi^{(1)}=\cdots=\pi^{(s-1)}=\emptyset$ and $\pi^{(s)}\neq\emptyset$, $2\leq s\leq m-2$, is given by
$$G_{m,s}(x)=\frac{x^{m+2}L}{(1-x)^2}(1+sx/(1-x))+\frac{x^{m+1}}{1-x}B+(m-2)\frac{x^{m+2}L}{(1-x)^2}.$$
\end{lemma}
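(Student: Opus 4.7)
The plan is to follow the case-analysis template of Lemma~\ref{lem150a2}, adapted to the situation where the first nonempty interior block sits at position $s\ge 2$. Fix $\pi = i_1\pi^{(1)}\cdots i_m\pi^{(m)}$ with $\pi^{(1)}=\cdots=\pi^{(s-1)}=\emptyset$ and $\pi^{(s)}\ne\emptyset$. I would begin by extracting the structural constraints forced by $T=\{1324,3421,3241\}$. Avoidance of $1324$ applied to the subsequence $i_1,i_t,a,i_{t+1}$ with $a\in\pi^{(t)}$ gives $\pi^{(t)}<i_1$ for every $s\le t\le m-1$. Avoidance of $3241$ applied to $i_t,a,i_{t+1},b$ (with $a$ and $b$ in successive blocks below $i_1$) forces the value chain $\pi^{(s)}<\pi^{(s+1)}<\cdots<\pi^{(m-1)}$, and further forces every letter of $\pi^{(m)}$ lying below $i_1$ to dominate all of $\pi^{(s)},\ldots,\pi^{(m-1)}$. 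A final $1324$-argument using $a\in\pi^{(s)}$, $b\in\pi^{(t)}$ with $s<t\le m-1$, and $c\in\pi^{(m)}$ with $c>i_{m-1}$ (via $a,i_{s+1},b,c$) shows that whenever $\pi^{(m)}$ has a letter above $i_{m-1}$, every block $\pi^{(s+1)},\ldots,\pi^{(m-1)}$ must vanish.

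I would then partition the relevant $T$-avoiders into three cases matching the three summands of the announced formula. In Case~(i), where $\pi^{(m)}>i_{m-1}$ or $\pi^{(m)}=\emptyset$, the structural observation forces all middle blocks to vanish, $\pi^{(s)}$ reduces to a simple nonempty family with generating function $\frac{x}{1-x}$, and $\pi^{(m)}$ is an arbitrary $\{213,3421\}$-avoider contributing $B$; this produces the middle summand $\frac{x^{m+1}}{1-x}B$. In Case~(ii), where $\pi^{(m)}$ lies strictly between $i_1$ and $i_m$ with at least one letter in an interval $(i_j,i_{j+1})$ for some $1\le j\le m-2$, the same decomposition used in Lemma~\ref{lem150a2} writes $\pi^{(m)}=\pi'(i_j+1)\cdots(i_{j+1}-1)$ with $\pi'$ a $\{213,231\}$-avoider, and summing over the $m-2$ admissible values of $j$ together with the $\frac{x}{1-x}$ coming from $\pi^{(s)}$ yields $(m-2)\frac{x^{m+2}L}{(1-x)^2}$. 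In Case~(iii), where $\pi^{(m)}$ contains at least one letter below $i_1$, the dominance chain forces that low portion of $\pi^{(m)}$ to be a run of consecutive integers sitting just above the top of $\pi^{(m-1)}$ (or of $\pi^{(s)}$ if $\pi^{(s+1)}=\cdots=\pi^{(m-1)}=\emptyset$), while the head of $\pi^{(m)}$ is a $\{213,231\}$-avoider (factor $L$). The bracket $1+sx/(1-x)$ records the auxiliary freedom to populate further interior blocks: the summand $1$ handles the case where only $\pi^{(s)}$ is populated, while $sx/(1-x)$ enumerates the configurations in which exactly one of the $s$ admissible intermediate positions carries an additional geometric run.

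Summing the contributions from the three cases produces the claimed identity. The main obstacle is the bookkeeping in Case~(iii): one must push the dominance relation $\pi^{(s)}<\cdots<\pi^{(m-1)}<$ (low part of $\pi^{(m)}$) through the decomposition and verify that, after the consecutive-integer run is installed, exactly $s$ slots remain in which an auxiliary nonempty geometric block can sit without creating a forbidden pattern, whence the factor $1+sx/(1-x)$. Once this counting is settled, the remaining pattern-checking is a routine adaptation of the arguments already used in Lemmas~\ref{lem150a1} and~\ref{lem150a2}.
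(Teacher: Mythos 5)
Your structural preparation goes wrong at the key point, and the error propagates into the only genuinely new part of the count. First, the vanishing of $\pi^{(s+1)},\ldots,\pi^{(m-1)}$ is \emph{unconditional}, not contingent on $\pi^{(m)}$ having a letter above $i_{m-1}$: if $a\in\pi^{(s)}$ and $b\in\pi^{(t)}$ with $s<t\le m-1$, then (since 3241-avoidance already forces $a<b$) the subsequence $a\,i_{s+1}\,b\,i_m$ is an occurrence of $1324$ — the left-right maximum $i_m$ itself serves as the ``4''. Because you only prove the conditional version, your Case (iii) (and the subcase of Case (ii) where the head $\pi'$ is empty) is built on a false premise that $\pi^{(s+1)},\ldots,\pi^{(m-1)}$ may be populated. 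Second, and more seriously, your reading of the bracket $1+sx/(1-x)$ as ``the freedom to populate further interior blocks'' cannot be right: no further interior block can be nonempty, and even on your own terms the number of candidate blocks after position $s$ is $m-1-s$, not $s$, so the verification you defer (``exactly $s$ slots remain'') would fail if carried out. The correct source of the bracket is the placement of the letters of $\pi^{(m)}$ lying strictly between two consecutive left-right maxima when $\pi^{(m)}$ also has letters below $i_1$: the low letters of $\pi^{(m)}$ form the run $(k+1)\cdots(i_1-1)$ above $\max\pi^{(s)}$, and any letters of $\pi^{(m)}$ in a gap $(i_j,i_{j+1})$ with $s+1\le j\le m-2$ are impossible (if such a letter $d$ follows a low letter $c$, then $a\,i_j\,c\,d$ is a $1324$; if it precedes one, $i_{j+1}\,i_m\,d\,c$ is a $3421$), while a single nonempty gap with $1\le j\le s$ is allowed and must appear as an increasing consecutive run at the end of $\pi^{(m)}$. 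Choosing none of these gaps gives the $1$; choosing one of the $s$ admissible gaps gives $s\cdot x/(1-x)$. (This is also consistent with Lemma~\ref{lem150a2}, whose first term equals $x^m(A-1)L\cdot\frac{x}{1-x}\bigl(1+\frac{x}{1-x}\bigr)$, i.e.\ the bracket with $s=1$.)

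A further, smaller gap: you assert that $\pi^{(s)}$ contributes $x/(1-x)$ without saying why. The whole reason the answer differs from Lemma~\ref{lem150a2} (where $\pi^{(1)}$ contributes $A-1$) is that for $s\ge 2$ any inversion $b\,a$ inside $\pi^{(s)}$ combines with the two preceding left-right maxima to give $i_1\,i_s\,b\,a$, an occurrence of $3421$; hence $\pi^{(s)}$ must be increasing (and, being below $i_1$ and below the low letters of $\pi^{(m)}$, it is the increasing word $12\cdots k$). This observation needs to be stated, since it is exactly what replaces $A-1$ by $x/(1-x)$ in all three terms. With the unconditional emptiness of the later interior blocks, the increasing-ness of $\pi^{(s)}$, and the gap analysis above, your three-case skeleton does reproduce the stated formula; as written, however, the proposal both misattributes the factor $1+sx/(1-x)$ and lacks the two structural facts needed to justify it.
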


\begin{lemma}\label{lem150a4}
Let $T=\{1324,3421,3241\}$. Then the generating function for $T$-avoiders $\pi=i_1\pi^{(1)}\cdots i_m\pi^{(m)}$ with $m$ left-right maxima and $\pi^{(1)}=\cdots=\pi^{(m-2)}=\emptyset$
and $\pi^{(m-1)}\neq\emptyset$ is given by $$G_{m,m-1}(x)=N_m+N'_m+M_m+M'_m,$$
where
\begin{align*}
N_m&=x^{m+1}B/(1-x),\\
N'_m&=(m-2)x^{m+2}L/(1-x)^2,\\
M_m&=\frac{xN_m/(1-x)+x^{m+2}(L-1/(1-x))/(1-x)^3}{1-x/(1-x)} ,\\ M'_m&=\frac{xN_m'+(m-2)x^{m+3}(L-1)/((1-x)^2(1-2x))}{1-x}.
\end{align*}
\end{lemma}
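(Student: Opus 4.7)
The plan is to decompose $T$-avoiders of the stated form into four subclasses whose generating functions are exactly $N_m$, $N'_m$, $M_m$, and $M'_m$, mirroring the strategy already used in Lemmas \ref{lem150a2} and \ref{lem150a3}. Write $\pi = i_1 i_2 \cdots i_{m-1} \pi^{(m-1)} i_m \pi^{(m)}$ with $\pi^{(m-1)} \neq \emptyset$ (since the earlier $\pi^{(j)}$ are empty by assumption).

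The first step is to extract the pattern-forced constraints. A letter $a \in \pi^{(m-1)}$ with $a > i_1$, together with $i_1$, an appropriate letter of $\pi^{(m)}$, and $i_m$, forces a copy of $1324$, so $\pi^{(m-1)} < i_1$. The avoidance of $3421$ and $3241$ then restricts $\pi^{(m)}$ to decompose as a (possibly empty) segment lying in a single gap $(i_{j-1},i_j)$ with $2\le j\le m-1$ together with a tail exceeding $i_{m-1}$, and forces $\pi^{(m-1)}$ itself to have a highly constrained shape (essentially a decreasing run followed by controlled insertions below~$i_1$).

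Next, I split the avoiders into four classes according to (i) whether $\pi^{(m)}$ lies entirely above $i_{m-1}$ or drops into one of the $(m-2)$ available gaps, and (ii) whether $\pi^{(m-1)}$ consists of a ``minimal'' configuration or admits additional insertions. The two base classes give $N_m$ and $N'_m$ by a direct count: $N_m = x^{m+1}B/(1-x)$ uses that the upward tail is a $\{213,3421\}$-avoider (factor $B$) with an extra $1/(1-x)$ absorbing a trailing decreasing run, while $N'_m=(m-2)x^{m+2}L/(1-x)^2$ uses that the in-gap segment is a $\{213,231\}$-avoider (factor $L$), with the $(m-2)$ accounting for the choice of gap. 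For the two remaining classes, I would peel off the next smallest unfixed letter (in the spirit of Lemma \ref{lem150a1}): either it falls into an ``absorbing'' position, returning a smaller instance of the same class and contributing $\tfrac{x}{1-x}$ or $x$, or it falls into a ``terminating'' position and produces one of the inhomogeneous pieces appearing in the formula. This yields the self-referential equations
\[
M_m = \tfrac{x}{1-x}M_m + \tfrac{xN_m}{1-x} + \tfrac{x^{m+2}(L-1/(1-x))}{(1-x)^3}, \qquad M'_m = xM'_m + xN'_m + \tfrac{(m-2)x^{m+3}(L-1)}{(1-x)^2(1-2x)},
\]
whose solutions are precisely the claimed expressions for $M_m$ and $M'_m$.

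The main obstacle will be the case analysis: establishing that the four classes are disjoint and exhaustive, and that in each of $M_m$ and $M'_m$ the self-referential coefficient ($\tfrac{x}{1-x}$ and $x$ respectively) arises with the correct multiplicity. The bookkeeping is most delicate at the interface between $\pi^{(m-1)}$ and the initial segment of $\pi^{(m)}$, where the $3241$- and $3421$-avoidance constraints interact non-trivially with the choice of gap in $\pi^{(m)}$ and determine precisely which insertions of the next smallest letter continue the iteration and which terminate it.
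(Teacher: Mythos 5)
Your overall plan does mirror the paper's proof (a four-way split with direct counts giving $N_m,N'_m$ and self-referential equations giving $M_m,M'_m$, and your two displayed recursions are exactly the right ones), but as written the structural analysis that would justify it contains errors, and the part you defer as ``the main obstacle'' is precisely the content of the lemma. First, the splitting criterion: the correct dichotomies concern $\pi^{(m)}$ only, namely whether $\pi^{(m)}$ contains letters smaller than $i_1$ (call them $\alpha$) and whether it contains letters lying in some gap $(i_j,i_{j+1})$, $1\le j\le m-2$ (call them $\beta$); the four classes $N_m,N'_m,M_m,M'_m$ are $(\alpha,\beta)$ empty/nonempty. Your criterion (ii), about $\pi^{(m-1)}$ being ``minimal'' versus admitting ``additional insertions'', does not correspond to this, and your description of $\pi^{(m-1)}$ is wrong: since $m\ge3$ and $\pi^{(1)}=\cdots=\pi^{(m-2)}=\emptyset$, any inversion $d_1>d_2$ in $\pi^{(m-1)}$ makes $i_1\,i_2\,d_1\,d_2$ an occurrence of $3421$, so $\pi^{(m-1)}$ is a (nonempty) \emph{increasing} sequence below $i_1$, not ``a decreasing run followed by controlled insertions''; the insertions below $i_1$ live in $\pi^{(m)}$, where they are forced to be the consecutive values $i_1-s,\ldots,i_1-1$ in increasing order, interleaved with blocks $\beta^{(s)}>\cdots>\beta^{(0)}>i_{m-1}$, and then $\pi^{(m-1)}<i_1-s$. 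Also your witness for $\pi^{(m-1)}<i_1$ is positionally impossible (every letter of $\pi^{(m)}$ occurs after $i_m$, so it cannot play the ``2'' of a $1324$ whose ``4'' is $i_m$); the correct witness is $i_1\,i_{m-1}\,a\,i_m$.

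Second, the bookkeeping behind the base counts is misassigned: in $N'_m$ the factor $L$ is carried by the prefix of $\pi^{(m)}$ lying above $i_{m-1}$ (which must now avoid $\{213,231\}$ because of the in-gap letters below it), while the in-gap letters form the forced increasing run $(i_j+1)\cdots(i_{j+1}-1)$ at the very end and contribute only $x/(1-x)$; one factor $x/(1-x)$ in each of $N_m,N'_m$ is the nonempty increasing $\pi^{(m-1)}$, and in $N_m$ the factor $B$ is for $\pi^{(m)}$ avoiding $\{213,3421\}$. Finally, the inhomogeneous terms in your equations are exactly what your ``peeling'' sketch does not deliver: in the class $M_m$ the dichotomy is whether the top block $\beta^{(s)}$ (the high letters preceding $i_1-s$) is decreasing (possibly empty) --- deleting it together with $i_1-s$ returns an object of type $N_m$ or $M_m$, giving $\tfrac{x}{1-x}(N_m+M_m)$ --- or contains an ascent, which rigidifies the rest and yields $x^{m+2}\bigl(L-\tfrac1{1-x}\bigr)/(1-x)^3$; in $M'_m$ the dichotomy is whether $\beta^{(0)}$ is empty (giving $x(N'_m+M'_m)$) or not (giving $(m-2)x^{m+3}(L-1)/((1-x)^2(1-2x))$). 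Without identifying these sub-cases and verifying that they exhaust and rigidify as claimed, your recursions are reverse-engineered from the target formula rather than proved.
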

\begin{proof}
To write an equation for $G_{m,m-1}(x)$, suppose $\pi=i_1\pi^{(1)}\cdots i_m\pi^{(m)}$ with $m$ left-right maxima and $\pi^{(1)}=\cdots=\pi^{(m-2)}=\emptyset$ and $\pi^{(m-1)}\neq\emptyset$. Let $\alpha$ be the subsequence of $\pi^{(m)}$ consisting of letters smaller than $\pi^{(1)}$, and let $\beta$ be the subsequence of $\pi^{(m)}$ consisting of letters between $i_1$ and $i_{m-1}$. Note that $\pi^{(m)}<\alpha\beta$. Now let us consider the following four cases:
\begin{itemize}
\item $\alpha=\beta=\emptyset$: Here $\pi^{(m-1)}<i_1$ and $\pi^{(m)}>i_{m-1}$. So we have a contribution $N_m$.
\item $\alpha=\emptyset$ and $\beta\neq\emptyset$: Here $\pi^{(m-1)}<i_1$, and there exits $1\leq j\leq m-2$ such that $\pi^{(m)}=\pi'(i_j+1)\cdots(i_{j+1}-1)$ and $\pi'$ avoids $\{213,231\}$. So we have a contribution $N'_m$.
\item $\alpha\neq\emptyset$ and $\beta=\emptyset$. In this case $\pi^{(m)}$ can be written as
$$\beta^{(s)}(i_1-s)\beta^{(s-1)}(i_1-s+1)\cdots \beta^{(1)}(i_1-1)\beta^{(0)},$$
where $\beta^{(s)}>\beta^{(s-1)}>\cdots>\beta^{(0)}>i_{m-1}$ and $\pi^{(m-1)}<i_1-s$. Let $M_m$ be the contribution of this case. By considering whether $\beta^{(s)}$ is a decreasing sequence (including the empty case) or contains an ascent, we obtain the contributions $\frac{x}{1-x}(N_m+M_m)$ and $x^{m+2}(L-1/(1-x))/(1-x)^3$. Thus,
$$M_m=\frac{x}{1-x}(N_m+M_m)+x^{m+2}(L-1/(1-x))/(1-x)^3,$$
with solution for $M_m$ as above.
\item $\alpha\neq\emptyset$ and $\beta\neq\emptyset$. In this case, there exists $j$, $1\leq j\leq m-2$ such that $\pi^{(m)}$ can be written as
$$\beta^{(s)}(i_1-s)\beta^{(s-1)}(i_1-s+1)\cdots\beta^{(1)}(i_1-1)\beta^{(0)}(i_j+1)(i_j+2)\cdots(i_{j+1}-1),$$
where $\beta^{(s)}>\beta^{(s-1)}>\cdots>\beta^{(0)}>i_{m-1}$ and $\pi^{(m-1)}<i_1-s$. Let $M'_m$ be the contribution of this case. By considering whether $\beta^{(0)}$ is empty or not, we obtain the contributions $x(N'_m+M'_m)$ and $(m-2)x^{m+3}(L-1)/((1-x)^2(1-2x))$. Thus,
$$M'_m=x(N'_m+M'_m)+(m-2)x^{m+3}(L-1)/((1-x)^2(1-2x)),$$
with solution for $M'_m$ as above.
\end{itemize}
This completes the proof.
\end{proof}

\begin{theorem}\label{th150a}
Let $T=\{1324,3421,3241\}$. Then
$$F_T(x)=\frac{1-11x+52x^2-136x^3+214x^4-204x^5+111x^6-28x^7}{(1-x)^3(1-2x)^3(1-3x+2x^2)}.$$
\end{theorem}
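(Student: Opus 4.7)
The plan is to use the standard left-right maxima decomposition:
\[
F_T(x) = 1 + xF_T(x) + \sum_{m \geq 2} G_m(x),
\]
where $G_m(x)$ counts $T$-avoiders with exactly $m$ left-right maxima (so $G_0=1$ and $G_1=xF_T(x)$). Solving for $F_T(x)$ will be immediate once a closed form for $\sum_{m \geq 2} G_m(x)$ is in hand.

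For $m \geq 3$, partition $T$-avoiders $\pi = i_1\pi^{(1)} \cdots i_m \pi^{(m)}$ by the smallest index $s$ with $\pi^{(s)} \neq \emptyset$. The three regimes $s = 1$, $2 \le s \le m-2$, and $s = m-1$ are exactly the contributions enumerated in closed form by Lemmas \ref{lem150a2}, \ref{lem150a3}, and \ref{lem150a4} respectively. What remains is the case $s = m$ -- all interior blocks empty, so $\pi = i_1 i_2 \cdots i_m \pi^{(m)}$ -- which I would treat by generalizing the construction of $D_d(x)$ from Lemma \ref{lem150a1} from two to $m$ left-right maxima: refine $\pi^{(m)}$ by the number $d$ of its letters lying below $i_1$, use avoidance of $1324$ and $3241$ to force the letters of $\pi^{(m)}$ in any interval $(i_j, i_{j+1})$ to form an increasing run, and use $3421$ to limit how many such nonempty runs can coexist. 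This yields a $G_{m,m}(x)$ of the same general shape $x^m \cdot (\text{rational in } x \text{ with polynomial-in-}m \text{ numerator})$ as the other $G_{m,s}$'s.

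For $m = 2$ the same split gives $G_2(x) = D(x) + (\text{the } i_1 = 1 \text{ contribution, a standard avoider count}) + (\text{the } \pi^{(1)} \neq \emptyset \text{ contribution, an } m=2 \text{ analogue of Lemma \ref{lem150a2}})$. With $G_2(x)$ and the formula for $G_m(x)$ ($m\geq 3$) explicit, the total $\sum_{m\geq 2}G_m(x)$ reduces to elementary geometric sums in pieces of the form $\sum_{m\geq 3} x^m/(1-x)^j$ and $\sum_{m\geq 3} m\,x^{m+1}/(1-x)^j$. Substituting into the linear equation above and simplifying using the closed forms for $A$, $B$, $L$ and for $D(x), E(x)$ from Lemma \ref{lem150a1} should produce the stated rational function.

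The main obstacle I anticipate is the case $s = m$ for $m \geq 3$: it is not covered by any of the preceding lemmas, and one must carefully organize the sub-subcases (empty/nonempty status of the letters of $\pi^{(m)}$ below $i_1$, and of the increasing run in each interval $(i_j,i_{j+1})$) so as not to double-count against the configurations already captured by $G_{m,m-1}$ from Lemma \ref{lem150a4} or by the $D(x)/E(x)$ bookkeeping. Once that case analysis is pinned down cleanly, the remaining work -- geometric summation over $m$ and algebraic simplification against the factor $(1-x)^3(1-2x)^3(1-3x+2x^2)$ -- is routine.
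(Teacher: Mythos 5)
Your overall architecture coincides with the paper's: decompose by left-right maxima, split $G_m(x)$ for $m\ge 3$ according to the first nonempty block, and invoke Lemmas \ref{lem150a2}, \ref{lem150a3}, \ref{lem150a4} for $s=1$, $2\le s\le m-2$, $s=m-1$. The paper treats the leftover case $\pi^{(1)}=\cdots=\pi^{(m-1)}=\emptyset$ essentially as you suggest, but not by generalizing Lemma \ref{lem150a1}: after separating the avoiders whose first letter is $1$ (the $G'_m$ count), this case is reduced to the series of Lemma \ref{lem150a1}, contributing $x^{m-2}\bigl(D(x)+(m-2)E(x)/(1-x)\bigr)$ (delete $i_2,\dots,i_{m-1}$ and record which gap carries the run of consecutive letters); likewise your $m=2$ term ``an $m=2$ analogue of Lemma \ref{lem150a2}'' is not literally covered by that lemma and corresponds to the separate functional-equation argument ($K$) in the paper's proof.

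The genuine gap is in the one case you defer, and the structural claim your plan rests on there is false. Avoidance of $1324$ and $3241$ does \emph{not} force the letters of $\pi^{(m)}$ lying in a gap $(i_j,i_{j+1})$ to form an increasing run: that holds only for $j\le m-2$, and the responsible pattern is $3421$ (a descent $x>y$ inside such a gap, together with $i_{j+1}$ and $n$, forms a $3421$), whereas in the top gap $(i_{m-1},n)$ descents are perfectly legal --- for instance $25431$ (two left-right maxima) and $236541$ (three left-right maxima) are $T$-avoiders whose letters in the top gap decrease. The correct structure in that gap is a decreasing sequence of high letters interleaved with an increasing sequence of letters below $i_1$, and disentangling it is exactly what the functional equations for $E_d(x)$ and $D_d(x)$ in Lemma \ref{lem150a1} accomplish. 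So the step you describe as an organizational matter is where the main combinatorial work of the proof actually lives, and carried out as sketched it would produce a wrong count of $G_{m,m}$; you need either to redo that analysis along the lines of Lemma \ref{lem150a1} or to reduce to its $D$ and $E$ as the paper does, before the concluding geometric summation and simplification (which are indeed routine).
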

\begin{proof}
First, we study the generating function $G'_m(x)$  for $T$-avoiders $\pi$ with $m$
left-right maxima such that $\pi_1=1$. Clearly, $G'_m(x)/x$ is the generating function
for the $\{213,3421,3241\}$-avoiders with  $m-1$ left-right maxima.
It is not hard to see that
$$G'_m(x)=(m-2)\,\frac{x^{m+1}}{1-x}\,L+x^mB.$$

Now let $G_m(x)$ be the generating function for $T$-avoiders with $m$
left-right maxima. Clearly, $G_0(x)=1$ and $G_1(x)=xF_T(x)$.

Let us find the generating function $G_2(x)$. Suppose $\pi=i\pi'n\pi''\in S_n(T)$ with 2 left-right maxima. If $\pi''>i$, then we have a contribution of $x^2(A-1)B$. Otherwise, we can write $\pi$ as
$$\pi=i\pi'n\beta^{(s)}(i-s)\beta^{(s-1)}(i-s+1)\cdots \beta^{(1)}(i-1)\beta^{(0)},$$
where $\beta^{(s)}>\beta^{(s-1)}>\cdots>\beta^{(0)}>i$ and $\pi'<i-s$. Let us denote the contribution of this case by $K$. By considering whether $\beta^{(s)}$ is decreasing or contains a rise, we obtain the contributions of $\frac{x}{1-x}(K+x^2(A-1)B)$ and $\frac{x^3}{(1-x)^2}(A-1)(L-\frac{1}{1-x})$. Thus,
$$K=\frac{x}{1-x}(K+x^2(A-1)B)+\frac{x^3}{(1-x)^2}(A-1)\left(L-\frac{1}{1-x}\right),$$
and $G_2(x)-G'_2(x)=x^2(A-1)B+K$.

Now let us find an explicit formula for $G_m(x)$. By Lemmas \ref{lem150a1}--\ref{lem150a4}, we see that
$$G_m(x)-G'_m(x)=G_{m,1}(x)+\sum_{j=2}^{m-2}G_{m,j}(x)+G_{m,m-1}(x)+x^{m-2}(D_m(x)+(m-2)E_m(x)/(1-x)),$$
where $G_{m,m}(x)$ counted by $x^{m-2}(D_m(x)+(m-2)E_m(x)/(1-x))$. Thus,
$$F_T(x)=1+xF_T(x)+\sum_{m\geq2}(G_m(x)-G'_m(x))+\sum_{m\geq2}G'_m(x).$$
By substituting the expressions for $G_m(x)-G'_m(x)$ and solving for $F_T(x)$, we complete the proof.
\end{proof}

\subsection{Case 151: $\{1324,1342,3421\}$}
Here, we focus on the number of left-right maxima and begin with the case where $n$ is the second letter. Let $J_d(x)$ be the generating for permutations of the form $(d+1)n\alpha\in S_n(T)$.

\begin{lemma}\label{lem151a1}
$J_0(x)=x^2L(x)=\frac{x^2(1-x)}{1-2x}$ and for all $d\geq1$,
$$J_d(x)=\frac{x^{d+2}+x^2\left(\frac{x}{1-x}\right)^{d+1}L(x)+\sum_{j=0}^{d-2}x^{3+j}\left(\frac{x}{1-x}\right)^{d-j}}{1-\frac{x}{1-x}}\,.$$
\end{lemma}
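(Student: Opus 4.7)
The plan is to establish a structural description of $T$-avoiders of the form $\pi=(d+1)n\alpha$ and then derive a linear equation for $J_d(x)$. The case $d=0$ I would dispose of directly: with $\pi_1=1$ and $\pi_2=n$, any occurrence of $1324$ or $1342$ in $\pi$ is forced to take $\pi_1$ as its ``$1$'' with the other three entries lying in $\alpha$, which translates cleanly into $\alpha$ avoiding $\{213,231\}$; the pattern $3421$ imposes no further constraint on $\alpha$. Since $F_{\{213,231\}}(x)=L(x)$ and the prefix $1n$ contributes $x^2$, this yields $J_0(x)=x^2L(x)=\frac{x^2(1-x)}{1-2x}$.

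For $d\ge 1$ I would first pin down the shape of $\alpha$. Placing $(d+1,n)$ at positions $1,2$ as the ``$34$'' of a forbidden $3421$ forces the low values $\{1,\ldots,d\}$ to occur in $\alpha$ in strictly increasing order, so $\alpha$ splits into $d+1$ high-value blocks $B_0,B_1,\ldots,B_d$ delimited by the lows. Playing $d+1$ as the ``$1$'' of $1324$ or $1342$ forces the high subsequence $B_0B_1\cdots B_d$ to avoid $\{213,231\}$, and a finer analysis of the same two patterns with all four positions in $\alpha$ additionally forces $B_1,\ldots,B_{d-1}$ to concatenate to a decreasing sequence of values lying strictly above those of $B_d$.

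With that structure in hand, the key step is to condition on the length $k\ge 0$ of the initial decreasing run $\pi_3=n-1,\pi_4=n-2,\ldots,\pi_{k+2}=n-k$ just after $n$. For $k\ge 1$, stripping these $k$ letters and re-standardizing $n\mapsto n-k$ is a bijection onto $J_d$-avoiders of length $n-k$, which produces the self-referential contribution $\frac{x}{1-x}J_d(x)$. The remaining $k=0$ contribution (either $n=d+2$, or $n>d+2$ with $\pi_3\ne n-1$) should then be computed directly by a case analysis on which blocks $B_j$ are non-empty. I expect this residual contribution to split into three types matching the three numerator terms: the minimal case $n=d+2$, with $\pi=(d+1)(d+2)12\cdots d$, gives $x^{d+2}$; the case where $B_d$ is a non-empty $\{213,231\}$-avoider (and each of the preceding $d+1$ gaps contributes an independent run of length $\ge 1$) gives $x^2(x/(1-x))^{d+1}L(x)$; and the remaining cases, indexed by the rightmost non-empty interior block $B_j$ with $0\le j\le d-2$, give the finite sum $\sum_{j=0}^{d-2}x^{3+j}(x/(1-x))^{d-j}$.

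The main obstacle will be the bookkeeping in this final case analysis: verifying that the constraints from $\{213,231\}$-avoidance of the full high subsequence, together with the layered-decreasing structure of $B_1,\ldots,B_{d-1}$, allow each gap length to vary independently so as to produce exactly the claimed $x/(1-x)$ factors, and confirming that the intermediate sum truncates at $j=d-2$ rather than $j=d-1$ (the $j=d-1$ configuration being already absorbed into the $\frac{x}{1-x}J_d(x)$ recursion). A spot check at $d=1,2$ should make the enumeration manageable; solving the resulting linear equation $\bigl(1-\tfrac{x}{1-x}\bigr)J_d(x)=\text{(three-term numerator)}$ then yields the claimed closed form.
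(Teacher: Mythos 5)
Your $J_0$ argument and your structural description of the avoiders $(d+1)n\alpha$ (lows $1,\dots,d$ increasing, high subsequence avoiding $\{213,231\}$, interior blocks decreasing and above $B_d$) are essentially sound, but the enumeration step that is supposed to produce the equation is based on the wrong statistic, and it fails. You condition on the length $k$ of the initial run $n-1,n-2,\dots,n-k$ immediately after $n$ and claim that, for $k\ge 1$, stripping this run is a bijection onto \emph{all} $J_d$-avoiders of length $n-k$, giving the term $\frac{x}{1-x}J_d(x)$. It is not: the image of that map consists only of avoiders whose own initial run is empty (otherwise the run you stripped was not maximal), so the class ``run length $\ge 1$'' is counted by $xJ_d(x)$, not $\frac{x}{1-x}J_d(x)$, and correspondingly the class ``run length $0$'' is not counted by the three numerator terms. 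A concrete check with $d=1$, $n=5$: the six avoiders are $25134,\,25143,\,25314,\,25341,\,25413,\,25431$; exactly two of them ($25413,\,25431$) have $\pi_3=n-1$, whereas $[x^5]\frac{x}{1-x}J_1(x)=3$; and four have $\pi_3\ne n-1$, whereas $[x^5]\bigl(x^3+x^2(\frac{x}{1-x})^2L(x)\bigr)=3$. So your two claimed contributions do not count the two classes you assign them to (they only happen to sum to the correct total because the final equation is the right one).

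The decomposition that actually yields the displayed equation conditions on the position of the letter $d+2$, not on the initial run. If $d+2$ precedes the letter $1$, then avoidance forces every high letter between $n$ and $d+2$ to exceed every high letter after $d+2$ and to occur in decreasing order, so these letters are exactly the consecutive top values $n-1,\dots,n'+1$; deleting $d+2$ together with this possibly empty run (at least one letter in total) is a bijection onto all shorter $J_d$-avoiders, and this is the source of the term $\frac{x}{1-x}J_d(x)$ --- note this class contains avoiders whose initial run is empty, e.g.\ $2\,6\,3\,1\,5\,4$, which your recursion would misfile. The remaining cases are: $n=d+2$ (giving $x^{d+2}$); $d+2$ lying between the lows $p$ and $p+1$ for some $1\le p\le d-1$, where everything after $d+2$ is forced to be $(p+1)\cdots d$ and the other highs form a decreasing sequence distributed over the $p+1$ gaps up to $d+2$ (giving $x^{2+d-p}(\frac{x}{1-x})^{p+1}$, which after reindexing $j=d-1-p$ is your finite sum); and $d+2$ lying after the low $d$, where the highs before $d+2$ are decreasing and dominate the $\{213,231\}$-avoiding block after $d+2$ (giving $x^2(\frac{x}{1-x})^{d+1}L(x)$; here the factors $\frac{x}{1-x}$ should be read as one mandatory letter per gap times a possibly empty run of highs, and $L(x)$ allows the block after $d+2$ to be empty, two details your sketch also gets slightly wrong). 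With that re-founded case analysis the linear equation and hence the closed form follow as you indicate.
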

\begin{proof}
Clearly, $J_0(x)=x^2F_{\{213,231\}}(x)=x^2L(x)$. Thus, we assume that $d\geq1$ and let us write an equation for $J_d(x)$.
If $d=n-2$ then we have a contribution of $x^{d+2}$. Otherwise, $d\leq n-3$, and consider the position of the letter $d+2$. Note that the letters $1,2,\dots,d$ occur in that order since $\pi$ starts $(d+1)n$ and avoids 3421.
\begin{itemize}
\item The letter $d+2$ appears on the left side of the letter $1$. In this case,  $(d+1)n\alpha=(d+1)n(n-1)\cdots(n'+1)(d+2)\alpha'$, which leads to a contribution of $\frac{x}{1-x}J_d(x)$.
\item The letter $d+2$ appears between the letters $p$ and $p+1$, where $1\leq p\leq d-1$. Then, $(d+1)n\alpha=(d+1)n\alpha'(d+2)(p+1)(p+2)\cdots d$, where all the letters in $\alpha'$ which are $>d+2$ (resp. $<p+1$) are decreasing (resp. increasing). Thus, we have a contribution of
    $$x^{2+d-p}\left(\frac{x}{1-x}\right)^{p+1}\,.$$
\item The letter $d+2$ appear on the right side of $d$. In this case, $\pi$ has the form $(d+1)n\alpha'(d+2)\beta$ where all the letters in $\alpha'$ greater than $d+2$ form a decreasing subsequence $\gamma$ with $\gamma>\beta>d+2$, and $\beta$ avoids $\{213,231\}$. Thus, we have a contribution of
    $$x^2\left(\frac{x}{1-x}\right)^{d+1}L(x)\,.$$
\end{itemize}
Hence, by adding all the contributions, we have
$$J_d(x)=x^{d+2}+\frac{x}{1-x}J_d(x)+x^2\left(\frac{x}{1-x}\right)^{d+1}L(x)+\sum_{p=1}^{d-1}x^{2+d-p}\left(\frac{x}{1-x}\right)^{p+1},$$
and the stated expression for $J_d(x)$ follows.
\end{proof}
Now set $J(x)=\sum_{d\geq0}J_d(x)$, the \gf for $T$-avoiders whose largest letter occurs in second position.
\begin{corollary}\label{cor151a1}
$$J(x)=\frac{(1-5x+9x^2-5x^3-x^4)x^2}{(1-2x)^3(1-x)}\,.$$ \qed
\end{corollary}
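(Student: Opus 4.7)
The plan is to sum the explicit formula for $J_d(x)$ from Lemma~\ref{lem151a1} over $d\ge 1$, then add $J_0(x)$ and consolidate. First, I would rewrite $J_d(x)$ in a cleaner form. Since $1-\frac{x}{1-x}=\frac{1-2x}{1-x}$ and $L(x)=\frac{1-x}{1-2x}$, multiplying through by $\frac{1-x}{1-2x}$ gives the three-term decomposition
\[
J_d(x)=\frac{(1-x)x^{d+2}}{1-2x}+\frac{x^{d+3}}{(1-x)^{d-1}(1-2x)^2}+\frac{1-x}{1-2x}\sum_{j=0}^{d-2}\frac{x^{d+3}}{(1-x)^{d-j}}\qquad (d\ge 1),
\]
valid also for $d=1$ with the convention that the inner sum is empty.

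Next, I would sum each piece over $d\ge 1$ using standard geometric series. The first piece gives $\frac{x^3}{1-2x}$. For the second, substitute $u=\frac{x}{1-x}$, so $\sum_{d\ge 1}\frac{x^d}{(1-x)^{d-1}}=x\sum_{d\ge 0}u^d=\frac{x(1-x)}{1-2x}$, yielding $\frac{x^4(1-x)}{(1-2x)^3}$. For the third piece (a double sum), I would swap the order of summation by setting $k=d-j$, rewriting
\[
\sum_{d\ge 2}\sum_{j=0}^{d-2}\frac{x^{d+3}}{(1-x)^{d-j}}=\sum_{k\ge 2}\frac{1}{(1-x)^k}\sum_{d\ge k}x^{d+3}=\sum_{k\ge 2}\frac{x^{k+3}}{(1-x)^{k-1}\cdot(1-x)\cdot\cdots}\,,
\]
and evaluating the resulting geometric series in $u$ gives $\frac{x^5}{(1-x)(1-2x)^2}$ after multiplying by $\frac{1-x}{1-2x}$.

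Finally, adding $J_0(x)=\frac{x^2(1-x)}{1-2x}$ to the three summed contributions and placing everything over the common denominator $(1-x)(1-2x)^3$, I would expand the four numerators
\[
x^2(1-x)^2(1-2x)^2,\quad x^3(1-x)(1-2x)^2,\quad x^4(1-x)^2,\quad x^5(1-2x),
\]
and collect coefficients to obtain $x^2(1-5x+9x^2-5x^3-x^4)$, producing the claimed formula. The entire argument is routine; the only point requiring attention is the index-swap in the double sum, which must be set up correctly so that no terms are lost when $d=1$ (corresponding to an empty inner sum).
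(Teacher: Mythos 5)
Your proposal is correct and matches the paper's intent exactly: the corollary is stated with only a \qed because its proof is precisely this routine summation of the formula in Lemma~\ref{lem151a1} over $d\ge 1$ (plus $J_0$), and your three geometric-series evaluations $\frac{x^3}{1-2x}$, $\frac{x^4(1-x)}{(1-2x)^3}$, $\frac{x^5}{(1-x)(1-2x)^2}$ and the final numerator collection all check out. The only blemish is the garbled denominator in your intermediate display for the index swap, but the stated final value of that piece is right, so there is no gap.
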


\begin{lemma}\label{lem151a3}
Let $G_2(x)$ be the generating function for permutations in $S_n(T)$ with exactly two left-right maxima. Then
$$G_2(x)=\frac{1-8x+26x^2-40x^3+25x^4-2x^5-x^6)x^2}{(1-3x+x^2)(1-2x)^4}.$$
\end{lemma}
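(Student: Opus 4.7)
The plan is to split $G_2(x)$ according to whether the block $\pi'$ between the two left-right maxima $i$ and $n$ of $\pi=i\pi' n\pi''$ is empty or not. When $\pi'=\emptyset$, i.e.\ $i$ is immediately followed by $n$, the permutation is exactly one of those enumerated in Lemma~\ref{lem151a1}, so this case contributes $J(x)=\sum_{d\ge 0}J_d(x)$, which by Corollary~\ref{cor151a1} equals $\frac{(1-5x+9x^2-5x^3-x^4)x^2}{(1-2x)^3(1-x)}$.

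For the case $\pi'\neq\emptyset$, I would first extract the structural consequences of $T$-avoidance: (a) from four-letter subsequences of the form $\pi'_a\pi'_b\pi'_c\,n$ we get that $\pi'$ (restricted to itself) avoids $132$; (b) from $i\,n\,\pi''_j\pi''_k$ we get that the subsequence of entries of $\pi''$ smaller than $i$ is increasing (otherwise a $3421$ appears); (c) from $\pi'_a\pi'_b\,n\,\pi''_c$ we get that no letter of $\pi''$ can lie strictly between an ascending pair $\pi'_a<\pi'_b$ inside $\pi'$ (otherwise $1342$ appears); and (d) from $\pi'_a\,n\,\pi''_b\pi''_c$ we get that no descent of $\pi''$ can have both entries below some entry of $\pi'$ (otherwise $3421$ appears). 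These four restrictions, together with $\pi'<i<n$ and $\pi''<n$, will turn out to be equivalent to $T$-avoidance.

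The next step is to refine the $\pi'\neq\emptyset$ case by the position of the letter $i-1$ (whether it belongs to $\pi'$ or to $\pi''$), by whether $\pi''$ has any entry below $i$, and by the $132$-avoider decomposition $\pi'=\alpha k\beta$ with $k=\max\pi'$ and $\alpha>\beta$. In each resulting branch the contribution factorizes into standard rational pieces: $\frac{x}{1-x}$ for decreasing runs, $L(x)=\frac{1-x}{1-2x}$ for $\{213,231\}$-avoiding tails, and a few analogous factors for the feasible middle portions of $\pi''$ whose admissible value-set is pinned down by $k$ and the gap structure of $\alpha,\beta$. Summing the contributions of all subcases and adding $J(x)$ should, after algebraic simplification, yield the claimed rational function.

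The main obstacle is the coupling of $\pi'$ and $\pi''$ forced by (c) and (d): these constraints prevent a naive product decomposition, because the admissible values for $\pi''$ depend on the shape of $\pi'$. Parametrizing this admissible value-set cleanly in terms of the $132$-avoider decomposition of $\pi'$, and avoiding over-counting borderline cases (e.g.\ $\pi'$ a single letter, or $\pi''$ lying entirely above $i$), is where the bookkeeping will be most delicate. I expect the calculation to reduce to a short list of sub-generating functions, one per structural branch, that combine cleanly into the stated expression.
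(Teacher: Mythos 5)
Your overall framing (peel off the case $\pi'=\emptyset$, which is exactly $J(x)$ from Corollary \ref{cor151a1}, then analyze $\pi=i\pi'n\pi''$ with $\pi'\neq\emptyset$) is reasonable, but the structural claim on which the rest of your plan rests is wrong: conditions (a)--(d), together with $\pi'<i<n$ and $\pi''<n$, are \emph{not} equivalent to $T$-avoidance. Take $\pi=2\,1\,7\,5\,6\,3$: here $\pi'=1$, $\pi''=5\,6\,3$, and (a)--(d) all hold vacuously, yet $1\,5\,6\,3$ is an occurrence of $1342$. The missing constraints come from letting $i$ (or any letter of $\pi'$, or a small letter of $\pi''$) play the role of the ``1'' while the ``$324$'' or ``$342$'' sits entirely inside the part of $\pi''$ lying above $i$: this forces that part of $\pi''$ to avoid $213$ and $231$, and more generally couples the small letters $j_1<\cdots<j_d$ of $\pi''$ with the block structure of $\pi'$ in a way your list does not capture. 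Since your subsequent case analysis is built on the assertion that (a)--(d) characterize avoidance, the branches you would enumerate do not describe the right set of permutations.

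Beyond that, the enumeration itself is only asserted, not carried out: you say each branch ``factorizes into standard rational pieces'' and that the sum ``should'' simplify to the stated formula, while simultaneously acknowledging that (c)--(d) prevent a product decomposition and that the bookkeeping is delicate. That bookkeeping is precisely the content of the lemma. For comparison, the paper does not split on $\pi'=\emptyset$ or on the position of $i-1$; it refines $G_2$ by the number $d$ of letters of $\pi''$ below $i$, writes $\pi=i\alpha^{(1)}\cdots\alpha^{(d+1)}n\pi''$ with the blocks $\alpha^{(s)}$ interleaved in value with $j_1,\ldots,j_d$, cases on the last nonempty block, and reuses the refined series $J_d(x)$ of Lemma \ref{lem151a1} (contributing $L(x)J_d(x)$ when $\alpha^{(2)},\ldots,\alpha^{(d+1)}$ are empty), with the remaining branches giving explicit geometric terms that are then summed over $d$. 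To salvage your approach you would need a corrected characterization of the $\pi'\neq\emptyset$ avoiders, including the constraints on $\pi''_{>i}$, and then an explicit computation of each branch's generating function.
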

\begin{proof}
Define $G_2(x;d)$ to be the generating function for permutations in $i\pi'n\pi''\in S_n(T)$ with  two left-right maxima such that $\pi''$ contains $d$ letters smaller than $i$. If $d=0$ so that $\pi$ has the form $i\al n\be $ with $\al<i<\be$,
then $\al$ avoids $\{132,3421\}$ and $\be$ avoids $\{213,231\}$. Hence
$G_2(x;0)=x^2A(x)L(x)$,
where $A(x)=1+\frac{x(1-3x+3x^2)}{(1-x)^2(1-2x)^2}$ is the generating function for $S_n(\{132,3421\})$ \cite{MV}.
From now on, we assume that $d\geq1$. The letters $j_1,\dots,j_d$ in $\pi''$ that are
less than $i$ are increasing, and so $\pi$ can be decomposed as
$$\pi=i\alpha^{(1)}\cdots\alpha^{(d+1)}n\pi'',$$
where $i>\alpha^{(1)}>j_d>\alpha^{(2)}>\cdots>\alpha^{(d)}>j_1>\alpha^{(d+1)}$.
Now, we treat the following cases:
\begin{itemize}
\item $\alpha^{(j)}=\emptyset$ for all $j=2,3,\ldots,d+1$. In this case, we have a contribution of $L(x)J_d(x)$, see Lemma \ref{lem151a1}.
\item $\alpha^{(s)}\neq\emptyset$ and $\alpha^{(j)}=\emptyset$ for all $j=s+1,s+2,\ldots,d+1$, where $s=2,3,\ldots,d+1$. In this case, $\alpha^{(j)}$ is a decreasing sequence for $j=1,2,\ldots,s-1$, $\alpha^{(s)}$ avoids $\{132,231\}$, and $\pi''=\beta^{(1)}j_1\cdots\beta^{(d)}j_d\gamma$, where $\beta=\beta^{(1)}\cdots\beta^{(d)}$ is a decreasing subsequence such that $\beta>\gamma$ and $\gamma$ avoids $\{213,231\}$. So, we have a contribution of $$x^2\left(\frac{x}{1-x}\right)^dL(x)\frac{1}{(1-x)^{s-1}}\big(L(x)-1\big)$$
    for $s=2,3,\ldots,d$, and
    $$x^2\left(\frac{x}{1-x}\right)^dL(x)\frac{1}{(1-x)^d}\big(A(x)-1\big)$$
    for $s=d+1$.
\item $\alpha^{(d+1)}\neq\emptyset$ and $\alpha^{(s)}$ contains a rise (a rise of $\pi$ is an index $i$ such that $\pi_i<\pi_{i+1}$) with $s=1,2,\ldots,d$. In this case, $\alpha^{(1)}\cdots\alpha^{(s-1)}$ is a decreasing sequence, $\alpha^{(s+1)}\cdots\alpha^{(d)}$ is empty, $\alpha^{(d)}$ is increasing and $\pi''$ decomposes as in the previous bullet. So the contribution is
    $$x^2\left(\frac{x}{1-x}\right)^dL(x)\left(L(x)-\frac{1}{1-x}\right)\frac{1}{(1-x)^{s-1}}\frac{x}{1-x}.$$
\end{itemize}
Hence, for $d\geq1$,
\begin{align*}
G_2(x;d)&=L(x)J_d(x)+\sum_{j=1}^{d-1}\frac{x^{d+2}L(x)\big(L(x)-1\big)}{(1-x)^{j+d}}\\
&+\frac{x^{d+3}L(x)\big(A(x)-1\big)}{(1-x)^{2d}}
+\sum_{j=0}^{d-1}\frac{x^{d+3}\big(L(x)-1/(1-x)\big)L(x)}{(1-x)^{d+1+j}}.
\end{align*}
Summing over $d\geq0$ and using Lemma \ref{lem151a1}, we obtain the stated formula for $G_2(x)$.
\end{proof}
\begin{lemma}\label{lem151a2}
Let $G_3(x)$ be the generating for the number of permutations in $S_n(T)$ with exactly three left-right maxima. Then
$$G_3(x)=\frac{(1-4x+5x^2+x^3-5x^4)x^3}{(1-2x)^4}.$$
\end{lemma}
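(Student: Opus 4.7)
The proof should parallel that of Lemma~\ref{lem151a3}. Write $\pi = i_1 \pi^{(1)} i_2 \pi^{(2)} i_3 \pi^{(3)} \in S_n(T)$ with exactly three left-right maxima $i_1<i_2<i_3=n$. I would first derive the structural consequences of avoiding the three patterns in $T$ by applying each pattern to quadruples chosen from $\{i_1,i_2,i_3\}$ together with one or two extra letters drawn from $\pi^{(1)}$, $\pi^{(2)}$, $\pi^{(3)}$. In particular, applying $1342$ to $(i_1,i_2,i_3,b)$ with $b\in\pi^{(3)}$ forces every letter of $\pi^{(3)}$ to satisfy $b<i_1$ or $i_2<b<i_3$ (the interval $(i_1,i_2)$ is forbidden in $\pi^{(3)}$); applying $3421$ to $(i_2,i_3,a,b)$ with $a,b\in\pi^{(3)}$ then forces the ``small'' part of $\pi^{(3)}$ (letters below $i_1$) to be increasing; and similar quadruple tests involving letters of $\pi^{(1)}\cup\pi^{(2)}$ further restrict how the three blocks interact.

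Next, I would split into cases according to whether $\pi^{(1)}$ and $\pi^{(2)}$ are empty, and according to the number of ``small'' letters appearing in $\pi^{(3)}$ (a secondary parameter $d\ge 0$, in analogy with the parameter used in Lemma~\ref{lem151a3}). In each case, the residual blocks $\pi^{(1)}$, $\pi^{(2)}$ and the ``large'' part of $\pi^{(3)}$ (the letters lying in $(i_2,i_3)$) are each constrained to avoid a smaller pattern set, typically $\{132,3421\}$ or $\{213,231\}$, or to be monotone. Their generating functions are the function $A(x)=1+\frac{x(1-3x+3x^2)}{(1-x)^2(1-2x)^2}$ appearing in Lemma~\ref{lem151a3}, and $L(x)=\frac{1-x}{1-2x}$, respectively. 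Summing geometric series in $d$ and then aggregating the cases should, after simplification, yield the claimed closed form $G_3(x)=x^3(1-4x+5x^2+x^3-5x^4)/(1-2x)^4$.

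The main technical hurdle will be the case in which $\pi^{(2)}$ is nonempty and $\pi^{(3)}$ simultaneously carries letters below $i_1$ and letters in $(i_2,i_3)$: there the three avoided patterns interact to force a fairly rigid interleaving between the increasing run of small letters and the $\{213,231\}$-avoiding block of large letters, and this interleaving must be tracked carefully. Once the shape is parametrized by the single integer $d$, the remaining bookkeeping is a direct analogue of the computations already carried out in Lemma~\ref{lem151a3} for $G_2(x)$, so no new generating-function techniques are required beyond combining $A(x)$, $L(x)$, and elementary rational manipulations.
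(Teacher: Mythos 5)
Your overall setup (decompose by the three left-right maxima, note that $\pi^{(3)}$ has no letters in $(i_1,i_2)$, that its letters below $i_1$ are increasing, and case on $\pi^{(1)},\pi^{(2)}$ and the number $d$ of small letters) is the paper's setup, and those structural claims are correct. The genuine gap is in the case $\pi^{(2)}=\emptyset$ and $d\ge 1$, which your plan treats as routine ``blocks avoiding $\{132,3421\}$ or $\{213,231\}$ or monotone, summed as a geometric series in $d$.'' That description is wrong there: the large letters of $\pi^{(3)}$ that precede the first small letter are not forced to be monotone, and their admissible configurations are coupled to the later letters rather than factoring blockwise. For instance $2\,3\,7\,4\,6\,5\,1$ is a $T$-avoider of exactly this type whose large letters $4,6,5$ contain an ascent before the small letter $1$; enumerating such configurations is precisely the content of $J_d$ in Lemma \ref{lem151a1}, whose derivation needs a scan of the position of the smallest large letter and a self-referential term $\frac{x}{1-x}J_d(x)$ — not elementary geometric bookkeeping. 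The paper handles this case by standardizing $i_2i_3\pi^{(3)}$, which is an arbitrary $T$-avoider with its maximum in second position and first letter exceeding $1$, hence counted by $J(x)-x^2L(x)$ (Lemma \ref{lem151a1} and Corollary \ref{cor151a1}), with a separate factor $xL(x)$ for $i_1\pi^{(1)}$ (which must avoid $\{132,231\}$ because a smaller letter follows). Your proposal never invokes $J(x)$, so as written it cannot produce the correct contribution for this case.

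You also single out the wrong case as the ``main technical hurdle.'' When $\pi^{(2)}\neq\emptyset$, any $c\in\pi^{(2)}$ lies below all small letters of $\pi^{(3)}$ (else $i_1\,i_2\,c\,j$ is a $3421$), and then $c\,g\,g'\,j$ is a $1342$ whenever two large letters increase before a later small letter; this is exactly what forces the rigid interleaving (decreasing large letters woven among the increasing smalls, dominating a $\{213,231\}$-avoiding suffix) and makes that case the easy one, contributing $x^3L(x)\cdot\frac{x}{1-x}\cdot L(x)\cdot\frac{1}{1-x/(1-x)}$. When $\pi^{(2)}=\emptyset$ that witness $c$ disappears, and the rigidity with it — which is why the reduction to $J(x)$ (or a rederivation of its scan-and-recurse argument) is an indispensable ingredient your plan is missing. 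With that ingredient added, the three cases $(\pi^{(2)},\beta)=(\emptyset,\emptyset),(\emptyset,\neq\emptyset),(\neq\emptyset,\cdot)$ give $x^3A(x)L(x)$, $xL(x)\bigl(J(x)-x^2L(x)\bigr)$, and the factor above, which indeed sum to the stated closed form.
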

\begin{proof}
We consider permutations $\pi=i_1\pi^{(1)}i_2\pi^{(2)}i_3\pi^{(3)}\in S_n(T)$ with exactly three left-right maxima.
Since $\pi$ avoids $1324$ and $1342$, we see that $\pi^{(1)}>\pi^{(2)}\beta$ where $\beta$ is all the letters in $\pi^{(3)}$ smaller than $i_1$. If $\pi^{(2)}=\emptyset$ and $\beta=\emptyset$ then we have a contribution of $x^3A(x)L(x)$. If $\pi^{(2)}=\emptyset$ and $\beta\neq\emptyset$, then we have a contribution of $x^3L(x)\big(J(x)-x^2L(x)\big)$. If $\pi^{(2)}\neq\emptyset$, we see that  $\pi^{(1)}>\pi^{(2)}\beta$, $\pi^{(1)}$ avoids $\{132,231\}$, $\pi^{(2)}\beta$ is an increasing sequence. Suppose $\beta=j_1j_2\cdots j_d$, then $\pi^{(3)}=\beta^{(1)}j_1\cdots\beta^{(d)}j_d\beta^{(d+1)}$, where $\beta^{(1)}\cdots\beta^{(d)}$ is a decreasing sequence
which is greater than $\beta^{(d+1)}$, and $\beta^{(d+1)}$ avoids $\{213,231\}$. Thus, we have a contribution of $x^3L(x)\frac{x}{1-x}L(x)\frac{1}{1-x/(1-x)}$. Hence,
$$G_3(x)=x^3A(x)L(x)+x^3L(x)(J(x)-x^2L(x))+x^3L(x)\frac{x}{1-x}L(x)\frac{1}{1-x/(1-x)},$$
and, using Corollary \ref{cor151a1}, the result follows.
\end{proof}

\begin{theorem}\label{th151a}
Let $T=\{1324,1342,3421\}$. Then
$$F_T(x)=\frac{1-12x+61x^2-169x^3+275x^4-263x^5+136x^6-29x^7+x^8}{(1-3x+x^2)(1-2x)^4(1-x)^2}.$$
\end{theorem}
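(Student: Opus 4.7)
The plan is to write $F_T(x)=\sum_{m\ge 0}G_m(x)$, where $G_m(x)$ is the generating function for $T$-avoiders with exactly $m$ left-right maxima. I already have $G_0(x)=1$, $G_1(x)=xF_T(x)$, and explicit formulas for $G_2(x)$ and $G_3(x)$ from Lemmas \ref{lem151a3} and \ref{lem151a2}. The remaining task is to determine $G_m(x)$ for $m\ge 4$, sum the series, and solve for $F_T(x)$.

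First I would analyze a generic avoider $\pi=i_1\pi^{(1)}i_2\pi^{(2)}\cdots i_m\pi^{(m)}$ with $m\ge 4$ left-right maxima. Continuing the line of reasoning from the $G_3$ case, avoidance of $1324$ and $1342$ forces $\pi^{(1)}>\pi^{(2)}\cdots\pi^{(m-1)}\gamma$, where $\gamma$ denotes the subsequence of letters in $\pi^{(m)}$ smaller than $i_1$. Avoidance of $3421$ further forces the inner blocks $\pi^{(2)},\ldots,\pi^{(m-1)}$ to be essentially empty: at most one of them can be nonempty, and if $\pi^{(j)}$ is nonempty the positional/order relations with $\pi^{(1)}$ and $\pi^{(m)}$ are rigidly determined. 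I would split the enumeration into the two cases ``all inner blocks empty'' and ``exactly one block $\pi^{(j)}$ nonempty for some $2\le j\le m-1$,'' and within each case into subcases according to whether $\gamma=\emptyset$ and whether $\pi^{(m)}$ contains letters above $i_1$. Each contribution should reduce to a product of $L(x)=\frac{1-x}{1-2x}$, the $\{132,3421\}$-avoider generating function $A(x)$ introduced inside the proof of Lemma \ref{lem151a3}, and the generating function $J(x)$ from Corollary \ref{cor151a1}, possibly weighted by a factor $m-2$ reflecting the choice of position of the nonempty inner block.

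I expect the resulting expression to take the form $G_m(x)=P(x)\,x^m+(m-2)\,Q(x)\,x^m$ for explicit rational $P(x)$ and $Q(x)$, so that $\sum_{m\ge 4}G_m(x)$ evaluates via two geometric-type sums. Substituting into
\begin{equation*}
F_T(x)=1+xF_T(x)+G_2(x)+G_3(x)+\sum_{m\ge 4}G_m(x)
\end{equation*}
and solving the resulting linear equation in $F_T(x)$ should produce the stated rational function, with the denominator $(1-3x+x^2)(1-2x)^4(1-x)^2$ arising from $A(x)$, $L(x)$, $J(x)$ and the $(1-x)^{-1}$ factors accumulated by the sums over $m$.

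The main obstacle is the simultaneous bookkeeping of the three pattern restrictions during the case analysis for $G_m(x)$, especially pinning down how $3421$-avoidance couples a nonempty inner block $\pi^{(j)}$ to the tail of $\pi^{(m)}$. Once each configuration has been identified with correct bounds on the letters in each piece, the remaining algebra---collecting contributions, performing the sums over $m$, and solving for $F_T(x)$---is purely mechanical and can be verified against the stated closed form.
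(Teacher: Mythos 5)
Your global strategy is the paper's (left-right maxima, the lemmas for $G_2$ and $G_3$, an analysis of $G_m$ for $m\ge 4$, a sum over $m$, and a linear equation for $F_T$), and your structural observations are largely right: at most one of $\pi^{(2)},\dots,\pi^{(m-1)}$ can be nonempty (this needs $1324$ as well as $3421$), and $\pi^{(1)}$ dominates all smaller letters. The genuine gap is precisely in the step you defer as bookkeeping: the $m-2$ admissible positions $j$ of the nonempty inner block do \emph{not} contribute equally, so a single product weighted by $m-2$ is wrong. For $2\le j\le m-2$ there are two left-right maxima to the right of the block, and the subsequences $(i_1,i_2,b,c)$ (a $3421$ when $c<b$) and $(b,i_{j+1},i_{j+2},c)$ (a $1342$ when $c>b$) exclude every letter of $\pi^{(m)}$ below $i_1$, whence $\pi^{(m)}>i_{m-1}$; each such $j$ then contributes $x^m\cdot L(x)\cdot\frac{x}{1-x}\cdot L(x)=\frac{x^{m+1}(1-x)}{(1-2x)^2}$. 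For $j=m-1$, however, only $i_m$ lies to the right of the block and $\pi^{(m)}$ may still dip below $i_1$: the permutation $345162$ avoids $T$, has four left-right maxima, $\pi^{(3)}=(1)\neq\emptyset$, and the letter $2<i_1=3$ in $\pi^{(4)}$. That position contributes the larger amount $\frac{x^{m+1}(1-x)^2}{(1-2x)^3}$ (it is $x^{m-3}$ times the $\pi^{(2)}\neq\emptyset$ part of $G_3$). Together with the case where all inner blocks are empty, which splits as $x^mA(x)L(x)$ (no letters of $\pi^{(m)}$ below $i_1$) plus $x^{m-2}L(x)\bigl(J(x)-x^2L(x)\bigr)$ (the $J$-analysis you mention), the correct closed form is $G_m(x)=x^{m-3}G_3(x)+(m-3)\frac{x^{m+1}(1-x)}{(1-2x)^2}$ for $m\ge4$. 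With a uniform weight $m-2$ you would already miss $345162$ and obtain $258$ instead of $259$ avoiders at $n=6$, so the stated $F_T$ would not come out.

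For comparison, the paper reaches the same $G_m$ by a recursion that sidesteps this whole case analysis: for $m\ge4$ it splits only on $\pi^{(2)}$, the case $\pi^{(2)}=\emptyset$ being worth $xG_{m-1}(x)$ (delete $i_2$; insertion is the inverse), and the case $\pi^{(2)}\neq\emptyset$ having exactly the clean product structure above; iterating this recursion yields $x^{m-3}G_3+(m-3)\cdot(\text{explicit term})$. One caution if you check your computation against the printed proof: the displayed term $\frac{x^{m+1}}{1-x}\cdot\frac{1-x}{1-2x}$ there (and the resulting $\frac{x^5}{(1-x)(1-2x)}$ in the summed equation) is missing one factor $\frac{1-x}{1-2x}$, since both $\pi^{(1)}$ and $\pi^{(m)}$ contribute $L(x)$; only the corrected term $\frac{x^{m+1}(1-x)}{(1-2x)^2}$, i.e.\ $\frac{x^5}{(1-2x)^2}$ after summing over $m\ge4$, reproduces the numerator $1-12x+\cdots+x^8$ of the theorem. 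So a correct execution of your plan will not match that display verbatim, but it will match the stated $F_T(x)$.
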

\begin{proof}
Let $G_m(x)$ be the generating function for $T$-avoiders with $m$
left-right maxima. Clearly, $G_0(x)=1$ and $G_1(x)=xF_T(x)$, and $G_2(x)$ and $G_3(x)$ are given by Lemmas \ref{lem151a3} and \ref{lem151a2}, respectively. Now let $m\geq4$ and let us write an equation for $G_m(x)$.
Suppose $\pi=i_1\pi^{(1)}i_2\pi^{(2)}\cdots i_m\pi^{(m)}\in S_n(T)$ with exactly $m$ left-right maxima. If $\pi^{(2)}$ is empty then we have a contribution of $xG_{m-1}(x)$.
Otherwise, $\pi^{(j)}=\emptyset$ for all $j=3,4,\ldots,m-1$, $\pi^{(m)}>i_{m-1}$ and
$\pi^{(1)}>\pi^{(2)}$, where $\pi^{(m)}$ avoids $\{213,231\}$, $\pi^{(1)}$ avoids
$\{132,231\}$, and $\pi^{(2)}$ is an increasing sequence. Recall that
$F_{\{213,231\}}(x)=F_{\{132,231\}}(x)=\frac{1-x}{1-2x}$  \cite{SiS}. Thus, we have
$$G_m(x)=xG_{m-1}(x)+\frac{x^{m+1}}{1-x}\frac{1-x}{1-2x},$$
for all $m\geq4$. Summing over $m\geq4$, we obtain
$$F_T(x)-1-G_1(x)-G_2(x)-G_3(x)=x(F_T(x)-1-G_1(x)-G_2(x))+\frac{x^5}{(1-x)(1-2x)}\,.$$
Solving for $F_T(x)$ using the expressions above for $G_1(x),G_2(x),G_3(x)$, we complete the proof.
\end{proof}

\subsection{Case 153: $\{4231,1324,1342\}$}
Here, we use the same techniques as in Case 84.

\begin{lemma}\label{lem153a1}
Let $m\geq3$. The generating function for permutations $\pi=\pi_1\pi_2\cdots\pi_n\in S_n(T)$ with exactly $m$ left-right maxima and $\pi_1=1$ is given by $\frac{x^m(1-x)}{1-2x}$.
\end{lemma}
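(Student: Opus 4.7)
The plan is to exploit the constraint $\pi_1=1$, which collapses the three forbidden patterns to a much simpler two-pattern condition on the tail $\sigma:=\pi_2\pi_3\cdots\pi_n$. First I would show that $\pi=1\sigma$ avoids $T=\{4231,1324,1342\}$ if and only if $\sigma$, regarded as a permutation of $\{2,\ldots,n\}$, avoids $\{213,231\}$. A copy of $1324$ or $1342$ in $\pi$ either uses $\pi_1=1$ as its leading ``$1$'' --- in which case the remaining three letters form a $213$ or $231$ pattern in $\sigma$ --- or lies entirely inside $\sigma$. But the patterns $1324$, $1342$, and $4231$ each already contain $213$ or $231$ on their last three letters ($1324 \supset 213$, $1342 \supset 231$, $4231\supset 231$), so avoidance of $\{213,231\}$ in $\sigma$ is equivalent to avoidance of all of $T$ in $\pi$.

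Next, since every letter of $\sigma$ is greater than $\pi_1=1$, the left-right maxima of $\pi$ are exactly $\{1\}$ together with the left-right maxima of $\sigma$. Hence $\pi$ has $m$ left-right maxima iff $\sigma$ has $m-1$, and the desired generating function equals $x\cdot G_{m-1}(x)$, where $G_k(x)$ is the gf for $\{213,231\}$-avoiders with exactly $k$ left-right maxima. The lemma thus reduces to proving
\[
G_k(x) \;=\; \frac{x^k(1-x)}{1-2x}\qquad (k\ge 1).
\]

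To establish this I would appeal to the standard structure of $\{213,231\}$-avoiders. If $\sigma\in S_n$ avoids $\{213,231\}$ and the value $n$ sits at position $p$, then avoidance of $231$ forces every entry left of $n$ to be smaller than every entry right of $n$ (so the entries left of $n$ are exactly $\{1,\ldots,p-1\}$), and avoidance of $213$ forces those left entries to be increasing. Therefore $\sigma=1\,2\,\cdots(p-1)\,n\,R$, where $R$ is an arbitrary $\{213,231\}$-avoider of the set $\{p,\ldots,n-1\}$. The left-right maxima of $\sigma$ are precisely $1,2,\ldots,p-1,n$, so ``$k$ left-right maxima'' is equivalent to $p=k$, and $R$ ranges over all $\{213,231\}$-avoiders of $n-k$ letters, of which there are $2^{n-k-1}$ for $n>k$ and $1$ for $n=k$ by Simion--Schmidt. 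Summing,
\[
G_k(x) \;=\; x^k \;+\; \sum_{n>k}2^{n-k-1}x^n \;=\; x^k+\frac{x^{k+1}}{1-2x}\;=\;\frac{x^k(1-x)}{1-2x}.
\]

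Multiplying by $x$ for the leading $1$ gives the claimed gf $\dfrac{x^m(1-x)}{1-2x}$. There is no serious obstacle: the only step requiring any care is the pattern-containment check that $1324,1342,4231$ all contain $213$ or $231$, which makes the reduction to the two-pattern class clean. (An alternative, more in the spirit of Case 84, would be to decompose $\pi$ directly by left-right maxima; but the $\pi_1=1$ reduction above is considerably shorter.)
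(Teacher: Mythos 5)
Your proof is correct and takes essentially the same route as the paper: both arguments identify these avoiders as exactly the permutations $12\cdots(m-1)n\pi'$ with $\pi'$ avoiding $\{213,231\}$, which immediately gives the factor $x^m\,\frac{1-x}{1-2x}$. You simply spell out in detail (via the reduction $\pi=1\sigma$ and the position of the maximum in $\sigma$) the structural claim that the paper states in one line.
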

\begin{proof}
Such permutations can be written as $12\cdots(m-1)n\pi'$ where $\pi'$ avoids $213$ and $231$, and the result follows.
\end{proof}

\begin{lemma}\label{lem153a2}
Let $m\geq3$. The generating function $G_m(x)$ for permutations $\pi=\pi_1\pi_2\cdots\pi_n\in S_n(T)$ with exactly $m$ left-right maxima satisfies the recurrence
$$G_m(x)=x^mL^2(x)+\frac{x}{1-x}\big(G_{m-1}(x)-x^{m-1}L(x)\big)\, .$$
\end{lemma}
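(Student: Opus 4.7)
The plan is to carry out a structural analysis of $\pi = i_1\pi^{(1)}i_2\pi^{(2)}\cdots i_m\pi^{(m)} \in S_n(T)$ with exactly $m \ge 3$ left-right maxima, then decompose such avoiders into two families according to the location of the letter $1$, and verify that the two families contribute $x^mL(x)^2$ and $\frac{x}{1-x}\bigl(G_{m-1}(x) - x^{m-1}L(x)\bigr)$, respectively.

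First I establish the basic structural constraints. Since $\pi^{(j)} < i_j$, and using avoidance of $\{1324, 1342\}$ with $i_1$ playing the role of the pattern's ``$1$'', one shows that whenever $i_1 \ge 2$ the next left-right maximum satisfies $i_2 = i_1 + 1$: indeed, if $i_2 \ge i_1 + 2$ then the value $i_2 - 1$ must lie in some block $\pi^{(j)}$ with $j \ge 2$, and then $(i_1, i_2, i_2{-}1, i_3)$ gives a $1324$ pattern when $j = 2$ while $(i_1, i_2, i_3, i_2{-}1)$ gives a $1342$ pattern when $j \ge 3$. A parallel case analysis (using $4231$-avoidance together with the $1324$ argument in which $v_1 \in \pi^{(1)}$ and $v_4 = i_2$) pins down the possible shapes of $\pi^{(1)}$: either $\pi^{(1)}$ is a $\{132, 231\}$-avoiding permutation of the entire interval $[1, i_1{-}1]$ (in which case $1 \in \pi^{(1)}$), or $\pi^{(1)}$ is the decreasing run $(i_1{-}1)(i_1{-}2)\cdots(i_1{-}|\pi^{(1)}|)$ of some length $k-1 \ge 0$ (in which case $1 \notin \pi^{(1)}$).

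\emph{Family A} consists of avoiders with $i_1 = 1$ or ($i_1 \ge 2$ and $1 \in \pi^{(1)}$). When $i_1 = 1$, Lemma~\ref{lem153a1} gives the contribution $x^mL(x)$. When $i_1 \ge 2$, the block $\pi^{(1)}$ is a non-empty $\{132, 231\}$-avoider of $[1, i_1{-}1]$ (generating function $L(x) - 1$), while the tail $i_2\pi^{(2)}\cdots i_m\pi^{(m)}$, re-labelled by subtracting $i_1$ from each of its values, becomes an $(m-1)$-LRM $T$-avoider $\sigma$ with $\sigma_1 = i_2 - i_1 = 1$ and generating function $x^{m-1}L(x)$ by Lemma~\ref{lem153a1}; including the factor $x$ for $i_1$ yields $x^mL(x)(L(x)-1)$. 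Summing the two sub-cases gives the Family~A contribution $x^mL(x) + x^mL(x)(L(x)-1) = x^mL(x)^2$.

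\emph{Family B} consists of avoiders with $i_1 \ge 2$ and $1 \notin \pi^{(1)}$, so $\pi^{(1)}$ is the decreasing consecutive run just below $i_1$ of length $k-1$ for some $k \ge 1$. Removing $i_1$ together with $\pi^{(1)}$ and re-labelling (subtracting $k$ from values $\ge i_1 + 1$) produces an $(m-1)$-LRM $T$-avoider $\sigma$ of length $n - k$ with $\sigma_1 = i_1 + 1 - k > 1$; conversely, given any such $\sigma$ and any $k \ge 1$, setting $i_1 = \sigma_1 + k - 1$, shifting $\sigma$'s values $\ge \sigma_1$ up by $k$, and prepending $i_1$ followed by the decreasing sequence $i_1{-}1, i_1{-}2, \ldots, i_1{-}k{+}1$ recovers a Family~B avoider. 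Summing $x^k$ over $k \ge 1$ gives $\frac{x}{1-x}$, and summing over $\sigma$ (using Lemma~\ref{lem153a1} to identify $x^{m-1}L(x)$ as the generating function of $(m-1)$-LRM avoiders with $\sigma_1 = 1$) yields the Family~B contribution $\frac{x}{1-x}\bigl(G_{m-1}(x) - x^{m-1}L(x)\bigr)$. Adding the two families produces the stated recurrence. The main obstacle is the structural analysis of $\pi^{(1)}$, which requires a careful case analysis against each of the three forbidden patterns, together with the verification that the Family~B bijection preserves $T$-avoidance in both directions by a pattern-by-pattern check.
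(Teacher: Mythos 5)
Your proof is correct and follows essentially the same decomposition as the paper: the case where all letters below $i_1$ lie in $\pi^{(1)}$ (your Family A, which the paper counts directly as the product $x^mL(x)^2$ rather than as $x^mL(x)+x^mL(x)(L(x)-1)$) versus the case where $\pi^{(1)}$ is a decreasing run of consecutive values just below $i_1$, so that deleting $i_1\pi^{(1)}$ leaves an $(m-1)$-left-right-maxima $T$-avoider not beginning with its minimum, giving $\frac{x}{1-x}\big(G_{m-1}(x)-x^{m-1}L(x)\big)$. The only small caveat is that you invoke Lemma~\ref{lem153a1} at parameter $m-1$, which for $m=3$ is the two-maxima case not literally covered by its statement ($m\ge 3$); the formula $x^{2}L(x)$ for avoiders of the form $1n\pi'$ is immediate, and the paper's own argument relies on the same extension.
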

\begin{proof}
To write an equation for $G_m(x)$, let $\pi=i_1\pi^{(1)}\cdots i_m\pi^{(m)}\in S_n(T)$ with $m$ left-right maxima. Since $\pi$ avoids $1324$ and $1342$ we see that $\pi^{(2)}\cdots\pi^{(m)}$ has no letter between $i_1$ and $i_{m-1}$. If $\pi^{(2)}\cdots\pi^{(m)}$ has no letter
smaller than $\pi^{(1)}$, the contribution is $x^mF_{\{132,231\}}(x)F_{\{213,231\}}(x)=x^mL^2(x)$. Otherwise, $\pi^{(1)}$ is a decreasing sequence, and then we have a contribution of $$\frac{x}{1-x}\big(G_{m-1}(x)-x^{m-1}L(x)\big)$$ (see Lemma \ref{lem153a1}). Now add the two contributions.
\end{proof}

The next two lemmas give the generating function for permutations in $S_n(T)$ with exactly $2$ left-right maxima. We leave the proof to the diligent reader (very similar to Case 84).

\begin{lemma}\label{lem153a3}
Let $\pi=i\alpha n\beta\in S_n(T)$ with exactly $2$ left-right maxima and $i\geq2$.
\begin{itemize}
\item If $i-1$ is the left most letter of $\alpha$, then the generating function for such permutations $\pi$ is given by $xG_2(x)$, where $G_2(x)$ is the generating function for permutations in $S_n(T)$ with exactly $2$ left-right maxima.
\item If $i-1$ is in $\alpha$ but not the first letter in $\alpha$, then the generating function for such permutations $\pi$ is given by $x^3L(x)\big((L(x)-1\big)$.
\item If $i-1$ is a letter in $\beta$ such that there are exactly $d$ letters between $n$ and $i-1$ in $\pi$ that are greater than $i$. We denote the generating function for the number of such permutations by $A_d(x)$. Then
    $A_0(x)=\frac{x^3(1-3x+5x^2-4x^2)}{(1-x)(1-2x)^3}$, $A_1(x)-xA_0(x)=\frac{x^5(2-5x+4x^2)}{(1-2x)^5}$,
    and for all $d\geq2$,
    $$A_d(x)-xA_{d-1}(x)=\frac{x^{d+4}(1-x)^d}{(1-2x)^{d+3}}.$$
\end{itemize}
\end{lemma}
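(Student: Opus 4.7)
The plan is to handle the three bullets in turn, paralleling the approach of Case 84.

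For the first bullet, removing the initial letter $i-1$ of $\alpha$ and relabeling the remaining letters establishes a size-preserving bijection between the permutations of the stated form and $T$-avoiders of length $n-1$ with exactly two left-right maxima. Hence the contribution is $xG_2(x)$.

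For the second bullet, I would first argue that $i-1$ must be the last letter of $\alpha$: if $\pi=i\alpha'(i-1)\alpha''n\beta$ with both $\alpha',\alpha''$ nonempty, then picking $a\in\alpha'$ and $c\in\alpha''$ gives a $1324$ (from $a,i-1,c,n$) when $a<c$, and a $4231$ (from $i,a,i-1,c$) when $a>c$. So write $\pi=i\alpha'(i-1)n\beta$ with $\alpha'\ne\emptyset$. The key structural claim is $\beta\cap[1,i-2]=\emptyset$, i.e.\ $\alpha'$ is a permutation of $[1,i-2]$ and $\beta$ a permutation of $[i+1,n-1]$: any $b\in\beta$ with $b<i-1$ yields, with a suitable $a\in\alpha'$, either a $1342$ (via $a,i-1,n,b$ when $b>\min\alpha'$) or a $4231$ (via $i,a,i-1,b$ when $b\le\min\alpha'$). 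Standard pattern-containment arguments then show $\alpha'$ must avoid $\{132,231\}$ (a $132$ in $\alpha'$ followed by $n$ gives a $1324$; a $231$ in $\alpha'$ preceded by $i$ gives a $4231$) and $\beta$ must avoid $\{213,231\}$ (a $213$ in $\beta$ preceded by any $a\in\alpha'$ gives a $1324$; a $231$ in $\beta$ preceded by any $a\in\alpha'$ gives a $1342$); conversely every configuration of this shape avoids $T$. Since $F_{\{132,231\}}(x)=F_{\{213,231\}}(x)=L(x)$, the contribution is $x^3(L(x)-1)L(x)$, with $x^3$ tracking $i,i-1,n$, $L(x)-1$ tracking the nonempty $\alpha'$, and $L(x)$ tracking $\beta$.

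For the third bullet I would mimic the case analysis of Lemma \ref{lem84a2}. Write $\pi=i\alpha n\gamma(i-1)\delta$, and note that the $d$ letters of $\gamma$ exceeding $i$ must form a decreasing subsequence (an ascent among them would produce a $4231$ together with $i-1$ and a suitable later smaller letter). For $d=0$, the forbidden patterns constrain $\alpha$, $\gamma$, and $\delta$ to lie in known avoidance classes (counted by $L(x)$ and rational variants thereof) with a bounded number of interleaving sub-cases; summing the contributions yields the stated $A_0(x)$. For $d\ge1$, I would single out the leftmost of the $d$ large letters of $\gamma$: a deletion-and-relabel argument produces the contribution $xA_{d-1}(x)$, and the residual contribution, coming from configurations in which that leftmost large letter occupies a distinguished structural position, can be enumerated directly, giving $\frac{x^{d+4}(1-x)^d}{(1-2x)^{d+3}}$ uniformly for $d\ge2$. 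The case $d=1$ is exceptional because a single large letter in $\gamma$ interacts with $\alpha$, $i$, and $i-1$ differently than in the $d\ge2$ regime, producing the separate residual $\frac{x^5(2-5x+4x^2)}{(1-2x)^5}$.

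The main obstacle is the third bullet. Although the recurrence $A_d(x)-xA_{d-1}(x)$ has a uniform form for $d\ge2$, the base cases $A_0$ and $A_1$ each require a careful enumeration of how the small letters of $\gamma$ and $\delta$ interleave with $\alpha$, and how the forbidden patterns $1324$, $1342$, $4231$ constrain each sub-block. Working systematically through these sub-cases, in the style of the Case 84 arguments, should produce the claimed generating functions after routine algebraic simplification.
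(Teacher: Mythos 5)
Your first two bullets are correct and follow exactly the route the paper intends (the paper gives no proof here beyond ``very similar to Case 84'', and your arguments are the analogues of Lemma \ref{lem84a1}): the deletion bijection when $i-1$ immediately follows $i$ gives $xG_2(x)$, and the forcing of $\pi=i\alpha'(i-1)n\beta$ with $\alpha'\neq\emptyset$ a $\{132,231\}$-avoider on $\{1,\dots,i-2\}$ and $\beta$ a $\{213,231\}$-avoider on $\{i+1,\dots,n-1\}$ gives $x^3L(x)\big(L(x)-1\big)$; your pattern checks for these two cases are sound.

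The genuine gap is the third bullet, which is where essentially all the content of the lemma lies, and you do not prove it. You never derive $A_0(x)$, the exceptional $d=1$ residual $\frac{x^5(2-5x+4x^2)}{(1-2x)^5}$, or the uniform residual $\frac{x^{d+4}(1-x)^d}{(1-2x)^{d+3}}$ for $d\ge2$: you assert that the small letters lie in ``known avoidance classes'', that deleting the leftmost large letter of $\gamma$ ``produces the contribution $xA_{d-1}(x)$'', and that the leftover configurations ``can be enumerated directly'', ending with the concession that working through the sub-cases \emph{should} produce the claimed formulas. That is a plan, not a proof. Since the paper itself defers these computations to the reader (as does the analogous Lemma \ref{lem84a2}), nothing can be cited; the specific rational functions are exactly what must be verified. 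In particular, the deletion step needs real justification: you must identify precisely which permutations counted by $A_d$ map bijectively, after deletion and relabelling, onto those counted by $A_{d-1}$, and then enumerate the complement --- it is the structure of this complement that distinguishes $d=1$ from $d\ge2$, and you offer no analysis of it, only the remark that $d=1$ ``interacts differently''. (A small additional slip: your stated reason that the $d$ large letters of $\gamma$ must decrease is garbled; an ascent $u<v$ among them yields the occurrence $n\,u\,v\,(i-1)$ of $4231$, with $n$ playing the $4$ and $i-1$ the $1$.) As submitted, the generating functions in the third bullet remain unestablished.
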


\begin{lemma}\label{lem153a4}
Let $G_2(x)$ be the generating function for permutations in $S_n(T)$ with exactly $2$ left-right maxima. Then
$$G_2(x)=\frac{x^2(1 -7x+22x^2-36x^3 +33x^4 -20x^5 + 7x^6 -x^7)}{(1-3x+x^2)(1-x)^3(1-2x)^2}.$$
\end{lemma}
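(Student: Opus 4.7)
The plan is to decompose $G_2(x)$ by the position of the letter $i-1$ in a generic $T$-avoider $\pi=i\alpha n\beta$ with two left-right maxima, just as in Case 84, using Lemma \ref{lem153a3} directly and handling the boundary case $i=1$ separately.

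First I would treat $i=1$. Here $\pi=1n\beta$, and one checks that in any $1324$, $1342$, or $4231$ pattern involving the prefix $1n$ the leading $1$ and the second letter $n$ force the remaining three indices to lie inside $\beta$ with the relative pattern $213$, $231$, or $231$ respectively (the extreme values $1$ and $n$ can only play the extreme roles in these patterns). Thus $\beta$ must avoid $\{213,231\}$, contributing $x^2 L(x)$. For $i\ge 2$, Lemma \ref{lem153a3} gives the three contributions $xG_2(x)$, $x^3 L(x)\bigl(L(x)-1\bigr)$, and $A(x):=\sum_{d\ge 0}A_d(x)$. Assembling these yields
\[
(1-x)G_2(x)=x^2 L(x)+x^3 L(x)\bigl(L(x)-1\bigr)+A(x),
\]
so the whole question reduces to evaluating $A(x)$ in closed form.

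The main computational step is summing $A(x)$ using the recurrence $A_d(x)-xA_{d-1}(x)=\frac{x^{d+4}(1-x)^d}{(1-2x)^{d+3}}$ for $d\ge 2$. Telescoping after multiplication by $1-x$ gives
\[
(1-x)A(x)=A_0(x)+\bigl(A_1(x)-xA_0(x)\bigr)+\sum_{d\ge 2}\frac{x^{d+4}(1-x)^d}{(1-2x)^{d+3}}.
\]
The tail sum is geometric with ratio $\frac{x(1-x)}{1-2x}$, and the identity $1-\frac{x(1-x)}{1-2x}=\frac{1-3x+x^2}{1-2x}$ is precisely what introduces the factor $1-3x+x^2$ in the denominator of the final answer; explicitly,
\[
\sum_{d\ge 2}\frac{x^{d+4}(1-x)^d}{(1-2x)^{d+3}}=\frac{x^6(1-x)^2}{(1-2x)^4(1-3x+x^2)}.
\]
Combining with the explicit expressions of Lemma \ref{lem153a3} for $A_0(x)$ and $A_1(x)-xA_0(x)$ produces a closed rational form for $A(x)$.

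The final step is to substitute this into $(1-x)G_2(x)=x^2L(x)+x^3L(x)(L(x)-1)+A(x)$ and place every term over the common denominator $(1-x)^3(1-2x)^2(1-3x+x^2)$. The main obstacle is not conceptual but bookkeeping: a moderately lengthy rational-function simplification has to reproduce the numerator $1-7x+22x^2-36x^3+33x^4-20x^5+7x^6-x^7$, and a computer-algebra check is the safest way to confirm the coefficients. All other ingredients are purely routine, mirroring the argument of Case 84.
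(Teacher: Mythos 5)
Your argument is essentially the paper's own proof: the paper likewise sums the $A_d(x)$ of Lemma \ref{lem153a3} (telescoping the recurrence and summing the geometric tail with ratio $\frac{x(1-x)}{1-2x}$, which is where $1-3x+x^2$ enters) to get $M(x)=\sum_{d\ge0}A_d(x)=\frac{x^3(1-4x+9x^2-12x^3+6x^4-x^5)}{(1-3x+x^2)(1-x)^2(1-2x)^2}$, and then assembles $G_2(x)=\frac{M(x)+x^3L(x)\big(L(x)-1\big)+x^2L(x)}{1-x}$, exactly as you do. One typographical caveat: to reproduce this $M(x)$, and hence the stated $G_2(x)$, the telescoped data must be read as $A_0(x)=\frac{x^3(1-3x+5x^2-4x^3)}{(1-x)(1-2x)^3}$ and $A_1(x)-xA_0(x)=\frac{x^5(2-5x+4x^2)}{(1-2x)^4}$ (as in Case 84), rather than the $-4x^2$ and $(1-2x)^5$ printed in Lemma \ref{lem153a3}, so your computer-algebra check should use these corrected inputs.
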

\begin{proof}
With the definitions introduced in Lemma \ref{lem153a3}, we have
$$M(x):=\sum_{d\geq0}A_d(x)=\frac{x^3(1 -4x+9x^2-12x^3+6x^4-x^5)}{(1-3x+x^2)(1-x)^2(1-2x)^2}.$$
Thus, by Lemma \ref{lem153a3},
$$G_2(x)=\frac{M(x)+x^3L(x)(L(x)-1)+x^2L(x)}{1-x},$$
where $x^2L(x)$ counts the permutations $1n\pi'\in S_n(T)$,
and the result follows.
\end{proof}

\begin{theorem}\label{th153a}
Let $T=\{4231,1324,1342\}$. Then
$$F_T(x)=\frac{1-10x+41x^2-87x^3+101x^4-61x^5+15x^6-x^7}{(1-x)^2(1-2x)^3(1-3x+x^2)}.$$
\end{theorem}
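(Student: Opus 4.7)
The plan is to assemble $F_T(x)=\sum_{m\geq 0}G_m(x)$ from the ingredients already prepared: the trivial values $G_0(x)=1$ and $G_1(x)=xF_T(x)$, the closed form for $G_2(x)$ from Lemma \ref{lem153a4}, and the one-term recurrence for $G_m(x)$ with $m\geq 3$ from Lemma \ref{lem153a2}.

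First I would set $S(x):=\sum_{m\geq 3}G_m(x)$ and sum the recurrence
\[
G_m(x)=x^mL^2(x)+\frac{x}{1-x}\bigl(G_{m-1}(x)-x^{m-1}L(x)\bigr)
\]
over $m\geq 3$. The right-hand side sums to $\frac{x^3L^2(x)}{1-x}+\frac{x}{1-x}\bigl(G_2(x)+S(x)-\frac{x^2}{1-x}L(x)\bigr)$, since $\sum_{m\geq 3}G_{m-1}(x)=G_2(x)+S(x)$ and $\sum_{m\geq 3}x^{m-1}=\frac{x^2}{1-x}$. This gives a single linear equation in the unknown $S(x)$, namely
\[
S(x)=\frac{x^3L^2(x)}{1-x}+\frac{x}{1-x}\!\left(G_2(x)+S(x)-\frac{x^2L(x)}{1-x}\right)\!,
\]
which I would solve for $S(x)$, obtaining an explicit rational expression in terms of $L(x)=\frac{1-x}{1-2x}$ and the known $G_2(x)$.

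Next I would use the decomposition $F_T(x)=1+xF_T(x)+G_2(x)+S(x)$ (splitting $T$-avoiders by number of left-right maxima, with $m=1$ contributing $xF_T(x)$) and solve for $F_T(x)$:
\[
(1-x)F_T(x)=1+G_2(x)+S(x).
\]
After substituting the expression for $S(x)$ and the explicit formula for $G_2(x)$ from Lemma \ref{lem153a4}, the right-hand side becomes a rational function with denominator $(1-x)^3(1-2x)^3(1-3x+x^2)$. Dividing by $(1-x)$ then yields $F_T(x)$ with the denominator $(1-x)^2(1-2x)^3(1-3x+x^2)$ claimed in the theorem.

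The only real work is the final algebraic simplification, and the main (minor) obstacle will be verifying that the numerator collapses to $1-10x+41x^2-87x^3+101x^4-61x^5+15x^6-x^7$; I would carry out this check via partial-fraction bookkeeping or a routine computer algebra verification, since every cancellation is forced by the denominator structure of $G_2(x)$ and $S(x)$.
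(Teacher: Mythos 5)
Your overall plan is the same as the paper's: take $G_2$ from Lemma \ref{lem153a4}, sum the recurrence of Lemma \ref{lem153a2} over $m\geq 3$, and solve the resulting linear relation; that part of your write-up (the summation $\sum_{m\ge 3}G_{m-1}=G_2+S$ and $\sum_{m\ge 3}x^{m-1}L=\tfrac{x^2L}{1-x}$) is correct. The genuine gap is your claim $G_1(x)=xF_T(x)$. That identity holds only when a leading maximal letter can never take part in a forbidden pattern, and here it fails because $4231$ begins with its largest entry: if $\pi=n\pi'$, then $n$ together with any occurrence of $231$ in $\pi'$ forms a $4231$. So $\pi=n\pi'$ avoids $T$ if and only if $\pi'$ avoids $\{231,1324\}$ (avoiding $231$ already forces avoidance of $1342$), and the correct value, which the paper takes from \cite{MV}, is
\begin{equation*}
G_1(x)=xF_{\{231,1324\}}(x)=\frac{x\bigl(1-4x+5x^2-x^3\bigr)}{(1-x)(1-2x)^2},
\end{equation*}
not $xF_T(x)$. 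The discrepancy is already visible at length $4$: your identity would make the number of $T$-avoiders of length $4$ starting with $4$ equal to $|S_3(T)|=6$, but $4231$ itself must be excluded, so the count is $5$.

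Because your final equation $(1-x)F_T(x)=1+G_2(x)+S(x)$ is built on the wrong $G_1$, it does not produce the stated generating function (its power series disagrees with the theorem's from low order on), so the "routine algebraic simplification" you defer to would not in fact collapse to the claimed numerator. The fix is small: keep your $S(x)$ exactly as computed (it depends only on $G_2$ and $L$), replace the $m=1$ contribution by the explicit $G_1(x)$ above, and use $F_T(x)=1+G_1(x)+G_2(x)+S(x)$; with that correction the computation does yield the stated formula, and this is precisely the paper's proof.
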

\begin{proof}
Note that the number of permutations in $S_n(T)$ with leftmost letter is $n$ is given by $G_1(x)=xF_{\{231,1324\}}(x)=xF_{\{132,4231\}}(x)=\frac{x(1+x(L(x)-1)L(x))}{1-x}$, where $L(x)=\frac{1-x}{1-2x}$, see \cite{MV}. Thus, by Lemma \ref{lem153a2} and Lemma \ref{lem153a4}, we obtain
$$F_T(x)-1-G_1(x)=\frac{x^3}{1-x}L^2(x)+\frac{x}{1-x}(F_T(x)-G_1(x)-1)-\frac{x^3}{(1-x)^2}L(x),$$
which, by solving for $F_T(x)$, completes the proof.
\end{proof}

\subsection{Case 156: $T=\{1324,2341,2431\}$}
\begin{theorem}\label{th156a}
Let $T=\{1324,2341,2431\}$. Then
$$F_T(x)=\frac{1-8x+23x^2-25x^3+3x^4+7x^5}{(1-2x)^2(1-3x+x^2)(1-2x-x^2)}.$$
\end{theorem}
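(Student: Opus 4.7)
The plan is to follow the left-right maxima decomposition used throughout this section. For $m\ge 0$, let $G_m(x)$ denote the generating function for $T$-avoiders with exactly $m$ left-right maxima, and write such an avoider as $\pi=i_1\pi^{(1)}i_2\pi^{(2)}\cdots i_m\pi^{(m)}$. Clearly $G_0(x)=1$ and $G_1(x)=xF_T(x)$, since an $n$ in first position cannot play any role of the maximum in $1324$, $2341$, or $2431$ (the maxima of these patterns occupy positions $4$, $4$, and $2$, none of which can be the first position when $n$ is there). My first preliminary task would be to identify the auxiliary generating functions needed later, principally $F_{\{132,2341,2431\}}(x)$ and $F_{\{213,2341,2431\}}(x)$, obtained by the standard recursive decomposition via the position of $n$; I expect the Pell-type factor $(1-2x-x^2)$ in the denominator of $F_T$ to originate from one of these.

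Next I would compute $G_2(x)$ for $\pi=i\pi'n\pi''\in S_n(T)$. The constraints are tight and interact: $1324$-avoidance forbids the simultaneous existence of a letter of $\pi'$ above $i$ and a letter of $\pi''$ strictly between $i$ and that letter; $2341$-avoidance (with $i$ and $n$ playing the roles ``$2$'' and ``$4$'') forces either $\pi'<i$ or $\pi''>i$; and $2431$-avoidance (with $i$, $n$ playing ``$2$'' and ``$4$'') forces any small letters of $\pi''$ (those $<i$) to appear before any large letters of $\pi''$ (those $>i$). I would split $G_2(x)$ according as $\pi''>i$ or $\pi''$ has a letter below $i$, and refine further by the position of the witness $i-1$ (as in Case~84) and the number of letters of $\pi''$ below $i$ (as in Case~80). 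Each sub-case should reduce to one of the auxiliary functions computed in the preliminary step, and summing the contributions should give a closed-form rational expression for $G_2(x)$.

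For $G_3(x)$ the same method applies but is considerably simpler, since the structural constraints cut down the admissible shapes of $\pi^{(1)},\pi^{(2)},\pi^{(3)}$. For $m\ge 4$, I expect $1324$- and $2341$-avoidance together to force $\pi^{(s)}=\emptyset$ for $3\le s\le m-1$ and $\pi^{(m)}=\alpha\beta$ with $\alpha>i_{m-1}$ and $i_{m-2}<\beta<i_{m-1}$, yielding a recurrence of the form $G_m(x)=xG_{m-1}(x)+P_m(x)$ with $P_m(x)$ an explicit rational expression, which then sums to a linear equation for $F_T(x)$; solving it and simplifying should match the stated formula.

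The main obstacle is the $G_2(x)$ case analysis: all three forbidden patterns interact nontrivially through $\pi''$, and one must choose the refinement so that the Pell factor $(1-2x-x^2)$ appears naturally from the correct sub-configuration (most plausibly the internal structure of $\pi''$ once its small/large split has been fixed and the position of $i-1$ specified) rather than emerging only after opaque algebraic simplification at the end. Once $G_2(x)$ is in hand, both $G_3(x)$ and the recurrence for $G_m(x)$ with $m\ge 4$ should be routine, and the final algebra is a direct verification.
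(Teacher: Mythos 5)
Your proposal follows essentially the same route as the paper's proof: decomposition by left-right maxima with $G_0=1$, $G_1=xF_T(x)$, a $G_2$ analysis that splits off the case $i=n-1$ and tracks the position of $i-1$ using the auxiliary series $F_{\{132,2341\}}(x)=\frac{1-2x}{1-3x+x^2}$ and $F_{\{213,2341,2431\}}(x)=\frac{1-x-x^2}{1-2x-x^2}$ (which is indeed where the factor $1-2x-x^2$ enters), an explicit $G_3$, and a recurrence for $m\ge4$ that is summed and solved for $F_T(x)$. The only deviations are cosmetic: in the paper the recurrence for $m\ge4$ is simply $G_m(x)=xG_{m-1}(x)$ (your anticipated extra term $P_m$ vanishes), and since $2431$ forces the letters of $\pi^{(m)}$ below $i_{m-1}$ to precede those above it, the last block is $\beta\alpha$ rather than $\alpha\beta$, which does not affect the counting.
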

\begin{proof}
Let $G_m(x)$ be the generating function for $T$-avoiders with $m$
left-right maxima. Clearly, $G_0(x)=1$ and $G_1(x)=xF_T(x)$.

Let us write an equation for $G_2(x)$. Let $\pi=i\pi'n\pi''\in S_n(T)$ with exactly $2$ left-right maxima. The contributions for the cases $n-2\geq i=1$ and $i=n-1\geq 1$ are $x^2(M(x)-1)$ and $x\big(F_T(x)-1\big)$ respectively, where $M(x)=\frac{1-x-x^2}{1-2x-x^2}$ is the generating function for $\{213,2341,2431\}$-avoiders (for example, see \cite{MV}). Denote the contribution for the case $2\leq i\leq n-2$ by $H(x)$. Then $G_2(x)=x^2\big(M(x)-1\big)+x\big(F_T(x)-1\big)+H(x)$. To find a formula for $H(x)$ we consider the position of $i-1$ in $\pi$, which leads to either $\pi=i\alpha(i-1)\beta'n\beta''\beta'''$ with $\beta'''>i\alpha>\beta'\beta''$ or $\pi=i\alpha'n\alpha''(i-1)\beta'\beta''$ with $\beta''>i>\alpha'\alpha''>\beta'$. By examining the four possibilities, $\alpha,\beta'\beta''$  either empty or not, in the first case and examining the four possibilities for $\alpha'\alpha'',\beta'$ in the second case, we obtain that
\begin{align*}
H(x)&=x^3\big(M(x)-1\big)+x^3\big(K(x)-1\big)\big(M(x)-1\big)+\frac{x^4}{1-x}\big(K(x)-1\big)\big(M(x)-1\big)\\
&+x^3\big(M(x)-1\big)+x^3\big(K(x)-1\big)\big(M(x)-1\big)+\frac{x^4}{1-x}\big(K(x)-1\big)\big(M(x)-1\big)+2xH(x),
\end{align*}
where $K(x)=\frac{1-2x}{1-3x+x^2}$ is the generating function for $\{132,2341\}$-avoiders. Thus,
$$H(x)=\frac{2x^4(1-x)^2}{(1-2x)(1-3x+x^2)(1-2x-x^2)}.$$

Next, let us write an equation for $G_3(x)$. Let $\pi=i_1\pi'i_2\pi''i_3\pi'''\in S_n(T)$
with exactly $3$ left-right maxima. Since $\pi$ avoids $T$, we can
write $\pi=i_1\pi'i_2\pi''i_3\beta\alpha$ where $\pi''<\pi'<i_1<\beta<i_2<\alpha<i_3$.
By considering the four cases where $\pi'',\beta$ are empty or not, we obtain
$$G_3(x)=x^3\left(K(x)M(x)+K(x)\big(M(x)-1\big)+\frac{\big(K(x)-1\big)M(x)}{1-x}
+\frac{\big(K(x)-1\big)\big(M(x)-1\big)}{1-x}\right).$$

Finally, let us write an equation for $G_m(x)$ with $m\geq4$. Let $\pi=i_1\pi^{(1)}\cdots i_m\pi^{(m)}\in S_n(T)$ with exactly $m$ left-right maxima. Since $\pi$ avoids $T$, we see that $i_1>\pi^{(1)}>\pi^{(2)}$, $\pi^{(j)}=\emptyset$ for $j=3,4,\ldots,m-1$, and $\pi^{(m)}$ has the form $\beta\alpha$ where $i_m>\alpha>i_{m-1}>\beta$. Thus, $\pi$ avoids $T$ if and only if the permutation that is obtained from $\pi$ by removing $i_{m-2}$ avoids $T$. Hence $G_m(x)=xG_{m-1}(x)=\dots =x^{m-3}G_3(x)$, for $m\geq3$.

By summing over $m\geq4$ and using the expressions for $G_0(x),G_1(x),G_2(x)$ and $G_3(x)$, we obtain
\begin{align*}
&F_T(x)=1+xF_T(x)+x\big(F_T(x)-1\big)+x^2\big(M(x)-1\big)+H(x)\\  &\,+\frac{x^3}{1-x}\left(K(x)M(x)+K(x)\big(M(x)-1\big)+\frac{\big(K(x)-1\big)M(x)}{1-x}+\frac{\big(K(x)-1\big)\big(M(x)-1\big)}{1-x}\right),
\end{align*}
which, by solving for $F_T(x)$, completes the proof.
\end{proof}

\subsection{Case 158: $\{1324,1342,3412\}$}
Here, it is convenient to consider $J(x)$, the generating function for $T$-avoiders whose maximal letter occurs in second position, and its refinement to $J_d(x)$, the generating function for $T$-avoiders whose maximal letter occurs in second position and whose first letter is $d$, that is,
permutations of the form $dn\pi''\in S_n(T)$.
\begin{lemma}\label{lem158a1} With the preceding notation,
\[
J_1(x)=\frac{x^2(1-x)}{1-2x},\quad J_2(x)=\frac{x^3(1-2x+2x^2)}{(1-2x)^2}, \quad J(x)=\frac{x^2(1-2x)}{(1-x)(1-3x)}\, .
\]
\end{lemma}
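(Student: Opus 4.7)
The plan is to characterize the structure of a $T$-avoider $\pi = dn\pi''$ and then count. Decompose $\pi''$ into two subsequences determined by value: $\sigma$, the subsequence of letters in $S := \{1, \ldots, d-1\}$, and $\tau$, the subsequence of letters in $L := \{d+1, \ldots, n-1\}$. The first step is to establish the following structural claim: $\pi = dn\pi''$ avoids $T$ if and only if $\sigma$ is decreasing and $\tau$ avoids $\{213, 231\}$.

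I would prove this by a role-based case analysis. Because $n$ sits in position $2$, it can participate in no occurrence of $1324$ or $1342$ (every possible role for $n$ would require either a value larger than $n$ or at least two positions to the left of $n$), and its only possible role in $3412$ is the ``$4$'' (at pattern position $2$), which forces $d$ into the ``$3$'' role and forces two $S$-values in ascending order to appear in $\pi''$---an ascent in $\sigma$. The letter $d$ at position $1$ can play only the ``$1$'' of $1324$ or $1342$ (yielding a $213$ or $231$ subpattern inside $\tau$) or the ``$3$'' of $3412$ (again yielding an ascending $S$-pair). When neither $d$ nor $n$ participates, the dominance $S<L$ forces the ``large'' roles of any mixed occurrence into $L$ and the ``small'' roles into $S$, and each case collapses to a $213$/$231$ in $\tau$ or an ascent in $\sigma$. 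The converse is read off from the same case inspection.

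Granted the claim, $\sigma$ is uniquely the decreasing sequence on $S$, $\tau$ ranges over $\{213,231\}$-avoiders (of which there are $a_k = 2^{k-1}$ for $k \geq 1$ and $a_0 = 1$), and the number of shuffles of $\sigma$ into $\tau$ inside $\pi''$ is $\binom{k+d-1}{d-1}$. Hence
\[
J_d(x) \;=\; x^{d+1} \sum_{k \geq 0} \binom{k+d-1}{d-1} a_k\, x^k
       \;=\; \frac{x^{d+1}\bigl((1-2x)^d + 1\bigr)}{2(1-2x)^d},
\]
using the identity $\sum_{k \geq 0} \binom{k+d-1}{d-1} y^k = (1-y)^{-d}$ with $y = 2x$. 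Specializing to $d=1$ and $d=2$ recovers the stated $J_1(x)$ and $J_2(x)$. Summing over $d \geq 1$ splits $J(x)$ into two geometric series with ratios $x$ and $x/(1-2x)$:
\[
J(x) \;=\; \frac{x^2}{2(1-x)} + \frac{x^2}{2(1-3x)} \;=\; \frac{x^2(1-2x)}{(1-x)(1-3x)}.
\]

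The main obstacle is the patience demanded by the structural claim: each of $1324$, $1342$, $3412$ must be checked against every possible role assignment to $d$, $n$, an $S$-value, and an $L$-value. The cleanest bookkeeping uses $S<L$ together with the positions of $d$ and $n$ to eliminate most role-assignments outright, leaving only the three ``effective'' scenarios above.
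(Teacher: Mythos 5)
Your proof is correct, and it takes a genuinely different route from the paper's. The paper only extracts the weaker consequence of avoiding $3412$ with prefix $dn$ — that the letters $1,\dots,d-1$ must occur in decreasing order — and then performs a positional case analysis on the letter $n-1$ (before or after that decreasing block) to obtain the recurrence
$J_d(x)=x^{d+1}+\sum_{j=1}^dx^jJ_{d+1-j}+\sum_{j=1}^dx^j\bigl(J_{d+1-j}-x^{d+2-j}\bigr)$,
which it solves for $d=1,2$ and sums over $d$ to get $J(x)$. You instead prove the full structural characterization: $dn\pi''$ avoids $T$ exactly when the small letters form the decreasing word and the large letters form a $\{213,231\}$-avoider, with the interleaving completely unconstrained. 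That last point is the crux, and your case analysis handles it correctly: any mixed occurrence not using $d$ or $n$ either places two $S$-letters in ascending order or collapses (via $S<L$) to a $213$ or $231$ inside $\tau$, while $d$ can only serve as the $1$ of $1324/1342$ or the $3$ of $3412$, and $n$ only as the $4$ of $3412$; conversely an ascent in $\sigma$ gives $3412$ with $dn$, and a $213$ (resp.\ $231$) in $\tau$ gives $1324$ (resp.\ $1342$) with $d$. What your approach buys is an explicit closed form
$J_d(x)=\frac{x^{d+1}\bigl((1-2x)^d+1\bigr)}{2(1-2x)^d}$
for every $d$ (one can check it satisfies the paper's recurrence, and it specializes and sums to the three stated expressions), with no recurrence to solve; the cost is the patience of the role-by-role verification. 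The paper's recurrence approach needs less structural insight up front but yields $J_1,J_2$ only after solving, and $J(x)$ only via a further functional identity obtained by summing the recurrence.
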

\begin{proof}
Suppose $\pi \in S_n(T)$ has the form $dn\pi''$. Since $\pi$ avoids $3412$, we see that $\pi''$ contains the subsequence $(d-1)(d-2)\cdots1$. Therefore, if $d=n-1$, the contribution is $x^{d+1}$ and otherwise, by considering whether $n-1$ occurs before or after the string $(d-1)(d-2)\cdots1$, we find
$$J_d(x)=x^{d+1}+\sum_{j=1}^dx^jJ_{d+1-j}+\sum_{j=1}^dx^j(J_{d+1-j}-x^{d+2-j}),$$
for all $d\geq1$.
Solving this recurrence successively for $d=1$ and $d=2$ gives the first two stated expressions.
Also, summing over all $d\geq1$ yields
$$J(x)=\frac{x^2}{1-x}+\frac{2x}{1-x}J(x)-\frac{x^3}{(1-x)^2},$$
from which the expression for $J(x)$ follows.
\end{proof}

\begin{lemma}\label{lem158a2}
Let $G_2(x)$ be the generating function for the number of $T$-avoiders
with exactly $2$ left-right maxima. Then
$G_2(x)=\frac{x^2(1-7x+19x^2-23x^3+9x^4)}{(1-x)^2(1-2x)(1-3x)(1-3x+x^2)}$.
\end{lemma}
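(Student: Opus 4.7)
The plan is to decompose $G_2(x)$ by the structure of a $T$-avoider $\pi = i\pi'n\pi''$ with exactly two left-right maxima, separating first whether $\pi'$ is empty and then, when $\pi'\ne\emptyset$, whether $\pi''$ contains a letter smaller than $i$.

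When $\pi'=\emptyset$, one has $\pi=in\pi''$ so $n$ sits in second position, and the contribution is precisely $J(x)=\frac{x^2(1-2x)}{(1-x)(1-3x)}$ from Lemma~\ref{lem158a1}.

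When $\pi'\ne\emptyset$ and $\pi''>i$, so that $\pi'<i<\pi''<n$, a direct check of the three forbidden patterns, by considering all assignments of the four positions of each pattern to the four regions $\{i\}$, $\pi'$, $\{n\}$, $\pi''$ of $\pi$, shows the constraints collapse to: $\pi'$ avoids $\{132,3412\}$ (the 132 because any 132 in $\pi'$ together with $n$ is a 1324, the 3412 directly, and these two together subsume 1324 and 1342 inside $\pi'$ since each contains 132), and $\pi''$ avoids $\{213,231\}$ (213 from 1324 using $i$ or a letter of $\pi'$ as the ``1'', 231 from 1342 similarly); the avoidance of 3412 by $\pi''$ is then automatic from the recursive structure of $\{213,231\}$-avoiders, and no further interaction constraints arise. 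Since $F_{\{132,3412\}}(x)=\frac{1-2x}{1-3x+x^2}$ and $L(x)=\frac{1-x}{1-2x}$, this subcase contributes $x^2\bigl(F_{\{132,3412\}}(x)-1\bigr)L(x)=\frac{x^3(1-x)^2}{(1-2x)(1-3x+x^2)}$.

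When $\pi'\ne\emptyset$ and $\pi''$ has at least one letter below $i$, the pattern 3412 with $i$ and $n$ as its ``$3,4$'' forces the letters of $\pi''$ below $i$ to form a decreasing subsequence, and 1342-avoidance invoked with an entry of $\pi'$ restricts the interleaving of the small letters of $\pi''$ with the ascents of its large portion. I refine this subcase by the number $d\ge 1$ of letters of $\pi''$ below $i$, and within each $d$ by the shape of the big/small interleaving; each sub-sub-case admits a closed-form rational contribution, and summing over $d$ introduces an additional $\tfrac{1}{1-x}$-factor, accounting for the extra $(1-x)$ in the denominator of $G_2(x)$.

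The main obstacle will be the case analysis in this last subcase, where the three forbidden patterns couple the shape of $\pi'$ with the decreasing block of small letters of $\pi''$ and the admissible shape of the large block: one must check simultaneously the constraints from 1324, 1342, and 3412 without double-counting or missing admissible configurations. Once the closed form of this final sum is obtained and added to $J(x)$ and to the Subcase A contribution, the algebraic simplification---with $(1-x)(1-3x)$ originating from $J$, $(1-2x)(1-3x+x^2)$ from Subcase A, and the remaining $(1-x)$ from the last subcase---yields the stated rational expression for $G_2(x)$.
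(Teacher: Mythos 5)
Your overall setup is sound and is essentially the paper's: split by the structure of $\pi=i\pi'n\pi''$, use Lemma~\ref{lem158a1} for the ``$n$ in second position'' permutations, and refine the remaining case by the number $d$ of letters of $\pi''$ below $i$. Your subcase with $\pi'=\emptyset$ correctly contributes $J(x)$, and your subcase $\pi'\ne\emptyset$, $\pi''>i$ correctly contributes $x^2\bigl(K(x)-1\bigr)L(x)$ with $K(x)=\frac{1-2x}{1-3x+x^2}$, $L(x)=\frac{1-x}{1-2x}$ (this matches the paper's $G_2(x;0)=x^2K(x)L(x)$ after removing the $\pi'=\emptyset$ piece $x^2L(x)=J_1(x)$), and your observation that $\{213,231\}$-avoidance of $\pi''$ subsumes its $3412$-avoidance is fine.

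The genuine gap is that the third subcase ($\pi'\ne\emptyset$ and $d\ge 1$), which is where virtually all of the work of the lemma lies, is only announced, not proved. You never derive the structural normal form (the paper writes $\pi=i\alpha^{(1)}\alpha^{(2)}\cdots\alpha^{(d+1)}n\beta^{(1)}j_1\cdots\beta^{(d)}j_d\beta^{(d+1)}$ with $i>\alpha^{(d+1)}>j_1>\alpha^{(d)}>j_2>\cdots>\alpha^{(2)}>j_d>\alpha^{(1)}$, plus the case analysis of which $\alpha^{(s)}$ is the last nonempty block), you compute none of the contributions (the paper gets $G_2(x;1)=\frac{x^3}{(1-x)(1-2x)}\bigl(K(x)-1\bigr)+K(x)J_2(x)$ and, for $d\ge2$, $\sum_{s=3}^{d+1}\frac{x^{d+2}}{(1-x)^s}\bigl(K(x)-1\bigr)+\frac{x}{(1-x)^2}\bigl(K(x)-1\bigr)J_d(x)+K(x)J_{d+1}(x)$), and your claim that ``summing over $d$ introduces an additional $\tfrac{1}{1-x}$-factor'' cannot be right as stated: the $d$-dependence is not a geometric factor but runs through the distinct functions $J_d(x)$ and $J_{d+1}(x)$ and a finite geometric sum, so the summation requires the recurrence/sums of Lemma~\ref{lem158a1} rather than a bare $\tfrac1{1-x}$. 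Relatedly, your accounting of denominator factors is off: $(1-3x)$ and $(1-3x+x^2)$ also enter through this last subcase (via $\sum_d J_d$, $J_{d+1}$ and $K(x)-1$), not only through $J$ and the $d=0$ piece. Until the interleaving analysis is carried out and its closed form computed, the stated value of $G_2(x)$ is not established.
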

\begin{proof}
Again, we refine  $G_2(x)$ by defining $G_2(x;d)$ to be the generating function for the number of permutations $i\pi'n\pi''\in S_n(T)$ with exactly $2$ left-right maxima where $\pi''$ has $d$ letters smaller than $i$. Since $\pi$ avoids $3412$, we see that $\pi''$ is decreasing. Now, let us write an equation for $G_2(x;d)$. Clearly, $G_2(x;0)=x^2F_{\{132,3412\}}(x)F_{\{213,231\}}(x)=\frac{x^2(1-x)}{1-2x}K(x)$, where $F_{\{132,3412\}}(x)=K(x)=\frac{1-2x}{1-3x+x^2}$ (see \cite[Seq A001519]{Sl}) and $F_{\{213,231\}}(x)=\frac{1-x}{1-2x}$ (see \cite{SiS}).

For $d=1$, let $j$ denote the letter in $\pi''$ smaller than $i$. Thus $i>2$ and $\pi$ can be written, according as $j>1$ or $j=1$, as either
(1) $\pi=i(i-1)\cdots(j+1)\alpha^{(1)}n(n-1)\cdots (i'+1)i'j\beta^{(1)}$ where
$1 \in \alpha^{(1)}$ and $j>\alpha^{(1)}$, $i'>\beta^{(1)}$ and $\beta^{(1)}$ avoids $213$ and $231$, and $\alpha^{(1)}$ avoids $132$ and $3412$, or (2) $\pi=i\alpha n\beta$ where $\alpha$ consists of the letters $2,3,\dots,i-1$ in some order
and $\alpha$ avoids $132$ and $3412$.
Hence, $$G_2(x;1)=\frac{x^3}{(1-x)(1-2x)}(K(x)-1)+K(x)J_2(x),$$
where $J_2(x)$ is defined above.

Now, let $d\geq2$. Since $\pi$ avoids $1324$ and $1342$, we can express $\pi$ as
$$\pi=i\alpha^{(1)}\alpha^{(2)}\cdots\alpha^{(d+1)}n\beta^{(1)}j_1\cdots\beta^{(d)}j_d\beta^{(d+1)}$$
where $i>\alpha^{(d+1)}>j_1>\alpha^{(d)}>j_2>\cdots>\alpha^{(2)}>j_d>\alpha^{(1)}$. We consider three cases:
\begin{itemize}
\item $\alpha^{(s)}\neq\emptyset$ and $\alpha^{(s+1)}=\cdots=\alpha^{(d+1)}=\emptyset$ with $s=3,4,\ldots,d+1$. In this case $\alpha^{(1)},\ldots,\alpha^{(s-1)}$ are decreasing and
$\alpha^{(s)}$ avoids $132$ and $3412$, while $\beta^{(2)}=\cdots=\beta^{(d+1)}=\emptyset$ and $\beta^{(1)}$ is decreasing. Hence, we have a contribution of $\frac{x^{d+2}}{(1-x)^{s}}\big(K(x)-1\big)$.

\item $\alpha^{(3)}=\cdots=\alpha^{(d+1)}=\emptyset$ and $\alpha^{(2)}\neq\emptyset$. In this case, $\alpha^{(1)}$ and $\beta^{(1)}$ are decreasing, and $\beta^{(1)}>\beta^{(2)}> \cdots >\beta^{(d+1)}$. Thus, we have a contribution of $\frac{x}{(1-x)^2}\big(K(x)-1\big)J_d(x)$.

\item $\alpha^{(2)}=\cdots=\alpha^{(d+1)}=\emptyset$. In this case, we have a contribution of $K(x)J_{d+1}(x)$.
\end{itemize}
Hence, for all $d\geq2$,
$$G_2(x;d)=\sum_{s=3}^{d+1}\frac{x^{d+2}}{(1-x)^s}\big(K(x)-1\big)+\frac{x}{(1-x)^2}\big(K(x)-1\big)J_d(x)+K(x)J_{d+1}(x).$$
Summing over $d\geq 0$ and using Lemma \ref{lem158a1}, we obtain the stated expression for
$G_2(x)$.
\end{proof}

\begin{theorem}\label{th158a}
Let $T=\{1324,1342,3412\}$. Then
$$F_T(x)=\frac{1-10x+40x^2-81x^3+88x^4-50x^5+11x^6}{(1-x)^3(1-2x)(1-3x)(1-3x+x^2)}.$$
\end{theorem}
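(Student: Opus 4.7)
The plan is to follow the left-right-maxima decomposition used throughout this paper. Let $G_m(x)$ denote the generating function for $T$-avoiders with exactly $m$ left-right maxima, so that $F_T(x)=\sum_{m\ge 0}G_m(x)$. We already have $G_0(x)=1$, $G_1(x)=xF_T(x)$ (obtained by stripping the initial $n$), and an explicit expression for $G_2(x)$ from Lemma~\ref{lem158a2}. What remains is to compute $G_m(x)$ for $m\ge 3$ and then sum.

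For $m\ge 3$, write $\pi=i_1\pi^{(1)}i_2\pi^{(2)}\cdots i_m\pi^{(m)}$. The first step is to extract structural constraints from the three forbidden patterns. By $1324$-avoidance applied to the quadruple $(i_1,i_s,a,i_{s+1})$, any letter $a\in\pi^{(s)}$ with $a>i_1$ would create a $1324$; hence $\pi^{(s)}<i_1$ for all $1\le s\le m-1$. Next, $1342$-avoidance applied to $(i_1,i_{m-1},i_m,a)$ forces $\pi^{(m)}$ to have no letter strictly between $i_1$ and $i_{m-1}$. Finally, $3412$-avoidance applied to $(i_{m-1},i_m,a,b)$ with $a<b$ and $a$ before $b$ in $\pi^{(m)}$ forces all letters of $\pi^{(m)}$ that are $\le i_1$ to come after all those that are $>i_{m-1}$, and to form a decreasing suffix.

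With this structure in hand, I expect the recurrence for $G_m(x)$ to split along the same case analysis as in Lemma~\ref{lem158a2}: a leading term $xG_{m-1}(x)$ (handling the case where one of $\pi^{(2)},\ldots,\pi^{(m-1)}$ is empty and the corresponding $i_j$ can be deleted), together with correction terms expressed via the functions $J_d(x)$, $J(x)$ of Lemma~\ref{lem158a1} and $K(x)=F_{\{132,3412\}}(x)=\tfrac{1-2x}{1-3x+x^2}$. By analogy with Cases~72, 151, and 156 we anticipate a clean recurrence of the form
\[
G_m(x)=xG_{m-1}(x)+\frac{x^{m}}{(1-x)^{m-2}}\,\Phi(x)\qquad (m\ge 4),
\]
for an explicit rational function $\Phi(x)$ built from $K(x)$ and $J(x)$. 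The denominator factors $(1-x)$, $(1-2x)$, $(1-3x)$ of the target answer arise naturally from $J(x)=\tfrac{x^2(1-2x)}{(1-x)(1-3x)}$, while $(1-3x+x^2)$ is the signature of $K(x)$.

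The main obstacle will be the careful bookkeeping inside $\pi^{(m)}$ when some $\pi^{(j)}$ with $2\le j\le m-1$ is nonempty, since $3412$-avoidance couples these "bottom" blocks with the decreasing suffix of $\pi^{(m)}$ (mirroring the third bullet in the proof of Lemma~\ref{lem158a2}, which used the $J_{d+1}(x)$ contribution). Once each subcase is matched against the correct summand built from $J$ and $K$, a geometric sum over $d$ and then over $m\ge 3$ yields $\sum_{m\ge 3}G_m(x)$ in closed form. Substituting into $F_T(x)=1+xF_T(x)+G_2(x)+\sum_{m\ge 3}G_m(x)$ gives a linear equation for $F_T(x)$, whose solution is the claimed rational function.
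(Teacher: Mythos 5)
Your plan follows the same left-right-maxima route as the paper (reusing Lemmas~\ref{lem158a1} and~\ref{lem158a2}), but it stops short of a proof exactly where the work lies: the contribution of avoiders with $m\ge3$ left-right maxima is never actually computed. You only ``anticipate'' a recurrence $G_m(x)=xG_{m-1}(x)+\frac{x^m}{(1-x)^{m-2}}\Phi(x)$ with $\Phi$ left unspecified and defer the case analysis that would identify it, so the claimed formula for $F_T(x)$ is not derived. Worse, the one concrete structural claim you make about $\pi^{(m)}$ is false: avoidance of $3412$ does force the letters of $\pi^{(m)}$ below $i_{m-1}$ to be decreasing, but it does \emph{not} force them to come after all letters of $\pi^{(m)}$ exceeding $i_{m-1}$. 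For instance $\pi=23514\in S_5(T)$ has three left-right maxima $2,3,5$, and in $\pi^{(3)}=14$ the letter $1<i_1$ precedes the letter $4>i_2$. Carrying your ordering constraint through the bookkeeping would undercount $G_m(x)$ for every $m\ge 3$; it is precisely these interleaved configurations that the factor $J(x)$ of Lemma~\ref{lem158a1} is designed to absorb.

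For comparison, the paper's proof of Theorem~\ref{th158a} establishes a complete structural description for $m\ge3$ (Figure~\ref{figAK3}): $i_1,\dots,i_{m-1}$ are consecutive integers; the blocks $\pi^{(2)},\dots,\pi^{(m-1)}$ together form a decreasing run of consecutive values lying below $\pi^{(1)}$ and below which, in turn, sit all letters of $\pi^{(m)}$ smaller than $i_1$ (this last containment, which you do not note, follows from $1342$ and is what decouples the pieces), contributing $1/(1-x)^{m-2}$; $\pi^{(1)}$ avoids $\{132,3412\}$, contributing $K(x)$; and $\mathrm{St}(i_1 i_m\pi^{(m)})$ is an \emph{arbitrary} $T$-avoider with largest letter in second position, contributing $J(x)$, with the high and low letters of $\pi^{(m)}$ interleaved subject only to avoidance. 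Checking the converse as well, this gives the closed form $G_m(x)=x^{m-2}K(x)J(x)/(1-x)^{m-2}$ directly (no recurrence in $m$ is needed), whence $\sum_{m\ge3}G_m(x)=\frac{x^3(1-2x)}{(1-x)(1-3x)(1-3x+x^2)}$ and the stated $F_T(x)$. Your ansatz is in fact compatible with this (it corresponds to $\Phi(x)=K(x)J(x)/x$), but to turn the proposal into a proof you must establish the structure above — in particular the role of $J(x)$ inside $\pi^{(m)}$ — rather than the incorrect ordering constraint you asserted.
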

\begin{proof}
Let $G_m(x)$ be the generating function for $T$-avoiders with $m$ left-right maxima. Clearly, $G_0(x)=1$ and $G_1(x)=xF_T(x)$, and $G_2(x)$ is given by Lemma \ref{lem158a2}.

Now, let us write an equation for $G_m(x)$ with $m\geq3$. A $T$-avoider $i_1\pi^{(1)}\cdots i_m\pi^{(m)}$ with $m\ge 3$ left-right maxima
$i_1,i_2,\dots,i_m$  has the restricted form shown in Figure \ref{figAK3},
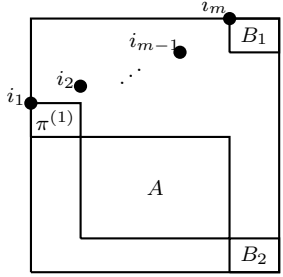
\begin{figure}[htp]
\begin{center}
\begin{pspicture}(4,4.2)
\psset{xunit=.66cm}
\psset{yunit=.45cm}
\psline(0,1)(5,1)(5,8.5)(0,8.5)(0,1)
\psline(1,2)(4,2)(4,5)(1,5)(1,2)
\psline(0,5)(1,5)(1,6)(0,6)(0,5)
\psline(4,1)(5,1)(5,2)(4,2)(4,1)
\psline(4,7.5)(5,7.5)(5,8.5)(4,8.5)(4,7.5)
\psdots[linewidth=1.5pt](0,6)(1,6.5)(3,7.5)(4,8.5)
\rput(.5,5.5){\textrm{{\footnotesize $\pi^{(1)}$}}}
\rput(2.5,3.5){\textrm{{\footnotesize $A$}}}
\rput(4.5,1.5){\textrm{{\footnotesize $B_2$}}}
\rput(4.5,8){\textrm{{\footnotesize $B_1$}}}
\rput(-.3,6.2){\textrm{{\footnotesize $i_1$}}}
\rput(.7,6.7){\textrm{{\footnotesize $i_2$}}}
\rput(2.5,7.8){\textrm{{\footnotesize $i_{m-1}$}}}
\rput(3.7,8.9){\textrm{{\footnotesize $i_m$}}}
\rput(2.,7){\textrm{{\footnotesize $\iddots$}}}
\end{pspicture}
\caption{A $\{1324, 1342, 3412\}$-avoider with $m\ge 3$ left-right maxima}\label{figAK3}
\end{center}
\end{figure}
where $i_1,i_2,\dots,i_{m-1}$ are increasing consecutive integers, $A$ is a list of zero or more decreasing consecutive integers, $\pi^{(m)}$ is composed of $B_1$ and $B_2$, and regions not marked are empty. Furthermore, $\pi^{(1)}$ avoids $\{132,3412\}$ and, of course, $i_1 i_m \pi^{(m)}$
avoids $T$. Conversely, every permutation of this form with $m\ge 3$ satisfying the latter two conditions is a $T$-avoider. Hence we get contributions as follows:  $K(x)$ from $\pi^{(1)}$; $J(x)$ from $i_1 i_m \pi^{(m)}$; $x^{m-2}$ from $i_2,\dots,i_{m-1}$; and $1/(1-x)^{m-2}$
from $A$.
So $G_m(x)=x^{m-2}K(x)J(x)/(1-x)^{m-2}$. By summing over $m\geq3$, we obtain
$$F_T(x)-1-xF_T(x)-G_2(x)=\frac{x^3(1-2x)}{(1-x)(1-3x)(1-3x+x^2)}\, .$$
Now solve for $F_T(x)$ using Lemma \ref{lem158a2} to complete the proof.
\end{proof}

\subsection{Case 180: $\{1342,2314,4231\}$}
\begin{theorem}\label{th180a}
Let $T=\{1342,2314,4231\}$. Then
\[
F_T(x)=\frac{1-7 x +18 x^2 -22 x^3 +16 x^4 -6  x^5 +x^6 - \left(x- 5 x^2+8 x^3 - 2 x^4 -2 x^5 +x^6\right)C(x)  }{(1-2 x) (1-x)^2 \left(1-5 x+4 x^2-x^3\right)}\,.
\]
\end{theorem}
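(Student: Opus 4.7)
The plan is to enumerate $T$-avoiders by their number of left-right maxima. Let $G_m(x)$ denote the generating function for $T$-avoiders with exactly $m$ left-right maxima, so that $F_T(x)=1+\sum_{m\geq1}G_m(x)$. Immediately $G_0(x)=1$ and, by deleting the leading maximum $n$, $G_1(x)=xF_T(x)$.

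The main case is $m=2$. Write $\pi=i\alpha n\beta$ with $\alpha<i<n$, and refine $G_2$ by the number $d$ of letters of $\beta$ smaller than $i$. In the sub-case $d=0$ (so $\beta>i$), a pattern-by-pattern check shows that the $4231$- and $2314$-conditions, applied with $n$ playing the role of the largest or last letter, force $\alpha$ to avoid both $231$ and $213$; hence $\alpha$ is enumerated by $L(x)=\tfrac{1-x}{1-2x}$. Meanwhile, because $231$ is contained as a pattern in each of $1342$, $2314$ and $4231$, the block $\beta$ needs only to avoid $231$ and is therefore enumerated by $C(x)$. One also checks that there are no remaining cross-constraints between $\alpha$ and $\beta$ (any putative $1342$ using $i$ and letters of $\beta$ forces $\beta$ to contain $231$, which is already forbidden). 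This produces a contribution $x^{2}L(x)\,C(x)$, and this is precisely where the Catalan generating function enters the final formula. For $d\geq1$ the analysis is considerably more delicate: an ascent in $\alpha$ combined with a suitably placed small letter of $\beta$ and the pivot $n$ creates a $1342$, and analogous interactions occur for the other two patterns. I would introduce auxiliary generating functions refined by $d$ and by the positions and relative values of the small letters of $\beta$, derive a recurrence in $d$ (essentially a geometric one after the structural constraints are fixed), and sum it to obtain a closed form for $G_2(x)$.

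For $m\geq3$, writing $\pi=i_1\pi^{(1)}i_2\pi^{(2)}\cdots i_m\pi^{(m)}$, simultaneous avoidance of $1342$ and $2314$ forces most of the middle blocks $\pi^{(2)},\ldots,\pi^{(m-1)}$ to be empty, and $\pi^{(m)}$ splits into an initial block of values greater than $i_{m-1}$ followed (possibly) by small letters. The remaining degrees of freedom reduce to a short list of configurations counted by $L(x)$ and $C(x)$, yielding a recurrence of the shape $G_m(x)=xG_{m-1}(x)+R_m(x)$ with $R_m(x)$ an explicit rational expression in $x$ (and polynomial in $m$), which can be summed over $m\geq3$ in closed form.

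Assembling the pieces, $F_T(x)=1+xF_T(x)+G_2(x)+\sum_{m\geq3}G_m(x)$ is a linear equation in $F_T(x)$ whose coefficients are rational in $x$ and $C(x)$. Solving for $F_T(x)$ and simplifying with the identity $xC(x)^2=C(x)-1$ is expected to produce exactly the stated numerator and denominator; the denominator factor $1-5x+4x^2-x^3$ should emerge from the combination of the $d\geq1$ geometric sum with the contribution from the tail $\sum_{m\geq3}G_m(x)$. The main obstacle will be the $d\geq1$ part of $G_2$: one must carefully bookkeep how letters of $\alpha$, small letters of $\beta$, and large letters of $\beta$ interleave while simultaneously avoiding all three forbidden $4$-letter patterns, and collect a moderate number of sub-contributions into a single tractable formula.
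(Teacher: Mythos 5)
There are concrete errors in the parts of your plan that you actually work out, and they would propagate through the final assembly. First, $G_1(x)=xF_T(x)$ is false for this triple: if $\pi=n\pi'$ has a single left-right maximum, then $n$ can serve as the ``4'' of $4231$, so $\pi'$ must avoid $231$; conversely, since $231$ is contained in all three patterns of $T$ and $n$ cannot participate in any other occurrence, $\pi$ avoids $T$ if and only if $\pi'$ avoids $231$. Hence $G_1(x)=xC(x)$ (this is in fact where $C(x)$ first enters), and your equation $F_T=1+xF_T+G_2+\sum_{m\ge3}G_m$ is already wrong at this stage. Second, your $d=0$ contribution to $G_2$ is miscounted: for $\pi=i\alpha n\beta$ with $\alpha<i<\beta$, a $213$ pattern inside $\alpha$ cannot be completed to $1342$, $2314$ or $4231$ using $i$, $n$ or letters of $\beta$ (appending a larger letter to a $213$ gives $2134$, which is not forbidden, and $4231$, $1342$ contain no $213$ whose completion sits outside $\alpha$ in this configuration). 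The only constraints are that $\alpha$ and $\beta$ each avoid $231$, so this case contributes $x^2C(x)^2$, not $x^2L(x)C(x)$.

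Beyond these two errors, the genuinely hard work is deferred: the $d\ge1$ analysis of $G_2$ is only announced, and your structural claim for $m\ge3$ (``most of the middle blocks $\pi^{(2)},\dots,\pi^{(m-1)}$ are empty'') is not what avoidance of $2314$ and $1342$ forces here; rather one gets $i_{j-1}<\pi^{(j)}<i_j$ for $2\le j\le m-1$, and the correct outcome is the closed form $G_m(x)=\frac{x^m}{(1-x)^m}\big(C(x)-1\big)+x^mC(x)^m$ rather than a recurrence $G_m=xG_{m-1}+R_m$. For comparison, the paper's proof of $G_2$ does not refine by the number $d$ of small letters of $\beta$; it locates the letter $i-1$ (leftmost in $\alpha$, elsewhere in $\alpha$, or in $\beta$) and, for the last case, introduces an auxiliary generating function $H$ for avoiders $i\pi'n\pi''$ containing the subsequence $n(n-1)\cdots(i+1)$, solving a linear equation for $H$ and then for $G_2$. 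Your refinement by $d$ might be salvageable, but as written the proposal both miscounts the cases it treats and leaves the decisive cases unproved, so it does not establish the theorem.
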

\begin{proof}
Let $G_m(x)$ be the generating function for $T$-avoiders with $m$
left-right maxima. Clearly, $G_0(x)=1$ and $G_1(x)=xF_{\{231\}}(x)=xC(x)$ (see \cite{K}).
Now let $m\geq3$ and let us write equation for $G_m(x)$.
Let $\pi=i_1\pi^{(1)}i_2\pi^{(2)}\cdots i_m\pi^{(m)}\in S_n(T)$ with exactly $m$ left-right maxima. Since $\pi$ avoids $2314$ and 1342, we see that $\pi^{(j)}>i_{j-1}$ for all $j=2,3,\ldots,m-1$. If $\pi^{(m)}$ has a letter smaller than any letter in $\pi^{(1)}$, then $i_j\pi^{(j)}=i_j(i_j-1)\cdots(i_{j-1}+1)$ for all $j=1,2,\ldots,m-1$, and $\pi^{(m)}=i_m(i_m-1)\cdots(i_{m-1}+1)\beta$ such that $\beta$ is non empty permutation in $S_{i_0}(231)$. Hence, we get a contribution of $\frac{x^m}{(1-x)^m}(C(x)-1)$. Otherwise, $\pi^{(j)}>i_{j-1}$ for all $j=2,3,\ldots,m$, which gives a contribution of $x^mC^m(x)$. Hence,
$$G_m(x)=\frac{x^m}{(1-x)^m}(C(x)-1)+x^mC^m(x),$$
for all $m\geq3$.

Now, let us focus on the case $m=2$. First, let $H$ be the generating function for permutations $i\pi'n\pi''\in S_n(T)$ with exactly $2$ left-right maxima and containing the subsequence $n(n-1)\cdots(i+1)$. Let us write an equation for $H$. If $i=1$, then we have a contribution of $x^2/(1-x)$. Otherwise, $n-1\geq i\geq2$ and consider the position of $i-1$.
If $(i-1)$ is the first letter in  $\pi'$ then we have a contribution of $xH$.
If $(i-1)\in \pi'$ but is not the first letter in  $\pi'$, then $\pi$ must have the form
$i\alpha(i-1)\beta n(n-1)\cdots(i+1)$ with $\alpha<\beta<i-1$, $\alpha \ne \emptyset$,
and $\alpha,\beta$ both $231$-avoiders, which implies a contribution of
$x^3\big(C(x)-1\big)C(x)/(1-x)$. Otherwise, $i-1 \in \pi''$ and then $\pi$ can be decomposed
as in Figure \ref{figAK4}.
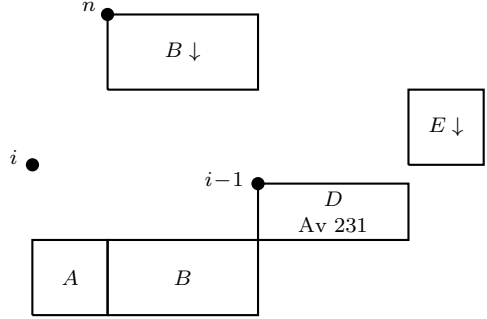
\begin{figure}[htp]
\begin{center}
\begin{pspicture}(5,4.3)
\psset{xunit=.5cm}
\psset{yunit=.5cm}
\psline(0,0)(2,0)(2,2)(0,2)(0,0)
\psline(2,6)(6,6)(6,8)(2,8)(2,6)
\psline(2,0)(6,0)(6,3.5)(10,3.5)(10,2)(2,2)(2,0)
\psline(10,4)(12,4)(12,6)(10,6)(10,4)
\psdots[linewidth=1.5pt](0,4)(2,8)(6,3.5)
\rput(1,1){\textrm{{\footnotesize $A$}}}
\rput(4,7){\textrm{{\footnotesize $B\downarrow$}}}
\rput(4,1){\textrm{{\footnotesize $B$}}}
\rput(8,3.1){\textrm{{\footnotesize $D$}}}
\rput(8,2.4){\textrm{{\footnotesize Av $231$}}}
\rput(11,5){\textrm{{\footnotesize $E\downarrow$}}}
\rput(-.5,4.2){\textrm{{\footnotesize $i$}}}
\rput(5.1,3.6){\textrm{{\footnotesize $i\!-\!1$}}}
\rput(1.5,8.2){\textrm{{\footnotesize $n$}}}
\end{pspicture}
\caption{A permutation counted by $H$ with $i-1$ after $n$}\label{figAK4}
\end{center}
\end{figure}
where $\downarrow$ indicates a region of decreasing entries. Since St($iAnB$) ($B$ is spread over two regions) is of the type counted by $H$ and $D$ contributes $C(x)$, we get a contribution of $x/(1-x)C(x)H$.
Hence,
$$H=x^2/(1-x)+xH+x^3(C(x)-1)C(x)/(1-x)+xC(x)H/(1-x),$$
which leads to
$$H=\frac{x^2/(1-x)+x^3(C(x)-1)C(x)/(1-x)}{1-x-xC(x)/(1-x)}.$$

Now let us write an equation for $G_2(x)$. Let $\pi=i\pi'n\pi''$ with exactly $2$ left-right maxima. If $i=1$, then we have a contribution of $x^2C(x)$. Otherwise, $n-1\geq i\geq2$ and again consider the position of $i-1$. If $\pi'=(i-1)\pi'''$, then we have a contribution of $xG_2(x)$. If $\pi'=\alpha(i-1)\beta$ such that $\alpha$ is not empty then $\pi=i\alpha(i-1)\beta n\pi''$ with $\alpha<\beta<i-1<\pi''<n$, which gives a contribution of $x^3\big(C(x)-1\big)C(x)^2$. Thus, we can assume that $i-1$ belongs to $\pi''$. In this case, $\pi$ can be written as $\pi=i\pi'n\alpha'(i-1)\alpha''\alpha'''$ such that $i\pi'n\alpha'$ has $2$ left-right maxima and contains the subsequence $n(n-1)\cdots (j+1)$, each letter in $\alpha''$ is greater than each letter smaller than $i$ in $\pi'\alpha'$, and $i<\alpha'''<j+1$, where $\alpha'',\alpha'''$ avoids $231$ and $i\pi'n\alpha'$ avoids $T$. Thus, we have a contribution of $xHC(x)^2$. Hence,
$$G_2(x)=x^2C(x)+xG_2(x)+ x^3\big(C(x)-1\big)C(x)^2+xHC(x)^2,$$
which implies
$$G_2(x)=\frac{x^2C(x)+x^3\big(C(x)-1\big)C(x)^2+xHC(x)^2}{1-x}.$$

Summing over $m\geq0$, we obtain
$$F_T(x)=1+xC(x)+\frac{x^2C(x)+x^3\big(C(x)-1\big)C^2(x)+xHC(x)^2}{1-x}+\frac{x^3\big(C(x)-1\big)}{(1-x)^2(1-2x)}+\frac{x^3C^3(x)}{1-xC(x)}\,,$$
and this expression simplifies to the stated form.
\end{proof}

\subsection{Case 184: $\{1324,2431,3241\}$}
\begin{theorem}\label{th184a}
Let $T=\{1324,2431,3241\}$. Then
$$F_T(x)=\frac{1-8x+24x^2-32x^3+19x^4-3x^5}{(1-x)(1-2x)(1-3x+x^2)^2}.$$
\end{theorem}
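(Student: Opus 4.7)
The plan is to follow the same left-right maxima decomposition used repeatedly above. Let $G_m(x)$ be the generating function for $T$-avoiders with exactly $m$ left-right maxima, so that $F_T(x) = \sum_{m\ge 0}G_m(x)$ with $G_0(x) = 1$ and $G_1(x) = xF_T(x)$. The appearance of $(1-3x+x^2)^2$ in the denominator strongly suggests that the key building block is the Fibonacci-like generating function $K(x)=\frac{1-2x}{1-3x+x^2}$ for $\{132,3412\}$-avoiders (as in Cases 86 and 158), together with $L(x)=\frac{1-x}{1-2x}$.

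For $m=2$, write $\pi = i\pi' n\pi'' \in S_n(T)$. Three structural observations should drive the analysis: (i) Since $\pi'<i$, a $132$-pattern in $\pi'$ would, together with $n$, create $1324$; hence $\pi'$ is a $\{132\}$-avoider in $S(T)$, and in fact $\pi'$ avoids $\{132,3412\}$ (the other two prohibited patterns contain $132$), giving the factor $K(x)$. (ii) Avoidance of $2431$ with $i$ and $n$ in roles $2$ and $4$ forces $\pi''=\alpha\beta$ with $\alpha<i<\beta$. (iii) Avoidance of $3241$ with $i,n$ in roles $3,4$ forces $\alpha>\pi'$, so the four regions satisfy $\pi' < \alpha < i < \beta$. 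One then writes $G_2(x)$ as a sum over subcases (each of $\alpha,\beta$ empty or nonempty, plus a refinement by the position of the entry $i-1$) of products of $K(x)$, $L(x)$, and monomials, obtaining an explicit rational expression.

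For $m\ge 3$, write $\pi = i_1\pi^{(1)}\cdots i_m\pi^{(m)}$. Avoidance of $1324$ forces $\pi^{(s)}<i_1$ for $s=1,\dots,m-1$, and avoidance of $2431$ and $3241$ (applied with various triples of left-right maxima in the roles of the top three entries of the pattern) should force $\pi^{(s)}=\emptyset$ for $3\le s\le m-1$, $\pi^{(1)}>\pi^{(2)}$, and $\pi^{(m)}=\gamma\delta$ with $\gamma$ lying between $i_{m-1}$ and $i_m$ and $\delta$ constrained exactly as the $\alpha$ in the $m=2$ case. This should collapse the problem to a recurrence of the form $G_m(x)=xG_{m-1}(x)+x^m R(x)$ for an explicit rational $R(x)$ (or an equivalent closed form $G_m(x) = x^{m-2}G_2'(x)$ for some auxiliary $G_2'$), which sums geometrically.

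Assembling everything, the identity $F_T(x) = 1 + xF_T(x) + G_2(x) + \sum_{m\ge 3}G_m(x)$ becomes a linear equation in $F_T(x)$, which one solves and simplifies to the stated rational form. The main obstacle will be the case analysis for $G_2(x)$: the three patterns interact subtly, and correctly accounting for all configurations of $i-1$ (and possibly $i-2$) relative to the four regions $\pi',\alpha,i,\beta$ requires careful book-keeping. A secondary obstacle is verifying that the ``collapsing'' of the middle blocks $\pi^{(3)},\dots,\pi^{(m-1)}$ for $m\ge 3$ actually follows from the three forbidden patterns alone, which should be checked by exhibiting, for each nonempty middle block, an explicit occurrence of $1324$, $2431$, or $3241$ together with the surrounding left-right maxima.
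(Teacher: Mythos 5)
Your $m=2$ analysis is essentially on track: avoidance of $2431$ and $3241$ together with $i$ and $n$ does force $\pi=i\pi'n\alpha\beta$ with $\pi'<\alpha<i<\beta$, which agrees with the paper's decomposition. Two details are off, though. The class relevant for $\pi'$ is $\{132,3241\}$, not $\{132,3412\}$: $3412$ is not in $T$, and $3241$ contains no $132$, so $132$-avoidance does not subsume it (the two classes merely happen to share the generating function $K=\frac{1-2x}{1-3x+x^2}$). Also, when $\pi'=\beta=\emptyset$ the block $\alpha$ is an arbitrary $T$-avoider, so $G_2$ necessarily involves $F_T$ itself; it is not an explicit expression in $K$ and $L$ alone (this is harmless, since the final equation stays linear in $F_T$, but it is not what you describe).

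The genuine gap is your structure for $m\ge3$; the claims that $\pi^{(s)}=\emptyset$ for $3\le s\le m-1$, that $\pi^{(1)}>\pi^{(2)}$, and that the high letters of the last block lie between $i_{m-1}$ and $i_m$ are all false. For example $23415\in S_5(T)$ has four left-right maxima and a nonempty block after the third one. Moreover, if $a\in\pi^{(1)}$ and $b\in\pi^{(2)}$ with $b<a$, then $i_1\,a\,i_2\,b$ is an occurrence of $3241$, so in fact $\pi^{(1)}<\pi^{(2)}$ (and more generally all blocks below $i_1$ increase left to right) --- the reverse of what you assert, and inconsistent with your own $m=2$ finding $\alpha>\pi'$. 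Similarly, the letters of the suffix lying above $i_1$ may occupy any single gap $(i_j,i_{j+1})$, not just the top one, and $2431$-avoidance forces them to come \emph{after}, not before, the suffix letters below $i_1$. The paper's proof exploits exactly this: at most one of $\pi^{(1)},\dots,\pi^{(m-1)}$ and at most one high block are nonempty (by $1324$-avoidance), and since these can be placed independently in roughly $m$ positions each, one gets for all $m\ge2$
\begin{align*}
G_m(x)=x^mF_T(x)+2(m-1)x^mK\big(K-1\big)+x^m\left(\binom{m-1}{2}+\binom{m}{2}\frac{1}{1-x}\right)\big(K-1\big)^2,
\end{align*}
where the block after $i_m$ and below $i_1$ is a free $T$-avoider, forced increasing, or empty according to the case. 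Because these coefficients are quadratic in $m$, no recurrence $G_m=xG_{m-1}+x^mR(x)$ with $R$ independent of $m$ can hold (the difference $G_m-xG_{m-1}$ still carries a factor $m-1$), so the geometric summation you envisage cannot yield the stated $F_T$; one must sum the closed form above directly, as the paper does, and then solve the resulting linear equation for $F_T$.
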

\begin{proof}
Let $G_m(x)$ be the generating function for $T$-avoiders with $m$
left-right maxima. Clearly, $G_0(x)=1$ and $G_1(x)=xF_T(x)$.

Let us write an equation for $G_m(x)$ with $m\geq2$. Suppose $\pi\in S_n(T)$ has exactly $m$ left-right maxima. Since $\pi$ avoids $3241$ and $2431$, we can express $\pi$ as
$$\pi=i_1\pi^{(1)}i_2\pi^{(2)}\cdots i_{m-1}\pi^{(m-1)}
i_m\pi^{(m)}\rho^{(1)}\rho^{(2)}\cdots\rho^{(m-1)}$$
where
$\pi^{(1)}<\cdots<\pi^{(m)}<i_1<\rho^{(1)}<i_2<\rho^{(2)}<\cdots<i_{m-1}<\rho^{(m-1)}<i_m$. Since $\pi$ avoids $1324$ we see that at most one element of
$L:=\{\pi^{(1)},\ldots,\pi^{(m-1)}\}$ is nonempty and at most one element of
$R:=\{\rho^{(1)},\ldots,\rho^{(m-1)}\}$ is nonempty. Thus, we have the following cases:
\begin{itemize}
\item if $L$ and $R$ are both lists of empty words, then $\pi$ avoids $T$ if and only if $\pi^{(m)}$ avoids $T$, so we have a contribution $x^mF_T(x)$.
\item if, say, $\pi^{(s)}\in L$ is nonempty and $R$ is a list of empty words, then $\pi$ avoids $T$ if and only if $\pi^{(s)}$ avoids $132$ and $3241$, and $\pi^{(m)}$ avoids $213$ and $2431$. Thus, we have a contribution of $x^mK(x)\big(K(x)-1\big)$, where $K(x)=\frac{1-2x}{1-3x+x^2}$
is the generating function for $\{132,3241\}$-avoiders \cite[Seq. A001519]{Sl}. By symmetry,
$K(x)$ is also the generating function for
$\{213,2431\}$-avoiders since $R\circ C \circ I(\{213,2431\} )= \{132,3241\})$, where $R=$ reverse, $C=$ complement, and $I=$ inverse on permutations.
\item if $\rho^{(t)}\in R$ is nonempty and $L$ is a list of empty words, then, similarly, we have the same contribution of $x^mK(x)\big(K(x)-1\big)$.
\item lastly, suppose $\pi^{(s)}\in L$ and $\rho^{(t)}\in R$ are nonempty. If $t\geq s+1$, then
$\pi$ avoids $T$ if and only if $\pi^{(m)}=\emptyset$, $\pi^{(s)}$ avoids $132$ and $3241$,
and $\rho^{(t)}$ avoids $213$ and $2431$, which gives a contribution of $x^m\big(K(x)-1\big)^2$.
Otherwise, $1\leq t\leq s$, and we have the same conditions except $\pi^{(m)}$ must be increasing rather than empty, which leads to a contribution of $\frac{x^m}{1-x}\big(K(x)-1\big)^2$.
\end{itemize}
By adding all the contributions, we find that for all $m\geq2$,
$$G_m(x)=x^mF_T(x)+2(m-1)x^m\big(K(x)-1\big)K(x)+x^m\left(\binom{m-1}{2}+\binom{m}{2}\frac{1}{1-x}\right)\big(K(x)-1\big)^2.$$
Summing  over $m\geq2$, we obtain
$$F_T(x)-1-xF_T(x)=\frac{x^2}{1-x}F_T(x)+\frac{2x^3(1-3x+2x^2)}{(1-x)^2(1-3x+x^2)^2}+\frac{x^4(1+x-x^2)}{(1-x)^2(1-3x+x^2)^2},$$
and solving for $F_T(x)$ completes the proof.
\end{proof}

\subsection{Case 187: $\{1324,2314,2431\}$}
\begin{theorem}\label{th187a}
Let $T=\{1324,2314,2431\}$. Then
$$F_T(x)=\frac{1-9x+31x^2-49x^3+34x^4-7x^5}{(1-3x+x^2)^2(1-2x)^2}.$$
\end{theorem}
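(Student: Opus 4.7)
The plan is to use the left-right maxima decomposition that has driven most of the paper, closely mirroring Cases 132, 156 and 184. Let $G_m(x)$ be the generating function for $T$-avoiders with exactly $m$ left-right maxima, so that $F_T(x)=\sum_{m\ge 0}G_m(x)$, with the trivial values $G_0(x)=1$ and $G_1(x)=xF_T(x)$. The shape of the target denominator $(1-3x+x^2)^2(1-2x)^2$ is a strong hint that the relevant auxiliary generating functions are $K(x)=\frac{1-2x}{1-3x+x^2}$, counting $\{132,2314\}$-avoiders (and, by an $R\circ C\circ I$ symmetry, $\{213,2431\}$-avoiders), together with $L(x)=\frac{1-x}{1-2x}$; the two squared factors in the denominator should arise from two independent $K$- or $KL$-type contributions inside each $G_m(x)$.

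For $m\ge 3$, I expect the three forbidden patterns to severely restrict the structure. Writing $\pi=i_1\pi^{(1)}i_2\pi^{(2)}\cdots i_m\pi^{(m)}$, avoidance of $1324$ and $2314$ together should force $\pi^{(s)}=\emptyset$ for all interior indices $2\le s\le m-1$, while $2431$ constrains the final block $\pi^{(m)}$ to decompose as $\alpha\beta$ with $\alpha$ above $i_{m-1}$ and $\beta$ below $i_1$, each factor a pattern-avoider with known generating function. The first block $\pi^{(1)}$ inherits avoidance of $\{132,2314,2431\}$ and contributes $K(x)$, while a parallel analysis of $\pi^{(m)}$ contributes another factor of $K(x)$ or $L(x)$. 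The expected outcome is a clean formula $G_m(x)=x^mP(x)$ for $m\ge 3$ with $P(x)$ rational and independent of $m$, whose sum $\sum_{m\ge 3}G_m(x)=x^3P(x)/(1-x)$ is immediate.

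The case $m=2$ will be the main work. For $\pi=i\alpha n\beta$ I would stratify by the position of the letter $i-1$, distinguishing $i=1$, $i-1\in\alpha$, and $i-1\in\beta$; in the last case a further refinement $H_d(x)$ by the number of letters of $\beta$ lying in $(i,n)$ should satisfy a linear recurrence of the shape $H_d(x)=xH_{d-1}(x)+(\textrm{explicit})$, whose geometric sum is rational. The pattern $2431$ is the chief complication, because it couples $\alpha$ with $\beta$: a descent in $\alpha$ combined with a letter of $\beta$ in the wrong range forces an occurrence, so several subcases must be tracked according to whether $\alpha$, the part of $\beta$ below $i$, and the part of $\beta$ above $i$ are monotone, empty, or merely pattern-avoiding.

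The main obstacle is precisely the bookkeeping for $G_2(x)$: the joint interaction of $1324$, $2314$ and $2431$ produces many subcases in the $\alpha$--$\beta$ decomposition, and a single miscounted branch would corrupt the numerator of the final answer. I would verify the closed form for $G_2(x)$ against the first several coefficients of $|S_n(T)|$ obtained by direct enumeration, and only then combine with $\sum_{m\ge 3}G_m(x)$ and solve the resulting linear equation $F_T(x)=1+xF_T(x)+G_2(x)+\sum_{m\ge 3}G_m(x)$ for $F_T(x)$, expecting the stated rational function to drop out after a routine simplification.
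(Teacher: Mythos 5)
Your plan for $m\ge 3$ contains the decisive error. Keeping your notation $K(x)=\frac{1-2x}{1-3x+x^2}$ and $L(x)=\frac{1-x}{1-2x}$: for $T=\{1324,2314,2431\}$ the last block $\pi^{(m)}$ does \emph{not} decompose as $\alpha\beta$ with $\alpha>i_{m-1}$ and $\beta<i_1$. If both parts were nonempty, then for $a\in\alpha$ and $b\in\beta$ the subsequence $i_1\,i_m\,a\,b$ (values $b<i_1<a<i_m$) is an occurrence of $2431$, so at most one part can occur. Moreover, when the upper part is empty the lower part is neither confined below $i_1$ nor a fixed pattern class: $1342$ is a $T$-avoider with three left-right maxima whose last block is the single letter $2$, strictly between $i_1=1$ and $i_2=3$. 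The correct dichotomy (and this is the paper's route) is that $\pi^{(m)}$ lies entirely above $i_{m-1}$, contributing $x^m L(x)\bigl(K(x)-1\bigr)$ (with $\pi^{(1)}$ a $\{132,231\}$-avoider and $\pi^{(m)}$ a nonempty $\{213,2431\}$-avoider), or entirely below $i_{m-1}$, in which case deleting $n$ is a bijection onto avoiders with $m-1$ left-right maxima, giving the recursive term $xG_{m-1}(x)$. Hence $G_m(x)=xG_{m-1}(x)+x^mL(x)\bigl(K(x)-1\bigr)$, so $G_m(x)=x^{m-2}G_2(x)+(m-2)x^mL(x)\bigl(K(x)-1\bigr)$, which grows linearly in $m$ and couples back to $G_2(x)$ (itself containing the term $x\bigl(F_T(x)-1\bigr)$). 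Your expected clean form $G_m(x)=x^mP(x)$ with $P$ independent of $m$, and the sum $x^3P(x)/(1-x)$, are therefore false; the correct sum is $\frac{x}{1-x}G_2(x)+\frac{x^3}{(1-x)^2}L(x)\bigl(K(x)-1\bigr)$, and carrying your version forward would corrupt the numerator of the final answer (the square of $1-3x+x^2$ comes from solving the linear equation for $F_T$, not from two independent $K$-type factors inside each $G_m$).

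Your $G_2$ plan is workable but heavier than necessary. Once $1\le i\le n-2$, the letters of $\pi''$ above $i$ form a nonempty block avoiding $\{213,2431\}$ and all letters below $i$ (whether before or after $n$) avoid $\{132,231\}$; a short case analysis on whether the low part is empty and on whether $i-1$ sits before or after $n$ gives, in the paper, $H=x^2\bigl(K(x)-1\bigr)+2\bigl(xH+x^3L(x)(K(x)-1)\bigr)$ in closed form, with no $d$-indexed refinement and no geometric summation needed; together with the $i=n-1$ contribution $x\bigl(F_T(x)-1\bigr)$ this yields $G_2(x)$ directly. Finally, a small slip: $R\circ C\circ I(\{132,2314\})=\{213,1342\}$, not $\{213,2431\}$; this is harmless because both pairs are counted by $\frac{1-2x}{1-3x+x^2}$, but the class actually needed for $\pi^{(m)}$ is $\{213,2431\}$, so quote it as such.
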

\begin{proof}
Let $G_m(x)$ be the generating function for $T$-avoiders with $m$
left-right maxima. Clearly, $G_0(x)=1$ and $G_1(x)=xF_T(x)$.

Let us write an equation for $G_2(x)$. Suppose $\pi \in S_n(T)$ has 2 left-right maxima so that
$\pi=i\pi'n\pi''$. If $i=n-1$, then the contribution is given by $x(F_T(x)-1)$.
Otherwise, we denote the contribution by $H$. So $G_2(x)=x(F_T(x)-1)+H$. For $H$, $\pi$ can be written as $\pi=i\pi'n\alpha\beta$ with $1\le i \le n-2$ and $\pi'\alpha<i<\beta$ and hence
$\beta\neq\emptyset$. Note that $\beta$ avoids both $213$ and $2431$ and $\pi'\alpha$
avoids both $231$ and $132$. By considering whether $\pi'\alpha$ is empty or not and whether
$i-1$ belongs to $\pi'$ or to $\alpha$, we find that
$$H=x^2(L-1)+\big(xH+x^3K(L-1)\big)+\big(xH+x^3K(L-1)\big),$$
where $L=\frac{1-2x}{1-3x+x^2}$ is the generating function for $\{213,2431\}$-avoiders
\cite[Sequence A001519]{Sl} and $K=\frac{1-x}{1-2x}$ is the generating function for $\{132,231\}$-avoiders \cite{SiS}.
Hence, $H=\frac{x^3(1-x)(1-2x+2x^2)}{(1-3x+x^2)(1-2x)^2}$, which implies
$$G_2(x)=x\big(F_T(x)-1\big)+\frac{x^3(1-x)(1-2x+2x^2)}{(1-3x+x^2)(1-2x)^2}.$$

Now, let us write an equation for $G_m(x)$ with $m\geq3$. Suppose $\pi=i_1\pi^{(1)}i_2\pi^{(2)}\cdots i_m\pi^{(m)}\in S_n(T)$ with exactly $m$ left-right maxima. Since $\pi$ avoids $1324$ and $2314$, we see that $\pi^{(2)}=\cdots=\pi^{(m-1)}=\emptyset$. We consider the following two cases:
\begin{itemize}
\item $\pi^{(m)}<i_{m-1}$. In this case, by removing the letter $n$, we obtain a bijection between such permutations of length $n$ and $T$-avoiders with $m-1$ left-right maxima of length $n-1$. Therefore, we have a contribution of $xG_{m-1}(x)$.

\item $\pi^{(m)}$ contains a letter between $i_{m-1}$ and $i_m=n$. In this case, since $\pi$ avoids $1324$ and $2314$, we see that $\pi^{(m)}>i_{m-1}$. So $\pi$ avoids $T$ if and only if $\pi^{(m)}$ avoids $213$ and $2431$ and $\pi^{(1)}$ avoids $231$ and $132$. So the contribution
for this case is $x^mK(L-1)$.
\end{itemize}
Thus, $G_m(x)=xG_{m-1}(x)+x^mK(L-1)$.
Summing  over $m\geq3$, we obtain
$$F_T(x)-G_2(x)-G_1(x)-G_0(x)=x(F_T(x)-G_1(x)-G_0(x))+\frac{x^3}{1-x}K(L-1).$$
Hence, using the expressions for $G_2(x)$, $G_1(x)$ and $G_0(x)$,  we obtain
$$(1-2x)F_T(x)-\frac{(2x^3+6x^2-5x+1)(1-x)^3}{(1-3x+x^2)(1-2x)^2}
=x(1-x)F_T(x)-\frac{x(1-5x+7x^2-3x^3+x^4)}{(1-3x+x^2)(1-2x)},$$
and solving for $F_T(x)$ completes the proof.
\end{proof}

\subsection{Case 193: $\{1324,2431,3142\}$} Observe that each pattern here contains 132. So if a permutation avoids 132, then it certainly avoids $T$.
\begin{theorem}\label{th193a}
Let $T=\{1324,2431,3142\}$. Then
\[
F_T(x)=\frac{x-1+ \left(x^2-5 x+2\right)C(x)}{1-3 x+x^2}\, .
\]
\end{theorem}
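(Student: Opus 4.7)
The plan is to decompose $T$-avoiders by the number of left-right maxima. Let $G_m(x)$ denote the generating function for $T$-avoiders with exactly $m$ left-right maxima, so $F_T(x)=\sum_{m\ge 0}G_m(x)$. Clearly $G_0(x)=1$. For $G_1(x)$, a permutation takes the form $n\pi'$, and since the maximum letter in each of 1324, 2431, 3142 occupies position 4, 2, or 3 respectively---never position 1---the leading $n$ cannot participate in any pattern of $T$. Thus $\pi$ avoids $T$ iff $\pi'$ does, giving $G_1(x)=xF_T(x)$.

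For $m\ge 2$, write $\pi=i_1\pi^{(1)}i_2\pi^{(2)}\cdots i_m\pi^{(m)}$. The crucial structural observation, which accounts for the appearance of $C(x)$ in the answer, is that each internal block $\pi^{(j)}$ with $1\le j\le m-1$ must avoid 132: any 132 in $\pi^{(j)}$, together with a later left-right maximum $i_k$ playing the role of the `4', produces a 1324. Thus each such block contributes a Catalan factor. Simultaneously, 2431 and 3142 impose restrictions on $\pi^{(m)}$ relative to the earlier $i_k$'s: a 132 triple $(a,b,c)$ in $\pi^{(m)}$ with some earlier left-right maximum $i_k$ satisfying $c<i_k<b$ creates a 3142, and a decreasing triple in $\pi^{(m)}$ with some $i_k$ strictly between its smallest and middle values creates a 2431.

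Next I would compute $G_2(x)$ explicitly by writing $\pi=i_1\pi^{(1)}n\pi^{(2)}$ with $\pi^{(1)}$ a 132-avoider, then splitting $\pi^{(2)}$ into its sub-sequences of letters below $i_1$ and above $i_1$ and enumerating the admissible configurations under the joint 2431/3142 restrictions. For $m\ge 3$, a parallel but simpler analysis should yield a recurrence $G_m(x)=xG_{m-1}(x)+R_m(x)$, where $R_m(x)$ is a product of $C(x)$ with elementary rational factors; summing this geometric-type recurrence over $m$ and employing the identity $xC(x)^2=C(x)-1$ should reduce everything to a single linear equation for $F_T(x)$, whose solution is the stated formula.

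The main obstacle will be the case analysis for $G_2(x)$: disentangling the joint restrictions on $\pi^{(2)}$ imposed by 2431 and 3142 relative to $i_1$, and then checking that the cumulative contribution simplifies, via $xC(x)^2=C(x)-1$, to the compact form $(x-1+(x^2-5x+2)C(x))/(1-3x+x^2)$. The Fibonacci-type denominator $1-3x+x^2$ is a strong hint that once the 132-avoidance of $\pi^{(1)}$ is factored out, a residual sub-block inherits a $\{132,3412\}$-like (or $\{132,2341\}$-like) constraint, whose generating function $(1-2x)/(1-3x+x^2)$ recurs throughout the paper.
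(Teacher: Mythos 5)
Your setup — decomposition by left-right maxima, $G_0(x)=1$, $G_1(x)=xF_T(x)$, and the observation that every block $\pi^{(j)}$ with $1\le j\le m-1$ must avoid $132$ because a later left-right maximum would complete a $1324$ — coincides with the paper's starting point and is correct. But the proposal defers exactly the step that carries all the enumerative content, and the structure you predict for that step is not what actually emerges. The paper's key lemma about the last block is: writing $\pi^{(m)}=\alpha^{(1)}\alpha^{(2)}\cdots\alpha^{(m)}$, where $\alpha^{(1)}$ lies below the earlier blocks and $\alpha^{(j)}$ ($j\ge2$) consists of the letters between $i_{j-1}$ and $i_j$ (this splitting is forced by $3142$ and $2431$), avoidance of $1324$ forces \emph{at most one} of $\alpha^{(2)},\dots,\alpha^{(m)}$ to be nonempty. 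If some $\alpha^{(j)}$, $j\ge 2$, is nonempty, it must avoid $\{213,2431\}$, whose generating function $K(x)=\frac{1-2x}{1-3x+x^2}$ is the true source of the denominator (your guess of a $\{132,3412\}$- or $\{132,2341\}$-type constraint is Wilf-equivalent but not the constraint that arises), and then $\alpha^{(1)}$ and the surviving $\pi^{(i)}$ only avoid $132$. If instead all of $\alpha^{(2)},\dots,\alpha^{(m)}$ are empty, then $\alpha^{(1)}$ need only avoid $T$, so $F_T(x)$ itself reappears inside $G_m$ for \emph{every} $m\ge2$. This gives the uniform closed form $G_m(x)=x^mC(x)^{m-1}F_T(x)+\sum_{j=2}^{m}x^mC(x)^{j}\bigl(K(x)-1\bigr)$, valid for all $m\ge2$, which is summed directly; no separate treatment of $G_2$ and no recurrence in $m$ is needed.

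Two concrete consequences for your plan. First, the recurrence you forecast, $G_m(x)=xG_{m-1}(x)+R_m(x)$ with $R_m$ a product of $C(x)$ and elementary rational factors, is false as stated: from the closed form one gets $G_m(x)-xG_{m-1}(x)=x^mC(x)^{m-2}\bigl(C(x)-1\bigr)F_T(x)+x^mC(x)^{m}\bigl(K(x)-1\bigr)$, so the remainder unavoidably carries an $F_T(x)$ term (the clean recurrence is rather $G_m(x)=xC(x)G_{m-1}(x)+x^mC(x)^2\bigl(K(x)-1\bigr)$). Second, and more importantly, the heart of the proof is precisely the case analysis you postpone (``I would compute,'' ``should yield''): recognizing the at-most-one-nonempty-middle-block dichotomy, the $\{213,2431\}$ constraint, and the self-referential $F_T(x)$ contribution from the bottom part of $\pi^{(m)}$ is the missing idea. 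Without it the proposal is a plausible roadmap in the right direction rather than a proof, and the intermediate structure you anticipate would need the corrections above before the final linear equation for $F_T(x)$, and hence the stated formula, can be derived.
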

\begin{proof}
Let $G_m(x)$ be the generating function for $T$-avoiders with $m$
left-right maxima. Clearly, $G_0(x)=1$ and $G_1(x)=xF_T(x)$.

Now let us write an equation for $G_m(x)$ with $m\geq2$. Let $\pi=i_1\pi^{(1)}i_2\pi^{(2)}\cdots i_m\pi^{(m)}\in S_n(T)$ with exactly $m$ left-right maxima. We see that
$i_1>\pi^{(1)}>\pi^{(2)}>\cdots>\pi^{(m-1)}$ (to avoid 1324) and $\pi^{(m)}$ can be written as $\alpha^{(1)}\alpha^{(2)}\cdots\alpha^{(m)}$ such that $\pi^{(m-1)}>\alpha^{(1)}$
and $i_1<\alpha^{(2)}$ (to avoid 3142) and
$\alpha^{(2)}<i_2<\alpha^{(3)}<\cdots<i_{m-1}<\alpha^{(m)}<i_m$ (to avoid 2431).
Furthermore, $\pi^{(j)}$ avoids $132$ for all $j=1,2,\ldots,m-1$  (or $i_m$ is the 4 of a 1324),
and at most one of $\alpha^{(2)}, \ldots,\alpha^{(m)}$ is nonempty (to avoid 1324).
We consider two cases according as $\alpha^{(2)}=\cdots=\alpha^{(m)}=\emptyset$ or not:
\begin{itemize}
\item
$\alpha^{(2)}=\cdots=\alpha^{(m)}=\emptyset$. Here, $\alpha^{(1)}$ only needs to avoid $T$
and we have a contribution of $x^mC(x)^{m-1}F_T(x)$
\item  There is a unique $j$ in the interval $[2,m]$ such that $\alpha^{(j)}\ne \emptyset$.
Here, $\alpha^{(1)}$ must avoid $132$ (or $\alpha^{(j)}$ contains the 4 of 1324), and
$\alpha^{(j)}$ must avoid both 213 $\approx 324$ (or $i_1$ is the 1 of a 1324) and $2431$.
Also, $\pi^{(j)}= \cdots = \pi^{(m)}=\emptyset$ (to avoid 3142). So $\pi^{(1)},\ldots,\pi^{(j-1)},\alpha^{(1)}$ each contribute $C(x)$ and we get a contribution of
$x^m C(x)^j \big(K(x)-1\big)$ where $K(x)=\frac{1-2x}{1-3x+x^2}$ is the generating function
for $\{213,2431\}$ avoiders  \cite[Seq. A001519]{Sl}.
\end{itemize}
Thus, $$G_m(x)=x^mC(x)^{m-1}F_T(x)+\sum_{j=2}^m x^m C(x)^j\big(K(x)-1\big)\,.$$

Summing over $m\geq2$ and using the expressions for $G_1(x)$ and $G_0(x)$, we obtain
$$F_T(x)=1+xF_T(x)+\sum_{m\geq2}\left(x^mC(x)^{m-1}F_T(x)+\sum_{j=2}^mx^mC(x)^j(K(x)-1)\right)\, .$$
Solving for $F_T(x)$ gives the stated \gf after simplification.
\end{proof}

\subsection{Case 195: $\{1324,2341,1243\}$}
In this subsection, let $A=\frac{1-2x}{1-3x+x^2}$ and $B=\frac{1-x+x^3}{(1-x)(1-x-x^2)}$  denote the generating functions for $F_{\{213,2341\}}(x)$ and $F_{\{132,213,2341\}}(x)$ (they can be derived from results in \cite{MV}). The first few lemmas refer to permutations with exactly 2 left-right maxima.

\begin{lemma}\label{lem195a1}
The generating function $J_m(x)$ for permutations of the form $$(n-m-1)\alpha^{(m)}n\alpha^{(m-1)}(n-1)\cdots\alpha^{(0)}(n-m)\in S_n(T)$$
with $2$ left-right maxima satisfies
$$J_m(x)=x^{m+2}+xJ_m(x)+\frac{x^{m+3}}{(1-x)^{m+1}}(A-1)+\sum_{j=1}^mx^jJ_{m+1-j}(x)\,.$$
Moreover, if $J(x,w)=\sum_{m\geq1}J_m(x)w^{m-1}$, then
$$J(x,w)=\frac{x^3((1+2w)x^3-(5+3w)x^2+(4+w)x-1)}{(wx+x-1)(x^2-3x+1)(wx^2-wx-2x+1)}\,.$$
\end{lemma}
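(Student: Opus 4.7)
The recurrence is to be obtained by a case analysis on $\pi = (n-m-1)\alpha^{(m)}n\alpha^{(m-1)}(n-1)\cdots\alpha^{(0)}(n-m) \in S_n(T)$, organized by the position of the letter $n-m-2$ — the largest letter not used in the skeleton $\{n-m-1,n-m,n-m+1,\ldots,n\}$ — and by the status of the block $\alpha^{(m)}$. When all of $\alpha^{(0)},\ldots,\alpha^{(m)}$ are empty the permutation is uniquely the skeleton itself, contributing $x^{m+2}$. When $n-m-2$ is the first letter of $\alpha^{(m)}$, deletion of $n-m-2$ produces a permutation of the same form (with parameter $m$) on $n-1$ letters, giving $xJ_m(x)$. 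For $j = 1,2,\ldots,m$, the case in which the first $j$ positions of $\pi$ after $n-m-1$ are forced (by avoidance of the three patterns in $T$ together with the positions of $n,n-1,\ldots,n-j+1$) into a specific configuration, with the suffix being a $T$-avoider of type $J_{m+1-j}$, contributes $x^j J_{m+1-j}(x)$; the $j=1$ term of this sum provides a second, distinct source of $xJ_m(x)$. Finally, the residual case, in which $\alpha^{(m)}$ contains a nonempty $\{213,2341\}$-avoider and the remaining small letters are distributed across the $m+1$ blocks $\alpha^{(0)},\ldots,\alpha^{(m)}$, yields $\frac{x^{m+3}(A-1)}{(1-x)^{m+1}}$: the factor $A-1$ records the nonempty $\{213,2341\}$-avoider and $(1-x)^{-(m+1)}$ records the free geometric distribution of independent letters across the $m+1$ blocks.

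With the recurrence in hand, $J(x,w)$ is obtained by multiplying by $w^{m-1}$ and summing over $m\ge 1$. The four contributions transform, respectively, to $\frac{x^3}{1-xw}$, $x\,J(x,w)$, $\frac{x^4(A-1)}{(1-x)(1-x-xw)}$, and $\frac{x\,J(x,w)}{1-xw}$ (the last by reindexing the convolution). This produces the linear equation
\[
\Bigl(1 - x - \tfrac{x}{1-xw}\Bigr) J(x,w) = \frac{x^3}{1-xw} + \frac{x^4(A-1)}{(1-x)(1-x-xw)}.
\]
Substituting $A - 1 = \frac{x(1-x)}{1-3x+x^2}$, clearing common factors, and simplifying produces the stated closed form
\[
J(x,w) = \frac{x^3\bigl((1+2w)x^3 - (5+3w)x^2 + (4+w)x - 1\bigr)}{(wx + x - 1)(x^2 - 3x + 1)(wx^2 - wx - 2x + 1)}.
\]

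The main obstacle is the case analysis underlying the recurrence. Verifying that the subdivision is exhaustive and disjoint requires a careful inspection of how the three forbidden patterns interact with both the decreasing skeleton $n,n-1,\ldots,n-m$ and with the multiple $\alpha^{(i)}$ blocks. In particular, isolating the configuration whose contribution is exactly $\frac{x^{m+3}(A-1)}{(1-x)^{m+1}}$ — that is, showing that the avoidance constraints force $\alpha^{(m)}$ into a $\{213,2341\}$-avoider while the other letters distribute independently — is the delicate step; the $J_{m+1-j}$ reductions, once the correct initial configuration is identified, are essentially bookkeeping.
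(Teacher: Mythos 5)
Your generating-function step is fine: multiplying the recurrence by $w^{m-1}$, summing over $m\ge 1$, and simplifying with $A-1=\frac{x(1-x)}{1-3x+x^2}$ does yield the stated closed form, and this is exactly how the paper finishes. The gap is in the combinatorial derivation of the recurrence itself, which is the real content of the lemma. The correct bookkeeping (and the paper's) is by the position of the letter $n-m-2$. If $n-m-2$ lies in $\alpha^{(s)}$ with $1\le s\le m$, then avoidance of $1243$ forbids any smaller letter to its left, so $\alpha^{(m)}=\cdots=\alpha^{(s+1)}=\emptyset$ and $n-m-2$ is the first letter of $\alpha^{(s)}$; deleting the first letter $n-m-1$ and the $m-s$ large letters preceding $\alpha^{(s)}$ gives $x^{m+1-s}J_s(x)$, i.e.\ the whole sum $\sum_{j=1}^m x^jJ_{m+1-j}(x)$, whose $j=1$ term is precisely the configuration ``$n-m-2$ is the first letter of $\alpha^{(m)}$''. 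You assign that same configuration to the standalone term $xJ_m(x)$ (via deleting $n-m-2$) and then also claim the $j=1$ term of your sum as a ``second, distinct source'' of $xJ_m(x)$ without identifying which permutations it counts; as written, these two cases count the same permutations twice, while the genuine second source of $xJ_m(x)$ --- the case $n-m-2\in\alpha^{(0)}$ occurring as its \emph{last} letter, i.e.\ immediately before the final letter $n-m$, where one deletes $n-m-2$ --- never appears in your decomposition. So your case list is neither disjoint nor exhaustive.

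The residual case is also misdescribed. When $n-m-2\in\alpha^{(0)}$ with a nonempty part $\gamma'$ to its right, avoidance of $1324$ forces every small letter left of $n-m-2$ to exceed all of $\gamma'$, avoidance of $2341$ then forces those letters ($\alpha^{(m)},\dots,\alpha^{(1)}$ together with the prefix of $\alpha^{(0)}$) to form one decreasing sequence split over $m+1$ slots (whence $(1-x)^{-(m+1)}$, which counts compositions, not ``independent'' letters), and $\gamma'$ must be a nonempty $\{132,2341\}$-avoider (same generating function as $\{213,2341\}$) sitting at the end of $\alpha^{(0)}$ --- not in $\alpha^{(m)}$ as you state; moreover the extra factor of $x$ making $x^{m+3}$ is the letter $n-m-2$ itself, which your description of this case omits. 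These forced structures are exactly the ``delicate step'' you acknowledge but defer, so the proposal reverse-engineers terms that would sum to the recurrence rather than establishing it.
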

\begin{proof}
Let us write an equation for $J_m(x)$. Let $\pi=(n-m-1)\alpha^{(m+1)}n\alpha^{(m-1)}(n-1)\cdots\alpha^{(0)}(n-m)\in S_n(T)$. If $n=m+2$ then we have a contribution of $x^{m+2}$. Otherwise, we can consider the position of the letter $n-m-2$. If the letter $n-m-2$ belongs to $\alpha^{(s)}$ with $s=1,2,\ldots,m$, then there is no letter smaller than $n-m-2$ on its left side (otherwise, $\pi$ contains $1243$), so we have a contribution of $x^{m+1-s}J_{s}$. So, we can assume that the letter $n-m-2$ belongs to $\alpha^{(0)}$, that is, $\alpha^{(0)}=\gamma(n-m-2)\gamma'$. Since $\pi$ avoids $1324$, we have $\alpha^{(m)}\cdots\alpha^{(1)}\gamma>\gamma'$. In the case $\gamma'=\emptyset$ then we have a contribution of $xJ_m(x)$. Otherwise, since $\pi$ avoids $2341$, we have that $\alpha^{(m)}\cdots\alpha^{(1)}\gamma$ is decreasing and $\gamma'$ avoids $\{132,2341\}$, which gives a contribution of $\frac{x^{m+3}}{(1-x)^{m+1}}(A-1)$. Hence, by adding all contributions, we obtain
$$J_m(x)=x^{m+2}+xJ_m(x)+\frac{x^{m+3}}{(1-x)^{m+1}}(A-1)+\sum_{j=1}^mx^jJ_{m+1-j}(x)\,.$$
Multiplying by $w^{m-1}$ and summing over $m\geq1$, we complete the proof.
\end{proof}

\begin{lemma}\label{lem195a2}
The generating function $K_m(x)$ for $T$-avoiders of the form
$$\pi=i\pi'n\alpha^{(1)}(i+m)\cdots\alpha^{(m)}(i+1)$$
with $2$ left-right maxima satisfies $K_1(x)=J_1(x)+K_2(x)$ and  for all $m\geq2$,
$$K_m(x)=J_m(x)+K_m(x)+\frac{x^{m+3}}{(1-x)(1-2x)(1-x-x^2)},$$
where $J_m(x)$ is defined in Lemma \ref{lem195a1}. Moreover,
$$K_1(x)=\frac{x^3(3x^7-10x^6+x^5+30x^4-42x^3+26x^2-8x+1)}{(2x-1)(x^2-3x+1)^2(x-1)^2(x^2+x-1)}\,.$$
\end{lemma}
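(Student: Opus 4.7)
The plan is to set up a recurrence of the form $K_m(x)=J_m(x)+K_{m+1}(x)+E_m(x)$, where $E_1(x)=0$ and $E_m(x)=\frac{x^{m+3}}{(1-x)(1-2x)(1-x-x^2)}$ for $m\ge 2$ (reading the displayed equation in the statement as having $K_{m+1}(x)$ on the right-hand side), and then to telescope this down to an explicit expression for $K_1(x)$. Fix $m\ge 1$ and let $\pi=i\pi'n\alpha^{(1)}(i+m)\alpha^{(2)}(i+m-1)\cdots\alpha^{(m)}(i+1)\in S_n(T)$ be counted by $K_m(x)$. The key is to partition these permutations according to the location of the letter $i+m+1$, which exists exactly when $n\ge i+m+2$.

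Concretely, I would distinguish three cases.
\begin{itemize}
\item If $i+m+1$ is absent, i.e.\ $n=i+m+1$, then $i$ and $n$ are consecutive values and the fixed decreasing postfix $n,i+m,\ldots,i+1$ uses the top $m+2$ values; after the obvious relabeling, $\pi$ has exactly the $J_m$-form of Lemma~\ref{lem195a1}, producing the summand $J_m(x)$.
\item If $i+m+1$ lies inside $\alpha^{(1)}$, writing $\alpha^{(1)}=\gamma'(i+m+1)\gamma''$ extends the decreasing postfix to $n,(i+m+1),(i+m),\ldots,(i+1)$ of length $m+1$, so $\pi$ has the $K_{m+1}$-form, contributing $K_{m+1}(x)$.
\item If $i+m+1$ lies inside $\alpha^{(j)}$ for some $j\in\{2,\ldots,m\}$ (a case which is vacuous when $m=1$, whence $K_1=J_1+K_2$), then the avoidance of $1324, 2341, 1243$ forces a rigid structure that yields $E_m(x)$.
\end{itemize}

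The main obstacle is the third case. I expect to show that $1324$-avoidance applied to $i,i+m,i+m+1$ together with any candidate larger letter forces $n=i+m+2$; that $2341$-avoidance applied to a rise in $\pi'$ together with $n$ and $i+m+1$ forces $\pi'$ to be decreasing (contributing $\frac{1}{1-x}$); and that $1324$ and $1243$ avoidance kill all but a controlled number of the $\alpha^{(s)}$'s. The remaining freedom splits naturally into a $\{213,231\}$-avoiding factor with \gf $L=\frac{1-x}{1-2x}$ (contributing effectively $\frac{1}{1-2x}$) and a $\{132,213,2341\}$-avoiding factor with \gf $B=\frac{1-x+x^3}{(1-x)(1-x-x^2)}$ (contributing effectively $\frac{1}{1-x-x^2}$). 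Summing over the allowed positions $j=2,\ldots,m$ should telescope, collapsing the $j$-dependence and leaving the prefactor $x^{m+3}$ that accounts for the distinguished letters $i,n,i+m+1,i+m,\ldots,i+1$.

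Once the recurrences are in hand, a telescoping sum gives
\[
K_1(x)=\sum_{m\ge 1}J_m(x)+\sum_{m\ge 2}\frac{x^{m+3}}{(1-x)(1-2x)(1-x-x^2)}=J(x,1)+\frac{x^5}{(1-x)^2(1-2x)(1-x-x^2)}.
\]
Substituting $J(x,1)=\frac{x^3(1-5x+8x^2-3x^3)}{(1-2x)(1-3x+x^2)^2}$, obtained by setting $w=1$ in the formula of Lemma~\ref{lem195a1}, and then combining over the common denominator $(1-2x)(1-3x+x^2)^2(1-x)^2(1-x-x^2)$, I expect the numerator to simplify to $x^3(1-8x+26x^2-42x^3+30x^4+x^5-10x^6+3x^7)$, matching the claimed closed form for $K_1(x)$ after routine polynomial arithmetic.
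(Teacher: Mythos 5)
Your skeleton is the same as the paper's: classify the $K_m$-form avoiders by the location of the letter $i+m+1$, read the (typo-ed) displayed recurrence as $K_m=J_m+K_{m+1}+\frac{x^{m+3}}{(1-x)(1-2x)(1-x-x^2)}$ for $m\ge2$, telescope, and add $J(x,1)$. Your first two cases are right ($n=i+m+1$ gives exactly the $J_m$-form; $i+m+1\in\alpha^{(1)}$ gives exactly the $K_{m+1}$-form; the third case is vacuous for $m=1$), and the closing algebra checks out: $J(x,1)+\frac{x^5}{(1-x)^2(1-2x)(1-x-x^2)}$ does combine to the stated closed form with numerator $x^3(1-8x+26x^2-42x^3+30x^4+x^5-10x^6+3x^7)$.

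The gap is the third case, which is the actual content of the lemma for $m\ge2$, and your sketch of it would not go through as written. First, $1324$-avoidance (via the subsequence $i,\,i+m,\,i+m-1,\,i+m+1$) already forces $i+m+1\in\alpha^{(1)}\cup\alpha^{(2)}$, so there is no range $j=2,\dots,m$ to sum over and nothing to ``telescope''; the only surviving subcase is $j=2$, which is what the paper states. Second, your claim that $1324$-avoidance forces $n=i+m+2$ here is false: for $m=2$, $i=1$, the permutation $165342=i\,n\,\alpha^{(1)}(i+2)\,\alpha^{(2)}(i+1)$ with $\alpha^{(1)}=5$, $\alpha^{(2)}=4$ avoids $1324$, $2341$ and $1243$, yet $n=i+m+3$; in general the letters above $i+m+1$ need not be absent, they are only confined (e.g.\ by $2341$ none may occur after $i+m+1$) and carry extra structure your argument does not pin down. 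Third, the proposed split into an $L$-factor ``effectively $\frac{1}{1-2x}$'' and a $B$-factor ``effectively $\frac{1}{1-x-x^2}$'' is not a computation: $B=\frac{1-x+x^3}{(1-x)(1-x-x^2)}$ does not reduce to $\frac{1}{1-x-x^2}$ without cancellations you never exhibit. So the key quantitative fact — that avoiders with $i+m+1\in\alpha^{(2)}$ are counted exactly by $\frac{x^{m+3}}{(1-x)(1-2x)(1-x-x^2)}$ — remains unproved in your write-up, and the route you indicate toward it (forcing $n=i+m+2$, summing over $j$) fails. Admittedly the paper also leaves the details of this subcase to the reader, but it at least isolates the correct dichotomy ($\alpha^{(1)}$ or $\alpha^{(2)}$ only); a complete proof must analyze where the letters $i+m+2,\dots,n-1$ and the letters below $i$ may sit and derive the stated rational function, after which your telescoping finishes the job.
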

\begin{proof}
Let us write an equation for $K_m(x)$. Clearly, $K_1(x)=J_1(x)+K_2(x)$. Let $m\geq2$, and let $\pi=i\pi'n\alpha^{(1)}(i+m)\cdots\alpha^{(m)}(i+1)\in S_n(T)$ with 2 left-right maxima. If $i=n-m-1$ then we have a contribution of $J_m(x)$ (see Lemma \ref{lem195a1}). Otherwise, since $\pi$ avoids $1243$ and $1324$, the letter $i+m+1$ belongs to either $\alpha^{(1)}$ or $\alpha^{(2)}$. The former case gives contribution of $K_{m+1}(x)$. The latter case gives a contribution of $\frac{x^{m+2}}{(1-x)(1-2x)(1-x-x^2)}$, where the proof details are left to the reader. Thus, for all $m\geq2$,
$$K_m(x)=J_m(x)+K_m(x)+\frac{x^{m+3}}{(1-x)(1-2x)(1-x-x^2)}\,.$$
Summing over $m\geq2$, we obtain
$$K_2(x)=\sum_{m\geq2}J_m(x)+\frac{x^5}{(1-x)^2(1-2x)(1-x-x^2)}\,.$$
Hence, since $K_1(x)=J_1(x)+K_2(x)$, we have
$$K_1(x)=\sum_{m\geq1}J_m(x)+\frac{x^5}{(1-x)^2(1-2x)(1-x-x^2)}\,,$$
where $\sum_{m\geq1}J_m(x)=J(x,1)$ is given in Lemma \ref{lem195a1}.
\end{proof}

\begin{lemma}\label{lem195a3}
The generating function $M(x)$ for permutations of the form $i\pi'n\pi''(i+1)\pi'''(i+2)\in S_n(T)$ with $2$ left-right maxima is given by
\begin{align*}
M(x)&=xK_1(x)+\frac{x^4}{(1-x)(1-2x)}(A-1)\\
&=\frac{x^4(2x^7-6x^6-x^5+22x^4-30x^3+20x^2-7x+1)}{(2x-1)(x^2-3x+1)^2(x-1)^2(x^2+x-1)}.
\end{align*}
\end{lemma}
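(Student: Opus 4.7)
The plan is to decompose $M(x)$ according to whether $\pi'''$ is empty or nonempty. This parallels the case analyses of Lemmas~\ref{lem195a1} and~\ref{lem195a2}, and matches the two summands of the claimed formula: the term $xK_1(x)$ corresponds to $\pi'''=\emptyset$, while the term $\frac{x^4(A-1)}{(1-x)(1-2x)}$ corresponds to $\pi'''\neq\emptyset$.

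First I would establish the standing structural constraints for $\pi=i\pi'n\pi''(i+1)\pi'''(i+2)\in S_n(T)$ with exactly two left-right maxima. Since $i$ and $n$ are the only left-right maxima, every letter of $\pi'$ is less than $i$. If some $k\in\pi'''$ satisfied $k>i+2$, the subsequence $i,(i+1),k,(i+2)$ would form an occurrence of $1243$, so $\pi'''<i$. Finally, any ascent $\pi_a<\pi_b$ in $\pi'$ would, together with $n$ and $(i+1)$, form a $1243$, so $\pi'$ is strictly decreasing.

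For the case $\pi'''=\emptyset$, I would show that deletion of the final $(i+2)$ gives a bijection onto the class of permutations enumerated by $K_1(x)$ in Lemma~\ref{lem195a2}. The forward direction is immediate, since any subpermutation of a $T$-avoider is a $T$-avoider. For the reverse, a direct pattern check confirms that appending $(i+2)$ to an arbitrary $T$-avoider $i\pi'n\pi''(i+1)$ introduces no new occurrence of a pattern in $T$: a new $1243$ with last letter $(i+2)$ would require $\pi_a<\pi_b<i+2<\pi_c$ with positions $a<b<c<\mathrm{pos}(i+2)$, but then $\pi_a\pi_b\pi_c(i+1)$ would already be a $1243$ in the shorter permutation; a new $2341$ with last letter $(i+2)$ would require three increasing letters all exceeding $i+2$ preceding it, which likewise yield a $2341$ with $(i+1)$ in place of $(i+2)$; and a new $1324$ with last letter $(i+2)$ is ruled out by a similar argument (the only potential new $1324$ places three letters in $\pi''$ whose standardization is $132$, which forms a $1324$ with $(i+1)$ already). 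This yields the contribution $xK_1(x)$, the extra $x$ marking the appended $(i+2)$.

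For the case $\pi'''\neq\emptyset$, I would analyze how $\pi''$, $\pi'''$, and the placements of the letters below $i$ interact. The forbidden patterns force $\pi'$ to be a (possibly empty) decreasing string (contributing a factor $1/(1-x)$), the portion of $\pi''$ interacting with $\pi'''$ to form a $\{213,231\}$-avoiding substructure (contributing a factor $1/(1-2x)$ after cancellation of a $(1-x)$), and $\pi'''$, being nonempty and constrained to avoid $\{213,2341\}$ on its standardization, to contribute $A-1$. Together with $x^4$ for the four fixed letters $i$, $n$, $(i+1)$, $(i+2)$, this gives $\frac{x^4(A-1)}{(1-x)(1-2x)}$. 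Summing the two cases yields $M(x)=xK_1(x)+\frac{x^4(A-1)}{(1-x)(1-2x)}$, and substituting the expressions for $K_1(x)$ from Lemma~\ref{lem195a2} and $A=\frac{1-2x}{1-3x+x^2}$ then clearing a common denominator produces the stated rational function. The main obstacle is the bookkeeping in the nonempty-$\pi'''$ case: one must carefully track, for each letter of $\pi''$, whether it lies above $i+2$, between $i$ and $i+2$, or below $i$, and check which pairs of regions can interact to create occurrences of the forbidden patterns (most delicately $1324$, which can span three distinct regions). The delicacy lies in verifying that once all necessary constraints are imposed, the three factors $1/(1-x)$, $1/(1-2x)$, and $A-1$ decouple cleanly as a product.
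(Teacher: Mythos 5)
Your split into the cases $\pi'''=\emptyset$ (giving $xK_1(x)$ via appending $i+2$, with the pattern checks) and $\pi'''\neq\emptyset$ is exactly the paper's decomposition, and the first case is fine. The gap is in the second case, where your structural claims are not correct and the factorization is not actually derived. First, $\pi'''$ does \emph{not} have to avoid $213$: the permutation $4\,7\,5\,2\,1\,3\,6$ is a $T$-avoider of the required form ($i=4$, $\pi'=\pi''=\emptyset$, $\pi'''=213$), so your stated condition ``avoids $\{213,2341\}$'' is false; the correct condition is that $\pi'''$ avoids $\{132,2341\}$ (a $132$ in $\pi'''$ makes a $1324$ with the final $i+2$). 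Your formula survives only because $\operatorname{Av}(132,2341)$ and $\operatorname{Av}(213,2341)$ happen to have the same generating function $A$, a fact you neither state nor use. Second, the claim that $\pi''$ (or its ``portion interacting with $\pi'''$'') is a $\{213,231\}$-avoiding structure contributing $1/(1-2x)$ ``after cancellation of a $(1-x)$'' is not a derivation and is false as a characterization: in the $T$-avoider $4\,9\,3\,8\,2\,7\,5\,1\,6$ (with $\pi'''=1$) one has $\pi''=3\,8\,2\,7$, which contains both $213$ and $231$.

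What actually has to be proved, and is the real content of the lemma, is the following consequence of $\pi'''\neq\emptyset$: fixing any $c\in\pi'''$, (i) every letter of $\pi'\pi''$ smaller than $i$ exceeds every letter of $\pi'''$ (otherwise $a,(i+1),b,(i+2)$ is a $1324$); (ii) these small letters form a single decreasing sequence (an ascent inside $\pi'$ gives the $1243$ $a\,b\,n\,(i+1)$, and an ascent ending in $\pi''$ gives the $2341$ $a\,b\,(i+1)\,c$); (iii) the letters of $\pi''$ larger than $i+2$ are decreasing (an ascent $u<v$ gives the $2341$ $i\,u\,v\,c$); and (iv) $\pi'''$ avoids $\{132,2341\}$, these conditions being sufficient as well. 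Then $\pi'$ is a decreasing string ($1/(1-x)$), $\pi''$ is an \emph{arbitrary shuffle} of two decreasing sequences, one below $i$ and one above $i+2$, which is what gives $1/(1-2x)$ directly, and $\pi'''$ contributes $A-1$, yielding $\frac{x^4}{(1-x)(1-2x)}(A-1)$. Your proposal acknowledges this bookkeeping as ``the main obstacle'' but does not carry it out, and the shortcuts it offers in its place are incorrect; so as written the nonempty-$\pi'''$ case is a genuine gap.
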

\begin{proof}
Let us write an equation for $M(x)$. By Lemma \ref{lem195a2}, the case $\pi'''=\emptyset$ gives a contribution of $xK_1(x)$. Otherwise, $\pi'\pi''>\pi'''$ and the subsequence of $\pi'\pi''$ consisting of letters smaller than $i$ is decreasing, and the subsequence of $\pi''$ consisting of letters greater than $i$ is also decreasing, and $\pi'''$ is a nonempty permutation that avoids $\{132,2341\}$. Hence, we have a contribution of $\frac{x^4}{(1-x)(1-2x)}(A-1)$. By adding all the contributions, we complete the proof.
\end{proof}

\begin{lemma}\label{lem195a4}
The generating function $N(x)$ for permutations of the form $i\pi'n\pi''(i+2)\pi'''(i+3)(i+1)\in S_n(T)$ with $2$ left-right maxima is given by
\begin{align*}
N(x)&=\frac{x^5}{(2x-1)(x^2+x-1)}\,.
\end{align*}
\end{lemma}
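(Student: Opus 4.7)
The plan is to derive an equation for $N(x)$ by a structural case analysis on the free blocks $\pi'$, $\pi''$, $\pi'''$, following the style of Lemmas \ref{lem195a1}--\ref{lem195a3}. The five fixed letters $i$, $n$, $i+2$, $i+3$, $i+1$ account for the factor $x^5$, and the remaining enumeration should simplify to $1/((2x-1)(x^2+x-1)) = 1/((1-2x)(1-x-x^2))$.

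First I would extract the constraints that $T$-avoidance places on the three blocks. Having exactly two left-right maxima forces $\pi' \subseteq \{1,\ldots,i-1\}$. A $1243$ test on the quadruple $i,(i+2),y,(i+3)$ shows that no letter of $\pi'''$ can exceed $i+3$, so in fact $\pi''' \subseteq \{1,\ldots,i-1\}$. A $2341$ test on $v_1,v_2,n,(i+3)$ with $v_1 < v_2$ both drawn from $\pi'' \cap \{i+4,\ldots,n-1\}$ forces the ``large'' letters of $\pi''$ to form a decreasing subsequence. Finally, a $1324$ test on $a,(i+2),c,(i+3)$ with $a \in \pi' \cup \pi''$ and $c \in \pi'''$ forces every letter of $\pi'''$ to lie weakly below every small letter of $\pi' \cup \pi''$.

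Next I would split on whether $\pi'''$ is empty. In the case $\pi'''=\emptyset$, the configuration reduces to $i\pi'n\pi''(i+2)(i+3)(i+1)$, and a further decomposition by the position of the next smallest element (exactly as in the proof of Lemma \ref{lem195a2}) expresses this contribution in terms of the $J_m$ and $K_m$ series already computed. In the case $\pi'''\neq\emptyset$, the constraints above make $\pi'''$ a decreasing run of very small letters that is disjoint in value from the rest, so the only remaining freedom lies in how the small letters of $\pi'\cup\pi''$ and the decreasing large letters of $\pi''$ interleave. The binary choice between placing a fresh small letter in $\pi'$ or in $\pi''$ (subject to $1324$ and $2341$ avoidance) should contribute the factor $1/(1-2x)$, while the interleaving of the decreasing large block with the small letters inside $\pi''$ should contribute the Fibonacci-type factor $1/(1-x-x^2)$, echoing the $1/(1-x-x^2)$ already present in $B$.

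The main obstacle will be keeping the $\pi'''\neq\emptyset$ bookkeeping clean enough for these two factors to separate cleanly. A natural auxiliary refinement --- further splitting on whether $\pi''\cap\{i+4,\ldots,n-1\}$ is empty --- should isolate the Fibonacci contribution, reducing the remaining algebra to routine simplification. Once the sub-generating-functions are in hand, adding them to the $\pi'''=\emptyset$ contribution and simplifying using the identities established earlier (in particular $K_1(x)=J_1(x)+K_2(x)$ and the closed form for $\sum_{m\ge 1}J_m(x)$ from Lemma \ref{lem195a1}) should collapse the whole expression to $x^5/((2x-1)(x^2+x-1))$.
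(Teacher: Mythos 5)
Your top-level split (on whether $\pi'''$ is empty) matches the paper's, but the structural analysis underneath has real problems. The claimed $2341$ test on ``$v_1,v_2,n,(i+3)$'' is not even a positionally valid subsequence, since $n$ precedes $\pi''$; and the conclusion you draw from it --- that the large letters of $\pi''$ must always be decreasing --- is false when $\pi'''=\emptyset$. For instance $\pi=1\,7\,5\,6\,3\,4\,2$ is of the required form ($i=1$, $\pi''=56$, $\pi'''=\emptyset$) and avoids $T$, yet its large letters $5,6$ ascend. The decreasing property is forced only when a smaller letter follows, e.g.\ a letter $c\in\pi'''$ makes $(i,v_1,v_2,c)$ an occurrence of $2341$; this is exactly why the paper invokes it only in the $\pi'''\neq\emptyset$ case (``as in the previous lemma''). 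You also miss the constraints the paper actually needs there: with $\pi'''\neq\emptyset$, the letters of $\pi'\pi''$ below $i$ must be decreasing (a rise $a<b$ followed by $n$ or $i+2$ and then $c\in\pi'''$ gives $2341$), and $\pi'''$ itself must be decreasing, since a rise $c_1<c_2$ in $\pi'''$ together with the trailing $(i+3)(i+1)$ gives $1243$ --- a restriction stronger than in Lemma \ref{lem195a3}, where $\pi'''$ only avoids $\{132,2341\}$. Without these facts you cannot obtain the nonempty-case contribution.

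The $\pi'''=\emptyset$ case, which is where the Fibonacci factor actually enters (the paper records its contribution as $\frac{x^5(x^3+(1-x)^2)}{(1-x)(1-2x)(1-x-x^2)}$, proof omitted), is not done at all in your proposal. Asserting that it reduces to the $J_m$/$K_m$ series ``exactly as in the proof of Lemma \ref{lem195a2}'' does not go through: in $K_m$ the large anchors $(i+m),\dots,(i+1)$ occur in decreasing order with $(i+1)$ last, whereas here the configuration is $(i+2)$ then $(i+3)$ then $(i+1)$, with $i+3$ \emph{after} $i+2$, so the case analysis of Lemma \ref{lem195a2} (which tracks where $i+m+1$ sits relative to $\alpha^{(1)},\alpha^{(2)}$) does not transfer verbatim. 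Likewise the attributions ``$1/(1-2x)$ from the $\pi'$-versus-$\pi''$ choice'' and ``$1/(1-x-x^2)$ from interleaving the large block'' are guesses rather than derivations; in the paper the factor $\frac{1}{(1-x)(1-2x)}$ comes from the decreasing smalls split across $\pi',\pi''$ shuffled with the decreasing larges in the nonempty case, and $\frac{1}{1-x-x^2}$ comes from the omitted empty case (via $B=F_{\{132,213,2341\}}$). As it stands, the proposal establishes neither of the two contributions whose sum gives $N(x)=\frac{x^5}{(2x-1)(x^2+x-1)}$, so the proof is incomplete and rests on an incorrect structural lemma.
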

\begin{proof}
Let us write an equation for $N(x)$. If $\pi'''\neq\emptyset$, then, as in the previous lemma, the subsequence of $\pi'\pi''$ consisting of letters smaller than $i$ is decreasing, and the subsequence of $\pi''$ consisting of letters greater than $i$ is also decreasing. Thus, we have a contribution of $\frac{x^6}{(1-x)(1-2x)}$. If $\pi'''=\emptyset$, we get a contribution of $\frac{x^5(x^3+(1-x)^2)}{(1-x)(1-2x)(1-x-x^2)}$ (proof omitted). Adding the two contributions, we complete the proof.
\end{proof}

\begin{lemma}\label{lem195a5}
Define the following generating functions for $T$-avoiders with $2$ left-right maxima:
\begin{itemize}
\item $B_m(x)$  for permutations of the form  $i\pi'n\pi''\in S_n(T)$  such that $\pi''$ contains the subsequence $(i+m)(i+m-1)\cdots(i+1)$;
\item $H_m(x)$ for permutations of the form $$(n-m-1)\alpha^{(m+1)}n\alpha^{(m)}(n-1)\cdots\alpha^{(1)}(n-m)\alpha^{(0)}\in S_n(T);$$
\item $E_m(x)$ for permutations of the form  $$(n-m-1)\alpha^{(m+1)}n\alpha^{(m)}(n-1)\cdots\alpha^{(1)}(n-m)\alpha^{(0)}\in S_n(T)$$ such that the letter $(n-m-2)$
belongs to $\alpha^{(1)}$;
\item $D_m(x)$ for permutations of the form $$(n-m-1)\alpha^{(m+1)}n\alpha^{(m)}(n-1)\cdots\alpha^{(1)}(n-m)\alpha^{(0)}\in S_n(T)$$ such that the letter $(n-m-2)$
belongs to $\alpha^{(0)}$.
\end{itemize}
Then for all $m\geq1$,
\begin{align*}
(a)\,\,& B_m(x)=H_m(x)+B_{m+1}(x)+\frac{x^{m-2}}{1-x}N(x)\mbox{ with }B_1(x)=H_1(x)+B_2(x)+\frac{1}{1-x}M(x);\\
(b)\,\,& H_m(x)=x^{m+2}+E_m(x)+D_m(x)+\sum_{j=1}^mx^jH_{m+1-j}(x);\\
(c)\,\,& D_m(x)=H_{m+1}(x);\\
(d)\,\,& E_m(x)=x^{m+2}(F_T(x)-1)+\frac{x^{m+4}B}{1-x}(t^{m+1}-1)+x^{m+3}(t^{m+1}-1)(A-1/(1-x))\\
&\qquad\qquad+x(J_m(x)-x^{m+2})+x^{m+3}(B-1)(t^{m+1}-1),
\end{align*}
with $t=1/(1-x)$, where $M(x),N(x),J_m(x)$ are defined in the three preceding lemmas.
\end{lemma}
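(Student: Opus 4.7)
My proof plan for Lemma~\ref{lem195a5} treats each of the four identities (a)--(d) separately; all are decompositions obtained by partitioning the underlying family according to the location of one distinguished letter, then applying the three forbidden patterns to identify the structure of each surviving sub-case.

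For (a), I would partition $B_m$ by the role of the letters in $\{i+m+1,\ldots,n-1\}$. When $i+m=n-1$ there are none, and the resulting block structure of $\pi$ is exactly $H_m$. Otherwise, let $i+m+1$ be the smallest ``new'' large letter: if it lies before $i+m$ in $\pi''$ the decreasing run extends, so $\pi\in B_{m+1}$; if it lies after $i+m$, the three forbidden patterns combine to rigidify the tail. In this third case, 1243 applied to $(i,i+m,\cdot,\cdot)$ forces $i+m+1$ to be the unique element of $\{i+m+1,\ldots,n-1\}$ appearing after $i+m$; 1324 applied to $(i,i+m,i+1,\cdot)$ keeps it strictly inside the decreasing run; and 2341 applied to $(i,i+m,i+m+1,\cdot)$ forces $i+1$ to be the last letter of $\pi$. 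What remains is an $N$-shape at the tail, inflated by the deterministic chain $(i+m)(i+m-1)\cdots(i+3)$ together with the free decomposition of the blocks hosting $i+m+1$, producing the factor $x^{m-2}/(1-x)$. For $m=1$ the inner chain is empty and the $N$-shape degenerates into an $M$-shape, giving the stated base case.

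Part (b) is a decomposition of $H_m$ by the block $\alpha^{(s)}$ that receives the letter $n-m-2$. The cases $s=0$ and $s=1$ are by definition $D_m$ and $E_m$; the degenerate case $n=m+2$ yields the atom $x^{m+2}$; and for $s\in\{2,\ldots,m+1\}$ the avoidance constraints force $n-m-2$ to be leftmost in $\alpha^{(s)}$, so that together with the preceding $s-1$ big-letter blocks it re-parses as a smaller $H_{m+2-s}$-structure led by this new minimum, contributing $x^{s-1}H_{m+2-s}$; the substitution $j=s-1$ yields the stated sum. Part (c) is a bijection: since $n-m-2$ lies in $\alpha^{(0)}$, re-reading the factorisation with $n-m-2$ in the role of the leading small letter places $\pi$ in exact $H_{m+1}$-form, and one checks that the $T$-avoidance conditions on the two shapes coincide.

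For (d), I would write $\alpha^{(1)}=\beta(n-m-2)\gamma$ and split into five regimes according to whether $\gamma$ is empty, whether $\gamma$ has a letter exceeding $n-m-2$, and whether the high blocks $\alpha^{(2)}\cdots\alpha^{(m+1)}$ are empty. In each regime the $T$-avoidance constraints force the free sub-block to belong to a specific pattern class, producing one of the factors $F_T$, $A=F_{\{213,2341\}}$, or $B=F_{\{132,213,2341\}}$; the geometric factor $t^{m+1}-1$ (with $t=1/(1-x)$) records the choice of one of the $m+1$ upper blocks $\alpha^{(1)},\ldots,\alpha^{(m+1)}$ into which a nonempty decreasing tail of large letters can be inserted; and the fourth term $x(J_m-x^{m+2})$ comes from configurations where the rigid $J_m$-skeleton of Lemma~\ref{lem195a1} is extended by one prepended letter. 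Summing the five contributions produces the stated expression. The principal obstacle is precisely (d): each of the five terms corresponds to a distinct structural regime of $E_m$, and the delicate point is verifying both that the $T$-avoidance constraints in each regime pin down exactly the claimed pattern class for every free subblock and that the five regimes partition $E_m$ with no overlap. This requires running the same ``1324/2341/1243 interacting with $(i,i+m,\ldots)$'' analysis as in (a), but now simultaneously on three or four subblocks and with an additional free parameter (the position of $n-m-2$ inside its block), which is where most of the bookkeeping lies.
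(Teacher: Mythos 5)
Your overall skeleton matches the paper's (case analysis on the position of the next smallest unplaced letter, re-parsing into the auxiliary families $H,B,E,D,J,M,N$), and parts (b) and (c) are essentially right: in (b) your per-case attribution is transposed --- if $n-m-2$ lies in $\alpha^{(s)}$, $2\le s\le m+1$, one strips the first letter together with the $m+1-s$ top letters preceding that block, so the contribution is $x^{m+2-s}H_{s-1}(x)$, not $x^{s-1}H_{m+2-s}(x)$, though the two families of terms coincide after summing; in (c) the map is a swap of the values $n-m-1$ and $n-m-2$ (not a mere re-parsing), which is why one must check that $T$-avoidance is preserved in both directions. The genuine problems are in (a) and (d). In (a), the claimed rigidification is wrong and in fact inconsistent with the very terms you are deriving: $1243$ does \emph{not} force $i+m+1$ to be the unique element of $\{i+m+1,\dots,n-1\}$ after $i+m$, and $i+1$ is not the last letter when $m=1$. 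What the avoidance actually forces is that the letters above the chain that occur from $i+m$ onward form an increasing run of \emph{consecutive} values $(i+m+1)(i+m+2)\cdots(i+m+r)$ with $r\ge1$ (all larger letters occurring before the chain, and no small letter after $i+m+1$); it is precisely this run of arbitrary length, together with the deterministic part of the chain, that produces the factor $x^{m-2}/(1-x)$ in front of $N(x)$ (and, for $m=1$, the shape $i\pi'n\gamma(i+1)\gamma'(i+2)(i+3)\cdots(i+s)$ giving $M(x)/(1-x)$, where the permutation ends with the run, not with $i+1$). With your ``uniqueness'' claim the $1/(1-x)$ factor would not appear at all.

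In (d) the proposed partition would not generate the five stated terms. The criterion ``whether $\gamma$ has a letter exceeding $n-m-2$'' is vacuous: every block letter other than $n-m-2$ itself is smaller than $n-m-1$, hence smaller than $n-m-2$. The split that works is: (i) all material to the left of $n-m-2$ other than the forced letters (that is, $\alpha^{(m+1)},\dots,\alpha^{(2)}$ and the prefix of $\alpha^{(1)}$) is empty, giving $x^{m+2}(F_T(x)-1)$; otherwise (ii) the suffix $\gamma'$ of $\alpha^{(1)}$ after $n-m-2$ is nonempty, split according as $\gamma'$ is decreasing or contains an ascent, giving $\frac{x^{m+4}B}{1-x}(t^{m+1}-1)$ and $x^{m+3}(t^{m+1}-1)(A-1/(1-x))$; and (iii) $\gamma'=\emptyset$, split according as $\alpha^{(0)}$ is empty or not, giving $x(J_m(x)-x^{m+2})$ and $x^{m+3}(B-1)(t^{m+1}-1)$. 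Your reading of $t^{m+1}-1$ is also incorrect: it is $\frac{1}{(1-x)^{m+1}}-1$, the generating function for a \emph{nonempty distribution} of (forced decreasing) letters among the $m+1$ slots to the left of $n-m-2$, not ``the choice of one of the $m+1$ upper blocks'' (which would give $(m+1)\frac{x}{1-x}$); and the term $x(J_m(x)-x^{m+2})$ arises by deleting the letter $n-m-2$, which sits immediately before the final letter $n-m$, the subtraction of $x^{m+2}$ removing the all-blocks-empty configuration already counted in (i) --- not by prepending a letter to a $J_m$-skeleton. So while the route is the paper's, the analysis as written would fail to reproduce identities (a) and (d).
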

\begin{proof}
(a) To write an equation for $B_1(x)$, suppose $\pi=i\pi'n\pi''\in S_n(T)$ with 2 left-right maxima. If $n=i+1$, then we have $H_1(x)$. Otherwise, the letter $i+2$ appears either on the left side of $i+1$, which leads to a contribution of $B_2(x)$, or on right side of $i+1$. In the latter case $\pi$ can be written as
$\pi=i\pi'n\gamma(i+1)\gamma'(i+2)(i+3)\cdots(i+s)$ with $s\geq2$, which gives a contribution of $\frac{1}{1-x}M(x)$, see Lemma \ref{lem195a3}. Thus, $B_1(x)=H_1(x)+B_2(x)+\frac{1}{1-x}M(x)$.

Let us write an equation for $B_m(x)$ with $m\geq2$. As in the case $B_1(x)$, we obtain
$$B_m(x)=H_m(x)+B_{m+1}(x)+\frac{x^{m-2}}{1-x}N(x)\,,$$
where $N(x)$ is defined in Lemma \ref{lem195a4}.

(b) The recurrence for the generating function $H_m(x)$ can be obtained by using very similar techniques as in Lemma \ref{lem195a1} using the definitions of $E_m(x)$ and $D_m(x)$.

(c) By mapping each permutation $(n-m-1)\alpha^{(m+1)}n\alpha^{(m)}(n-1)\cdots\alpha^{(1)}(n-m)\alpha^{(0)}$ to
$(n-m-2)\alpha^{(m+1)}n\alpha^{(m)}(n-1)\cdots\alpha^{(1)}(n-m-1)\alpha^{(0)}$, we obtain the required relation.

(d) Let us write an equation for $E_m(x)$. Let $$(n-m-1)\alpha^{(m+1)}n\alpha^{(m)}(n-1)\cdots\alpha^{(1)}(n-m)\alpha^{(0)}\in S_n(T)$$
with 2 left-right maxima and $\alpha^{(1)}=\gamma(n-m-2)\gamma'$ where the letter $n-m-2$ belongs to $\alpha^{(1)}$. The contribution of the case
$\alpha'=\alpha^{(m+1)}\cdots\alpha^{(2)}\gamma=\emptyset$ is $x^{m+2}(F_T(x)-1)$. So we can assume that $\alpha'\neq\emptyset$. If $\gamma'\neq\emptyset$ then by considering whether
$\gamma'$ is decreasing or not, we get contributions of $\frac{x^{m+4}}{1-x}(t^{m+1}-1)B$ and $x^{m+1}(t^{m+1}-1)(A-1/(1-x))$, respectively. Thus, we can assume that $\alpha'\neq\emptyset$ and $\beta=\emptyset$, and by considering either $\alpha^{(0)}$ is empty or not, we obtain the contributions $x(J_m(x)-x^{m+2})$ and $x^{m+3}(t^{m+1}-1)(B-1)$, respectively. Sum all  contributions to complete the proof.
\end{proof}

By Lemma \ref{lem195a5} (b) and (c), we have
$$H_m(x)=x^{m+2}+E_m(x)+H_{m+1}(x)+\sum_{j=1}^mx^jH_{m+1-j}(x)\,.$$
Define $H(x,w)=\sum_{m\geq1}H_m(x)w^{m-1}$ and $E(x,w)=\sum_{m\geq1}E_m(x)w^{m-1}$. Thus, this recurrence can be written as
$$H(x,w)=\frac{x^3}{1-xw}+\frac{x}{1-xw}H(x,w)+E(x,w)+\frac{1}{w}\big(H(x,w)-H(x,0)\big)\, ,$$
which implies
$$\left(w-1-\frac{xw}{1-xw}\right)H(x,w)=\frac{x^3w}{1-xw}+E(x,w)-H(x,0)\, .$$
This type of functional equation can be solved systematically using
the {\em kernel method} (see, e.g., \cite{HM} for an exposition) by taking $w=C(x)$. We find
\begin{align}
H(x,0)=\frac{x^3C(x)}{1-xC(x)}+E(x,C(x))=x^3C^2(x)+E(x,C(x))\, .  \label{eq195H1}
\end{align}
Lemma \ref{lem195a5}(d) and some mathematical programming yields
\begin{align*}
E(x,w)&=\frac{x^3}{1-wx}(F_T(x)-1)\\
&-\frac{x^5(-3x^7+11x^6-11x^5-5x^4+20x^3-22x^2+11x-2)}{(x-1)^3(wx+x-1)(x^2-3x+1)(wx^2-wx-2x+1)(wx-1)(x^2+x-1)}\\
&-\frac{x^6w((x^2-5x+3)(x^3-x^2-2x+1)+x(x-1)(x^3-x^2-2x+1)w)}{(x-1)^3(wx+x-1)(x^2-3x+1)(wx^2-wx-2x+1)(wx-1)}\, ,
\end{align*}
which leads to
\begin{align}
E(x,C(x))&=x^3C(x)(F_T(x)-1)\notag\\
&-\frac{x^5C^6(x)(-3x^7+11x^6-11x^5-5x^4+20x^3-22x^2+11x-2)}{(x-1)^3(x^2-3x+1)(x^2+x-1)}\label{eq195E1}\\
&-\frac{x^6C^7(x)((x^2-5x+3)(x^3-x^2-2x+1)+x(x-1)(x^3-x^2-2x+1)C(x))}{(x-1)^3(x^2-3x+1)}.\notag
\end{align}

Now we are ready to give the main result of this subsection.

\begin{theorem}\label{th195a}
Let $T=\{1324,2341,1243\}$. Then $F_T(x)=$
\[
\frac{ (1 - 7 x + 19 x^2 - 25 x^3 + 13 x^4 + 4 x^5 - 5 x^6 + x^7)C(x) -1 + 7 x - 19 x^2 + 23 x^3 - 7 x^4 - 7 x^5 + 4 x^6}{x(1 - x)^2  (1 - 3 x + x^2) (1 - x - x^2)}\, .
\]

\end{theorem}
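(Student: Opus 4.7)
The plan is to write $F_T(x)=1+xF_T(x)+G_2(x)+\sum_{m\ge 3}G_m(x)$ and convert this into a linear equation for $F_T(x)$ using the machinery of Lemmas~\ref{lem195a1}--\ref{lem195a5} together with both the kernel specialization already carried out and a second specialization of the functional equation at $w=1$.

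The first step is to exploit the functional equation
\[
\Bigl(w-1-\frac{xw}{1-xw}\Bigr)H(x,w)=\frac{x^3w}{1-xw}+E(x,w)-H(x,0)
\]
at $w=1$. The coefficient $-x/(1-x)$ is a unit, so
\[
H(x,1)=\frac{1-x}{x}\bigl(H(x,0)-E(x,1)\bigr)-x^2,
\]
where $H(x,0)=H_1(x)$ is already supplied by \eqref{eq195H1}--\eqref{eq195E1}. The quantity $E(x,1)$ is obtained by summing the five-term expression of Lemma~\ref{lem195a5}(d) over $m\ge 1$: the first term contributes $\frac{x^3}{1-x}(F_T(x)-1)$, the $J_m$ piece contributes $x(J(x,1)-x^3/(1-x))$ via Lemma~\ref{lem195a1}, and the remaining three terms are elementary geometric sums in $t=1/(1-x)$. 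Thus $H(x,1)$ is determined, and it is linear in the unknown $F_T(x)$.

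Next I would assemble $G_2(x)$. Telescoping Lemma~\ref{lem195a5}(a) yields
\[
B_1(x)=\sum_{m\ge 1}H_m(x)+\frac{M(x)}{1-x}+\frac{N(x)}{(1-x)^2}=H(x,1)+\frac{M(x)}{1-x}+\frac{N(x)}{(1-x)^2},
\]
so $G_2(x)=B_1(x)+G_2^{*}(x)$, where $G_2^{*}(x)$ counts the residual 2-left-right-maxima avoiders of the form $(n-1)\pi'n\pi''$. A short direct decomposition of $G_2^{*}(x)$ (analogous to the proof of Lemma~\ref{lem195a5}(d) but with the descending staircase absent) produces an explicit rational function. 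For $m\ge 3$, the three forbidden patterns conspire rigidly: avoiding $1324$ forces $\pi^{(j)}<i_1$ for $2\le j\le m-1$, avoiding $2341$ then collapses $\pi^{(1)},\ldots,\pi^{(m-1)}$ to at most one nonempty, highly restricted block, and avoiding $1243$ pins down the structure of $\pi^{(m)}$. A routine case analysis, in the style of the corresponding step in Cases~132 and~156, should show that $G_m(x)$ satisfies a recurrence $G_m(x)=xG_{m-1}(x)+\rho_m(x)$ with $\rho_m$ an elementary multiple of $x^m$, so $\sum_{m\ge 3}G_m(x)$ sums to an explicit rational function.

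Substituting all the closed forms (namely $J(x,1)$, $K_1(x)$, $M(x)$, $N(x)$, $E(x,C(x))$, $E(x,1)$, and the just-computed $H(x,1)$) into the left-right-maxima decomposition of $F_T(x)$ produces a linear equation in $F_T(x)$ over $\mathbb{Q}(x)[C(x)]$, which one solves and then simplifies using $xC(x)^2=C(x)-1$. The main obstacle I expect is algebraic bookkeeping: several intermediate expressions already have degree-seven numerators, the $C(x)$-dependence must remain strictly linear throughout the manipulation (or the final formula's linear-in-$C(x)$ shape would not materialize without artificial cancellation), and the residual case $G_2^{*}$ has to be carved out with care so that no avoiders are double-counted against $B_1(x)$. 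Beyond that the computation is a mechanical symbolic simplification.
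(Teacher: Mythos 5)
Your overall route is the same as the paper's: decompose by left-right maxima, get $H(x,0)$ from the kernel specialization $w=C(x)$, recover $H(x,1)$ from the functional equation at $w=1$ (your formula $H(x,1)=\frac{1-x}{x}(H(x,0)-E(x,1))-x^2$ agrees with the paper's), telescope Lemma \ref{lem195a5}(a) to assemble $B_1(x)$ together with $M(x)$ and $N(x)$, add the $m\ge 3$ contributions, and solve the resulting equation, linear in $F_T(x)$ over $\mathbb{Q}(x)[C(x)]$. Two steps, however, would not go through as you describe them.

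First, the residual two-left-right-maxima case $G_2^{*}$ (first letter $n-1$) is not an explicit rational function and no direct decomposition will make it one: writing a nonempty $T$-avoider around its maximum as $\pi'\,\max\,\pi''$ and prepending a new second-largest letter never creates a pattern of $T$ (that letter could only serve as the first letter of an occurrence, which would require at least two larger letters to its right, while only $n$ is available), so these avoiders are in bijection with \emph{all} nonempty $T$-avoiders and $G_2^{*}(x)=x\big(F_T(x)-1\big)$, which involves $C(x)$. This is exactly what the paper uses, and it is harmless for your plan—it merely adds another term linear in $F_T(x)$—but hunting for a closed rational form there would stall. Second, for $m\ge 3$ the paper does not use a recurrence $G_m=xG_{m-1}+\rho_m$; it computes the contributions directly, obtaining $G_3=x^3A+\frac{x^3(A-1)}{1-x}+\frac{x^3B}{(1-x)^2}$ and $G_m=x^mA+\frac{x^m(A-1)}{1-x}$ for $m\ge 4$ (in particular more than one of the blocks may be simultaneously nonempty, so the structure is not ``at most one nonempty block''), and your recurrence cannot hold at $m=3$, since $G_2$ contains the non-rational term $x(F_T(x)-1)$ while $G_3$ is rational. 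The conclusion you actually need—that $\sum_{m\ge3}G_m$ is the explicit rational function $\frac{x^3}{1-x}A+\frac{x^3}{(1-x)^2}(A+B-1)$—is correct, so with these two repairs your assembly coincides with the paper's proof and the remaining work is, as you say, symbolic simplification using $xC(x)^2=C(x)-1$.
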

\begin{proof}
Let $G_m(x)$ be the generating function for $T$-avoiders with $m$
left-right maxima. Clearly, $G_0(x)=1$ and $G_1(x)=xF_T(x)$.

First, we treat $G_2(x)$. Each $\pi=i\pi'n\pi''\in S_n(T)$ with 2 left-right maxima can be decomposed as either $(n-1)\pi'n\pi''$ or $i\pi'n\pi''$ with $1\leq i<n-2$ with respective contributions to $G_2(x)$ of $x(F_T(x)-1)$ and $B_1(x)$. From Lemma \ref{lem195a5}(a) we have
$$B_1(x)=\sum_{m\geq1}H_m(x)+\frac{M(x)}{1-x}+\frac{xN(x)}{(1-x)^2},$$
which implies
$$B_1(x)=H(x,1)+\frac{M(x)}{1-x}+\frac{xN(x)}{(1-x)^2},$$
where $M(x)$ and $N(x)$ are given in Lemmas \ref{lem195a3} and \ref{lem195a4}, respectively, and
$$H(x,1)=\frac{1-x}{x}\left(H(x,0)-\frac{x^3}{1-x}-E(x,1)\right),$$
where
$$E(x,1)=\frac{x^3\big(F_T(x)-1\big)}{1-x}-\frac{x^5(2x^7-11x^6+15x^5+2x^4-21x^3+25x^2-12x+2)}
{(x-1)^3(2x-1)(x^2-3x+1)^2(x^2+x-1)}$$
and $H(x,0)$ and $E(x,C(x))$ are given in \eqref{eq195H1} and \eqref{eq195E1}.

Now, let us write a formula for $G_3(x)$. Let $\pi=i_1\pi'i_2\pi''i_3\pi'''\in S_n(T)$ with 3 left-right maxima. then $i_1<\pi'''<i_2$ and $\pi'>\pi''$. By considering whether $\pi',\pi''$ are empty or not, we obtain the contributions $x^3A$ ($\pi'=\pi''=\emptyset$), $x^3(A-1)/(1-x)$ ($\pi'\neq\emptyset,\pi''=\emptyset$), $x^3B/(1-x)^2$ ($\pi'=\emptyset,\pi''\neq\emptyset$) and $0$. Thus,
$$G_3(x)=x^3A+\frac{x^3(A-1)}{1-x}+\frac{x^3B}{(1-x)^2}\,.$$

Now, let us write a formula for $G_m(x)$ with $m\geq4$. Let $i_1\pi^{(1)}\cdots i_m\pi^{(m)}\in S_n(T)$ with $m$ left-right maxima. Then $\pi^{(3)}=\cdots=\pi^{(m)}=\emptyset$ and $\pi^{(1)}>\pi^{(2)}$. By considering whether $\pi^{(2)}$ is empty or not, we have
$$G_m(x)=x^mA+\frac{x^m(A-1)}{1-x}\,.$$

Summing over $m\geq0$, we obtain
$$F_T(x)=1+xF_T(x)+G_2(x)+\frac{x^3}{1-x}A+\frac{x^3}{(1-x)^2}(A+B-1).$$
Substituting the formula for $G_2(x)$, we get
\begin{align*}
F_T(x)&=1+xF_T(x)+x(F_T(x)-1)+\frac{1-x}{x}H(x,0)-x^2F_T(x)\\
&+\frac{x^4(2x^7-11x^6+15x^5+2x^4-21x^3+25x^2-12x+2)}{(x-1)^2(2x-1)(x^2-3x+1)^2(x^2+x-1)}\\
&+\frac{M(x)}{1-x}+\frac{xN(x)}{(1-x)^2}+\frac{x^3}{1-x}A+\frac{x^3}{(1-x)^2}(A+B-1).
\end{align*}
where $M(x)$ and $N(x)$ are given in Lemmas \ref{lem195a3} and \ref{lem195a4}, respectively, and
$H(x,0)$ and $E(x,C(x))$ are given in \eqref{eq195H1} and \eqref{eq195E1}, respectively.
Solving for $F_T(x)$ and simplifying the result, we complete the proof.
\end{proof}

\subsection{Case 210: $\{1243,1324,2431\}$}
\begin{theorem}\label{th210a}
Let $T=\{1243,1324,2431\}$. Then
\[
F_T(x)=\frac{\left(1-6 x+13 x^2-11 x^3+4 x^4\right)}{x^2(1-x)^2 }C(x)-
\frac{1-6 x+12 x^2-8 x^3+2 x^4}{x^2(1-x)  (1-2 x)}.
\]
\end{theorem}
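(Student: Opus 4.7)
The plan is to decompose $T$-avoiders by the number of left-right maxima: let $G_m(x)$ denote the generating function for $\pi\in S_n(T)$ with exactly $m$ left-right maxima, so that $F_T(x)=\sum_{m\ge 0}G_m(x)$. Trivially $G_0(x)=1$. For $\pi = n\pi'$, the leading $n$ cannot serve as any letter in an occurrence of a pattern of $T$, since in each of $1243$, $1324$, and $2431$ the maximum letter lies strictly past the first position; hence $G_1(x) = xF_T(x)$.

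For $m\ge 2$, write $\pi=i_1\pi^{(1)}i_2\pi^{(2)}\cdots i_m\pi^{(m)}$. The key structural observation is that each left-right maximum $i_j$ can play only the role of the \emph{largest} letter in any occurrence of a pattern from $T$, because in each of the three patterns the letters preceding the maximum are strictly smaller (matching the defining property of a left-right maximum). Imposing $1324$-avoidance then forces, for every $j<m$, either $\pi^{(j)}=\emptyset$ or $\pi^{(j)}<\pi^{(s)}$ for all $s<j$ with $\pi^{(s)}\ne\emptyset$; imposing $1243$- and $2431$-avoidance further rigidifies the shape of $\pi^{(m)}$ relative to the earlier blocks.

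For $m\ge 3$ these constraints collapse almost all interior blocks: $\pi^{(2)},\dots,\pi^{(m-1)}$ must be empty, and $\pi^{(1)}$ together with $\pi^{(m)}$ satisfies only a short list of configurations, yielding a recursion of the form $G_m(x)=xG_{m-1}(x)+R_m(x)$ with a simple rational remainder whose sum over $m\ge 3$ is closed-form. The main obstacle is the analysis of $G_2(x)$: with $\pi=i\pi'n\pi''$, all three forbidden patterns are simultaneously active, and the case split naturally proceeds according to the position of the letter $i-1$ (in $\pi'$ versus $\pi''$) and whether $\pi''$ contains letters smaller than $i$. In one of the subcases the segment following $n$ reduces to a $132$-avoider of the appropriate type, producing the Catalan factor $C(x)$; this uses the fact that every pattern of $T$ contains $132$ as a subpattern, so any $132$-avoider automatically avoids $T$.

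Combining $G_0,G_1,G_2$ and $\sum_{m\ge 3}G_m$ gives a single linear equation expressing $F_T(x)$ in terms of itself and $C(x)$, of the shape $(1-x)F_T(x)=\alpha(x)+\beta(x)\,C(x)$ for explicit rational functions $\alpha,\beta$. Solving for $F_T(x)$ and simplifying using $xC(x)^2=C(x)-1$ yields the stated generating function. The principal difficulty will be the careful bookkeeping of the several $G_2$-subcases and verifying that the $132$-avoidance reduction accounts for all permutations contributing the $C(x)$ component; once this is done, the remaining algebra is routine.
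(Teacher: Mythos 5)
Your outline breaks down at the claim you make for $m\ge 3$. For $T=\{1243,1324,2431\}$ it is \emph{not} true that the interior blocks $\pi^{(2)},\dots,\pi^{(m-1)}$ must be empty: the permutation $24153$ avoids $T$ (its length-4 patterns are $3142$, $2143$, $1342$, $2413$, $2314$), it has the three left-right maxima $2,4,5$, and the block between $4$ and $5$ is the nonempty word $1$. Avoidance of $1324$ only forces $\pi^{(s)}<i_1$ for $s\le m-1$; letters smaller than $i_1$ may still sit in the interior blocks (in the paper's figure for this case they form the decreasing region $A$ spread under $i_2,\dots,i_m$). Consequently the asserted recursion $G_m(x)=xG_{m-1}(x)+R_m(x)$ with a ``simple rational remainder'' does not hold, and the computation cannot be reduced to ``$G_2$ is the only hard part, the rest telescopes.''

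What actually happens is that for every $m\ge 2$ one must split off the case $\pi^{(m)}<i_1$ (equivalently $i_1=n+1-m$), with generating function $H_m(x)$, and this case satisfies $H_m(x)=xH_{m-1}(x)+x\sum_{k\ge m}G_k(x)$, because such an avoider whose first block has $s$ left-right maxima corresponds bijectively to an avoider with $m+s-1$ left-right maxima. Thus each $G_m$ is coupled to the tail of the entire sequence $(G_k)_{k\ge m}$, not just to $G_{m-1}$, and the remainder involves the unknowns themselves. The paper therefore introduces a catalytic variable, $G(x;t)=\sum_{m\ge0}G_m(x)t^m$ and $H(x;t)=\sum_{m\ge1}H_m(x)t^m$, and solves the resulting functional equation by the kernel method with $t=C(x)$; that substitution is the main source of the Catalan factor in the answer. (A $132$-avoiding region does appear, but in the analysis of the case where the letters of $\pi^{(m)}$ lying between $i_1$ and $i_2$ form an increasing run, for every $m\ge2$, contributing $\frac{x^{m+1}}{1-x}L(x)^{m-1}C(x)$ with $L(x)=\frac{1-x}{1-2x}$ -- not only inside $G_2$.) Your values of $G_0$ and $G_1$ and the remark that every pattern of $T$ contains $132$ are correct, but without the coupling above and the kernel-method step the proposal does not reach the stated formula.
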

\begin{proof}
Let $G_m(x)$ be the generating function for $T$-avoiders with $m$
left-right maxima. Clearly, $G_0(x)=1$ and $G_1(x)=xF_T(x)$.

For $G_m(x)$ with $m\geq2$, suppose $\pi=i_1\pi^{(1)}i_2\pi^{(2)}\cdots i_m\pi^{(m)}\in S_n(T)$ with exactly $m$ left-right maxima. Since $\pi$ avoids $1324$, $\pi^{(s)}<i_1$ for all $s=1,2,\ldots,m-1$. Since $\pi$ avoids $1243$, $\pi^{(m)}<i_2$.
\begin{figure}[htp]
\begin{center}
\begin{pspicture}(7,3.1)
\psset{xunit=.5cm,yunit=.3cm}
\psline[linecolor=lightgray](6,6)(6,9)
\psline[linecolor=lightgray](2,8)(12,8)
\psline(6,0)(6,6)(0,6)(0,3)
\rput(3,4.5){\textrm{{\footnotesize $A\downarrow$}}}
\rput(10.5,4.5){\textrm{{\footnotesize $C\uparrow$}}}
\rput(13.5,7){\textrm{{\footnotesize $D\uparrow,\,\ne \emptyset$}}}
\rput(7.5,2){\textrm{{\footnotesize $B$}}}
\rput(7.5,1){\textrm{{\footnotesize Av $132$}}}
\psdots[linewidth=1.2pt](0,6)(2,8)(6,9)
\rput(-.5,6.3){\textrm{{\footnotesize $i_1$}}}
\rput(1.5,8.3){\textrm{{\footnotesize $i_2$}}}
\rput(3.6,9){\textrm{{\footnotesize $\iddots$}}}
\rput(5.4,9.3){\textrm{{\footnotesize $i_m$}}}
\psline(6,0)(9,0)(9,6)(15,6)(15,8)(12,8)(12,3)(0,3)
\end{pspicture}
\caption{A $T$-avoider with at least one entry between $i_1$ and $i_2$, and these entries increasing}\label{figTK1}
\end{center}
\end{figure}
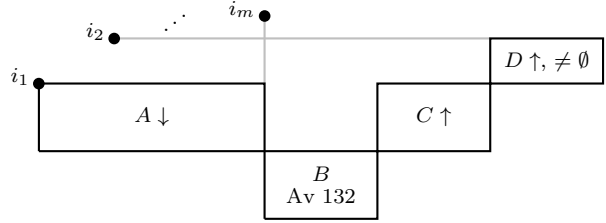
First, suppose $\pi^{(m)}$ has a letter greater than $i_1$ and consider two cases:
\begin{itemize}
\item all the letters between $i_1$ and $i_2$ in $\pi^{(m)}$ are increasing.
Here, $\pi^{(1)}\cdots\pi^{(m-1)}$ is decreasing ($\pi$ avoids $1243$), and $\pi^{(m)}$
can be decomposed as ($\pi$ avoids $2431$)  $\alpha(i_1+1)\cdots(i_2-1)$ and,
writing $\pi^{(1)}\cdots\pi^{(m-1)}$ as $j_sj_{s-1}\cdots j_1$, $\alpha$
can be further decomposed as $\alpha^{(0)}\alpha^{(1)}\cdots\alpha^{(s)}$ with
$\alpha^{(0)}<j_1<\alpha^{(1)}<\cdots<j_s<\alpha^{(s)}<i_1$, where $\alpha^{(0)}$ avoids $132$
and $\alpha^{(i)}$ is an increasing subword for all $i=1,2,\ldots,s$.
In short, $\pi$ has the schematic form shown in the Figure \ref{figTK1} where $\downarrow$ denotes decreasing, $\uparrow$ denotes increasing, and blank regions are empty.
If $A$ is empty, then so is $C$ and, in any case, the position of $i_m$ is determined by $AC$.
From Figure \ref{figTK1}, $\pi$ can be written as $i_1 A_1i_m BCD$,
where $A_1$ consists of $A$ and the left-right maxima $i_2,\dots,i_{m-1}$,
and $\pi$ is determined by St($A_1C$), St($B$), and St($D$). The latter two contribute $C(x)$ and $\frac{x}{1-x}$ respectively. Now $AC$ is a $\{132,231\}$-avoider and St($A_1C$) can be viewed as a $\{132,231\}$-avoider decorated with $m-2$ dots (for
$i_2,\dots,i_{m-1}$) placed in the spaces between its entries but not after the first ascent. For example, the 4 boxes shown accept dots in $\Box 5 \Box 2 \Box1\Box34$. The \gf for such constructs with $M$ dots, each dot counting as a letter, can be shown to be $x^M L(x)^{M+1}$ where $L(x)=\frac{1-x}{1-2x}$ is the \gf for $\{132,231\}$-avoiders.
Thus, with $M=m-2$, the contribution of this case is $\frac{x^{m+1}}{1-x}L(x)^{m-1}C(x)$.
\item the letters between $i_1$ and $i_2$ in $\pi^{(m)}$ do not form an increasing sequence, where the sequence  $\pi^{(1)}\cdots\pi^{(m-1)}\alpha$ is decreasing, and $\pi^{(m)}=\alpha\beta$ with $\alpha<i_1<\beta<i_2$. Thus, we have a contribution of $\frac{x^m}{(1-x)^m}\big(L(x)-\frac{1}{1-x}\big)$.
\end{itemize}
Hence, for all $m\geq2$,
$$G_m(x)=H_m(x)+\frac{x^{m+1}}{1-x}L(x)^{m-1}C(x)+\frac{x^m}{(1-x)^m}\left(L(x)-\frac{1}{1-x}\right),$$
where $H_m(x)$ is the generating function for those permutations $\pi\in S_n(T)$ with $\pi^{(m)}<i_1$ (in other words $i_1=n+1-m$).

Now let us write an equation for $H_m(x)$.
Let $\pi=i_1\pi^{(1)}i_2\pi^{(2)}\cdots i_m\pi^{(m)}\in S_n(T)$ with exactly $m$ left-right maxima such that $i_j=n+j-m$ for all $j$. If $\pi^{(1)}=\emptyset$ then we have a contribution of $xH_{m-1}(x)$. Otherwise, if we assume that $\pi^{(1)}$ has exactly $s$ left-right maxima, it is not hard to see that those permutations are bijection with the permutations with exactly $m+s-1$ left-right maxima. Thus,
$$H_m(x)=xH_{m-1}(x)+x(G_m(x)+G_{m+1}(x)+G_{m+2}(x)+\cdots)$$
with $H_1(x)=G_1(x)$ (by definitions).

Define $G(x;t)=\sum_{m\geq0}G_m(x)t^m$ and $H(x;t)=\sum_{m\geq1}H_m(x)t^m$. Note that $G(x;1)=F_T(x)$. Hence, the above recurrences can be written as
\begin{align*}
H(x;t)&=xtF_T(x)+xtH(x;t)+\frac{xt^2}{1-t}(F_T(x)-1)-\frac{xt}{1-t}(G(x;t)-1),\\
G(x;t)&=1+H(x;t)+\frac{x^3t^2L(x)C(x)}{(1-x)(1-xt)}+\frac{x^2t^2}{(1-x)(1-x-xt)}(L(x)-1/(1-x)).
\end{align*}
Hence,
\begin{align*}
\left(1+\frac{xt}{(1-t)(1-xt)}\right)G(x;t)&=\frac{1}{1-xt}+\frac{xt}{(1-t)(1-xt)}F_T(x)\\
&+\frac{x^3t^2L(x)C(x)}{(1-x)(1-xt)}+\frac{x^2t^2}{(1-x)(1-x-xt)}(L(x)-1/(1-x)).
\end{align*}
This equation can be solved by the kernel method (see, e.g., \cite{HM} for an exposition) taking $t=C(x)$ and leads, after simplification, to our theorem.
\end{proof}

\subsection{Case 211: $\{1234,1324,2341\}$} All three patterns contain 123 and so $T$-avoiders consist of 123-avoiders together with $T$-avoiders that contain a 123. The former are counted by $C(x)$. To count the latter, let $abc$ at positions $i,j,k$ be the leftmost 123 pattern (smallest $i$, then smallest $j$, then smallest $k$). This 123 pattern divides the permutation diagram into rectangular regions as shown
in Figure \ref{figAK6} where the bullets represent the leftmost 123.
\begin{figure}[htp]
\begin{center}
\begin{pspicture}(6,7)
\psset{unit =.8cm, linewidth=.5\pslinewidth}
\psline[fillstyle=hlines,hatchcolor=lightgray,hatchsep=0.8pt](6,0)(8,0)(8,2)(6,2)(6,0)
\psline[fillstyle=hlines,hatchcolor=lightgray,hatchsep=0.8pt](4,6)(6,6)(6,8)(4,8)(4,6)
\psline[fillstyle=hlines,hatchcolor=lightgray,hatchsep=0.8pt](6,6)(8,6)(8,8)(6,8)(6,6)
\psline[fillstyle=hlines,hatchcolor=lightgray,hatchsep=0.8pt](2,2)(6,2)(6,6)(4,6)(4,4)(2,4)(2,2)
\psline[fillstyle=hlines,hatchcolor=lightgray,hatchsep=0.8pt](2,0)(2,4)(0,4)(0,0)(2,0)
\psline(0,0)(8,0)(8,8)(0,8)(0,0)
\psline(0,2)(8,2)
\psline(0,4)(8,4)
\psline(4,6)(8,6)
\psline(2,0)(2,8)
\psline(4,0)(4,8)
\psline(6,0)(6,8)
\pscircle*(2,2){0.1}\pscircle*(4,4){0.1}\pscircle*(6,6){0.1}
\put(.9,5.9){$\al$}
\put(2.8,5.9){$\al'$}
\put(6.7,5.1){$\be\,\downarrow$}
\put(6.4,4.4){\small $(1234)$}
\put(6.8,2.9){ $\be'$}
\put(2.7,1.1){$\ga\,\downarrow$}
\put(2.4,0.4){\small $(1234)$}
\put(4.9,0.9){$\ga'$}
\put(.5,0.8){\small (min)}
\put(.5,2.8){\small (min)}
\put(2.5,2.8){\small (min)}
\put(4.4,2.8){\small$(1324)$}
\put(4.5,4.8){\small (min)}
\put(4.5,6.8){\small (min)}
\put(6.4,6.8){\small$(1234)$}
\put(6.4,0.8){\small$(2341)$}
\put(2,-.5){\small $i$}
\put(4,-.5){\small $j$}
\put(6,-.5){\small $k$}
\put(8.3,2){\small $a$}
\put(8.3,4){\small $b$}
\put(8.3,6){\small $c$}
\end{pspicture}
\caption{The leftmost (min) 123 in a $T$-avoider}\label{figAK6}
\end{center}
\end{figure}
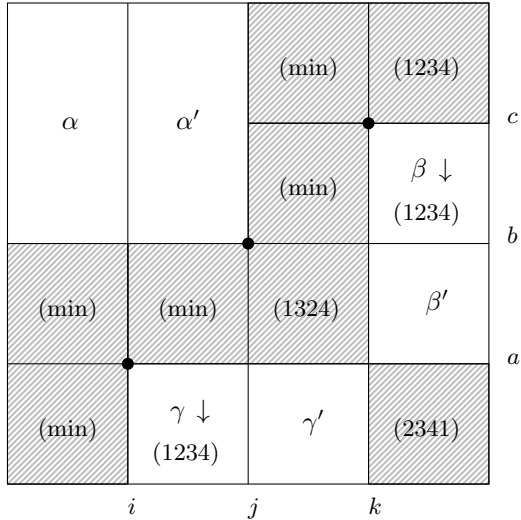
Shaded regions are empty for the indicated reason, where (min) refers to the minimal, that is, leftmost property of the 123 pattern, and unshaded regions are labeled
$\al,\al'$ etc. A down arrow ($\downarrow$) indicates necessarily decreasing entries, again for the indicated reason.
Also, $\al\al'$ avoids 123 (or the middle bullet ends a 2341), while $\be$ lies to the left of $\be'$ and  $\ga>\ga'$
(both to avoid 1324). 
We consider 4 cases according as $\be',\ga'$ are empty or not:
\begin{itemize}
\item $\be',\ga'$ both empty. Here, St($\al\, a\, \al'\, c$) is a a 123-avoider $\tau$ of length $\ge 2$ with last entry $>1$. Say $\tau$ has length $n$, with 1 in position $i<n$ and last entry $\ell\ge 2$. Now $\be$ serves to ``decorate'' $\tau$ with 0 or more dots inserted arbitrarily in the $\ell-1$ spaces between 1 and $\ell$. Similarly, $\ga$ amounts to 0 or more dots inserted in the $n-i$ spaces between $i$ and $n$. Thus, the contribution of this case is
\[
x \sum_{n\ge 2}\sum_{i=1}^{n-1}\sum_{\ell=2}^{n}C(n,i,\ell)x^n \sum_{u,v\ge 0}\binom{u+\ell-2}{u}\binom{v+n-i-1}{v}\,,
\]
where $C(n,i,\ell)$ is the number of $123$-avoiders of length $n$ with 1 in position $i$ and last entry $\ell$.
It is known that $C(n,i,\ell)=\frac{n-2-i+\ell}{n+i-\ell}\binom{n+i-\ell}{n-1}$, a generalized Catalan number, and the displayed sum evaluates to the compact expression $x^3 C(x)^5$.

\item $\be'$ empty, $\ga'$ nonempty. Here, $\ga'$ avoids 123 and 132 and so the contribution is $x^3 C(x)^5 (L-1)=x^3 C(x)^5\frac{x}{1-2x}$ where $L=\frac{1-x}{1-2x}$ is the \gf for $\{123,132\}$-avoiders.

\item $\be'$ nonempty, $\ga'$ empty. Here, $\be'$ avoids 123 and 213 and so the contribution is $x^3 C(x)^5 (L-1)= x^3 C(x)^5\frac{x}{1-2x}$ since $L=\frac{1-x}{1-2x}$ is also the \gf for $\{123,213\}$-avoiders.

\item $\be',\ga'$ both nonempty. In case $\ga'$ is decreasing, the only restriction on $\be'$ is to avoid 123 and 213 (since $\ga' \ne \emptyset$). So $\ga'$ contributes $\frac{x}{1-x}$ and $\be'$ contributes $L-1$. In case $\ga'$ is not decreasing, $\be'$ \emph{is} decreasing (to avoid 1234) and $\ga'$ contributes $L-\frac{1}{1-x}$ while $\be'$ contributes $\frac{x}{1-x}$. So, $\be',\ga'$ together contribute $\frac{x}{1-x}(L-1) +  \big(L-\frac{1}{1-x}\big)\frac{x}{1-x}= \frac{x^2}{(1-x)^2 (1-2x)}$ for an overall contribution of $x^3 C(x)^5 \frac{x^2}{(1-x)^2 (1-2x)}$

\end{itemize}
Hence, summing over 123-avoiders and the 4 cases,
\[
F_T(x)=C(x)+x^3 C(x)^5\left(1 + \frac{x}{1-2x} + \frac{x}{1-2x} + \frac{x^2}{(1-x)^2 (1-2x)}\right)\, .
\]
Simplifying this expression, we have established
\begin{theorem}\label{th211a}
Let $T=\{1234,1324,2341\}$. Then
\[
F_T(x)=\frac{(1 - 4 x + 5 x^2 - 3 x^3)\, C(x) - (1 - 4 x + 6 x^2 - 4 x^3)}{x (1 - x)^2 (1 - 2 x)}\, .
\]
\end{theorem}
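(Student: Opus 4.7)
The plan is to partition $S_n(T)$ according to whether the permutation contains a $123$ pattern. Since all three members of $T$ themselves contain $123$, every $123$-avoider is automatically a $T$-avoider, contributing exactly $C(x)$ to $F_T(x)$. For $T$-avoiders that do contain a $123$, I will condition on the lexicographically leftmost occurrence $abc$ at positions $i<j<k$ (smallest $i$, then smallest $j$, then smallest $k$); this triple partitions the permutation diagram into nine rectangles whose contents I will analyze region by region.

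Next, I will read off the constraints. Minimality forces several regions to be empty: any entry strictly between rows $a$ and $b$ and strictly left of $j$, or above $c$ and strictly left of $k$, would yield a smaller $123$. The avoidance conditions give additional restrictions: the region above $c$ and right of $k$ must avoid $1234$; the region right of $k$ and between values $a$ and $b$ must be a decreasing run (else together with $b,c$ one gets $1234$); the region right of $k$ below value $a$ must avoid $2341$; and the region above $b$ between positions $j$ and $k$ must avoid $1324$. The remaining ``free'' rectangles carry either a full $T$-type avoidance or one of the two-pattern avoidances whose generating functions are already known from \cite{SiS,MV}.

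I will then split into four subcases according as the two small tail regions $\beta'$ (right of $k$, between values $a$ and $b$) and $\gamma'$ (below $a$, between positions $j$ and $k$) are empty or nonempty. In the base case $\beta'=\gamma'=\emptyset$, the permutation is determined by a $123$-avoider $\tau=$ St($\alpha\, a\, \alpha'\, c$) of length $n\ge 2$ with $1$ in position $i<n$ and last entry $\ell\ge 2$, together with two decorations: $\beta$ contributes an arbitrary number of dots placed in the $\ell-1$ slots between values $1$ and $\ell$ of $\tau$, and $\gamma$ contributes arbitrary dots in the $n-i$ slots between positions $i$ and $n$. Summing the ballot-type numbers $C(n,i,\ell)=\frac{n-2-i+\ell}{n+i-\ell}\binom{n+i-\ell}{n-1}$ weighted by $\binom{u+\ell-2}{u}\binom{v+n-i-1}{v}$ collapses to the compact contribution $x^3C(x)^5$. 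Each of the two ``exactly one tail nonempty'' subcases contributes $x^3C(x)^5\cdot\frac{x}{1-2x}$, using $L(x)=\frac{1-x}{1-2x}$ as the generating function for both $\{123,213\}$- and $\{123,132\}$-avoiders; the ``both tails nonempty'' case contributes $x^3C(x)^5\cdot\frac{x^2}{(1-x)^2(1-2x)}$ after observing that at least one of $\beta',\gamma'$ must be decreasing (to avoid $1234$).

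The main obstacle will be justifying the collapse of the triple Catalan sum to $x^3C(x)^5$; the rest is bookkeeping once the regional constraints are correctly pinned down. Combining the four cases,
\[
F_T(x)=C(x)+x^3C(x)^5\left(1+\frac{2x}{1-2x}+\frac{x^2}{(1-x)^2(1-2x)}\right),
\]
and simplifying the parenthesized factor to $\frac{1-2x+2x^2}{(1-x)^2(1-2x)}$ and applying the Catalan identity $xC(x)^2=C(x)-1$ reduces the right-hand side to the claimed rational-in-$C(x)$ closed form.
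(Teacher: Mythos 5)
Your proposal follows essentially the same route as the paper's proof: the leftmost-$123$ decomposition into rectangular regions, the same four cases on $\beta',\gamma'$, the same decoration argument with the generalized Catalan numbers $C(n,i,\ell)$ collapsing to $x^3C(x)^5$, and the same final sum $C(x)+x^3C(x)^5\bigl(1+\frac{2x}{1-2x}+\frac{x^2}{(1-x)^2(1-2x)}\bigr)$, so the result follows as in the paper. One caution: a few of your stated regional constraints are imprecise --- the cells above $c$ to the right of $k$ and below $a$ to the right of $k$ are forced \emph{empty} (any entry there completes $abc$ to a $1234$, resp.\ forms a $2341$), not merely pattern-avoiding, and the forced-decreasing band right of $k$ is the one between values $b$ and $c$ (not between $a$ and $b$), while entries above $c$ may well occur before position $j$ --- but your actual case analysis and contributions are the correct ones and coincide with the paper's.
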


\subsection{Case 212: $\{1324,2413,2431\}$} Note that each pattern contains 132.
\begin{theorem}\label{th212a}
Let $T=\{1324,2413,2431\}$. Then
$$F_T(x)=1+\frac{x(1-4x+4x^2-x^3-x(1-4x+2x^2)C(x))}{(1-3x+x^2)(1-3x+x^2-x(1-2x)C(x))}.$$
\end{theorem}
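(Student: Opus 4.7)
Following the approach used consistently throughout this paper, the plan is to decompose $T$-avoiders by the number of left-right maxima. Let $G_m(x)$ denote the generating function for $T$-avoiders with exactly $m$ left-right maxima. Then $G_0(x)=1$ and $G_1(x)=xF_T(x)$, so the task reduces to determining $G_m(x)$ for $m\geq 2$ and summing. The form of the stated answer, in which the denominator contains both $1-3x+x^2$ (the denominator of the $\{213,2431\}$-avoider generating function $K(x)=\frac{1-2x}{1-3x+x^2}$) and the kernel expression $1-3x+x^2-x(1-2x)C(x)$, strongly signals that the argument will proceed by writing a functional equation in a catalytic variable and solving it by the kernel method, in the spirit of Case 210.

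For $\pi=i_1\pi^{(1)}i_2\pi^{(2)}\cdots i_m\pi^{(m)}\in S_n(T)$ with $m\geq 2$, I would carry out a case analysis on how $\pi^{(m)}$ interacts with the values $i_1,\ldots,i_m$. Avoidance of $1324$ (with $i_1$ as the ``1'' and $i_m$ as the ``4'') forbids an ascent of intermediate values in between, forcing monotonicity constraints on the interior blocks. Avoidance of $2413$ and $2431$ (with $i_1,i_m$ playing the roles of the ``2'' and ``4'') restricts how letters below $i_1$ and letters in the windows $(i_s,i_{s+1})$ can be interleaved inside $\pi^{(m)}$. Since every pattern in $T$ contains $132$, blocks that are forced to avoid $132$ will contribute factors of $C(x)$, and blocks that reduce to $\{132,231\}$- or $\{213,2431\}$-type avoidance will contribute factors of $L(x)=\frac{1-x}{1-2x}$ or $K(x)$, respectively.

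The case analysis should produce, for each $m\geq 2$, an expression for $G_m(x)$ that combines a $C(x)^{m-1}F_T(x)$-type main term (when $\pi^{(1)},\ldots,\pi^{(m-1)}$ are 132-avoiders below $i_1$ and $\pi^{(m)}$ carries the full avoidance) with additional lower-order contributions reflecting the cases where $\pi^{(m)}$ interacts with intermediate windows. Packaging these into $G(x;t)=\sum_{m\ge 0}G_m(x)t^m$ yields a functional equation of the form
\[
\bigl(1-\text{(kernel)}\bigr)\,G(x;t)=(\text{rational in }x,t)+(\text{term involving }F_T(x)),
\]
in which the kernel has a unique power-series root $t=t_0(x)$ expressible through $C(x)$. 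Substituting $t=t_0(x)$ eliminates $G(x;t)$ and solves for $F_T(x)=G(x;1)$; the final simplification invokes the Catalan identity $xC(x)^2=C(x)-1$ to produce the stated closed form.

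The main obstacle is the combinatorial bookkeeping in the $m\geq 2$ case analysis: the three patterns in $T$ each constrain a different aspect of the interaction between letters below $i_1$, letters in the windows $(i_s,i_{s+1})$ inside $\pi^{(m)}$, and the blocks $\pi^{(s)}$ for $s<m$, and disentangling these to extract a clean recurrence (rather than a morass of $m$-dependent sums) is the delicate step. Once the recurrence is found and the catalytic variable $t$ correctly identified, the kernel method step is essentially routine and the algebraic simplification, while tedious, is mechanical.
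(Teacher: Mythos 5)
Your proposal correctly identifies the starting point (decomposition by left-right maxima, with $G_0=1$, $G_1=xF_T(x)$), but beyond that it is a strategy sketch rather than a proof, and the mechanism you predict is not the one the structure actually delivers. The crucial combinatorial content is missing: for $m\geq 3$, avoidance of $2413$ and $2431$ forces that either $\pi^{(2)}=\emptyset$ or $\pi^{(m)}$ has no letter between $i_1$ and $i_2$, and handling the case $\pi^{(2)}\neq\emptyset$ requires introducing a refined family, namely $T$-avoiders with $m$ left-right maxima whose first letter is smaller than the second (generating function $G'_m(x)$), since there $\pi$ splits as St$(i_1\pi^{(1)})$ times St$(i_2\pi^{(2)}\cdots i_m\pi^{(m)})$ and one must subtract $G'_{m-1}$ to avoid overcounting. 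This yields the coupled recurrences $G_m=xG_{m-1}+x^mC(x)\bigl(K(x)-1\bigr)+xC(x)\bigl(G_{m-1}-G'_{m-1}\bigr)$ and $G'_m=x^m\bigl(K(x)-1\bigr)+xG_{m-1}$ (with $K(x)=\frac{1-2x}{1-3x+x^2}$), hence the constant-coefficient recurrence $G_m=x\bigl(C(x)+1\bigr)G_{m-1}-x^2C(x)G_{m-2}$ for $m\geq 3$. None of this appears in your outline, and your guess that $G_m$ takes the explicit form $x^mC(x)^{m-1}F_T(x)+\cdots$ is the shape of Case 193, not of this case.

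Moreover, your proposed kernel-method machinery is not needed and is not how the factor $1-3x+x^2-x(1-2x)C(x)$ arises. Because the recurrence above has coefficients independent of $m$, one simply sums it over $m\geq 3$ to get a single linear equation
\[
F_T-1-xF_T-G_2=x\bigl(C(x)+1\bigr)\bigl(F_T-1-xF_T\bigr)-x^2C(x)\bigl(F_T-1\bigr),
\]
and solves for $F_T(x)$; the expression $1-3x+x^2-x(1-2x)C(x)$ is just the coefficient of $F_T$ after collecting terms, while $C(x)$ enters through blocks forced to avoid $132$, not through a kernel root $t_0(x)$. A bivariate $G(x;t)$ and a catalytic-variable argument could perhaps be forced through, but you neither set up the functional equation nor justify that a suitable kernel root exists, so as written the proposal has a genuine gap: the case analysis, the auxiliary statistic $G'_m$, and the resulting recurrences are exactly the hard part, and they are absent.
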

\begin{proof}
Let $G_m(x)$ be the generating function for $T$-avoiders with $m$
left-right maxima. Clearly, $G_0(x)=1$ and $G_1(x)=xF_T(x)$.
For $m=2,$ $\pi$ has the form $i \al n \be$ with $\al<i$, and $\be$ cannot contain both
letters $>i$ and $<i$ (2413 and 2431). If $\be$ has no letter $>i$, then $i=n-1$ and deleting it gives a contribution of $x\big(F_T(x)-1\big)$. Otherwise, $\be>i$ and $\al$ avoids 132
(due to $n$) and $\be$ avoids both 213 (due to $i$) and 2431 and is nonempty,
contributing $x^2 C(x)\big(K(x)-1\big)$, where $K(x)=\frac{1-2x}{1-3x+x^2}$
is the generating function for $\{213,2431\}$-avoiders \cite[Seq. A001519]{Sl}.
So $G_2(x)=x\big(F_T(x)-1\big)+x^2C(x)\big(K(x)-1\big)$.

Now let us write equation for $G_m(x)$ for $m\geq3$.
Suppose $\pi=i_1\pi^{(1)}i_2\pi^{(2)}\cdots i_m\pi^{(m)}\in S_n(T)$ with exactly $m$ left-right maxima. Since $\pi$ avoids $1324$, we have $\pi^{(s)}<i_1$ for all $s=1,2,\ldots,m-1$. Since $\pi$ avoids $2413$ and $2431$, either $\pi^{(2)}$ is empty or $\pi^{(m)}$ has no letter between $i_1$ and $i_2$ (or both). Thus, we have three cases
\begin{itemize}
\item $\pi^{(2)}=\emptyset$ and $\pi^{(m)}$ has no letter between $i_1$ and $i_2$. Here, $i_2$ and its position is determined by the rest of the avoider, and we have a contribution of $xG_{m-1}(x)$.
\item $\pi^{(2)}=\emptyset$ and $\pi^{(m)}$ has a letter between $i_1$ and $i_2$.
Here, $\pi^{(3)}=\cdots=\pi^{(m-1)}=\emptyset$ (or they would contain the 1 of a 2413),
and since $\pi$ avoids both $2413$ and $2431$, $i_1<\pi^{(m)}$.
Additionally, $\pi^{(m)}$ is not empty and avoids both $213$ (or $i_1$ is the 1 of a 1324)
and $2431$, and $\pi^{(1)}$ avoids $132$ (or $i_m$ is the 4 of a 1324).
Hence, the contribution is $x^mC(x)\big(K(x)-1\big)$.
\item $\pi^{(2)}\neq\emptyset$ and $\pi^{(m)}$ has no letter between $i_1$ and $i_2$.
Since $\pi$ avoids $2413$, we see that $i_1>\pi^{(1)}>\pi^{(s)}$ for all $s=2,3,\ldots,m-1$.
Also, $\pi^{(1)}$ avoids 132 and $i_2=i_1+1$. So $\pi$ can be recovered from St($i_1 \pi^{(1)}$)
and St($i_2\pi^{(2)}\cdots i_m\pi^{(m)}$), giving respective contributions of
$xC(x)$ and $G_{m-1}(x)-G'_{m-1}(x)$ where $G'_{m-1}(x)$ counts the $\pi^{(2)}$ empty case, that is, $G'_m(x)$ is the generating function for $T$-avoiders with $m$ left-right maxima in which the first letter is smaller than the second.
Thus, the contribution is $xC(x)\big(G_{m-1}(x)-G'_{m-1}(x)\big)$.
\end{itemize}
Hence, for all $m\geq3$,
\begin{align}
G_m(x)=xG_{m-1}(x)+x^mC(x)\big(K(x)-1\big)+xC(x)\big(G_{m-1}(x)-G'_{m-1}(x)\big)\, .\label{eq212a1}
\end{align}

For $G'_2(x)$, a $T$-avoider has the form $\pi= i n \pi'$.  If $\pi'$ is empty,
the contribution is $x^2$. So suppose $\pi'\neq\emptyset$.
If $i=n-1$ then we have $x^2(F_T(x)-1)$, but if $i<n-1$, then, since $\pi$ avoids 2413 and 2431
we see that $i=1$ and $\pi=1n\pi'$ with $\pi'\neq\emptyset$ avoiding 213 and 2431, giving a contribution of $x^2(K(x)-1)$. So
\begin{equation}\label{212g2p}
G'_2(x)=x^2\big(K(x)+F_T(x)-1\big)\,.
\end{equation}

For $G'_m(x)$ with $m\ge 3$, using the same three cases as above, we find by similar arguments
the recurrence
\begin{eqnarray}
G'_m(x)& = & xG'_{m-1}(x)+x^m(K(x)-1)+x\big(G_{m-1}(x)-G'_{m-1}(x)\big) \notag \\
&=& x^m \big(K(x)-1\big)+xG_{m-1}(x)\, .\label{eq212a2}
\end{eqnarray}

Substituting \eqref{212g2p} and \eqref{eq212a2} into \eqref{eq212a1}, we have
$$G_m(x)=x\big(C(x)+1\big)G_{m-1}(x)-x^2C(x)G_{m-2}(x)$$
for $m\ge 3$, with $G_2,G_1,G_0$ as above.
Summing over $m\geq3$, we obtain
$$F_T(x)-1-xF_T(x)-G_2(x)=x\big(C(x)+1\big)\big(F_T(x)-1-xF_T(x)\big)-
x^2C(x)\big(F_T(x)-1\big)\, .$$
Solving for $F_T(x)$ completes the proof.
\end{proof}

\subsection{Case 213: $\{2431,1324,1342\}$}
\begin{theorem}\label{th213a}
Let $T=\{2431,1324,1342\}$. Then
\[
F_T(x)=\frac{ (1 - 5 x + 8 x^2 - 5 x^3)\,C(x)-1 + 4 x - 4 x^2 + x^3}{x^2(1 - 2 x) }\, .
\]

\end{theorem}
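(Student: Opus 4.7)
The strategy is to mirror the left-right maxima decomposition used throughout this paper. Let $G_m(x)$ denote the generating function for $T$-avoiders with exactly $m$ left-right maxima, so that $F_T(x)=\sum_{m\ge 0}G_m(x)$. Trivially $G_0(x)=1$, and $G_1(x)=xF_T(x)$ since a $T$-avoider whose unique left-right maximum occurs at position one must begin with $n$ and is otherwise arbitrary.

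For the central case $m=2$, I would write $\pi=i\alpha n\beta$ and analyze the structure imposed by all three patterns simultaneously. Every pattern in $T$ contains $132$, so any occurrence of $132$ in $\pi$ must involve at least one of the anchors $i$ or $n$, or span the $\alpha/\beta$ divide. I expect the analysis to show that $\alpha$ must avoid $132$ (contributing a Catalan factor $C(x)$) while $\beta$ splits into its letters greater than $i$ and its letters smaller than $i$, each portion being constrained by the residual interaction with $i$ and with $\alpha$. I would split into the subcases where $\beta$ has no letter smaller than $i$ versus $\beta$ has at least one letter smaller than $i$, further refining according to the position of $i-1$ when necessary. After summing, this should yield a closed expression for $G_2(x)$ as a combination of $F_T(x)$, $C(x)$, and rational functions of $x$.

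For $m\ge 3$, I expect the combined action of $1324$ and $1342$ to force the inner blocks $\pi^{(s)}$ with $2\le s\le m-1$ to satisfy $\pi^{(s)}<i_1$, and $2431$ to render these blocks (and the final block $\pi^{(m)}$) rigid enough that a simple recurrence of the form $G_m(x)=xG_{m-1}(x)+(\text{small correction})$ emerges for $m$ large. This sums geometrically, and adding the results for $G_0(x)$, $G_1(x)$, $G_2(x)$, together with $\sum_{m\ge 3}G_m(x)$, then solving for $F_T(x)$ and simplifying via $xC(x)^2=C(x)-1$, should produce the claimed rational expression in $x$ and $C(x)$.

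The hardest step will be the enumeration of $G_2(x)$: because $2431$, $1324$, and $1342$ all interact nontrivially with the decomposition $\pi=i\alpha n\beta$, one must verify that no unwanted occurrence of any of the three patterns arises when combining a $132$-avoiding $\alpha$ with a carefully structured $\beta$. Each subcontribution must be tracked precisely and then consolidated into a form that cleanly matches the claimed numerator and denominator; the recurrence and summation steps are then routine.
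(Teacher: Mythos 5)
Your outline follows the same left-right-maxima scheme as the paper, but as written it has two genuine gaps. First, the case $m\ge 3$ does not behave as you predict. The patterns $1324$ and $1342$ do force the structure $i_1>\pi^{(1)}>\pi^{(2)}>\cdots>\pi^{(m-2)}>\alpha\beta$ with a tail $\gamma>i_{m-1}$, but $2431$ does \emph{not} make the inner blocks rigid: each $\pi^{(j)}$ with $1\le j\le m-2$ only has to avoid $132$ (any $132$ inside it would complete a $1324$ with the later maximum $i_m$) and is otherwise unconstrained. Consequently the correct relation is multiplicative, $G_m(x)=x^{m-2}C(x)^{m-2}G_2(x)$ for $m\ge 3$ (equivalently $G_m(x)=xC(x)\,G_{m-1}(x)$), so the sum over $m\ge 2$ is $G_2(x)/\bigl(1-xC(x)\bigr)$, not a geometric series in $x$. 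If you carried out the recurrence $G_m(x)=xG_{m-1}(x)$ plus a small explicit correction, as you propose, you would lose the Catalan factor contributed by each extra left-right maximum and would not arrive at the stated formula.

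Second, the crux of the argument, the computation of $G_2(x)$, is only promised, and its structure is more delicate than ``$\alpha$ avoids $132$ and $\beta$ splits by size.'' In the paper one writes $\pi=i\alpha n\beta\gamma$ with $\alpha\beta<i<\gamma$, where $\gamma$ must avoid $\{132,231\}$ (generating function $J=\frac{1-x}{1-2x}$); the case $\gamma=\emptyset$ contributes $x\bigl(F_T(x)-1\bigr)$, and the contribution $H$ of the case $\gamma\neq\emptyset$ satisfies a self-referential equation obtained by locating $i-1$, namely $H=x^2(J-1)+xC(x)H+xH+x^3C(x)\bigl(C(x)-1\bigr)(J-1)$, where the terms $xC(x)H$ (when $i-1\in\alpha$) and $xH$ (when $i-1$ is the last letter of $\beta$) are the delicate ones. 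Your plan does mention refining by the position of $i-1$, which is the right instinct, but until these contributions are actually derived, the equation solved for $H$, and $G_2(x)=x\bigl(F_T(x)-1\bigr)+H$ combined with the corrected sum over $m\ge3$, the claimed formula is not established.
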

\begin{proof}
Let $G_m(x)$ be the generating function for $T$-avoiders with $m$
left-right maxima. Clearly, $G_0(x)=1$ and $G_1(x)=xF_T(x)$.

First, we find a formula for $G_2(x)$. Since $\pi$
avoids $2431$, we can write a $T$-avoider $\pi$ with 2 left-right maxima as $\pi=i\alpha n\beta\gamma$ where $\alpha\beta<i<\gamma$, and $\gamma$ avoids both 132 and 231 (or $i$ is the 1 of a 1324 or 1342). If $\gamma=\emptyset$, then $\pi=n-1\, \al n\be$ and, by deleting $n-1$, the contribution is $x\big(F_T(x)-1\big)$.  Now suppose $\gamma\neq\emptyset$
and let $H$ denote the contribution of such permutations to $G_2(x)$.
If $\alpha\beta=\emptyset$, then $\pi=1n\gamma$ with $\gamma=\emptyset$ and the contribution is
$x^2(J-1)$ where $J=\frac{1-x}{1-2x}$ is the generating function for $\{213,231\}$-avoiders. Henceforth, $\alpha\beta\ne\emptyset$, and so $i>1$ and consider 3 cases:

- $i-1 \in \al$. Here, $\pi=i\al'\, i-1\,\al''n\be \gamma$ where $\al'>\al''\beta\ (1324)$ and $\al'$ avoids 132. So the contribution is  $xC(x)H$.

- $i-1$ is the last letter of $\be$. Deleting $i-1$ gives a one-size-smaller avoider and the
contribution is $xH$.

- $i-1\in \be$ but is not the last letter of $\be$. Here $\al=\emptyset$ (2431) and $\pi=in\be'\,i-1\,\be'' \gamma$ where $\be''\ne \emptyset$, $\be'>\be''$ (1324) and $\be',\be''$ each avoid 132. So, with $C(x),C(x)-1,J-1$ respectively from $\be',\be'',\gamma$, the  contribution
is $x^3C(x)\big(C(x)-1\big)(J-1)$.

Thus, $H = x^2(J-1)+ xC(x)H +xH+ x^3 C(x)\big(C(x)-1\big)(J-1)$ and so
$$H=\frac{x^2(J-1)+x^3\big(C(x)-1\big)C(x)(J-1)}{1-x-xC(x)},$$
which leads to
$$G_2(x)=x\big(F_T(x)-1\big)+H=x\big(F_T(x)-1\big)+\frac{x^2(J-1)+x^3\big(C(x)-1\big)C(x)(J-1)}{1-x-xC(x)}.$$

Now let $m\geq3$ and let us write an equation for $G_m(x)$.
Let $\pi=i_1\pi^{(1)}i_2\pi^{(2)}\cdots i_m\pi^{(m)}\in S_n(T)$ with exactly $m$ left-right maxima. Since $\pi$ avoids $1324$ and $1342$, we can express $\pi$ as
$$\pi=i_1\pi^{(1)}i_2\pi^{(2)}\cdots i_{m-2}\pi^{(m-2)}i_{m-1}\alpha i_m\beta\gamma$$
such that $i_1>\pi^{(1)}>\cdots>\pi^{(m-2)}>\alpha\beta$ and $i_{m-1}<\gamma$. Note that $\pi$ avoids $T$ if and only if $\pi^{(j)}$ avoids $132$ for $j=1,2,\ldots,m-2$ and the standard form of $i_1\alpha n\beta\gamma$ is a $T$-avoider with two left-right maxima. Hence,
$$G_m(x)= x^{m-2} C(x)^{m-2}G_2(x).$$

By summing  for $m\geq2$ and using the expressions for $G_1(x)$ and $G_0(x)$, we obtain
$$F_T(x)=1+xF_T(x)+\frac{G_2(x)}{1-xC(x)}\, ,$$
with solution as stated.
\end{proof}

For the next two cases we set
$a_T(n)=|S_n(T)|$ and let $a_T(n;j_1,j_2,...,j_s)$ denote the number of permutations in $S_n(T)$ whose first $s$ letters are $j_1j_2...j_s$.

\subsection{Case 231: $\{1324,1342,2341\}$} Set $b(n;j)=a_T(n;j,j+1)$.
\begin{lemma}\label{lem231a1}
For $1\leq j\leq n-2$,
\begin{align*}
a_T(n;j)&=a_T(n-1;1)+\cdots+a_T(n-1;j)+b(n;j)
\end{align*}
and $a_T(n;n)=a_T(n;n-1)=a_T(n;n-2)=a_T(n-1)$.
\end{lemma}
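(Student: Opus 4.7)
The lemma has two claims: a recurrence for $a_T(n;j)$ in the range $1\le j\le n-2$, and boundary equalities for $j\in\{n-2,n-1,n\}$. The plan in both parts is to classify a $T$-avoider $\pi\in S_n(T)$ by its first letter (and, in the main case, by $\pi_2$ as well) and to exploit the structural constraints imposed by avoidance of $1324,\,1342,\,2341$.

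For the boundary equalities I would argue that when $\pi_1=j$ is large enough it cannot participate in a forbidden pattern: each pattern in $T$ has its two largest entries in positions strictly after $1$ ($1324$ has its maximum at position $4$; $1342$ and $2341$ have it at position $3$), so for $\pi_1=j$ to play any role in such a pattern one would need three later letters of specified sizes. For $j\in\{n,n-1\}$ no such configuration can exist, so the map $\pi\mapsto \mathrm{St}(\pi_2\pi_3\cdots\pi_n)$ is a bijection from $\{\pi\in S_n(T):\pi_1=j\}$ onto $S_{n-1}(T)$. For $j=n-2$ one has to work a little harder, handling the single remaining obstruction in which $\pi_1=n-2$ is the $2$ of a $2341$ completed by later letters $n-1,\,n,\,\star$ with $\star<n-2$, and showing that this correction collapses back to $a_T(n-1)$.

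For the main recurrence $1\le j\le n-2$, I would partition $T$-avoiders with $\pi_1=j$ by the value of $\pi_2$: (i) $\pi_2=j+1$ contributes $b(n;j)$ by definition; (ii) if $\pi_2<j$, then $\mathrm{St}(\pi_2\cdots\pi_n)$ is a length-$(n-1)$ $T$-avoider whose first letter lies in $\{1,\ldots,j-1\}$, yielding a bijection that accounts for $\sum_{i=1}^{j-1}a_T(n-1;i)$; (iii) if $\pi_2>j+1$, the class must supply the remaining term $a_T(n-1;j)$. Summing the three cases gives the asserted recurrence.

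The main obstacle will be case (iii). Naive deletion of $\pi_2$, or of the letter $j+1$ (which must occur later), is not bijective on its own. My plan is to exploit the rigidity imposed by $T$: from $1324$-avoidance, any letter whose value lies in $(j,\pi_2)$ forces every subsequent letter to be $\le\pi_2$; from $1342$- and $2341$-avoidance, the placements of letters greater than $\pi_2$ and of letters less than $j$ are severely restricted. This rigidity should pin down a canonical deletion rule whose output is a length-$(n-1)$ $T$-avoider starting with $j$, and conversely, given $\tau\in S_{n-1}(T)$ with $\tau_1=j$, a unique value $v>j+1$ which, inserted at position $2$ after shifting up by $1$ all letters of $\tau$ that are $\ge v$, yields a $T$-avoider in $S_n(T)$. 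Verifying the correctness and invertibility of this rule is the core combinatorial task.
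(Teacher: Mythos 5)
Your overall strategy---classifying the avoiders that start with $j$ according to the second letter---is the same as the paper's, and your cases (i) ($\pi_2=j+1$, giving $b(n;j)$) and (ii) ($\pi_2<j$, delete the leading $j$) are the right ones. Note, though, that in (ii) the nontrivial direction of the bijection is exactly where the work lies: one must check that prepending $j$ to an avoider beginning with $i<j$ cannot create a forbidden pattern, i.e.\ that any occurrence of $1324$, $1342$ or $2341$ using the leading $j$ transfers to one in $i\pi'$ (for $2341$, say $jabc$ with $c<j<a<b$, one gets a $2341$ if $c<i$ and a $1342$ if $i<c$). You assert the bijection without this check.

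The genuine gap is case (iii), which you call the core combinatorial task and leave open, proposing a canonical deletion rule and a ``unique value $v>j+1$'' to be inserted. But for $1\le j\le n-2$, avoidance of $1324$ and $1342$ already forces $\pi_2\in\{1,\dots,j-1\}\cup\{j+1\}\cup\{n\}$: if $j+1<\pi_2<n$, then $j+1$ and $n$ both occur after position $2$, and $j,\pi_2,j+1,n$ (resp.\ $j,\pi_2,n,j+1$) is an occurrence of $1324$ (resp.\ $1342$). Hence case (iii) is precisely $\pi_2=n$; since $n$ in position $2$ cannot take part in any occurrence of a pattern of $T$, deleting it is a bijection onto the avoiders of length $n-1$ starting with $j$, which yields the term $a_T(n-1;j)$ immediately (in your insertion formulation, the ``unique value $v$'' is always $n$, so the elaborate rigidity analysis is not needed---but as proposed it is not carried out). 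Finally, your plan for the boundary value $j=n-2$ cannot be completed: the obstruction you identify ($\pi_1=n-2$ as the $2$ of a $2341$ completed by $n-1$, $n$ and a smaller letter) does not collapse; for instance $a_T(4;2)=5\ne 6=a_T(3)$, so the equality $a_T(n;n-2)=a_T(n-1)$ in the statement is in fact erroneous. The paper's own proof only establishes (and the subsequent generating-function step only uses) $a_T(n;n)=a_T(n;n-1)=a_T(n-1)$, the value $j=n-2$ being already covered by the recurrence.
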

\begin{proof}
The initial conditions  $a_T(n;n)=a_T(n;n-1)=a_T(n-1)$ easily follow from the definitions. For $1\leq j\leq n-2$, we have
\begin{align*}
a_T(n;j)&=\sum_{i=1}^{j-1}a_T(n;j,i)+\sum_{i=j+1}^na_T(n;j,i).
\end{align*}
So assume $1 \leq j\leq n-2$ and let $\pi=ji\pi'$ be a member of $S_n(T)$. We consider several cases on $i$.  Since $\pi$ avoids $1324$ and $1342$, we have that either $1\leq i\leq j+1$ or $i=n$. If
$i=n$, then $\pi$ avoids $T$ if and only if $j\pi'$ avoids $T$, so $a_T(n;j,n)=a_T(n-1,j)$. So,
\begin{align*}
a_T(n;j)&=\sum_{i=1}^{j-1}a_T(n;j,i)+a_T(n-1;j)+b(n;j).
\end{align*}
Let $1\leq i\leq j-1$. If $\pi$ avoids $T$ then $i\pi'$ avoids $T$.  On other hand, if $\pi$ contains $1324$ (resp. $1342$), then $i\pi'$ contains $1324$ (resp. $1324$). Also, if $\pi$ contains $2341$ where $j$ does not occur in the corresponding occurrence, then $i\pi'$ contains $2341$. Thus, we assume that $\pi$ contains $jabc$ with $c<j<a<b$. If $c<i$, then $i\pi'$ contains $2341$, otherwise $i\pi'$ contains $1342$. Therefore, $i\pi'$ avoids $T$ if and only if $\pi$ avoids $T$, which implies $a_T(n-1;j,i)=a_T(n;i)$. Hence,
\begin{align*}
a_T(n;j)&=\sum_{i=1}^{j-1}a_T(n-1;i)+a_T(n-1;j)+b(n;j),
\end{align*}
as required.
\end{proof}

Define $A_T(n;v)=\sum_{j=1}^na_T(n;j)v^{j-1}$ and $B(n;v)=\sum_{j=1}^{n-2}b(n;j)v^{j-1}$. Then Lemma \ref{lem231a1} can be written as
$$A_T(n;v)=\frac{1}{1-v}(A_T(n-1;v)-v^nA_{T}(n-1;1))+B(n;v)$$
with $A_T(0;v)=A_T(1;v)=1$.

Define  $A_T(x,v)=\sum_{n\geq0}A_T(n;v)x^n$ and $B(x,v)=\sum_{n\geq3}B(n;v)x^n$. Then, the above recurrence can be written as
\begin{align}
A(x,v)=1+\frac{x}{1-v}(A(x,v)-vA(xv,1))+B(x,v).\label{eq231a1}
\end{align}
\begin{lemma}\label{lem231a2}
$$B(x,v)=\frac{x^3(1-2xv)}{(1-3xv+x^2v^2)(1-2x)}.$$
\end{lemma}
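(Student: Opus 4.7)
The plan is to give a block decomposition of $T$-avoiders beginning with the pair $j,j+1$, extract a product formula for $b(n;j)$, and then assemble the generating function.

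First I would prove that every $\pi=j(j+1)\pi_3\cdots\pi_n\in S_n(T)$ with $1\le j\le n-2$ has the form $\pi=j(j+1)\alpha\beta$, where $\alpha$ is a permutation of $\{1,\ldots,j-1\}$ and $\beta$ is a (necessarily nonempty) permutation of $\{j+2,\ldots,n\}$. The key observation is that any letter $x>j+1$ followed later by a letter $y<j$ (both in positions $\ge 3$) would combine with $j,j+1$ to form a $2341$ pattern; hence $2341$-avoidance forces every value $<j$ to precede every value $>j+1$ among positions $3,\ldots,n$.

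Next I would carry out a short case analysis on where occurrences of the three forbidden patterns can sit in $j(j+1)\alpha\beta$. One finds that $\beta$ must avoid $\{213,231\}$ (a $213$ in $\beta$ together with $j$ yields $1324$ in $\pi$; a $231$ in $\beta$ together with $j$ yields $1342$), while $\alpha$ must avoid $\{132,2341\}$ (any $132$ in $\alpha$ combined with any single letter of $\beta$ creates a $1324$ in $\pi$; internal $2341$'s in $\alpha$ survive and must be forbidden; all mixed $\alpha/\beta$ occurrences of $2341$ are ruled out by the block structure). Conversely, every such $\alpha,\beta$ yields a $T$-avoider, so
\[
b(n;j)=V_{j-1}\,W_{n-j-1}\qquad(1\le j\le n-2),
\]
where $V_k:=|S_k(\{132,2341\})|$ and $W_k:=|S_k(\{213,231\})|$.

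The generating function $W(x)=(1-x)/(1-2x)$ is classical and is already recorded as $L$ in \S\ref{prelim}. For $V(x)=\sum_{k\ge0}V_kx^k$ I would apply the standard $132$-avoider decomposition $\sigma=\sigma'n\sigma''$ with $\sigma'>\sigma''$ elementwise. The step I expect to be the main obstacle is the observation that when $\sigma''\ne\emptyset$, $2341$-avoidance forces $\sigma'$ to be \emph{decreasing} --- because any ascending pair in $\sigma'$ together with $n$ (playing the role of the ``$4$'' of the pattern) and any letter of $\sigma''$ (playing the ``$1$'') already forms a $2341$ in $\sigma$. This gives the functional equation
\[
V(x)=1+xV(x)+\frac{x\bigl(V(x)-1\bigr)}{1-x},
\]
which simplifies to $V(x)=(1-2x)/(1-3x+x^2)$. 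Finally, writing $k=j-1$ and $m=n-j-1$, the double sum becomes a Cauchy product:
\[
B(x,v)=\sum_{k\ge0}\sum_{m\ge1}V_k W_m\,v^k x^{k+m+2}=x^2\,V(xv)\cdot\frac{x}{1-2x}=\frac{x^3\,V(xv)}{1-2x},
\]
and substituting the closed form for $V(xv)$ yields the announced expression.
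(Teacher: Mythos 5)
Your proposal is correct and follows essentially the same route as the paper: the block decomposition $\pi=j(j+1)\alpha\beta$ forced by $2341$-avoidance, the identification that $\alpha$ must avoid $\{132,2341\}$ and the nonempty $\beta$ must avoid $\{213,231\}$, and the Cauchy-product assembly $B(x,v)=x^2F_{\{132,2341\}}(xv)\big(F_{\{213,231\}}(x)-1\big)$. The only difference is that you derive $F_{\{132,2341\}}(x)=\frac{1-2x}{1-3x+x^2}$ from scratch via the $\sigma'n\sigma''$ decomposition (a correct argument), whereas the paper simply cites it (OEIS A001519), and you spell out the equivalence that the paper asserts without proof.
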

\begin{proof}
Let $\pi=j(j+1)\pi'\in S_n(T)$ with $1\leq j\leq n-2$. Since $\pi$ avoids $2341$, we can express $\pi$ as $\pi=j(j+1)\alpha\beta$ with $\alpha<j<j+1<\beta$. Note that $\pi$ avoids $T$ if and only if $\alpha$ avoids both $132,2341$ and $\beta$ avoids both $213,231$. By \cite[Seq. A001519]{Sl} we have that $F_{\{132,2341\}}(x)=\frac{1-2x}{1-3x+x^2}$, and by \cite{SiS} we have that $F_{\{213,231\}}(x)=\frac{1-x}{1-2x}$, yielding
\begin{align*}
B(x,v)&=\sum_{n\geq3}\sum_{j=1}^{n-2}|S_{j-1}(132,2341)||S_{n-1-j}(213,231)|v^{j-1}x^n\\
&=x^2F_{\{132,2341\}}(xv)(F_{\{213,231\}}(x)-1),
\end{align*}
which leads to $B(x,v)=\frac{x^3(1-2xv)}{(1-3xv+x^2v^2)(1-2x)}$, as required.
\end{proof}

By \eqref{eq231a1} and Lemma \ref{lem231a2}, we obtain
$$A(x/v,v)=1+\frac{x}{v(1-v)}(A(x/v,v)-vA(x,1))+\frac{x^3(1-2x)}{v^2(1-3x+x^2)(v-2x)}.$$
To solve the preceding functional equation, we apply the kernel method (see, e.g., \cite{HM} for an exposition) and take $v=\frac{1+\sqrt{1-4x}}{2}=1/C(x)$. Then
$$F_T(x)=A(x,1)=C(x)+\frac{x^3(1-2x)C^4(x)}{(1-3x+x^2)(1-2xC(x))}\,.$$
After simplification, this gives the following result.
\begin{theorem}\label{th231a}
Let $T=\{1324,1342,2341\}$. Then
\[
F_T(x)=\frac{(1 - 3 x) \big(1 - 2x-xC(x)\big)}{(1 - 4 x) (1 - 3 x + x^2)}
\]
\end{theorem}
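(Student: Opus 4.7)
The plan is to feed the two ingredients already in place---Lemma \ref{lem231a1}, packaged as the functional equation \eqref{eq231a1}, and the closed form for $B(x,v)$ from Lemma \ref{lem231a2}---into the kernel method and then simplify the resulting Catalan-based expression. I would first rewrite \eqref{eq231a1} with the substitution $x \mapsto x/v$ so that the two unknown functions involve only the pairs $(x/v,v)$ and $(x,1)$; this is exactly the shape needed to cancel $A(x/v,v)$ by choosing $v$ appropriately. Substituting the formula from Lemma \ref{lem231a2} into the transformed equation gives
\[
\Bigl(1-\tfrac{x}{v(1-v)}\Bigr)A(x/v,v) \;=\; 1 \;-\; \tfrac{x}{1-v}A(x,1) \;+\; \frac{x^3(1-2x)}{v^2(1-3x+x^2)(v-2x)}.
\]

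Next I would apply the kernel method by choosing $v$ to annihilate the coefficient of $A(x/v,v)$, i.e.\ $v(1-v)=x$. The root that is a power series in $x$ with $v(0)=1$ is $v=1/C(x)$, since $C(x)$ satisfies $xC(x)^2=C(x)-1$, equivalently $v(1-v)=x$ with $v=1/C$. Plugging $v=1/C(x)$ into the equation kills the left-hand side and leaves an explicit expression for $A(x,1)=F_T(x)$, namely
\[
F_T(x)\;=\;C(x)\;+\;\frac{x^3(1-2x)\,C(x)^4}{(1-3x+x^2)\bigl(1-2xC(x)\bigr)}.
\]

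Finally, I would simplify this expression to the rational form claimed in the theorem. This is the only real obstacle in the argument, but it is purely algebraic: repeated use of $xC(x)^2=C(x)-1$ reduces $C(x)^4$ and the factor $1-2xC(x)$ to polynomials in $C(x)$ of degree at most one, after which the two summands can be placed over the common denominator $(1-4x)(1-3x+x^2)$ (noting that $1-2xC(x)=\sqrt{1-4x}$ and that $(1-2xC(x))(1+2xC(x))=1-4x$ when combined with the Catalan identity). Collecting terms in $1$ and $C(x)$ yields
\[
F_T(x)\;=\;\frac{(1-3x)\bigl(1-2x-xC(x)\bigr)}{(1-4x)(1-3x+x^2)},
\]
which completes the proof. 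The main delicate point is keeping track of signs when rationalising $1-2xC(x)$; everything else is bookkeeping.
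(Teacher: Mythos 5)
Your proposal matches the paper's proof essentially verbatim: the same rewriting of \eqref{eq231a1} with $x\mapsto x/v$ using Lemma \ref{lem231a2}, the same kernel root $v=1/C(x)$, the same intermediate expression $F_T(x)=C(x)+\frac{x^3(1-2x)C(x)^4}{(1-3x+x^2)(1-2xC(x))}$, and the same final simplification to the stated rational form. The only blemish is the parenthetical identity you cite for rationalising: the correct fact is $1-2xC(x)=\sqrt{1-4x}$, i.e.\ $(1-2xC(x))^2=1-4x$, rather than $(1-2xC(x))(1+2xC(x))=1-4x$, but this slip does not affect the argument.
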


\subsection{Case 241: $\{1324,1243,1234\}$} Set $b(n;j)=a_T(n;j,n-1)$.
\begin{lemma}\label{lem241a1}
For $1\leq j\leq n-2$,
\begin{align*}
a_T(n;j)&=a_T(n-1;1)+\cdots+a_T(n-1;j)+b(n;j),\\
b(n;j)&=b(n-1;1)+\cdots+b(n-1;j-1)+a_T(n-2;j)
\end{align*}
and $a_T(n;n)=a_T(n;n-1)=a_T(n;n-2)=a_T(n-1)$,
$b(n;n)=b(n;n-2)=a_T(n-2)$ and $b(n;n-1)=0$.
\end{lemma}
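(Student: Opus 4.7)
The approach is to prove each identity by direct combinatorial case-analysis on the first two or three letters of a $T$-avoider, exploiting the fact that every pattern in $T = \{1234, 1243, 1324\}$ has the form $1abc$ (smallest element at position~$1$). The unifying remark is that a letter $v$ sitting at position~$1$ of $\pi$ can appear in a forbidden pattern only as the rank-$1$ element, which requires three strictly larger letters to its right; when $v \in \{n-2, n-1, n\}$ this is impossible, so $v$ participates in no pattern and deletion-plus-standardise is a bijection onto $S_{n-1}(T)$. This immediately yields $a_T(n;n)=a_T(n;n-1)=a_T(n;n-2)=a_T(n-1)$, and, iterated on the first two letters, $b(n;n-1)=0$ together with $b(n;n)=b(n;n-2)=a_T(n-2)$.

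For the first recurrence, fix $1 \leq j \leq n-2$ and split the avoiders with $\pi_1 = j$ according to $i := \pi_2$. When $i < j$, deleting $j$ and standardising gives a bijection onto the $\{\sigma_1 = i\}$-avoiders of length $n-1$, since any forbidden pattern through $\pi_1$ places $j$ in the rank-$1$ role and is preserved when $j$ is replaced by the still-smaller $i$. When $i = n$, the maximum sits at position~$2$, but every $T$-pattern has its maximum at position~$3$ or $4$, so $n$ lies in no forbidden subsequence and deletion gives $a_T(n;j,n) = a_T(n-1;j)$. When $j < i < n-1$, both $n-1$ and $n$ live in $\pi_3\cdots\pi_n$, and the subsequence $(j,i,n-1,n)$ or $(j,i,n,n-1)$ forms a $1234$ or $1243$, so this contributes $0$. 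The case $i = n-1$ is $b(n;j)$ by definition. Summing yields the first identity.

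For the second recurrence, fix $j$ and split the $\{\pi_1 = j,\, \pi_2 = n-1\}$-avoiders by $k := \pi_3$. When $k = n$, the same ``no rank-$1$ role'' argument applies first to $n$ at position~$3$ and then to $n-1$ at position~$2$, yielding $a_T(n-2;j)$. When $j < k < n-1$, the letter $n$ lies in $\pi_4\cdots\pi_n$ and the subsequence $(j, n-1, k, n)$ has rank pattern $1324 \in T$, so the count is $0$. When $k < j$, define $\Phi(\pi) := k\,(n-2)\,\phi(\pi_4)\cdots\phi(\pi_n)$, where $\phi$ is the unique order-preserving bijection $[n]\setminus\{j,n-1,k\} \to [n-1]\setminus\{k,n-2\}$; the claim is that $\Phi$ restricts to a bijection onto avoiders of length $n-1$ starting with $k,\,n-2$, which are counted by $b(n-1;k)$. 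Summing the three contributions gives the second identity.

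The one substantial step is verifying that $\Phi$ preserves $T$-avoidance in the case $k < j$. The cleanest route is to encode $T$-avoidance of $\pi = j(n-1)k\pi'''$ by a short list of conditions on the tail $\pi''' = \pi_4\cdots\pi_n$: (A) $\pi'''$ itself avoids $T$; (B) the restriction of $\pi'''$ to values greater than $k$ avoids $\{123,132,213\}$; and (C) $n$ precedes, in $\pi'''$, every value in the interval $(j, n-1)$. A small diagonal argument combining (B) and (C) upgrades (C) to the stronger statement that $n$ precedes every value in $(k, n-1)\setminus\{j\}$, and this is precisely the condition that $\phi$ translates into the analogous $T$-avoidance conditions for $\Phi(\pi)$. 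The main obstacle is exactly this bookkeeping step; the boundary $j = n-2$, where $(j, n-1)$ is empty and the bijection $\Phi$ degenerates, is not handled through the recurrence but directly by the initial condition $b(n;n-2) = a_T(n-2)$ already proved above.
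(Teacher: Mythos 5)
Your proposal is correct in substance and follows essentially the same route as the paper: split on $\pi_2$ for the first identity and on $\pi_3$ for the second, kill the middle ranges with an explicit forbidden pattern, and reduce the case $k<j$ (the paper's $i<j$) to $b(n-1;k)$ by deleting the first letter and standardising. Your encoding of $T$-avoidance of $j(n-1)k\pi'''$ by (A), (B), (C) is a cleaner packaging of the paper's pattern-chasing, and it is accurate: avoidance of $\Phi(\pi)$ amounts to (A), (B) together with the strengthened condition that $n$ precede every value of $(k,n-1)\setminus\{j\}$, and your ``diagonal argument'' is exactly the right upgrade --- if some $b\in(k,j)$ preceded $n$, then choosing $d\in(j,n-1)$ (nonempty precisely when $j\le n-3$) and using (C) produces $b,n,d$, a $132$ among the values exceeding $k$, contradicting (B); equivalently, $k\,b\,n\,d$ is a $1243$ in $\pi$. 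This auxiliary element is the same device the paper uses (its ``$d$''), so your route is a reorganisation of the paper's argument rather than a genuinely different proof, albeit a more transparent one.

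Two remarks. First, declining to run the second recurrence at $j=n-2$ is not just a convenience; it is forced, because that identity is false at $j=n-2$. For $n=5$, $j=3$ one has $b(5;3)=6$ (every length-$5$ permutation beginning $3,4$ avoids $T$, since no letter has three larger letters to its right), whereas $b(4;1)+b(4;2)+a_T(3;3)=1+2+2=5$. So the stated range $1\le j\le n-2$ should be $1\le j\le n-3$ for the $b$-recurrence; this is also all the paper's own proof establishes (it explicitly invokes $j\le n-3$), and all that is needed, since $b(n;n-2)=a_T(n-2)$ is supplied separately among the initial conditions. Second, a small point of rigor in your case $k=n$: the letter $n$ in third position is not excluded by the ``rank-one'' argument alone, because $1243$ has its maximum in third position; you need the extra observation that such an occurrence would have to begin with $\pi_1\pi_2=j\,(n-1)$ and then contain a letter strictly between $n-1$ and $n$, which is impossible. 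The conclusion $a_T(n;j,n-1,n)=a_T(n-2;j)$ is of course correct.
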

\begin{proof}
The initial conditions  $a_T(n;n)=a_T(n;n-1)=a_T(n;n-2)=a_T(n-1)$,
$b(n;n)=b(n;n-2)=a_T(n-2)$ and $b(n;n-1)=0$ easily follow from the definitions. For $1\leq j\leq n-2$, we have
\begin{align*}
a_T(n;j)&=\sum_{i=1}^{j-1}a_T(n;j,i)+\sum_{i=j+1}^{n}a_T(n;j,i).
\end{align*}
So assume $1 \leq j\leq n-3$ and let $\pi=ji\pi'$ be a member of $S_n(T)$. We consider several cases for $i$.  If $i=n$, then $a_T(n;j,n)=a_T(n-1;j)$. If $1\leq i<j$ then $ji\pi'$ (respectively, $jn\pi'$) avoids $T$ if and only if $i\pi'$ (respectively, $j\pi'$) avoids $T$, so $a_T(n;j,i)=a_T(n-1;i)$ for all $i=1,2,\ldots,j-1$ and $a_T(n;j,n)=a_T(n-1;j)$. Since $\pi$ avoids $1234$ and $1243$, we see that either $i<j$ or $j>n-2$, and $a_T(n;j,n)=a_T(n-1;j)$. Thus,
\begin{align*}
a_T(n;j)&=\sum_{i=1}^{j-1}a_T(n-1;i)+a_T(n-1;j)+b(n;j),
\end{align*}
which completes the proof of the first recurrence relation.

For the second relation, by similar reasons, we have
\begin{align*}
b(n;j)&=\sum_{i=1}^{j-1}a_T(n;j,n-1,i)+\sum_{i=j+1}^{n-2}a_T(n;j,n-1,i)+a_T(n;j,n-1,n).
\end{align*}
Clearly, $a_T(n;j,n-1,n)=a_T(n-2,j)$. The permutation $j(n-1)(n-2)\pi''$ contains $1324$, so $a_T(n;j,n-1,n-2)=0$. The permutation $j(n-1)i\pi''$ with $j+1\leq i\leq n-3$ contains either $1234$ or $1243$, so $a_T(n;j,n-1,i)=0$.
Thus,
\begin{align*}
b(n;j)&=\sum_{i=1}^{j-1}a_T(n;j,n-1,i)+a_T(n-2;j).
\end{align*}
Let $j(n-1)i\pi''\in S_n$. If $i(n-1)\pi''$ contains a pattern in $T$ then $j(n-1)i\pi''$ contains the same pattern in $T$. Now, suppose $i(n-1)\pi''$ avoids $T$, so if $\pi=j(n-1)i\pi''$ contains $1234$ or $1243$, then $i(n-1)\pi''$ contains $1234$ or $1243$, which implies that $\pi$ avoids $1234$ and $1243$. If $\pi$ contains $1324$, then we can assume that $1324$ occurs in
$\pi$ as a subsequence $jabc$ with $j<b<a<c$, otherwise $i(n-1)\pi'$ contains $1324$. Also, we can assume that $b=n-1$ which gives $c=n$, otherwise $iabc$ occurs in $i(n-1)\pi''$.  Since $j\leq n-3$,  $\pi$ has an element $d$ such that either $j<d<b$ or $b<d<n-1$. Thus, $i(n-1)\pi''$ contains either $idba$ or $ibda$ or $ibad$, that is $i(n-1)\pi''$ does not avoid $T$. Therefore, $\pi$ avoids $T$.  Hence, $$a_T(n;j,n-1,i)=a_T(n-1;i,n-2)=b(n-1;i),$$ for all $i=1,2,\ldots,j-1$, which implies
\begin{align*}
b(n;j)&=\sum_{i=1}^{j-1}b(n-1;i)+a_T(n-2;j)\, .
\end{align*}
This completes the proof.
\end{proof}

Define $A_T(n;v)=\sum_{j=1}^na_T(n;j)v^{j-1}$ and $B(n;v)=\sum_{j=1}^nb(n;j)v^{j-1}$. Then Lemma \ref{lem241a1} can be written as
\begin{align*}
A_T(n;v)&=A_T(n-1;1)(v^{n-1}+v^{n-2}+v^{n-3})+\frac{1}{1-v}(A_T(n-1;v)-v^{n-3}A_T(n-1;1))\\
&+B(n;v)-A_T(n-2;1)v^{n-1},\\
B(n;v)&=A_T(n-2;1)(v^{n-1}+v^{n-3})+\frac{1}{1-v}(vB(n-1;v)-B(n-1;1)v^{n-3})\\
&+A_T(n-2;v)+A_T(n-3;1)v^{n-2}
\end{align*}
with $A_T(0;v)=A_T(1;v)=1$, $A_T(2;v)=1+v$, $B(0;v)=B(1;v)=0$ and $B(2;v)=v$.

Define  $A_T(x,v)=\sum_{n\geq0}A_T(n;v)x^n$ and $B(x,v)=\sum_{n\geq0}B(n;v)x^n$. Then, the above recurrence can be written as
\begin{align}
A_T(x,v)&=1+\left(x+\frac{x}{v}+\frac{x}{v^2}\right)A_T(xv,1)\nonumber\\
&+\frac{x}{1-v}\left(A_T(x,v)-\frac{1}{v^2}A_T(xv,1)\right)+B(x,v)-x^2vA_T(xv,1)\label{eq241a1}\\
B(x,v)&=\frac{x}{1-v}\left(vB(x,v)-\frac{1}{v^2}B(xv,1)\right)+x^3vA_T(xv,1)\nonumber\\
&+\frac{x^2(1+v)}{v}(A_T(xv,1)-1)+x^2vA_T(xv,1).\label{eq241a2}
\end{align}
By multiplying \eqref{eq241a2} by $v^2(1-v)^2\left(1-\frac{xv}{1-v}\right)$ and \eqref{eq241a1} by $v^2(1-v)^2$, then adding the results, we obtain
\begin{align*}
K(x,v)A_T(x/v;v)&=\frac{x(v^3(1-x-v)-x(1-v)(1-v+xv))}{v}A_T(x;1)+\frac{x(1-v)}{v}B(x;1)\\
&+\frac{(1-v)(x^2(1-v^2)-v^3(1-x-v))}{v},
\end{align*}
where $K(x,v)=xv(1-v^2)+x^2(1-3v+v^2)-v^2(1-v)^2$.

To solve the preceding functional equation, we apply the kernel method (see, e.g., \cite{HM} for an exposition) and take
\begin{align*}
v&=v_-=\frac{2+(\sqrt{5}-1)x+\sqrt{4-12x+6x^2-4\sqrt{5}x-2\sqrt{5}x^2}}{4},\,\mbox{and}\\
v&=v_+=\frac{2-(\sqrt{5}+1)x+\sqrt{4-12x+6x^2+4\sqrt{5}x+2\sqrt{5}x^2}}{4},\\
\end{align*}
which satisfies $K(x;v_+)=K(x,v_-)=0$. Hence, since $F_T(x)=A_T(x,1)$, we obtain the following result.
\begin{theorem}\label{th241a} Let $T=\{1324,1243,1234\}$.
Then $F_T(x)$ is given by
\begin{align*}
\frac{(v_--1)(v_+-1)\,\big((v_-+v_+)(v_-^2+v_+^2-x^2)+(x-1)(v_-^2+v_+^2+v_-v_+)\big)}{x\,\big(x-(v_--1)(v_+-1)(v_-^2+v_+^2+v_-v_++x(v_-+v_++2-x))\big)}\,.
\end{align*}
\end{theorem}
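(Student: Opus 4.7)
The plan is to enumerate $S_n(T)$ for $T=\{1324,1243,1234\}$ by refining $a_T(n)=|S_n(T)|$ according to the first letter, and further refining the generating function by the position of the second letter in certain subcases. Concretely, set $a_T(n;j)$ to be the number of $\pi\in S_n(T)$ with $\pi_1=j$ and set $b(n;j)=a_T(n;j,n-1)$ for the number with $\pi_1=j,\pi_2=n-1$. The reason for tracking $b(n;j)$ separately is that the case $\pi_1=j,\pi_2=n-1$ is the only "hard" initial configuration: after $n$ or after some $i<j$, the remaining avoidance is either transparent or reduces cleanly.

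The first step is to establish the coupled recurrences of Lemma \ref{lem241a1}. For $a_T(n;j)$ with $1\le j\le n-2$, I would case-split on $\pi_2=i$. Since $\pi$ avoids $\{1234,1243\}$, after $ji$ we cannot have $i>j$ unless $i=n$ (else $ji$ together with any two later letters in $T$-avoiding position would be forced to form $1234$ or $1243$); the case $i=n$ gives $a_T(n-1;j)$, while each $i<j$ contributes $a_T(n-1;i)$ because prefixing a $T$-avoider starting with $i$ by $j$ does not create a new forbidden pattern (since $j$ is too large to play the role of the $1$ in any of the three patterns). The residual case $i=n-1$ produces the $b(n;j)$ term. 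For $b(n;j)$, I look at $\pi_3$: the three-letter start $j(n-1)(n-2)$ immediately contains $1324$, and $j(n-1)i$ for $j<i<n-2$ produces $1234$ or $1243$, so $i$ must be either $<j$ or $n$, contributing $b(n-1;i)$ and $a_T(n-2;j)$ respectively. The delicate identity $a_T(n;j,n-1,i)=b(n-1;i)$ for $i<j$ is the pattern-containment check: I would argue exactly as the excerpt suggests, verifying that any occurrence of $1324$ in $j(n-1)i\pi''$ must use $j$ as the $1$ and $n-1,n$ as the $2,4$, and then using $j\le n-3$ to produce a contradictory letter.

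The second step is analytic: multiply the recurrences by $v^{j-1}$ and sum to get the bivariate functional equations \eqref{eq241a1} and \eqref{eq241a2} in $A_T(x,v)=\sum_n A_T(n;v)x^n$ and $B(x,v)=\sum_n B(n;v)x^n$. The initial conditions $A_T(n;n),A_T(n;n-1),A_T(n;n-2),b(n;n),b(n;n-2),b(n;n-1)$ supply the explicit non-recursive terms. I then eliminate $B(x,v)$ by taking the combination shown in the excerpt: $v^2(1-v)^2$ times \eqref{eq241a1} plus $v^2(1-v)^2(1-xv/(1-v))$ times \eqref{eq241a2}, and substitute $x\mapsto x/v$, to arrive at
\[
K(x,v)A_T(x/v,v)=P(x,v)A_T(x,1)+Q(x,v)B(x,1)+R(x,v),
\]
with the kernel $K(x,v)=xv(1-v^2)+x^2(1-3v+v^2)-v^2(1-v)^2$.

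The final step is the kernel method. As a quadratic in $v$ (after clearing), $K(x,v)=0$ has two algebraic roots $v_\pm(x)$ (explicit formulas given in the excerpt) that are power series in $x$ with $v_\pm(0)=1$, so $A_T(x/v_\pm,v_\pm)$ remains a well-defined formal series. Substituting $v=v_-$ and $v=v_+$ yields a linear $2\times 2$ system for the two unknown one-variable generating functions $A_T(x,1)=F_T(x)$ and $B(x,1)$. Solving this system and simplifying (using $v_-+v_+$ and $v_-v_+$ from Vi\`ete as symmetric functions of the coefficients of $K$) produces exactly the claimed closed form for $F_T(x)$. I expect the main obstacle is not the kernel-method algebra per se — that is mechanical — but rather the pattern-containment bookkeeping in Lemma \ref{lem241a1}, especially verifying $a_T(n;j,n-1,i)=b(n-1;i)$; all subsequent steps are a routine, if lengthy, computation and a standard application of the kernel method as in \cite{HM}.
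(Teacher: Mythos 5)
Your proposal is correct and follows the paper's own proof essentially step for step: the same refinement $a_T(n;j)$ and $b(n;j)=a_T(n;j,n-1)$, the same coupled recurrences of Lemma \ref{lem241a1}, the same bivariate functional equations and elimination yielding the kernel $K(x,v)=xv(1-v^2)+x^2(1-3v+v^2)-v^2(1-v)^2$, and the same use of both roots $v_\pm$ to obtain a $2\times 2$ linear system for $A_T(x,1)$ and $B(x,1)$. The only (immaterial) difference is that you make explicit the analytic justification that $v_\pm(0)=1$ so the substitution is legitimate, and the resolution of the two-unknown system, which the paper leaves implicit.
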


\vspace*{10mm}
\centerline{\bf{Acknowledgement}}
We thank Richard Mather for pointing out some inaccuracies in the previous version. 

\newpage


\vspace*{5mm}

{\footnotesize\begin{longtable}[c]{|l|c|c|}
\caption{Small classes of three 4-letter patterns counted by INSENC.\label{longinsenc}}\\ \hline
\multicolumn{3}{| c |}{Start of Table}\\ \hline
No. &$T$&$F_T(x)$ \\ \hline
\endfirsthead  \hline
\multicolumn{3}{|c|}{Continuation of Table \ref{longinsenc}}\\ \hline
No. & $T$&$F_T(x)$ \\ \hline
\endhead \hline
\endfoot \hline
\multicolumn{3}{| c |}{End of Table}\\ \hline\hline
\endlastfoot
1&$\{4321,3412,1234\}$&$73x^9+ 199x^8+ 240x^7+ 162x^6+ 69x^5+ 21x^4+ 6x^3+ 2x^2+ x + 1$\\[4pt]\hline
2&$\{4321,3142,1234\}$&$85x^9+ 221x^8+ 252x^7+ 164x^6+ 69x^5+ 21x^4+ 6x^3+ 2x^2+ x + 1$\\[4pt]\hline
3&$\{2143,4312,1234\}$&$\frac{18x^7+31x^6+22x^5+8x^4+2x^3+2x^2-2x+1}{(1-x)^{3}}$\\[4pt]\hline
4&$\{4231,2143,1234\}$&$\frac{2x^{10}-6x^9+6x^8+4x^7+4x^6+8x^5+6x^4-4x^3+7x^2-4x+1}{(1-x)^{5}}$\\[4pt]\hline
5&$\{2143,3412,1234\}$&$\frac{2x^5+10x^4-11x^3+11x^2-5x+1}{(1-x)^6}$\\[4pt]\hline
7&$\{3421,4312,1234\}$&$\frac{-9x^7+24x^6+23x^5+8x^4+2x^3+2x^2-2x+1}{(1-x)^3}$\\[4pt]\hline
8&$\{2431,4213,1234\}$&$\frac{26+21x+15x^2}{2(1-x-x^2-x^3)}+\frac{4x^{10}(1+3x)-62x^9-28x^8+66x^7+27x^6-15x^5-53x^4+41x^3+51x^2-75x+24}{2(x-1)^3(1-x-x^2)^2}$\\[4pt]\hline
9&$\{2134,4312,1243\}$&$\frac{-3x^7-5x^6+3x^5+10x^4-11x^3+11x^2-5x+1}{(1-x)^6}$\\[4pt]\hline 
10&$\{4213,1432,1234\}$&$\frac{2x^{11}+4x^{10}+10x^9+12x^8+6x^7-19x^6-19x^5-7x^4-x^3+2x-1}{(x-1)(x^5+3x^4+2x^3+x^2+x-1)(x^3+x^2+x-1)}$\\[4pt]\hline
11&$\{4231,1432,1234\}$&$\frac{5x^9-2x^8-x^7+9x^6+9x^5+6x^4-4x^3+7x^2-4x+1}{(1-x)^5}$\\[4pt]\hline
12&$\{2341,4312,1324\}$&$\frac {x^{10}-4x^9+3x^8+ 5x^7-7x^5+ 21x^4- 22x^3+ 16x^2- 6x+ 1}{(1-x)^{7}}$\\[4pt]\hline
13&$\{3214,1432,1234\}$&$-\frac{x^5+x^3+x^2+x-1}{x^{12}+16x^{11}+10x^{10}+17x^9+25x^8+25x^7-7x^6-14x^5-5x^4-2x^3-x^2-2x+1}$\\[4pt]\hline
14&$\{4231,2134,1243\}$&$\frac{4x^9-11x^8+10x^7+2x^6-7x^5+21x^4-22x^3+16x^2-6x+1}{(1-x)^7}$\\[4pt]\hline
16&$\{2314,1432,4123\}$&$\frac{2x^9-x^8+x^7+3x^6+6x^5-6x^4+11x^3-13x^2+6x-1}{(x^2-3x+1)(x^3+x^2+x-1)(1-x)^3}$\\[4pt]\hline
17&$\{2341,2143,4123\}$&$\frac{x^7- 13x^5+ 25x^4 - 29x^3+ 20x^2- 7x + 1}{(x^2- 3x + 1)(1-x)^5}$\\[4pt]\hline
18&$\{2341,1432,4123\}$&$\frac{x^{10}-7x^9+19x^8-25x^7+12x^6+10x^5-20x^4+25x^3-19x^2+7x-1}{(x^2+1)(x^2-3x+1)(x^3-x^{2}-2x+1)(x-1)^3}$\\[4pt]\hline
19&$\{2431,4312,1234\}$&$\frac{6x^9- 7x^8- 7x^7 + 4x^6+ 10x^5+ 6x^4- 4x^3+ 7x^2- 4x+ 1}{(1-x)^5}$\\[4pt]\hline
20&$\{4312,1432,1234\}$&$\frac{(x + 1)(2x^9-18x^8+33x^7-20x^6+12x^5-22x^4+16x^3-12x^2+5x-1)}{(x-1)^5}$\\[4pt]\hline
21&$\{4312,3142,1234\}$&$\frac{2x^9- 3x^8- 2x^{6} - 6x^5+ 21x^4- 22x^3+ 16x^2- 6x + 1}{(1-x)^7}$\\[4pt]\hline
22&$\{2134,4312,1432\}$&$\frac {x^6+ 6x^5- 21x^4+22x^3- 16x^2+ 6x - 1}{(x - 1)^7}$\\[4pt]\hline
23&$\{2431,4132,1234\}$&$\frac{1-2x}{x^2-3x+1}+\frac{(24x^9-116x^8+213x^7-158x^6+9x^5+37x^4-9x^3+x^2-3x+1)x^3}{(x-1)^5(2x-1)^3}$\\[4pt]\hline
24&$\{4231,3412,1234\}$&$\frac{2x^6- 6x^5+ 21x^4- 22x^3+ 16x^2- 6x + 1}{(1-x)^7}$\\[4pt]\hline
25&$\{3412,4132,1234\}$&$\frac{3x^6- 6x^5+ 21x^4- 22x^3+ 16x^2- 6x + 1}{(1-x)^7}$\\[4pt]\hline
26&$\{2134,4312,1342\}$&$-\frac {x^8- 9x^6+ 27x^5- 43x^4+ 38x^3- 22x^2+ 7x - 1}{(1-x)^8}$\\[4pt]\hline
27&$\{2314,4312,1432\}$&$-\frac{3x^9-x^8-18x^7+17x^6+15x^5-44x^4+47x^3-27x^2+8x-1}{(2x-1)(x^2+x-1)(x-1)^6}$\\[4pt]\hline
28&$\{4231,3142,1234\}$&$\frac{2x^8- 10x^7+ 40x^6- 70x^5+ 81x^4- 60x^3+ 29x^2- 8x + 1}{(1-x)^9}$\\[4pt]\hline
31&$\{2314,4312,1342\}$&$\frac{5x^{10}-22x^9+12x^8+89x^7-249x^6+354x^5-316x^4+179x^3-62x^2+12x-1}{(x^2-3x+1)(2x-1)^3(x-1)^4}$\\[4pt]\hline
32&$\{2134,1432,4123\}$&$\frac{x^{10}-4x^9+4x^8-x^6-5x^5+6x^4-11x^3+13x^2-6x+1}{(x^2-3x+1)(x^3+x^2+x-1)(x-1)^3}$\\[4pt]\hline
33&$\{2134,3412,4132\}$&$\frac {2x^7- 16x^5+ 36x^4 - 42x^3+ 26x^2- 8x + 1}{(2x - 1)^{3}(x - 1)^3}$\\[4pt]\hline
34&$\{2143,4132,1234\}$&$-\frac{x^9-2x^8-x^7+4x^6-x^5-2x^4+3x^3-8x^2+5x-1}{(x^2-3x+1)(x-1)(2x-1)}$\\[4pt]\hline
36&$\{3412,3124,1432\}$&$-\frac{x^8+2x^7-26x^6+62x^5-83x^4+69x^3-34x^2+9x-1}{(x-1)^{5}(x^2-3x+1)(2x-1)}$\\[4pt]\hline
37&$\{3142,1432,1234\}$&$\frac {(x^3- 2x^2+ 3x - 1)^2}{x^8- x^7+ 4x^6- 7x^5+ 19x^4- 24x^3 + 18x^2- 7x+1}$\\[4pt]\hline
38&$\{4321,1423,1234\}$&$147x^9+ 359x^8+ 367x^7+ 198x^6+72x^5+ 21x^4+ 6x^3+ 2x^2+ x + 1$\\[4pt]\hline
39&$\{4321,4123,1234\}$&$185x^9+ 400x^8+ 396x^7+ 205x^6+72x^5+ 21x^4+ 6x^3+ 2x^2+ x + 1$\\[4pt]\hline
40&$\{2341,4312,1234\}$&$\frac{x^9- 5x^8+ 6x^7+ x^6+ 5x^5- 21x^4+ 22x^3- 16x^2+ 6x - 1}{(x - 1)^7}$\\[4pt]\hline
41&$\{4312,1342,1234\}$&$-\frac {2x^7- 8x^6+ 26x^5- 43x^4+ 38x^3- 22x^2+ 7x - 1}{(x - 1)^8}$\\[4pt]\hline
42&$\{2341,4132,1234\}$&$\frac{4x^8- 5x^7- 7x^{6} - 7x^5+ 22x^4- 28x^3+ 20x^2- 7x + 1}{(2x-1)^2(x-1)^4}$\\[4pt]\hline
43&$\{2314,4213,1432\}$&$-\frac{9x^6-35x^5+54x^4-49x^3+27x^2-8x+1}{(3x^3-5x^2+4x-1)(2x-1)(x-1)^3}$\\[4pt]\hline
44&$\{4213,1342,1234\}$&$\frac{x^{10}-6x^9+9x^8+9x^7-54x^6+94x^5-104x^4+76x^3-35x^2+9x-1}{(x^3-2x^2+3x-1)(2x-1)(x-1)^5}$\\[4pt]\hline
45&$\{4213,2134,1432\}$&$\frac{x^{10}-2x^9-x^8-13x^7+54x^6-99x^5+108x^4-77x^3+35x^2-9x+1}{(x-1)^{2}(3x^3-5x^2+4x-1)^2}$\\[4pt]\hline
46&$\{2341,4132,1324\}$&$\frac{2x^7+5x^6-3x^{5}+3x^4+6x^3-12x^2+6x-1}{(x-1)(2x-1)(x^2-3x+1)(x^2+x-1)}$\\[4pt]\hline
47&$\{2413,4132,1234\}$&$-\frac{3x^6-21x^5+40x^4-43x^3+26x^2-8x+1}{(2x-1)(x-1)^{4}(x^2-3x+1)}$\\[4pt]\hline
48&$\{4312,3124,1342\}$&$-\frac{x^9-15x^8+73x^7-175x^6+247x^5-228x^4+138x^3-52x^2+11x-1}{(x^2-3x+1)^{2}(x-1)^6}$\\[4pt]\hline
51&$\{4213,3124,1432\}$&$\frac{x^6- 7x^4+ 12x^3-13x^2+ 6x - 1}{(x^2- 3x + 1)(3x^3- 5x^2+4x - 1)}$\\[4pt]\hline
52&$\{1432,4123,1234\}$&$-\frac{x^8- 4x^7+ 3x^6+ 4x^5- 11x^4+ 20x^3- 18x^2+ 7x - 1}{(x-1)^2(x^2-3x+1)^2}$\\[4pt]\hline
53&$\{2134,4132,1243\}$&$\frac{x^{10}-4x^9-6x^8+68x^7-186x^6+291x^5-283x^4+170x^3-61x^2+12x-1}{(2x-1)^2(x^2-3x+1)^2(x-1)^3}$\\[4pt]\hline
54&$\{3124,1432,1234\}$&$\frac{(1-x)^3(2x^3-2x^2+3x-1)}{2x^9-7x^8+7x^7-10x^6+16x^5-27x^4+29x^3-19x^2+7x-1}$\\[4pt]\hline
57&$\{2143,1432,1234\}$&$\frac {x^7+ x^6- x^5+ 3x^3+ 2x^2+ 2x - 1}{x^7+ x^6- x^5- x^4+ 2x^3+x^2+3x-1}$\\[4pt]\hline
58&$\{4321,1243,1234\}$&$144x^9+ 396x^8+ 382x^7+ 202x^6+73x^5+ 21x^4+ 6x^3+ 2x^2+ x + 1$\\[4pt]\hline
59&$\{4321,1324,1234\}$&$334x^9+ 669x^8+ 484x^7+ 215x^6+73x^5+ 21x^4+ 6x^3+ 2x^2+ x + 1$\\[4pt]\hline
60&$\{4312,4132,1234\}$&$\frac{x^7+16x^6+12x^5+6x^4-4x^3+7x^2-4x+1}{(1-x)^5}$\\[4pt]\hline
61&$\{4312,1243,1234\}$&$\frac{x^{10} -4x^9+ 3x^8+2x^7+ x^6+ 4x^5- 21x^4+ 22x^3- 16x^{2}+ 6x - 1}{(x-1)^7}$\\[4pt]\hline
62&$\{4231,4312,1234\}$&$\frac{3x^8- 8x^7+ 4x^6 - 4x^5+ 21x^4- 22x^3+ 16x^2- 6x + 1}{(1-x)^7}$\\[4pt]\hline
63&$\{4312,1324,1234\}$&$\frac{x^{10}-5x^9+ 6x^8+2x^7- 5x^6+ 4x^5- 21x^4+ 22x^3- 16x^2 + 6x -1}{(x-1)^7}$\\[4pt]\hline
64&$\{4312,3412,1234\}$&$\frac {3x^7+ 5x^6- 4x^5 + 21x^4- 22x^3+ 16x^2- 6x + 1}{(1-x)^7}$\\[4pt]\hline
65&$\{4213,4132,1234\}$&$-\frac{3x^8+5x^7+13x^6+7x^5+2x^4+x^3+5x^2-4x+1}{(x^2+x-1)(x^3+x^2+x-1)(x-1)^3}$\\[4pt]\hline
66&$\{4231,4132,1234\}$&$\frac{2x^7+8x^6-4x^5+21x^4-22x^3+16x^2-6x+1}{(1-x)^7}$\\[4pt]\hline
67&$\{4312,1324,1243\}$&$\frac{2x^{10}-7x^8+65x^7-187x^6+274x^5-248x^4+145x^3-53x^2+11x-1}{(x-1)^6(2x-1)^3}$\\[4pt]\hline
68&$\{4312,1342,1243\}$&$\frac{3x^7-4x^6-14x^5+36x^4-42x^3+26x^2-8x+1}{(x-1)^3(2x-1)^3}$\\[4pt]\hline
70&$\{4312,3124,1243\}$&$-\frac{11x^7-62x^6+128x^5-146x^4+102x^3-43x^2+10x-1}{(2x-1)^3(x-1)^5}$\\[4pt]\hline
71&$\{4231,1243,1234\}$&$-\frac{4x^8-2x^7-17x^6+25x^5-43x^4+38x^3-22x^2+7x-1}{(x-1)^8}$\\[4pt]\hline
73&$\{4231,1324,1234\}$&$-\frac{x^{10}-15x^8+55x^7-111x^6+149x^5-141x^4+89x^3-37x^2+9x-1}{(x-1)^{10}}$\\[4pt]\hline
79&$\{2134,4132,1234\}$&$\frac{2x^{11}+x^{10}-10x^9-9x^8+12x^7+17x^6-30x^5+2x^4+28x^3-24x^2+8x-1}{(x^2+2x-1)(2x-1)(x-1)^3(x^2+x-1)^2}$\\[4pt]\hline
81&$\{2431,4312,1324\}$&$\frac{145x^3+11x-1-248x^4-193x^6+274x^5-53x^2-x^9-13x^8+80x^7)}{(2x-1)^3(x-1)^6}$\\[4pt]\hline
82&$\{4312,3142,1243\}$&$\frac{x^7+2x^6-27x^5+59x^4-61x^3+33x^2-9x+1}{(x-1)^{2}(2x-1)^4}$\\[4pt]\hline
83&$\{4312,3412,1243\}$&$\frac{x^7+2x^6+4x^5-23x^4+36x^3-25x^2+8x-1}{(x-1)(2x-1)^4}$\\[4pt]\hline
85&$\{2314,4132,1432\}$&$-\frac{x^5+5x^4-11x^3+13x^2-6x+1}{(2x-1)(x^2-3x+1)(x-1)^2}$\\[4pt]\hline
87&$\{4312,3124,1432\}$&$-\frac{2x^9-46x^7+143x^6-226x^5+221x^4-137x^3+52x^2-11x+1}{(x-1)^5(x^2-3x+1)(2x-1)^2}$\\[4pt]\hline
89&$\{3142,4132,1234\}$&$-\frac{4x^5-16x^4+24x^3-19x^2+7x-1}{(2x-1)(x^2-3x+1)(x-1)^3}$\\[4pt]\hline
91&$\{4213,1342,1243\}$&$-\frac{4x^5-14x^4+17x^3-14x^2+6x-1}{(3x-1)(x^2-x+1)(x-1)^3}$\\[4pt]\hline
92&$\{2314,3124,1432\}$&$\frac{(x^3-2x^2+3x-1)(x^2+x-1)(1-x)^3}{x^9-2x^8+6x^7-4x^6-7x^5+32x^4-40x^3+25x^2-8x+1}$\\[4pt]\hline
95&$\{2314,4132,1342\}$&$-\frac{4x^6-25x^5+51x^4-56x^3+32x^2-9x+1}{(2x-1)(x-1)^2(x^2-3x+1)^2}$\\[4pt]\hline
96&$\{2134,4132,1342\}$&$-\frac{4x^8+6x^7-45x^6+100x^5-126x^4+95x^3-42x^2+10x-1}{(2x-1)^2(x^2-3x+1)(x-1)^4}$\\[4pt]\hline
97&$\{2341,4312,4123\}$&$-\frac{(x-1)^4(x^3-2x^2+3x-1)}{x^8-4x^7+18x^6-35x^5+51x^4-47x^3+26x^2-8x+1}$\\[4pt]\hline
98&$\{2134,3124,1432\}$&$\frac{(x-1)^{3}(x^3+2x-1)}{4x^6-7x^5+9x^4-15x^3+13x^2-6x+1}$\\[4pt]\hline
100&$\{4312,1342,4123\}$&$-\frac{4x^6-16x^5+30x^4-31x^3+20x^2-7x+1}{(x-1)^3(2x^4-7x^3+8x^2-5x+1)}$\\[4pt]\hline
101&$\{3124,4132,1342\}$&$-\frac{4x^6-16x^5+30x^4-31x^3+20x^2-7x+1}{(x-1)^3(2x^4-7x^3+8x^2-5x+1)}$\\[4pt]\hline
102&$\{2413,3142,1234\}$&$-\frac{(x-1)^{3}(x^3-2x^2+3x-1)}{x^7-4x^6+12x^5-23x^4+28x^3-19x^2+7x-1}$\\[4pt]\hline
104&$\{2134,4132,1423\}$&$-\frac{3x^7-4x^6+17x^5-46x^4+55x^3-32x^2+9x-1}{(x-1)(x^2-3x+1)(2x-1)^3}$\\[4pt]\hline
105&$\{4213,2134,1342\}$&$-\frac{7x^6-25x^5+51x^4-56x^3+32x^2-9x+1}{(2x-1)(x-1)^2(x^2-3x+1)^2}$\\[4pt]\hline
107&$\{4213,3412,1342\}$&$-\frac{(x^2-x+1)(2x-1)^3}{(4x^3-7x^2+5x-1)(x-1)^3}$\\[4pt]\hline
110&$\{2134,3142,1432\}$&$-\frac{(x-1)(3x^3-5x^2+4x-1)^2}{x^9+2x^8-27x^7+86x^6-144x^5+150x^4-100x^3+42x^2-10x+1}$\\[4pt]\hline
111&$\{2143,3142,1234\}$&$\frac{(x^3-2x^2+3x-1)^2}{(2x^3-3x^2+4x-1)(x-1)^3}$\\[4pt]\hline
113&$\{2134,1432,1234\}$&$\frac{2x^5-x^4-3x^3-2x^2-2x+1}{2x^5-2x^3-x^2-3x+1}$\\[4pt]\hline
114&$\{4312,1423,1234\}$&$-\frac{x^{10}-3x^9+2x^8+4x^7-9x^6+24x^5-43x^4+38x^3-22x^2+7x-1}{(x-1)^8}$\\[4pt]\hline
115&$\{4231,1423,1234\}$&$-\frac{2x^{10}-17x^9+66x^8-158x^7+256x^6-289x^5+230x^4-126x^3+46x^2-10x+1}{(x-1)^{11}}$\\[4pt]\hline
116&$\{4312,4123,1243\}$&$\frac{x^9-23x^8+133x^7-315x^6+419x^5-350x^4+188x^3-63x^2+12x-1}{(2x-1)^4(x-1)^5}$\\[4pt]\hline
117&$\{3124,4132,1234\}$&$\frac{x^9-6x^8+22x^7-53x^6+92x^5-104x^4+76x^3-35x^2+9x-1}{(x^3-2x^2+3x-1)(2x-1)(x-1)^5}$\\[4pt]\hline
119&$\{4312,1432,1324\}$&$\frac{-350x^4-63x^2+419x^5-26x^8+138x^7-317x^6+188x^3+x^9-1+12x}{(2x-1)^4(x-1)^5}$\\[4pt]\hline
120&$\{4132,1423,1234\}$&$-\frac{x^8+4x^7-41x^6+99x^5-126x^4+95x^3-42x^2+10x-1}{(x^2-3x+1)(2x-1)^2(x-1)^4}$\\[4pt]\hline
122&$\{4213,1432,1324\}$&$-\frac{5{x}^{5}-19{x}^{4}+25{x}^{3}-19{x}^{2}+7x-1}{(x-1)({x}^{2}-3x+1)(3{x}^{3}-5{x}^{2}+4x-1)}$\\[4pt]\hline
123&$\{4132,1342,1234\}$&$-\frac{3x^8-46x^7+141x^6-225x^5+221x^4-137x^3+52x^2-11x+1}{(x^2-3x+1)(2x-1)^{2}(x-1)^5}$\\[4pt]\hline
124&$\{2341,4132,4123\}$&$-\frac{(2x-1)(x-1)^4}{2x^6-8x^5+19x^4-27x^3+19x^2-7x+1}$\\[4pt]\hline
128&$\{2341,3142,4123\}$&$\frac{\left({x}^{2}-3x+1\right)\left(x-1\right)^{5}}{4{x}^{7}-23{x}^{6}+55{x}^{5}-78{x}^{4}+66{x}^{3}-33{x}^{2}+9x-1}$\\[4pt]\hline
135&$\{1432,4123,1243\}$&$-\frac{5x^5-14x^4+22x^3-18x^2+7x-1}{(x-1)^4(2x^2-4x+1)}$\\[4pt]\hline
136&$\{4213,1342,4123\}$&$\frac{x^5-3x^3+4x^2-4x+1}{x^5+x^4-6x^3+7x^2-5x+1}$\\[4pt]\hline
137&$\{3124,1432,1342\}$&$-\frac{(x^2-3x+1)(x^2+2x-1)}{(x-1)(x^4-2x^3-5x^2+5x-1)}$\\[4pt]\hline
138&$\{2134,3142,1243\}$&$-\frac{(x^2-3x+1)(x^2+2x-1)}{(x-1)(x^4-2x^3-5x^2+5x-1)}$\\[4pt]\hline
139&$\{2143,3124,1342\}$&$-\frac{(x^2-3x+1)(x^2+2x-1)}{(x-1)(x^4-2x^3-5x^2+5x-1)}$\\[4pt]\hline
140&$\{3124,1432,1243\}$&$\frac{(3x-1)(x-1)^3}{9x^4-19x^3+17x^2-7x+1}$\\[4pt]\hline
141&$\{2143,1423,1234\}$&$\frac{2x^4-4x^3+7x^2-5x+1}{4x^4-9x^3+11x^2-6x+1}$\\[4pt]\hline
142&$\{1432,1342,4123\}$&$\frac{x^3+3x-1}{x^3-2x^2+4x-1}$\\[4pt]\hline
143&$\{4312,4123,1234\}$&$-\frac{x^8-3x^7-12x^6+23x^5-43x^4+38x^3-22x^2+7x-1}{(x-1)^8}$\\[4pt]\hline
144&$\{4231,4123,1234\}$&$-\frac{3x^8-15x^7+40x^6-66x^5+81x^4-60x^3+29x^2-8x+1}{(x-1)^9}$\\[4pt]\hline
145&$\{4312,1423,1243\}$&$\frac{2x^7-2x^6-25x^5+59x^4-61x^3+33x^2-9x+1}{(x-1)^2(2x-1)^4}$\\[4pt]\hline
146&$\{4132,1243,1234\}$&$\frac{x^9-4x^8+20x^6-58x^5+83x^4-69x^3+34x^2-9x+1}{(2x-1)(x^2-3x+1)(x-1)^5}$\\[4pt]\hline
147&$\{4132,1324,1234\}$&$-\frac{13x^{10}-45x^9+83x^8-38x^7-141x^6+308x^5-306x^4+178x^3-62x^2+12x-1}{(x^2-3x+1)(x^2+x-1)(2x-1)^2(x-1)^5}$\\[4pt]\hline
148&$\{2134,4132,1324\}$&$-\frac{5x^8-51x^7+172x^6-288x^5+283x^4-170x^3+61x^2-12x+1}{(2x-1)^2(x^2-3x+1)^2(x-1)^3}$\\[4pt]\hline
152&$\{4231,2341,4123\}$&$\frac{(x-1)^{6}(x^2-3x+1)}{5x^8-31x^7+83x^6-134x^5+144x^4-99x^3+42x^2-10x+1}$\\[4pt]\hline
154&$\{4312,1342,1423\}$&$-\frac{3x^5-14x^4+21x^3-18x^2+7x-1}{(x-1)(2x^3-4x^2+4x-1)(x^2-3x+1)}$\\[4pt]\hline
155&$\{3124,4132,1243\}$&$-\frac{3x^5-14x^4+21x^3-18x^2+7x-1}{(x-1)(2x^3-4x^2+4x-1)(x^2-3x+1)}$\\[4pt]\hline
160&$\{4312,1432,1342\}$&$\frac{2x^5-4x^4-10x^3+16x^2-7x+1}{(x-1)(3x-1)(2x-1)(x^2+2x-1)}$\\[4pt]\hline
161&$\{4312,4132,1342\}$&$-\frac{7x^5-22x^4+33x^3-24x^2+8x-1}{(x^3-3x^2+4x-1)(x-1)(2x-1)^2}$\\[4pt]\hline
167&$\{3142,3124,1432\}$&$-\frac{(2x-1)(x-1)(x^2-3x+1)}{x^5-7x^4+18x^3-17x^2+7x-1}$\\[4pt]\hline
168&$\{3124,1432,1423\}$&$-\frac{(x^2-3x+1)(2x-1)^{2}}{(x-1)(x^4-13x^3+16x^2-7x+1)}$\\[4pt]\hline
169&$\{3142,1423,1234\}$&$-\frac{(x^2-3x+1)(2x-1)^2}{(x-1)(x^4-13x^3+16x^2-7x+1)}$\\[4pt]\hline
179&$\{2134,1432,1423\}$&$-\frac{2x^5-8x^4+12x^3-12x^2+6x-1}{(x^4-5x^3+10x^2-6x+1)(x^2-x+1)}$\\[4pt]\hline
181&$\{2143,1324,1234\}$&$-\frac{2x^3+3x-1}{x^4-2x^3+2x^2-4x+1}$\\[4pt]\hline
183&$\{4132,4123,1234\}$&$\frac{x^8-8x^7+31x^{6}-75x^5+98x^4-75x^3+35x^2-9x+1}{(2x-1)^2(x-1)^6}$\\[4pt]\hline
186&$\{4132,4123,1243\}$&$-\frac{-27{x}^{5}+55{x}^{4}-57{x}^{3}+32{x}^{2}-9x+1+4{x}^{6}}{\left(3x-1\right)\left({x}^{2}-3x+1\right)\left(x-1\right)^{4}}$\\[4pt]\hline
189&$\{2143,2134,1432\}$&$-\frac{x^4-7x^3+8x^2-5x+1}{x^5-5x^4+13x^3-12x^2+6x-1}$\\[4pt]\hline
200&$\{2143,3124,1243\}$&$\frac{{x}^{3}-6{x}^{2}+5x-1}{\left(x-1\right)\left(5{x}^{2}-5x+1\right)}$\\[4pt]\hline
202&$\{1432,1423,1234\}$&$\frac{x^4-4x^3+10x^{2}-6x+1}{3x^4-11x^3+15x^2-7x+1}$\\[4pt]\hline
205&$\{1432,1324,1234\}$&$-\frac{x^7-2x^6+4x^5-17x^4+24x^3-18x^2+7x-1}{2x^6-14x^5+34x^4-38x^3+24x^2-8x+1}$\\[4pt]\hline
206&$\{1432,1243,1234\}$&$-\frac{x^6+5x^4-12x^3+12x^2-6x+1}{2x^5-13x^4+21x^3-17x^2+7x-1}$\\[4pt]\hline
\end{longtable}}

\begin{thebibliography}{11pt}
\bibitem{wikipermpatt} Wikipedia, Permutation pattern,
https://en.wikipedia.org/wiki/Permutation\_pattern

\bibitem{gp2015}
M. H. Albert, C. Homberger, J. Pantone, N. Shar, and V. Vatter, Generating Permutations with Restricted Containers, preprint, https://arxiv.org/abs/1510.00269, 2015.

\bibitem{ALR}
M. H. Albert, S. Linton and N. Ru$\breve{s}$kuc, The insertion encoding of permutations, {\em Electron.
J. Combin.} {\bf12} (2005), \#R47.

\bibitem{BBD}
C. Banderier, M. Bousquet-M{\'e}lou, A. Denise, P. Flajolet, D. Gardy, and D. Gouyou-Beauchamps, Generating functions for generating trees,
(\emph{Formal Power Series and Algebraic Combinatorics}, Barcelona, 1999), {\em Discrete Math.} {\bf 246:1-3} (2002), 29--55.

\bibitem{HYL} D. Callan, T. Mansour and M. Shattuck, Wilf classification of triples of 4-letter patterns, preprint, http://arxiv.org/abs/1605.04969.

\bibitem{CM} D. Callan and Mansour, Five subsets of permutations enumerated as weak sorting permutations, {\em Southeast Asian Bulletin of Mathematics}, to appear.

\bibitem{CM2} D. Callan and Mansour, A Wilf class composed of 7 symmetry classes of triples of 4-letter patterns, {\em  J. Analysis Num. Theory} {\bf5:1} (2017), 19--26.

\bibitem{AA20160601}
D. Callan and T. Mansour, On permutations avoiding 1243, 2134, and another 4-letter pattern, {\em  PU. M. A.} {\bf 26} (2017), No.\,1, 11--21.

\bibitem{AA20160607}
D. Callan and T. Mansour, On permutations avoiding 1324, 2143, and another 4-letter pattern, {\em  PU. M. A.} {\bf 26} (2017), No.\,1, 1--10.

\bibitem{CMS} D. Callan, T. Mansour and M. Shattuck, Twelve subsets of permutations enumerated as maximally clustered permutations, {\em submitted to Annales Mathematicae et Informaticae}.

\bibitem{CMS3patI}
D. Callan, T. Mansour and M. Shattuck, Wilf classification of triples of 4-letter patterns I, {\em Discr. Math. Theor. Comput. Sci.},  {\bf 19:1} (2017), \#5, 36 pp.

\bibitem{CMS3patII}
D. Callan, T. Mansour and M. Shattuck, Wilf classification of triples of 4-letter patterns II, {\em Discr. Math. Theor. Comput. Sci.}, {\bf 19:1} (2017), \#6, 45 pp.

\bibitem{FM}
G. Firro and T. Mansour, Three-letter-pattern-avoiding permutations and functional equations, {\em Elect. J. of Comb.} {\bf13} (2006), \#R51.

\bibitem{SHM}
S. Heubach and T. Mansour, \emph{Combinatorics of Compositions and Words}, CRC Press, Boca Raton, FL, 2009.

\bibitem{HM}
Q. Hou and T. Mansour, Kernel method and linear recurrence system,
\emph{J. Computat. Appl. Math.} {\bf261:1} (2008), 227--242.

\bibitem{K} D. E. Knuth, \emph{The Art of Computer Programming}, 3rd edition, Addison Wesley, Reading, MA, 1997.

\bibitem{KS}
D. Kremer and W.C. Shiu, Finite transition matrices for permutations avoiding
pairs of length four patterns, {\em Discrete Math.} {\bf268:1-3} (2003), 171--183.

\bibitem{Kus}
W. Kuszmaul, Fast algorithms for finding pattern avoiders and counting
pattern occurrences in permutations, Preprint, arXiv:1509.08216, 2015.

\bibitem{L}
I. Le, Wilf classes of pairs of permutations of length 4, {\em Electron. J. Combin.} {\bf12} (2005), \#R25.

\bibitem{macmahon1915}P. A. MacMahon, \emph{Combinatory Analysis}, Volumes 1 and 2, Cambridge University Press, 1915-16.

\bibitem{TM}
T. Mansour, \emph{Combinatorics of Set Partitions}, CRC Press, Boca Raton, FL, 2012.

\bibitem{MS} T. Mansour and M. Shattuck, Nine classes of permutations enumerated by binomial transform of Fine's sequence, {\em Discr. Appl. Math.} {\bf 226} (2017), 94--105.

\bibitem{MV}
T. Mansour and A. Vainshtein, Restricted 132-avoiding permutations, {\em Adv. in Appl. Math.} {\bf26} (2001), 258--269.


\bibitem{SiS}
R. Simion and F. W. Schmidt, Restricted permutations, \emph{European J. Combin.} {\bf 6} (1985), 383--406.

\bibitem{Sl}
N.J.A. Sloane,  The On-Line Encyclopedia of Integer Sequences, published electronically at http://oeis.org, 2017.

\bibitem{SlA257562}
N.J.A. Sloane, Sequence A257562, The On-Line Encyclopedia of Integer Sequences, http://oeis.org/sequences.

\bibitem{St0}
Z. E. Stankova, Forbidden subsequences, \emph{Discrete Math.} {\bf132} (1994), 291--316.

\bibitem{St}
Z. Stankova, Classification of forbidden subsequences of length four, \emph{European J. Combin.} {\bf17} (1996), 501--517.

\bibitem{V}
V. Vatter, Finding regular insertion encodings for permutation
classes, \emph{J. Symbolic Comput.} {\bf47:3} (2012), 259--265.

\bibitem{W}
J. West, Generating trees and the Catalan and Schr\"{o}der numbers, \emph{Discrete Math.} {\bf146} (1995), 247--262.
\end{thebibliography}
\end{document}